\title{The Plectic Weight Filtration on Cohomology of Shimura Varieties and Partial Frobenius}
\author{Zhiyou Wu}
\newcommand*{\isoarrow}[1]{\arrow[#1,"\rotatebox{90}{\(\sim\)}"
]}
\newcommand*{\rom}[1]{\expandafter\@slowromancap\romannumeral #1@}
\newenvironment{innerproof}[1][\proofname]
  {\par\normalfont \topsep6\p@ \@plus6\p@\relax
  \trivlist
  \item[\hskip\labelsep\itshape#1\@addpunct{.}]\ignorespaces}
  {\endtrivlist\@endpefalse}
\newtheorem{proposition}{Proposition}[section]
\newtheorem{theorem}[proposition]{Theorem}
\newtheorem{corollary}[proposition]{Corollary}
\newtheorem{definition}[proposition]{Definition}
\newtheorem{remark}[proposition]{Remark}
\newtheorem{lemma}[proposition]{Lemma}
\newtheorem{warning}[proposition]{Warning}
\newtheorem{dt}[proposition]{Definition/Theorem}
\begin{document}

\maketitle

\begin{abstract}
 We prove that there is a natural plectic weight filtration on the cohomology of Hilbert modular varieties in the spirit of Nekov\'a\v r and Scholl. This is achieved with the help of Morel's work on weight t-structures and  a detailed study of partial Frobenius. We prove in particular that the partial Frobenius extends to toroidal and minimal compactifications.  

\end{abstract}

\tableofcontents

\section{Introduction}  

Nekov\'a\v r and Scholl recently proposed in \cite{MR3502988} a program on plectic theory, which is about some hidden symmetries of Shimura varieties. The theme of this paper is to exploit some of these hidden symmetries, and provide evidence for their conjectures. More precisely, Nekov\'a\v r and Scholl observed that when the group of a Shimura variety $X$ is of the form $Res_{F/\mathbb{Q}} G$ with $F$ totally real, the cohomology of $X$ has extra structures. This is most easily observed in the (Betti) intersection cohomology of (minimal compactification of) Shimura varieties, in which case we have
\[
IH^*(X^{\text{min}}(\mathbb{C}), \mathbb{C}) = H^*_{(2)}(X(\mathbb{C}), \mathbb{C}) = \underset{\pi}{\oplus}  H^*( \mathfrak{g} , K_{\infty}; \pi_{\infty}) \otimes \pi_f^{K_f}  =  \underset{\pi}{\oplus} \underset{v | \infty}{\otimes} H^*( \mathfrak{g}_v , K_{\infty, v}; \pi_{v}) \otimes \pi_f^{K_f}\]
where the first equality is the (proven) Zucker's conjecture, $\pi$ ranges over irreducible $L^2$ automorphic representations of the group $Res_{F/\mathbb{Q}}G$, and the last equality follows by applying the Kunneth theorem for $(\mathfrak{g}, K)$-cohomology to $\pi_{\infty} = \underset{v | \infty}{\otimes} \pi_{v}$. 
As each $(\mathfrak{g}, K)$-cohomology
$H^*( \mathfrak{g}_v , K_{\infty, v}; \pi_{v})$
equips with a Hodge structure of type $(p_v,q_v)$, we see that $IH^*(X^{\text{min}}(\mathbb{C}), \mathbb{C})$
is a sum of  refined Hodge structures of type 
$\underset{v | \infty}{\otimes} (p_v,q_v) $, 
i.e.  plectic Hodge structures. A remarkably similar structure appears in the etale cohomology, at least in the case of Hilbert modular varieties, which suggests that it is motivic in nature. This motivates the question of explaining this extra structure.

 Nekov\'a\v r and Scholl proposed that the Shimura variety prolongs to a variety defined over $Spec(k_{plec})$, where $Spec(k_{plec})$ is a (product of) symmetric product of $Spec(k)$ over $\mathbb{F}_1$, the field with one element. Obviously, this does not make sense as we do not have a good theory of $\mathbb{F}_1$. However, this heuristic allows us to guess what extra structures we can expect on the cohomology, which sometimes can be established directly. In particular, we expect that for noncompact Shimura varieties of type $Res_{F/\mathbb{Q}}G$, the Betti cohomology has a natural plectic weight filtration, which is a $\mathbb{Z}^d$-indexed filtration whose graded pieces have pure plectic Hodge strutures as we observed using $(\mathfrak{g}, K)$-cohomology. What we prove in this paper is that this is true in the special case of Hilbert modular varieties. Before explaining more about the results, we remark that the plectic conjectures have powerful arithmetic consequences on special values of L-functions. 

Let us first recall how we detect the classical weight filtration on a smooth non-proper complex variety $X$. Using Nagata embedding and resolution of singularities, we can find an open embedding 
$j: X \hookrightarrow \overline{X}$ 
into a proper smooth variety with 
$\overline{X}\setminus X$ union of normal crossing divisors. Then, as observed by Deligne, the weight filtration is detected using the filtration on 
$Rj_* \mathbb{C}$
induced by the standard truncation
$\tau_{\leq a} Rj_* \mathbb{C}$, 
and the graded pieces of the weight filtration is detected using cohomology of strata of $\overline{X}$. More precisely, we have a spectral sequence induced by the filtration
$\tau_{\leq a} Rj_* \mathbb{C}$, 
\begin{equation} \label{2}
E^{p,q}_1 =   \mathbb{H}^{p+q}(\overline{X}(\mathbb{C}), \tau_{\geq -p} \tau_{\leq -p} Rj_*\mathbb{C}) \Rightarrow H^{p+q}(X(\mathbb{C}), \mathbb{C})
\end{equation}
which is nothing but the (reindexed) Leray spectral sequence for $j$. The graded sheaves
$\tau_{\geq -p} \tau_{\leq -p} Rj_*\mathbb{C}$
are supported on the strata defined by intersections of boundary divisors, and the weight filtration is a shift of the converging filtration of the spectral sequence.  

When $X$ is a Shimura variety of type $Res_{F/\mathbb{Q}}G$,  we can find an explicit $\overline{X}$ using toroidal compactifications. However, we can not use them to detect the plectic weight filtration since toroidal compactifications are not "plectic", in particular the strata of them possess no plectic structures on their cohomology. Our strategy is to look at the minimal compactification $X^{min}$ of $X$ instead, and what we gain is that the strata are now again Shimura varieties of type $Res_{F/\mathbb{Q}} G$, hence "plectic". This is a highly singular proper variety, and 
$\tau_{\leq a} Rj_* \mathbb{C}$ 
is not a reasonable object to consider. The way to approach it is to use Morel's weight t-structures (\cite{MR2862060}) in place of the standard t-structures. The formalism gives us a new truncation
$w_{\leq a} Rj_* \mathbb{C}$, 
giving rise to a spectral sequence of Hodge structures 
\begin{equation} \label{1}
E_1^{p,q}= \mathbb{H}^{p+q}(X^{min}(\mathbb{C}), w_{\geq -p} w_{\leq -p} Rj_*\mathbb{C}) \Rightarrow H^{p+q}(X(\mathbb{C}), \mathbb{C})
\end{equation}
first observed by Nair in \cite{nairmixed}. Note that Morel's formalism only makes sense in a theory with good notion of weights and perverse sheaves, and we have to use the derived category of mixed Hodge modules here. It is not hard to see that 
$w_{\geq -p} w_{\leq -p} Rj_*\mathbb{C}$
decomposes into shifted simple Hodge modules strictly supported on (closure of) strata of $X^{min}$, and can be made explicit with the help of Burgos and Wildeshaus' results (\cite{burgos2004hodge}). Moreover, these simple summands are automorphic in the sense they are associated to algebraic representations of the groups associated to the strata they support. Now $E^{p,q}_1$ is a sum of intersection cohomology of "plectic" Shimura varieties with automorphic coefficients, the same computation as before using $(\mathfrak{g}, K)$-cohomology on twisted automorphic representations shows that it possesses plectic structures. 

To proceed further, we have to know whether the spectral sequence detects the weight filtration and how we can extract the plectic weight filtration from it. Unfortunately, the answer to the first question is no in general, though it is true in the Hilbert modular case. The problem is that the graded pieces of the filtration are not necessarily pure, but direct sums of pure Hodge structures possibly of different weights. It is a coincidence that in the Hilbert modular case, this does not happen. On the other hand, to find the plectic weight filtration, it is not necessary to know the weight filtration a priori, and the spectral sequence does help with our purpose. 

To motivate the strategy, let us recall that there is another way to detect weights, namely using Frobenius weights. By spreading out the variety, we can assume that it is defined over a finitely generated $\mathbb{Z}$-algebra, and  reduce it to a variety defined over a finite field, then the Weil conjecture proved by Deligne  tells us that the $l$-adic cohomology has a weight filtration defined by Archemdean places of Frobenius eigenvalues. Using comparison theorems and base change or nearby cycles, we can find the weight filtration on Betti cohomology using finite fields. It is necessary to check that the new weight filtration is the same as the previously defined one, and this is proved by observing that the Frobenius acts on the spectral sequence (\ref{2}) through the comparison isomorphism, and has the right Frobenius weight on each $E_1^{p,q}$. 

In the plectic case, we expect that there are plectic Frobenius weights in some reasonable sense, and the above classical method can be applied to find the plectic weight filtration. Fortunately, morphisms called partial Frobenius have been defined and studied in the literature (\cite{JanNekovar2018}). These are decompositions of the usual Frobenius, and their eigenvalues are naturally expected to give plectic Frobenius weights, hence the plectic weight filtration. To fulfill the expectation, we have to prove that the partial Frobenius extends to the minimal compactification, and induces a morphism on the spectral sequence (\ref{1}). This is achieved through toroidal compactifications. Indeed, we prove firstly the partial Frobenius extends to toroidal compactifications, using Lan's universal property of toroidal compactifications (\cite{lan2013arithmetic}). To check the universal property, we have to make full use of the degeneration data of semiabelian varieties constructed by Faltings-Chai and Lan. Then we prove that the extended partial Frobenius morphism descends to the minimal compactification, which is a standard argument adapted from Lan (\cite{lan2013arithmetic}). 

\begin{theorem}
Let $M_n$ be a (similitude) PEL Shimura variety  with principal level $n$ structure, and $M_{n,\Sigma}^{\text{tor}}$ its toroidal compactification associated to a cone decomposition $\Sigma$. We assume that $M_n$ is defined over a finite field over which we have a well-defined partial Feobenius map 
$F_{\mathfrak{p}_i} : M_{n} \rightarrow M_{n}$,
then $F_{\mathfrak{p}_i}$ extends to a map
\[
F_{\mathfrak{p}_i}: M_{n,\Sigma}^{\text{tor}} \longrightarrow M_{n,\Sigma'}^{\text{tor}}
\]
with a different choice of $\Sigma'$. 
\end{theorem}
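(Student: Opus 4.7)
The plan is to apply Lan's universal property for morphisms into toroidal compactifications (\cite{lan2013arithmetic}, Chapter~6): to give a map $M_{n,\Sigma}^{\text{tor}} \to M_{n,\Sigma'}^{\text{tor}}$ extending a given map on $M_n$ it suffices to produce, over $M_{n,\Sigma}^{\text{tor}}$, a degeneration datum of the type classified by $M_{n,\Sigma'}^{\text{tor}}$ whose restriction to $M_n$ is the pullback under $F_{\mathfrak{p}_i}$ of the universal PEL abelian scheme. The bulk of the argument is therefore the explicit construction of this degeneration datum and the identification of a suitable $\Sigma'$.

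First I would extend the partial Frobenius isogeny to the universal semiabelian scheme $G$ over $M_{n,\Sigma}^{\text{tor}}$. Because the $\mathcal{O}_F$-action on the universal abelian scheme extends to $G$ by the N\'eron-type universal property of the semiabelian prolongation, $G[\mathfrak{p}_i]$ is a well-defined finite flat subgroup scheme, and the quotient $G' := G/G[\mathfrak{p}_i]$ is again semiabelian over $M_{n,\Sigma}^{\text{tor}}$. The induced endomorphism structure, the polarization (possibly rescaled by a power of $N(\mathfrak{p}_i)$), and the prime-to-$\mathfrak{p}_i$ part of the level-$n$ structure transfer to $G'$ in the standard way; on the open part $M_n$ this exactly realises the given partial Frobenius.

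Next I would analyze the effect of the isogeny $G \to G'$ on the Faltings--Chai/Lan degeneration data $(B, X, Y, \phi, c, c^{\vee}, \tau, \ldots)$ along each boundary stratum. The abelian part $B$ is modified by the partial Frobenius isogeny, while on the toric part $T = \operatorname{Hom}(X,\mathbb{G}_m)$ the quotient corresponds dually to rescaling the character lattice $X$ by $\mathfrak{p}_i^{-1}$ (equivalently, the cocharacter lattice $Y$ by $\mathfrak{p}_i$); the maps $c$, $c^{\vee}$ and the trivialization $\tau$ of the biextension transport accordingly. A cone $\sigma$ in the rational closure of the positivity cone inside $(S_X)_{\mathbb{R}}$ is then pushed forward to a cone $\sigma'$ in the corresponding rescaled space, and I take $\Sigma'$ to be the admissible cone decomposition obtained from these transported cones (refined if necessary so as to be $\Gamma$-admissible and smooth/projective, matching whatever conditions are imposed on $\Sigma$).

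The main obstacle will be verifying that the resulting datum genuinely satisfies \emph{all} the compatibilities of a degeneration datum for the level-$n$ PEL moduli problem classified by $M_{n,\Sigma'}^{\text{tor}}$: that the positivity condition for $\tau$ holds on the cones of $\Sigma'$, that the polarization type matches the one imposed on $M_{n,\Sigma'}^{\text{tor}}$, and that the prescribed level structure lands in the required $\Gamma$-orbit. This amounts to a careful bookkeeping, cone-by-cone, through Mumford's construction and the formal gluing along the boundary, of the sort already carried out in \cite{lan2013arithmetic}; the partial-Frobenius twist only intervenes through the lattice rescaling by $\mathfrak{p}_i$, which is linear and hence well-behaved under all the relevant constructions. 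Once these compatibilities are checked, Lan's universal property yields a unique morphism $F_{\mathfrak{p}_i}: M_{n,\Sigma}^{\text{tor}} \to M_{n,\Sigma'}^{\text{tor}}$ restricting to the given partial Frobenius on $M_n$, as claimed.
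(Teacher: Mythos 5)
Your overall strategy is the right one and matches the paper's: invoke Lan's universal property for morphisms into toroidal compactifications, define the extended map by producing the correct degeneration datum over $M_{n,\Sigma}^{\text{tor}}$, and read off $\Sigma'$ from the induced transformation of the lattices. However, there is a genuine error in the key construction. You propose $G' := G/G[\mathfrak{p}_i]$, the quotient by the \emph{full} $\mathfrak{p}_i$-torsion. This is not the partial Frobenius. The partial Frobenius is $G' := G/(\operatorname{Ker}(F)[\mathfrak{p}_i])$, the quotient by the $\mathfrak{p}_i$-part of the kernel of the relative Frobenius. On the open locus this distinction is already serious: $A/A[\mathfrak{p}_i] \cong A\otimes_{\mathcal{O}_F}\mathfrak{p}_i^{-1}$ is a prime-to-Frobenius tensor twist of degree $N(\mathfrak{p}_i)^{2g_i}$, whereas $A/\operatorname{Ker}(F_A)[\mathfrak{p}_i]$ is the partial Frobenius twist $A^{(\mathfrak{p}_i)}$ of degree $N(\mathfrak{p}_i)^{g_i}$, so the claim that your construction "exactly realises the given partial Frobenius on $M_n$" fails.

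The distinction also matters for the extension over the boundary, where it is not merely a computational issue but a well-definedness one. For a semiabelian $G$ over the toroidal compactification, $G[\mathfrak{p}_i]$ is only quasi-finite flat, not finite: the étale part of the generic $\mathfrak{p}_i$-torsion does not extend finitely across the boundary (the standard example is the Tate curve, where the closure of the generic $p$-torsion has fiber rank jumping from $p^2$ to $p$). So the quotient $G/G[\mathfrak{p}_i]$ is not directly available over $M_{n,\Sigma}^{\text{tor}}$. By contrast, $\operatorname{Ker}(F)$ is always finite flat (the relative Frobenius of a smooth group scheme is finite flat), and since $\mathcal{O}_F\otimes\mathbb{F}_p\cong\prod_i\mathcal{O}_F/\mathfrak{p}_i$ one gets $\operatorname{Ker}(F)=\prod_i\operatorname{Ker}(F)[\mathfrak{p}_i]$ with each factor finite flat; this is precisely why the paper's choice extends naturally to the semiabelian family. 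As a secondary remark, the sign of your lattice rescaling is also off: the quotient (whichever you intend) rescales the character lattice $X$ of the torus by $\mathfrak{p}_i$, not $\mathfrak{p}_i^{-1}$, while $Y$ is unchanged; the map $(S_{\Phi})^{\vee}_{\mathbb{R}}\to (S_{\Phi'})^{\vee}_{\mathbb{R}}$ defining $\Sigma'$ comes from the natural inclusion $Y\times(X\otimes\mathfrak{p}_i)\hookrightarrow Y\times X$. If you replace $G[\mathfrak{p}_i]$ by $\operatorname{Ker}(F)[\mathfrak{p}_i]$ throughout and fix the lattice bookkeeping, the rest of your outline is sound.
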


\begin{corollary}
$F_{\mathfrak{p}_i}$ extends to the minimal compactification 
\[
F_{\mathfrak{p}_i}: M_{n}^{\text{min}} \longrightarrow M_{n}^{\text{min}}.
\]
\end{corollary}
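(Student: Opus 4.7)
The plan is to descend $F_{\mathfrak{p}_i}$ using Lan's construction of the minimal compactification as $M_n^{\text{min}} = \mathrm{Proj}\bigl(\bigoplus_{k \geq 0}\Gamma(M_{n,\Sigma}^{\text{tor}},\omega^{\otimes k})\bigr)$, where $\omega$ is the Hodge bundle; this construction is independent of the choice of $\Sigma$, and the canonical projection $\pi_\Sigma : M_{n,\Sigma}^{\text{tor}} \to M_n^{\text{min}}$ is a Stein-type contraction (in particular $(\pi_\Sigma)_*\mathcal{O} = \mathcal{O}$) collapsing each toroidal boundary stratum onto the cusp labelled by its rational boundary component. Thus to produce $F_{\mathfrak{p}_i} : M_n^{\text{min}} \to M_n^{\text{min}}$ it suffices either to exhibit a graded-ring homomorphism of the automorphic rings, or to check that $\pi_{\Sigma'} \circ F_{\mathfrak{p}_i}$ is constant on every connected fiber of $\pi_\Sigma$.

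The first step in either approach is to upgrade $F_{\mathfrak{p}_i}$ to a canonical isomorphism $F_{\mathfrak{p}_i}^* \omega_{\Sigma'} \simeq \omega_\Sigma$. Over the open part $M_n$ this comes from the universal $\mathfrak{p}_i$-isogeny carrying the partial Frobenius: inspection of its effect on relative cotangent spaces (compatibly with the $\mathcal{O}_F$-action, polarization and level-$n$ structure) yields a canonical isomorphism of Hodge bundles. To extend this over the boundary, I would return to the proof of the preceding theorem: since its extension of $F_{\mathfrak{p}_i}$ to $M_{n,\Sigma}^{\text{tor}}$ was produced by checking Lan's universal property on degeneration data, the underlying isogeny extends to the universal semiabelian scheme, and its action on invariant differentials extends the isomorphism of $\omega$'s toroidally. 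Pullback of sections then produces a graded ring homomorphism
\[
F_{\mathfrak{p}_i}^* : \bigoplus_{k \geq 0}\Gamma(M_{n,\Sigma'}^{\text{tor}},\omega^{\otimes k}) \longrightarrow \bigoplus_{k \geq 0}\Gamma(M_{n,\Sigma}^{\text{tor}},\omega^{\otimes k}),
\]
and taking $\mathrm{Proj}$ yields the desired morphism of minimal compactifications, automatically compatible with $\pi_\Sigma$ and $\pi_{\Sigma'}$.

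The main obstacle is the boundary compatibility underlying both the isomorphism $F_{\mathfrak{p}_i}^* \omega_{\Sigma'} \simeq \omega_\Sigma$ and the fiberwise constancy of $\pi_{\Sigma'} \circ F_{\mathfrak{p}_i}$. One has to track the $\mathfrak{p}_i$-isogeny through the canonical filtration of the degenerating semiabelian scheme (toric part, abelian quotient, and extension/period data), and check that the induced action is well-defined on the combinatorial data indexing cusps, so that a boundary stratum lying over a given cusp is carried into strata lying over a single cusp. Once this is verified—essentially a calculation internal to the degeneration data already assembled in the proof of the previous theorem—the remainder of the descent is standard, paralleling Lan's descent of Hecke correspondences from toroidal to minimal compactifications, and produces the extension claimed in the corollary.
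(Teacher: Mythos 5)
Your proposal hinges on a claimed canonical isomorphism $F_{\mathfrak{p}_i}^*\omega_{\Sigma'} \simeq \omega_\Sigma$, and this is where it breaks down. The partial Frobenius is built from the quotient $A \mapsto A/(\mathrm{Ker}(F)[\mathfrak{p}_i])$, which is a Frobenius-type isogeny in the $\mathfrak{p}_i$-direction, and Frobenius pullback does not preserve a line bundle — it raises it to the $p$-th power. Decomposing $\omega = \bigotimes_i \omega_i$ via the idempotents of $\mathcal{O}_F/p \cong \prod_i \mathcal{O}_F/\mathfrak{p}_i$ acting on $\underline{\mathrm{Lie}}^\vee$, the paper shows $(F_{\mathfrak{p}_i}^{\text{tor}})^*\omega'_j \cong \omega_j$ for $j \neq i$ but $(F_{\mathfrak{p}_i}^{\text{tor}})^*\omega'_i \cong F^*\omega_i \cong \omega_i^{\otimes p}$, so that
\[
(F_{\mathfrak{p}_i}^{\text{tor}})^*\omega' \cong \Bigl(\bigotimes_{j\neq i}\omega_j\Bigr)\otimes \omega_i^{\otimes p},
\]
which is genuinely \emph{not} $\omega$. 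Consequently pullback of sections does not give a graded ring homomorphism $\bigoplus_k \Gamma(\omega'^{\otimes k}) \to \bigoplus_k \Gamma(\omega^{\otimes k})$, and your appeal to "paralleling Lan's descent of Hecke correspondences" fails precisely here, because Hecke correspondences are prime-to-$p$ and do preserve the Hodge bundle, whereas the partial Frobenius does not.

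What the paper does to salvage the strategy is the missing technical ingredient you would have needed: it proves that each factor $\omega_i$ individually descends along $\oint : M_{n,\Sigma}^{\text{tor}} \to M_n^{\text{min}}$ to a line bundle $\omega_i^{\text{min}}$ with $\oint^*\omega_i^{\text{min}} \cong \omega_i$. This is a refinement of Lan's extension result (\cite{lan2013arithmetic} 7.2.4.1), proved by pushing forward from the codimension-$\leq 1$ locus, checking reflexivity, and matching against local models built from the degeneration data on $\mathfrak{X}_{\Phi_n,\delta_n,\sigma}$. Once that is in hand, $(F_{\mathfrak{p}_i}^{\text{tor}})^*\omega'$ is $\oint^*$ of $L^{\text{min}} := (\bigotimes_{j\neq i}\omega_j^{\text{min}})\otimes(\omega_i^{\text{min}})^{\otimes p}$, the projection formula and $\oint_*\mathscr{O} = \mathscr{O}$ identify global sections upstairs and downstairs, and the universal property of $\mathrm{Proj}$ then produces $F_{\mathfrak{p}_i}^{\text{min}}$ making the square commute. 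So your overall scaffolding (descend via the Proj presentation, use the toroidal extension from the preceding theorem) is the right one, but it is incomplete without (a) recognizing the Frobenius twist in the $i$-th factor, and (b) the descent of the individual $\omega_i$, which is the main new lemma needed.
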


Now the partial Frobenius acts on each summand of $E_1^{p,q}$, which as we have already seen is the intersection cohomology of (closure of) strata of the minimal compactifications with automorphic coefficients, and have plectic Hodge structures given by $(\mathfrak{g}, K)$-cohomology. A subtle point here is that we have to pass to special fibers of integral models of Shimura varieties and use the spectral sequence (\ref{1}) in the $l$-adic setting in order to have the action of the partial Frobenius,  and then compare it with the one in the Hodge theory setting. This can be done with some technical input from Huber and Morel's horizontal mixed complexes in \cite{morel2019mixed}  (a simpler proof in the special case of Hilbert modular varieties exists). 

Now, similar to the classical case,  we have to check that the eigenvalues of the partial Frobenius on each summand are Weil numbers with absolute value compatible with the multi-weights of the plectic Hodge structures.  In the case of Hilbert modular varieties, we have two different types of summands. The first type is when the summand is the cohomology of cusps with automorphic coefficients, which can be checked by direct computations. 

The second is when it is the intersection cohomology of (minimal compactification of) Hilbert modular variety with trivial coefficients. This is decomposed into Hecke equivariant isotypic components indexed by discrete cohomological automorphic representations. If the automorphic representation is cuspidal, we know that it corresponds to a holomorphic Hilbert modular form $f$ of parallel weight 2, and the plectic Hodge type is 
\[
\underset{v | \infty}{\otimes} ((1,0) \oplus (0,1)),
\]
which is of plectic weights $(1,\cdots, 1)$. We have to show that each partial Frobenius acts with eigenvalues of absolute value $p^{\frac{1}{2}}$. This follows from the Eichler-Shimura relation of the partial Frobenius proved by Nekov\'a\v r in \cite{JanNekovar2018}. Indeed, it tells that the eigenvalues of the partial Frobenius is the same as the eigenvalues of the Frobenius $Frob_{\mathfrak{p}} \in Gal(\overline{F}/F)$ on the Galois representation $\rho_f$ associated to $f$, where $\mathfrak{p}$ ranges over primes of $F$ above $p$. We know that $\rho_f$ is pure of weight 1 by Blasius (\cite{blasius2006hilbert}) and Blasius-Rogawski (\cite{blasius1993motives}), proving the claim. If the automorphic representation is discrete but not cuspidal, we know that they are one dimensional, and have plectic Hodge types (wedge products of) the sum of $\underset{v | \infty}{\otimes} (p_v,q_v)$, with $(p_v, q_v)= (1,1)$ for one $v$, and $(p_v, q_v)= (0,0)$ for the rest, which is of plectic weight
\[
(0, \cdots,0, 2,0, \cdots, 0). 
\]
This forces us to show that the partial Frobenius corresponding to $v$ (under the embedding $ \mathbb{Q}_p \hookrightarrow \mathbb{C}$ implicitly fixed in the comparison theorem) has eigenvalues with absolute value $p$, and the rest have  eigenvalues with absolute value 1. This is shown by observing that these cohomology spaces are spanned by first Chern classes of the natural line bundles $L_v$ whose sections are modular forms of weight $(0, \cdots,0, 2,0, \cdots, 0)$, 
and the partial Frobenius acts on them in the expected way ($F_v^*L_v = L_v^{\otimes p}$ and $F_{v'}^*L_v = L_v$). Note that here we use a motivic explanation of the plectic strucutures to compare the plectic Frobenius weights and plectic Hodge weights, and this is the main reason we restrict to Hilbert modular varieties. 

Now we have a $\mathbb{Z}^d$-filtration defined by eigenvalues of the partial Frobenius, and the previous proof shows that the graded pieces have natural plectic Hodge structures given by $(\mathfrak{g}, K)$-cohomology in a compatible way.  
This finishes the construction of the plectic weight filtration, and gives a conceptual explanation of the ad-hoc construction of the plectic weight filtration by Nekov\'a\v r and Scholl in \cite{nekovar2017plectic}. Moreover, the proof has the potential to extend to more general situations where the naive construction of Nekov\'a\v r and Scholl fails. Indeed, most ingredients we use are proved for general PEL type Shimura varieties. The only serious obstacle for the general case is the use of motivic explanation as remarked above. To summarize, we have

\begin{theorem}
Let $\mathscr{M}$ be a Hilbert modular variety, 
there is an increasing $\mathbb{Z}^d$-filtration $W_{\underline{a}}$ (defined over $\mathbb{C}$) on 
$H^*(\mathscr{M}(\mathbb{C}), \mathbb{C})$
with $\underline{a}=(a_1, \cdots, a_d) \in \mathbb{Z}^d$, defined by 
\[
W_{\underline{a}}= \underset{\begin{subarray}{c}
  |\beta_i|= p^{\frac{k_i}{2}} \\
  k_i \leq a_i
  \end{subarray}}{\bigoplus}
  V_{(\beta_1, \cdots, \beta_d)}
\]
where $V_{(\beta_1, \cdots, \beta_d)}$ is the generalized eigenspace of $F_i$ with eigenvalue $\beta_i$ for all $i$. The action of $F_i$ on 
$H^*(\mathscr{M}(\mathbb{C}), \mathbb{C})$
is through the natural comparison isomorphism 
$H^*(\mathscr{M}(\mathbb{C}), \mathbb{C}) \cong 
\imath_*H^*(\mathscr{M}_{\bar{\mathbb{F}}_p}, \overline{\mathbb{Q}_l})$ 
for some fixed isomorphism $\imath: \overline{\mathbb{Q}_l} \cong \mathbb{C}$. 

The filtration is plectic in the sense that there is a natural plectic Hodge structure on 
$Gr_{\underline{a}}^W$
with plectic weight $\underline{a}$.
\end{theorem}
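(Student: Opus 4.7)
The plan is to construct the filtration via the $\ell$-adic avatar of the spectral sequence (\ref{1}), on which the extended partial Frobenius $F_i := F_{\mathfrak{p}_i}$ acts through the preceding corollary. First I would spread $\mathscr{M}$ out to a good integral model and reduce modulo $p$; applying Huber--Morel's formalism of horizontal mixed complexes transfers (\ref{1}) to the $\ell$-adic setting, where each $F_i$ acts compatibly on all pages. Via the fixed comparison isomorphism $\imath$, this furnishes the required $F_i$-action on $H^*(\mathscr{M}(\mathbb{C}),\mathbb{C})$, so it suffices to do everything in the $\ell$-adic picture.

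Next, using Morel's weight t-structure together with Burgos--Wildeshaus' explicit description of $w_{\geq -p} w_{\leq -p} Rj_*$, I would identify each $E_1^{p,q}$ as a direct sum of intersection cohomologies of boundary strata of $\mathscr{M}^{\text{min}}$ with automorphic coefficients. In the Hilbert setting these strata are either $\mathscr{M}^{\text{min}}$ itself (with trivial coefficients) or cusps, i.e.\ $0$-dimensional Shimura varieties of Hilbert type with explicit automorphic local systems; each $F_i$ respects this decomposition, so it is enough to check the weight prediction summand-by-summand.

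The main task is then to verify on each summand that the eigenvalues of $F_i$ are Weil numbers of absolute value $p^{k_i/2}$ with $(k_1,\ldots,k_d)$ equal to the plectic Hodge weight predicted by $(\mathfrak{g},K)$-cohomology. For the cuspidal strata this is a direct computation. For the interior contribution, decompose Hecke-isotypically over discrete cohomological automorphic $\pi$: if $\pi$ is cuspidal it corresponds to a parallel weight $2$ Hilbert eigenform $f$, and Nekov\'a\v r's Eichler--Shimura relation identifies the $F_i$-eigenvalues with those of $\mathrm{Frob}_{\mathfrak{p}_i}$ on $\rho_f$, which by Blasius and Blasius--Rogawski is pure of weight $1$, matching the plectic weight $(1,\ldots,1)$ coming from $\bigotimes_{v\mid\infty}((1,0)\oplus(0,1))$; if $\pi$ is discrete non-cuspidal it is one-dimensional, and its isotypic component is spanned by first Chern classes of the Hodge line bundles $L_v$, on which $F_v^* L_v = L_v^{\otimes p}$ and $F_{v'}^* L_v = L_v$ force eigenvalues of absolute value $p$ at the $v$-slot and $1$ elsewhere, matching the plectic weight $(0,\ldots,2,\ldots,0)$.

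The hardest step will be the interior contribution: matching partial Frobenius eigenvalues to the plectic Hodge weights is not formal and requires a genuinely motivic interpretation of the Hecke-isotypic classes, available for Hilbert modular varieties precisely because the relevant classes are either Chern classes of explicit line bundles or come from Galois representations for which purity is known. Once the eigenvalue check is complete, the spectral sequence degenerates along each $F_i$-weight (higher differentials between summands of distinct absolute values must vanish), so the generalized eigenspace decomposition descends to $H^*(\mathscr{M}(\mathbb{C}),\mathbb{C})$, and setting $W_{\underline{a}}$ as in the statement produces an increasing $\mathbb{Z}^d$-filtration whose graded piece $\mathrm{Gr}^W_{\underline{a}}$ inherits, summand-by-summand, the plectic Hodge structure of weight $\underline{a}$ from the corresponding $(\mathfrak{g},K)$-cohomology.
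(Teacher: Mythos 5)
Your proposal follows essentially the same route as the paper: weight spectral sequence from Morel's weight t-structure, transfer to the $\ell$-adic setting via the comparison theorem and the horizontal-mixed-complex formalism, action of the extended partial Frobenius $F_{\mathfrak{p}_i}$ on the spectral sequence, and the three-way eigenvalue check (cusps by explicit block matrices, cuspidal interior by Eichler--Shimura plus Blasius/Blasius--Rogawski purity, non-cuspidal interior by the Chern-class identities $F_v^*L_v = L_v^{\otimes p}$, $F_{v'}^*L_v = L_v$), with the filtration then read off the generalized eigenspaces. You also correctly single out the motivic interpretation of the interior classes as the step that confines the argument to Hilbert modular varieties. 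One small imprecision: you say "the spectral sequence degenerates along each $F_i$-weight," but the spectral sequence in fact does not fully degenerate at $E_1$ (there is a nonzero differential from the cusp column into $IH^{2d}$, as the explicit computation in the paper shows); what actually matters is only that the differentials preserve plectic type, so that the plectic Hodge structure on $E_1$ descends to the $E_\infty$ subquotients and hence to $\mathrm{Gr}^W_{\underline{a}}$. This does not affect the correctness of your argument, but the claim as phrased is stronger than what is true or needed.
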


The reader is warned that  the construction does not a priori give the plectic mixed Hodge structure in the sense of Nekov\'a\v r and Scholl (\cite{nekovar2017plectic}) since we have not proved that the plectic Hodge filtration is compatible with plectic weight filtration. This is left to future works. 

We now give a summary of each section. In section 2, we review Morel's work on the weight t-structures,  and prove a comparison theorem between two spectral sequences obtained using mixed Hodge modules and etale cohomology respectively. In section 3, we define the PEL moduli varieties and the partial Frobenius. This section is mostly to fix notations. In section 4, we use the results on partial Frobenius proved in section 5 and the weight spectral sequences in section 2 to prove the existence of plectic weight structures on cohomology of Hilbert modular varieties. In particular, we make the weight spectral sequence in this case explicit in section 4.2,  and carry out the computations of the eigenvalues of the partial Frobenius in section 4.3. section 5 is a largely independent section, in which we prove the partial Frobenius extends to toroidal compactifications and minimal compactifications. Following Lan, we review the construction of toroidal compactifications in sections 5.1 to 5.3. In particular, we review with some details on  the degeneration data, and how to construct it from degenerating abelian varieties. This is then used to construct the formal boundary strata of the toroidal compactifications, and provides fundamental local formal models of the boundary strata. We use those constructions to prove the extension of partial Frobenius to toroidal and minimal compactifications in section 5.4 and 5.5 respectively.     
  
\subsection*{Acknowledgments}
This work is the author's PhD thesis at the University of Cambridge. The author would like to thank his PhD supervisor Tony Scholl for the guidance and encouragement he has received during the past four years. The author is grateful to Marius Leonhardt who raised his interest in plectic theory. The author would also like to thank Jack Thorne and Ana Caraiani for a careful reading of this paper and suggestions of improvement.

\section{Morel's weight t-structure}

\subsection{Formalism}

We review Morel's weight t-structures in this section. Everything in this section is due to Morel and Nair (the Hodge module case is due to Nair). The references we follow are \cite{nairmixed} and \cite{MR2862060}.

In this section, $X$ denotes a separated scheme of finite type over a field $k$. We assume that $k$ is either finitely generated over its prime field, or $k = \mathbb{C}$. Let $l$ be a prime number different from the characteristic of $k$, and
$D^b_c(X, \Bar{\mathbb{Q}_l})$
be the usual constructible derived category. We use $H^i$ to denote the cohomology with respect to the usual constructible t-structure and $\prescript{p}{}{H}^i$ for the cohomology with respect to the perverse t-structure. For Hodge modules,  $\prescript{p}{}{H}^i$ 
will denote the usual cohomology of complexes of Hodge modules. They correspond to perverse cohomology under $rat$, see below for explanation of the terminology. 

We denote both
$D^b_m(X, \overline{\mathbb{Q}_l})$ and 
$D^bMHM(X(\mathbb{C}))$
by $D^b_m(X)$, where 
$D^b_m(X, \overline{\mathbb{Q}_l})$
is the bounded derived category of horizontal mixed complexes with weight filtrations  as defined in \cite{morel2019mixed} when $k$ is finitely generated, and 
$D^bMHM(X(\mathbb{C}))$ 
is the bounded derived category of Saito's mixed Hodge modules when $k = \mathbb{C}$. Note that $m$ here means "mixed". The key property of 
$D^b_m(X)$ is that they have the notion of weights and perverse t-structures, giving rise to canonical weight filtrations on perverse sheaves in $D^b_m(X)$. Further, morphisms between perverse sheaves strictly preserve weight filtrations. Under this abuse of notation, perverse sheaves refers to the usual perverse sheaves in the $l$-adic case, and Hodge modules in the complex case.    

When $k$ is a finite field,  
$D^b_m(X, \overline{\mathbb{Q}_l})$
is the usual derived category of mixed sheaves defined by Deligne, i.e. 
$D^b_m(X, \overline{\mathbb{Q}_l}) \subset D^b_c(X, \overline{\mathbb{Q}_l})$
is the full subcategory defined by 
$K \in D^b_m(X, \overline{\mathbb{Q}_l})$
if and only if for every $i \in \mathbb{Z}$,  $ H^i(K)$ has a finite filtration $W$ whose graded pieces are pure in the sense that for every closed point 
$i_x: Spec(k(x)) \hookrightarrow X$ 
and $n \in \mathbb{Z}$, 
$i_x^* Gr_n^W H^i(K) $,
as a representation of $Gal(\Bar{k}/ k(x))$, has algebraic Frobenius eigenvalues whose absolute value are $(\# k(x))^{-n/2}$ for every Archimedean place. 

We review Morel's construction in \cite{morel2019mixed} of 
$D^b_m(X, \overline{\mathbb{Q}_l})$
for $k$ finitely generated. We can write $k$ as a direct limit of regular finite type $\mathbb{Z}$-algebras $A$ sitting inside $k$ and having fraction field $k$. The standard spreading argument shows that for $A$ as above (possibly passing to a localization), there is a flat finite type $A$-scheme $ \mathscr{X}_A $ such that $(\mathscr{X}_A)_k \cong X$. The $\{\mathscr{X}_A\}_A$ forms a direct system and induces natural functors between constructible derived categories. We define the derived category $D^b_h(X, \overline{\mathbb{Q}_l})$
of horizontal constructible sheaves on $X$ to be the 2-limit of the category 
$D^b_c(\mathscr{X}_A, \overline{\mathbb{Q}_l})$
indexed by $A$ as above. The perverse t-structures on 
$D^b_c(\mathscr{X}_A, \overline{\mathbb{Q}_l})$
induces a t-structure on 
$D^b_h(X, \overline{\mathbb{Q}_l})$, whose heart $\text{Perv}_h(X)$ is called the category of horizontal perverse sheaves. The usual t-structures also induce a t-strucutre on 
$D^b_h(X, \overline{\mathbb{Q}_l})$ whose heart are called horizontal constructible sheaves. 
$K \in D^b_h(X, \overline{\mathbb{Q}_l})$ 
is called mixed if $H^i(K)$ has a finite filtration whose graded pieces can be represented by a construtible sheaf $F_A$ on $\mathscr{X}_A$
such that for every closed point $x \in \text{Spec}(A)$ (necessarily of finite residue field),
$(F_A)_x$ is pure of some weight as discussed in the previous paragraph. Mixed horizontal complexes define a triangulated subcategory of 
$D^b_h(X, \overline{\mathbb{Q}_l})$, 
and the perverse t-structure on 
$D^b_h(X, \overline{\mathbb{Q}_l})$
induces a t-structure on it, whose heart 
$\text{Perv}_m(X)$
is called the category of mixed horizontal perverse sheaves. The problem is that an element of $\text{Perv}_m(X)$
does not necessarily have a weight filtration. However, the weight filtration is unique if it exists. We can define the subcategory
$\text{Perv}_{mf}(X)$
of $\text{Perv}_m(X)$ consisting of those with a weight filtration. The uniqueness shows that morphisms in 
$\text{Perv}_{mf}(X)$ 
is strict with respect to the weight filtration. Finally, we define the derived category of mixed horizontal perverse sheaves to be 
\[
D^b_m(X, \overline{\mathbb{Q}_l}) := D^b(\text{Perv}_{mf}(X))
\]
Morel proves that the six functors can be defined on 
$D^b_m(X, \overline{\mathbb{Q}_l})$. 
Note that for $k$ a finite field, $A=k$ and every mixed perverse sheaf has a weight filtration, proving that
$D^b_m(X, \overline{\mathbb{Q}_l})$
is identical to the category in the previous paragraph, see BBD (\cite{MR751966}) for details. 

\begin{remark}
The constructions, especially the six functors,  depend fundamentally on the finiteness results of Gabber, see \cite{MR3309086}. If we restrict to $k$ with transcendental dimension smaller than 2, which is the only case we need, then the older finiteness results of Deligne in SGA suffices. Moreover, Morel's proof uses sophisticated homological algebra results, including Beilinson's reconstruction of constructible t-structures from perverse ones, and Ayoub's work on crossed functors.    
\end{remark}

For mixed Hodge modules, we will not give a precise review. We only remind the reader that a mixed Hodge module consists of a good filtered regular holonomic $D$-module together with a perverse $\mathbb{Q}$-sheaf which is isomorphic after tensoring with $\mathbb{C}$ to the D-module under the Riemann Hilbert correspondence. The precise conditions to put on these data is through a delicate induction process where vanishing cycles play an important role. It can be proved that admissible graded polarizable variations of Hodge structures are mixed Hodge modules, and they (their intermediate extension) constitute the simple mixed Hodge modules in a way similar to locally systems and perverse sheaves. Forgetting about the $D$-modules gives a faithful functor 
\[
rat : D^bMHM(X) \rightarrow D^b_c(X, \mathbb{Q})
\]
where we use the classical topology on $X(\mathbb{C})$ to define the right hand side. An important property is that $rat$ commutes with the six functors. The comparison theorem gives an $l$-adic perverse sheaf for each Hodge module. We will only use $\mathbb{C}$-Hodge modules, in which case the extra choices of perverse sheaves are redundant.  

We will only need the cases when $k$ is a finite field, a number field or the complex numbers. Indeed, we will be primarily concerned with complex numbers, and finite fields come into play by reducing the complex situation to the finite fields cases. The reduction step will be achieved through number fields.   

We now introduce Morel's fundamental weight t-structures. 

\begin{dt} 
(\cite{MR2862060} Proposition 3.1.1)
With notations as above, for
$a \in \mathbb{Z} \cup \{\infty\}$ 
there is a $t$-structure 
\[
(\prescript{w}{}{D}^{\leq a},  \prescript{w}{}{D}^{\geq a+1})
\]
on  $D^b_m(X)$ defined by 
$K \in \prescript{w}{}{D}^{\leq a}$ 
(resp. $ K \in \prescript{w}{}{D}^{\geq a+1}$)
if and only if for all $i \in \mathbb{Z}$, 
$\prescript{p}{}{H}^i(K)$
has weights $\leq a$ (resp. $\geq a+1$). Moreover, $\prescript{w}{}{D}^{\leq a}$ 
and  
$\prescript{w}{}{D}^{\geq a+1}$
are traingulated subcategories and are stable under extensions. For 
$K \in \prescript{w}{}{D}^{\leq a}$
and 
$L \in \prescript{w}{}{D}^{\geq a+1}$, 
we have 
\[
RHom(K,L)=0
\]
Note that this is stronger than being given by a $t$-structures. We have 
$ \prescript{w}{}{D}^{\leq a}(1) =  \prescript{w}{}{D}^{\leq a-2}$
and 
$ \prescript{w}{}{D}^{\geq a}(1) =  \prescript{w}{}{D}^{\geq a-2}$,
where $(1)$ is the Tate twist. 
\end{dt}

\begin{remark}
The t-structure is unusual in that it has trivial heart, and stable under shift $[1]$ in the triangulated category. Note that a complex 
$K \in \prescript{w}{}{D}^{\leq a} \cap \prescript{w}{}{D}^{\geq a}$
is not a pure complex of weight $a$ in the sense of Deligne, which means $H^i(K)$ has weight $a+i$ (or equivalently $\prescript{p}{}{H}^i(K)$ has weight i+a). 
\end{remark}

Recall that (over finite fields) a pure complex is a direct sum of its shifted perverse cohomology after base change to the algebraic closure, and the decomposition does not hold before base change. This fact plays an important role in the proof of the decomposition theorem. The next proposition gives a variant of this fact in complete generality. In particular, we do not need to pass to algebraic closure. 

\begin{proposition} \label{splitting}
(\cite{nairmixed} lemma 2.2.3)
If 
$K \in \prescript{w}{}{D}^{\geq a} \cap \prescript{w}{}{D}^{\leq a}$, 
we have an isomorphism
\[
K \cong \underset{i}{\oplus} \prescript{p}{}{H}^i(K)[-i] 
\]
The constituents $\prescript{p}{}{H}^i(K)$ are pure, and they decompose by supports into intersection complexes, i.e. intermediate extension of smooth sheaves on a smooth locally closed subscheme. 

Moreover, this isomorphism is canonical and the constituents are semisimple if we are in the mixed Hodge modules case. 
\end{proposition}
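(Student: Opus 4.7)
The plan is to split the assertion into three essentially independent components: purity of the perverse cohomologies, the global splitting, and the decomposition of each pure piece by supports. The purity step is immediate from the definitions: because $K$ lies simultaneously in $\prescript{w}{}{D}^{\leq a}$ and $\prescript{w}{}{D}^{\geq a}$, every perverse cohomology $\prescript{p}{}{H}^i(K)$ has weight both $\leq a$ and $\geq a$, and so is pure of weight $a$. Granted this, the decomposition of each $\prescript{p}{}{H}^i(K)$ by supports into intermediate extensions $j_{S,!*}(\mathcal{L}_S[\dim S])$ is the standard structure theorem for pure perverse objects: in the $l$-adic horizontal setting it is Morel's adaptation of BBD to $\text{Perv}_{mf}$, and in the mixed Hodge modules setting it is Saito's structure theorem, which additionally supplies the semisimplicity and canonicity clauses in the MHM case.

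The substance of the proposition is the global splitting $K \cong \bigoplus_i \prescript{p}{}{H}^i(K)[-i]$. I would argue by induction on the number of non-vanishing perverse cohomologies. Letting $m$ be the largest index with $\prescript{p}{}{H}^m(K) \neq 0$, the perverse truncation distinguished triangle
\[
\prescript{p}{}{\tau}_{<m} K \longrightarrow K \longrightarrow \prescript{p}{}{H}^m(K)[-m] \xrightarrow{\partial} \prescript{p}{}{\tau}_{<m} K[1]
\]
splits precisely when the boundary class $\partial \in \mathrm{Hom}(\prescript{p}{}{H}^m(K)[-m], \prescript{p}{}{\tau}_{<m} K[1])$ is zero. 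Since $\prescript{p}{}{\tau}_{<m} K$ also lies in $\prescript{w}{}{D}^{\leq a} \cap \prescript{w}{}{D}^{\geq a}$, the induction hypothesis decomposes it into pure perverse pieces of weight $a$, and the whole question reduces to the vanishing of the boundary obstruction.

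The main obstacle is exactly this obstruction-vanishing: it cannot be derived from the formal $RHom$-vanishing between $\prescript{w}{}{D}^{\leq a}$ and $\prescript{w}{}{D}^{\geq a+1}$, because the source and target of $\partial$ both live in the same weight class $a$. What must be invoked is a genuine decomposition theorem inside Morel's formalism, genuinely stronger than the classical BBD statement over a finite field (which requires base change to $\bar{\mathbb{F}}_q$). In the MHM case the vanishing is supplied by Saito's theorem on pure Hodge modules, which also gives canonicity of the isomorphism and semisimplicity of the constituents. In the horizontal $l$-adic case, one uses Morel's construction of $\text{Perv}_{mf}$, specifically the strictness of morphisms with respect to weight filtrations within that subcategory; this is precisely the technical advantage of working in $\text{Perv}_{mf}$ rather than the larger $\text{Perv}_m$, and it is what removes the need for base change to the algebraic closure that is unavoidable in the single-finite-field setting.

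Once the splitting is in hand, the canonicity and semisimplicity in the MHM case are immediate: the identification of $K$ with $\bigoplus_i \prescript{p}{}{H}^i(K)[-i]$ is given by the natural perverse-truncation morphisms, which are functorial, and each summand is a pure Hodge module of weight $a$, hence semisimple by Saito.
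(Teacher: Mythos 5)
Your reduction to the vanishing of the boundary classes $\partial \in \mathrm{Ext}^{m-j+1}\bigl(\prescript{p}{}{H}^m(K), \prescript{p}{}{H}^j(K)\bigr)$ with $m-j+1 \geq 2$ is exactly right, as is the observation that the $RHom$-vanishing between $\prescript{w}{}{D}^{\leq a}$ and $\prescript{w}{}{D}^{\geq a+1}$ cannot supply it. The gap is in how you then close this. ``Saito's theorem on pure Hodge modules'' --- the decomposition theorem for pure complexes --- applies to complexes pure of weight $w$ in the BBD/Deligne sense, i.e.\ $\prescript{p}{}{H}^i$ of weight $w+i$. Here every $\prescript{p}{}{H}^i(K)$ has weight $a$ \emph{independently} of $i$, and as the remark following the definition of the weight $t$-structure emphasizes, this is a genuinely different condition: $K$ is not a shift or Tate twist of a BBD-pure object, so the classical decomposition theorem does not apply. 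What is actually needed is the assertion $\mathrm{Ext}^k_{D^b_m(X)}(P,Q)=0$ for $k\geq 2$ and $P,Q$ pure perverse of the \emph{same} weight $a$. This follows from the weight formalism together with cohomological dimension $\leq 1$ of the base category: for the structure map $a\colon X\to \mathrm{pt}$, the complex $Ra_*\underline{RHom}(P,Q)$ has weights $\geq 0$, hence $H^q$ of weight $\geq q$; then $\mathrm{Ext}^p(\mathbb{Q}(0), H^q)$ for $p\leq 1$ vanishes once $q\geq 1$ by a weight count, while $\mathrm{Ext}^p$ for $p \geq 2$ vanishes in $MHS^p$ (resp.\ in $\text{Perv}_{mf}(\mathrm{Spec}\,k)$), so every $E_2^{p,q}$ with $p+q\geq 2$ dies. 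Strictness of morphisms in $\text{Perv}_{mf}$ is indispensable for the formalism, but it controls $\mathrm{Hom}$'s and the existence of the weight truncation functors, not higher $\mathrm{Ext}$'s, so it does not by itself produce this vanishing.

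A secondary issue: the closing claim that canonicity is immediate from functoriality of perverse truncation is too quick. Truncation yields a canonical \emph{filtration} $\prescript{p}{}{\tau}_{\leq i}K$, but a splitting of that filtration is not canonical a priori, since $\mathrm{Aut}\bigl(\oplus_i \prescript{p}{}{H}^i(K)[-i]\bigr)$ contains nontrivial unipotent contributions from $\mathrm{Ext}^1$ between adjacent degrees. Canonicity of the decomposition in the MHM case is a separate input (Deligne's degeneration argument adapted by Saito), which is precisely why it is stated only for Hodge modules and fails in the $l$-adic horizontal setting.
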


\begin{remark}
The corresponding statement is not true in the $l$-adic case. 
\end{remark}

The t-structure gives us functors 
$w_{\leq a} : D^b_m(X) \rightarrow \prescript{w}{}{D}^{\leq a}$
(resp. 
$w_{\geq a} : D^b_m(X) \rightarrow \prescript{w}{}{D}^{\geq a}$)
such that for every 
$K \in D^b_m(X)$, 
we have a distinguished triangle
\[
w_{\leq a}K \longrightarrow K \longrightarrow w_{\geq a+1}K \overset{+1}{\longrightarrow}  \cdot
\]
If $K$ is written as a complex of perverse sheaves $K_i$, which is always possible, then 
$w_{\leq a} K $ is the complex represented by $w_{\leq a} K_i$, where $w_{\leq a} K_i$ is the weight filtration on $K_i$. 
We have the following proposition on the behaviour of $w_{\leq a}$. 

\begin{proposition}
(\cite{MR2862060} proposition 3.1.3)
Let $K \in D^b_m(X)$, we have that $w_{\leq a}$ (resp. $w_{\geq a}$ ) is exact with respect to the perverse t-structure, i.e. 
\[
w_{\leq a} \prescript{p}{}{H}^i(K) = \prescript{p}{}{H}^i (w_{\leq a}K)
\]
\[
w_{\geq a} \prescript{p}{}{H}^i(K) = \prescript{p}{}{H}^i (w_{\geq a}K)
\]
Moreover, the distinguished triangle 
\[
w_{\leq a}K \longrightarrow K \longrightarrow w_{\geq a+1}K \overset{+1}{\longrightarrow}  \cdot
\]
induces a short exact sequence of perverse sheaves
\[
0 \longrightarrow \prescript{p}{}{H}^i (w_{\leq a}K) \longrightarrow \prescript{p}{}{H}^i ( K) \longrightarrow \prescript{p}{}{H}^i (w_{\geq a+1}K) \longrightarrow 0 
\]
\end{proposition}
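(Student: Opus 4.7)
The plan is to exploit the paragraph immediately preceding the proposition: any $K \in D^b_m(X)$ can be represented by a complex of mixed perverse sheaves $(K^i, d^i)$ with weight filtrations (this is literally the definition of $D^b_m(X,\overline{\mathbb{Q}_l})$ as $D^b(\text{Perv}_{mf}(X))$, and follows from Beilinson's realization in the Hodge module case), and the truncation $w_{\leq a}K$ is computed term-wise as the subcomplex $(w_{\leq a}K^i, d^i|_{w_{\leq a}K^i})$, with analogous term-wise quotient description for $w_{\geq a+1}K$. So the distinguished triangle
\[
w_{\leq a}K \longrightarrow K \longrightarrow w_{\geq a+1}K \overset{+1}{\longrightarrow} \cdot
\]
is actually represented by a term-wise short exact sequence of complexes of perverse sheaves.

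The central input is strictness: morphisms in $\text{Perv}_{mf}(X)$ (or in the category of mixed Hodge modules) are strict with respect to the weight filtration, as recalled in the earlier discussion of $D^b_m(X)$. Strictness on an abelian category is equivalent to the exactness of the functor $w_{\leq a}$ (and $w_{\geq a}$) as an endofunctor of the heart. Indeed, given a short exact sequence $0\to M\to N\to P\to 0$ of mixed perverse sheaves, strictness of the two maps forces $0\to w_{\leq a}M\to w_{\leq a}N \to w_{\leq a}P\to 0$ to stay exact, and likewise on the $w_{\geq a}$ quotients.

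Once $w_{\leq a}$ is an exact endofunctor of the heart, applying it term-wise to $(K^i)$ commutes with taking perverse cohomology of the complex: the kernel and image of $d^i$ in the abelian category $\text{Perv}_{mf}(X)$ are preserved by $w_{\leq a}$, yielding
\[
\prescript{p}{}{H}^i(w_{\leq a}K) \;=\; w_{\leq a}\bigl(\ker d^i/\operatorname{im} d^{i-1}\bigr) \;=\; w_{\leq a}\prescript{p}{}{H}^i(K),
\]
and symmetrically for $w_{\geq a}$. Finally, the claimed short exact sequence
\[
0 \to \prescript{p}{}{H}^i(w_{\leq a}K)\to \prescript{p}{}{H}^i(K)\to \prescript{p}{}{H}^i(w_{\geq a+1}K)\to 0
\]
is just the sequence $0\to w_{\leq a}\prescript{p}{}{H}^i(K)\to \prescript{p}{}{H}^i(K)\to w_{\geq a+1}\prescript{p}{}{H}^i(K)\to 0$, which is exact by the very definition of the weight filtration on the mixed perverse sheaf $\prescript{p}{}{H}^i(K)$.

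The main obstacle, if any, is the foundational claim that $w_{\leq a}K$ admits the term-wise description on a complex of perverse sheaves. Once that is granted — and it is essentially built into the construction of $D^b_m(X)$ used in the paper — the rest of the argument is a formal consequence of strictness. I would therefore spend the bulk of the write-up justifying the term-wise formula (checking that the natural inclusion $w_{\leq a}K^\bullet \hookrightarrow K^\bullet$ lies in $\prescript{w}{}{D}^{\leq a}$ with cone in $\prescript{w}{}{D}^{\geq a+1}$, using the $RHom$-vanishing built into the t-structure definition), after which the exactness assertions follow immediately.
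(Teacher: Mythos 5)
Your argument is correct and is essentially the one the paper intends: the term-wise description of $w_{\leq a}$ on a complex of perverse sheaves is stated explicitly in the paragraph immediately preceding the proposition, and the exactness assertions are then a formal consequence of strictness of weight filtrations on the heart, exactly as you derive them. The only point worth making explicit in a write-up is the last step — after identifying $\prescript{p}{}{H}^i$ of the subcomplex and quotient complex with $w_{\leq a}\prescript{p}{}{H}^i(K)$ and $w_{\geq a+1}\prescript{p}{}{H}^i(K)$, you should note that the connecting maps in the long exact sequence in perverse cohomology go from weights $\geq a+1$ to weights $\leq a$ and therefore vanish by strictness, which is what makes the long exact sequence break into the claimed short exact sequences; you gesture at this ("exact by the very definition of the weight filtration") but it deserves one explicit sentence.
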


The four functors interacts with the weight t-structure as described in the following proposition. 

\begin{proposition} \label{functors}
(\cite{MR2862060} proposition 3.1.3)
Let $f : X \rightarrow Y$ be a morphism with dimension of the fibers less than or equal to $d$, then 
\[
Rf_{!}(\prescript{w}{}{D}^{\leq a}(X)) \subset \prescript{w}{}{D}^{\leq a+d}(Y)
\]
\[
f^{*}(\prescript{w}{}{D}^{\leq a}(Y)) \subset \prescript{w}{}{D}^{\leq a+d}(X)
\]
\[
Rf_{*}(\prescript{w}{}{D}^{\geq a}(X)) \subset \prescript{w}{}{D}^{\geq a-d}(Y)
\]
\[
f^{!}(\prescript{w}{}{D}^{\geq a}(Y)) \subset \prescript{w}{}{D}^{\geq a-d}(X)
\]
 The duality functor 
$D := RHom( -, \omega_X)$ 
($\omega_X$ is the dualizing complex)
exchanges $\prescript{w}{}{D}^{\leq a}(X)$
and 
$\prescript{w}{}{D}^{\geq -a}(X)$, i.e. 
$D(\prescript{w}{}{D}^{\leq a}(X)) = \prescript{w}{}{D}^{\geq -a}(X)
$
so 
\[
D\circ w_{\leq a} = w_{\geq -a} \circ D
\]

\end{proposition}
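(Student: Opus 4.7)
The plan is to reduce each of the four functorial inclusions to the case where $K$ is, up to shift, a single perverse sheaf, and then combine two classical ingredients: the BBD/Morel preservation of Deligne-style weights under the functor in question, and the perverse cohomological amplitude of that functor when $f$ has fibers of dimension $\leq d$. Concretely, since $\prescript{w}{}{D}^{\leq a+d}(Y)$ is a triangulated subcategory of $D^b_m(Y)$ stable under extensions (from the preceding Definition/Theorem on the weight $t$-structure), the standard distinguished triangles $\prescript{p}{}{\tau}_{\leq n-1}K\to \prescript{p}{}{\tau}_{\leq n}K\to \prescript{p}{}{H}^n(K)[-n]$ allow me to build any $K\in\prescript{w}{}{D}^{\leq a}(X)$ out of shifts of perverse sheaves of weight $\leq a$. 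It therefore suffices to handle $K=F[-n]$ with $F$ a perverse sheaf of weight $\leq a$.

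For $Rf_!$ with such $K$: by the classical BBD/Morel theorem on weights, $Rf_! F$ has Deligne weights $\leq a$, i.e.\ $\prescript{p}{}{H}^i(Rf_! F)$ has weight $\leq a+i$; and the perverse cohomological amplitude of $Rf_!$ is contained in $[0,d]$ because $f$ has fibers of dimension $\leq d$, so $\prescript{p}{}{H}^i(Rf_! F)=0$ for $i\notin[0,d]$. Combining the two bounds, every perverse cohomology of $Rf_! F$, and therefore of $Rf_! K=(Rf_! F)[-n]$, has weight $\leq a+d$. The $f^*$ inclusion is proved identically, substituting the analogous BBD pullback estimate and the perverse amplitude of $f^*$. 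The inclusions for $Rf_*$ and $f^!$ then follow from the $Rf_!$ and $f^*$ cases via the standard identities $Rf_*=D\circ Rf_!\circ D$ and $f^!=D\circ f^*\circ D$, once duality is handled.

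For the duality statement I would use that Verdier duality $D$ is perverse $t$-exact and satisfies $\prescript{p}{}{H}^{-i}(DK)=D\,\prescript{p}{}{H}^i(K)$, together with the fact that $D$ negates weights on pure perverse sheaves. These combine to give $K\in\prescript{w}{}{D}^{\leq a}$ if and only if $DK\in\prescript{w}{}{D}^{\geq -a}$, and the commutation $D\circ w_{\leq a}=w_{\geq -a}\circ D$ then drops out of the uniqueness of the truncations in the defining distinguished triangle.

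The delicate point is really the opening reduction step: making rigorous the assembly of $K$ from its perverse pieces without losing the weight bound requires both the stability of $\prescript{w}{}{D}^{\leq a}$ under extensions and the strictness of morphisms with respect to the weight filtration in $\mathrm{Perv}_{mf}(X)$. Both are built into Morel's construction of $D^b_m(X,\overline{\mathbb{Q}_l})$ and are automatic in the mixed Hodge module setting, so the only genuine work is bookkeeping the combination of the perverse amplitude with the Deligne weight shift.
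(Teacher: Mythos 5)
The paper does not prove this proposition; it simply cites it as Proposition~3.1.3 of Morel's paper \cite{MR2862060}. Your reconstruction is essentially the argument one finds there, and it is logically sound: d\'evissage along the perverse filtration (valid because $\prescript{w}{}{D}^{\leq a+d}(Y)$ is a triangulated subcategory stable under extensions), then on a single shifted perverse sheaf combine the BBD weight shift $\prescript{p}{}{H}^i(Rf_!F)$ of weight $\leq a+i$ with the perverse-degree bound, and deduce the other three inclusions from the $Rf_!$ and $f^*$ cases via duality.

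One small inaccuracy that does not affect the conclusion: you assert that $Rf_!$ applied to a perverse sheaf lands in perverse degrees $[0,d]$. The correct statement from BBD (4.2.4) is the one-sided bound that $Rf_!$ sends $\prescript{p}{}{D}^{\leq 0}$ to $\prescript{p}{}{D}^{\leq d}$ when fibers have dimension $\leq d$; there is in general no lower bound of $0$ (think of $j_!$ of a perverse sheaf across an open immersion). Fortunately only the upper bound $\leq d$ enters your estimate, since for $i<0$ the BBD bound already gives weight $\leq a+i<a+d$, so the final inequality holds uniformly. It would be cleaner to state the amplitude as a one-sided bound, but the argument is unaffected. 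Likewise, in the duality step the precise input is that Verdier duality sends a mixed perverse sheaf of weight $\leq a$ to one of weight $\geq -a$ (by dualizing the weight filtration graded-piece-by-graded-piece), not merely that it negates weights on pure objects, though this is a routine upgrade of what you wrote.
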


The most important property of $w_{\leq a}$ is its relation with intermediate extension functor.

\begin{theorem} \label{ie}
(\cite{MR2862060} theorem 3.1.4)
Let $j : U \rightarrow X$ be a nonempty open embedding, and $K \in D^b_m(X)$ a pure perverse sheaf of weight $a$ on U, then we have natural isomorphisms
\[
w_{\geq a}j_!K = j_{!*}K = w_{\leq a}Rj_*K
\]
\end{theorem}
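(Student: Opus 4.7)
The plan is to recognise the canonical factorisation $j_!K \to j_{!*}K \to Rj_*K$ as a weight-truncation triangle at weight $a$. Proposition \ref{functors} applied to the open immersion $j$ (fibre dimension $d=0$), together with the purity of $K$, gives $j_!K \in \prescript{w}{}{D}^{\leq a}(X)$ and $Rj_*K \in \prescript{w}{}{D}^{\geq a}(X)$. A fundamental theorem of Deligne--BBD (in the $l$-adic case) and Saito (in the Hodge module case) asserts that $j_{!*}K$ is pure of weight $a$, and hence lies in $\prescript{w}{}{D}^{\leq a}\cap \prescript{w}{}{D}^{\geq a}$. The dual identification $j_{!*}K = w_{\leq a}Rj_*K$ will follow formally from $j_{!*}K = w_{\geq a}j_!K$ via the duality part of Proposition \ref{functors}, using $D\circ j_! \cong Rj_*\circ D$, $D\circ j_{!*}\cong j_{!*}\circ D$ and $D\circ w_{\leq a}\cong w_{\geq -a}\circ D$; so I concentrate on the first identification.

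Let $F$ denote the fibre of the canonical surjection $j_!K \to j_{!*}K$, giving a distinguished triangle $F \to j_!K \to j_{!*}K \xrightarrow{+1}$. The crucial step is to show $F \in \prescript{w}{}{D}^{\leq a-1}$. Once this is established, the triangle has left term in $\prescript{w}{}{D}^{\leq a-1}$ and right term in $\prescript{w}{}{D}^{\geq a}$; the $R\mathrm{Hom}$-vanishing property of the weight t-structure then forces uniqueness of such a truncation triangle, identifying $j_{!*}K$ with $w_{\geq a}j_!K$.

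To verify $F \in \prescript{w}{}{D}^{\leq a-1}$, I invoke the perverse t-exactness of $w_{\geq a}$ (the proposition just preceding Proposition \ref{functors}): it suffices to check that each $\prescript{p}{}{H}^i(F)$ has weights $\leq a-1$. The long exact sequence of perverse cohomology for $F \to j_!K \to j_{!*}K$, combined with the right perverse t-exactness of $j_!$ on perverse sheaves and the vanishing $j^*\prescript{p}{}{H}^i(j_!K) = \prescript{p}{}{H}^i K = 0$ for $i\neq 0$, reduces the problem to showing that the kernel $N$ of $\prescript{p}{}{j_!}K \twoheadrightarrow j_{!*}K$ and each $\prescript{p}{}{H}^i(j_!K)$ for $i<0$, all of which are supported on $X\setminus U$, have weights $\leq a-1$.

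This strict weight inequality is the main obstacle: Proposition \ref{functors} only yields the non-strict bound $\leq a$, and sharpening it requires both the purity of $j_{!*}K$ and the semisimple decomposition of pure perverse sheaves into intersection complexes (Proposition \ref{splitting}). The idea is that any weight-$a$ subquotient of $\prescript{p}{}{j_!}K$ supported on $X\setminus U$ would, by semisimplicity of the pure weight-$a$ part, split off as a direct summand of $j_{!*}K$ supported on $X\setminus U$, contradicting the defining property of the intermediate extension as having no such summand; an analogous argument, or a devissage in the dimension of $X\setminus U$, controls the perverse cohomologies in negative degrees.
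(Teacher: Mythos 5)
The paper does not prove this theorem --- it is quoted with a citation to Morel \cite{MR2862060} --- so there is no in-paper argument to compare against; I evaluate your proposal on its own terms.

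Your overall structure is sound: factor through $j_{!*}K$, show the fiber $F$ of $j_!K\to j_{!*}K$ lies in $\prescript{w}{}{D}^{\leq a-1}$, deduce $j_{!*}K = w_{\geq a}j_!K$ from the orthogonality of the weight $t$-structure, and obtain the $w_{\leq a}Rj_*K$ identification by duality. The reduction to perverse-cohomology level is also fine. The trouble is in the strict-weight step. First, a phrasing error: $Gr^W_a N$, if nonzero, would split off as a direct summand of $Gr^W_a(\prescript{p}{}{j_!}K)$, not of $j_{!*}K$, and the contradiction you actually want is with $\prescript{p}{}{j_!}K$ having no nonzero quotient supported on $X\setminus U$. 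More substantively, the splitting you invoke is semisimplicity of pure weight-$a$ perverse sheaves, and the paper itself warns (remark after Proposition \ref{splitting}) that semisimplicity fails in the $\ell$-adic setting over a finitely generated base. Decomposition by supports, which Proposition \ref{splitting} does assert, is enough to kill $Gr^W_aN$, but that patch still leaves the negative-degree perverse cohomologies of $j_!K$, which you only gesture at. For those, the uniform bound from Proposition \ref{functors} gives only weight $\leq a$, not $\leq a-1$; you genuinely need Deligne's degree-dependent inequality, namely that $\prescript{p}{}{H}^{i}(j_!K)$ has weight $\leq a + i$ for a pure weight-$a$ perverse input. That sharper bound is not stated anywhere in the paper and is not a formal consequence of Proposition \ref{functors}.

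The standard argument sidesteps all of this at once. Since $i^*j_! = 0$, the triangle $F\to j_!K\to j_{!*}K$ gives $F\cong i_*\bigl(i^*j_{!*}K[-1]\bigr)$, where $i: X\setminus U\hookrightarrow X$ is the closed complement. The BBD characterization of intermediate extensions gives $i^*j_{!*}K\in \prescript{p}{}{D}^{\leq -1}(X\setminus U)$, while Deligne's degree-dependent weight inequality applied to the pure weight-$a$ perverse sheaf $j_{!*}K$ gives that $\prescript{p}{}{H}^{k}(i^*j_{!*}K)$ has weight $\leq a+k$. Together these force $i^*j_{!*}K\in \prescript{w}{}{D}^{\leq a-1}(X\setminus U)$, and by shift-invariance of the weight $t$-structure together with exactness of $i_*$ (Proposition \ref{functors}), $F\in \prescript{w}{}{D}^{\leq a-1}(X)$. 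This handles all perverse degrees uniformly and never invokes semisimplicity or decomposition by supports, so it works in the $\ell$-adic case without caveat.
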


We now introduce a refined version of the weight t-structure, taking a specified stratification into consideration. Let 
$X = \underset{0 \leq i \leq n}{\cup} S_i $
be a stratification such that each $S_i$ is locally closed in $X$, and $S_k$ is open in 
$\underset{k \leq i \leq n}{\cup} S_i$ 
for every $k \in [0,n]$. Let 
$\underline{a} = (a_0, \cdots, a_n)$
with each 
$a_i \in \mathbb{Z}\cup \{\infty\}$
and 
$i_k : S_k \hookrightarrow X$ be the inclusion. 

\begin{dt}
(\cite{MR2862060} proposition 3.3.2)
Let
$\prescript{w}{}{D}^{\leq \underline{a}}$
(resp. $\prescript{w}{}{D}^{\geq \underline{a}}$)
be the subcategory of $D^b_m(X)$ defined by 
$K \in \prescript{w}{}{D}^{\leq \underline{a}}$
(resp. $K \in \prescript{w}{}{D}^{\geq \underline{a}}$)
if and only if 
$i_k^* K \in \prescript{w}{}{D}^{\leq a_k}(S_k)$
(resp. $i_k^! K \in \prescript{w}{}{D}^{\geq a_k}(S_k)$) for every $k$. Then 
\[
(\prescript{w}{}{D}^{\leq \underline{a}}, \prescript{w}{}{D}^{\geq \underline{a} + \underline{1}})
\]
defines a t-structure on $D^b_m(X)$, giving rise to functors 
\[
w_{\leq \underline{a}} : D^b_m(X) \rightarrow \prescript{w}{}{D}^{\leq \underline{a}}
\]
and
\[
w_{\geq \underline{a}} : D^b_m(X) \rightarrow \prescript{w}{}{D}^{\geq \underline{a}}
\]
such that for every $K \in D^b_m(X)$, there is a distinguished triangle 
\[
w_{\leq \underline{a}}K \longrightarrow K \longrightarrow w_{\geq \underline{a}+\underline{1}}K \overset{+1}{\longrightarrow} \cdot
\]
Moreover, we have 
$RHom(L, K) =0 $
for 
$L \in \prescript{w}{}{D}^{\leq \underline{a}}$
and 
$K \in \prescript{w}{}{D}^{\geq \underline{a} +\underline{1}}$.
\end{dt}

Most of the properties of $\prescript{w}{}{D}^{\leq a}$ (resp. $\prescript{w}{}{D}^{\geq a}$)
generalizes to 
$\prescript{w}{}{D}^{\leq \underline{a}}$ (resp. $\prescript{w}{}{D}^{\geq \underline{a}}$) , we summarize them as follows. 

\begin{theorem}
(\cite{MR2862060} proposition 3.4.1)
$\prescript{w}{}{D}^{\leq \underline{a}}$
and 
$\prescript{w}{}{D}^{\geq \underline{a}}$
are triangulated subcategories of $D^b_m(X)$  that are stable under extensions. If $\underline{a}= (a, \cdots , a)$, then 
$\prescript{w}{}{D}^{\geq \underline{a}} = \prescript{w}{}{D}^{\geq a}$
and 
$\prescript{w}{}{D}^{\leq \underline{a}} = \prescript{w}{}{D}^{\leq a}$.

For $Y$ another scheme with strata 
$\{S'_i\}_{0 \leq i \leq n}$ 
satisfying the same condition as before, and $f : X \rightarrow Y$ a morphism such that $f(S_k) \subset S'_k$, assume the dimension of the fibers of $f$ is smaller than or equal to $d$, then we have 
\[
Rf_{!}(\prescript{w}{}{D}^{\leq \underline{a}}(X)) \subset \prescript{w}{}{D}^{\leq \underline{a}+\underline{d}}(Y)
\]
\[
f^{*}(\prescript{w}{}{D}^{\leq \underline{a}}(Y)) \subset \prescript{w}{}{D}^{\leq \underline{a}+\underline{d}}(X)
\]
\[
Rf_{*}(\prescript{w}{}{D}^{\geq \underline{a}}(X)) \subset \prescript{w}{}{D}^{\geq \underline{a}-\underline{d}}(Y)
\]
\[
f^{!}(\prescript{w}{}{D}^{\geq \underline{a}}(Y)) \subset \prescript{w}{}{D}^{\geq \underline{a}-\underline{d}}(X)
\]
Further, we have 
\[
D \circ w_{\leq \underline{a}} = w_{\geq - \underline{a}} \circ D
\]
\end{theorem}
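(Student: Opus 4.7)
The plan is to reduce each assertion to the unstratified weight $t$-structure results (the earlier Definition/Theorem and Proposition~\ref{functors}) by arguing stratum-by-stratum, with induction on the length of the stratification where needed.

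That $\prescript{w}{}{D}^{\leq \underline{a}}$ and $\prescript{w}{}{D}^{\geq \underline{a}}$ are triangulated subcategories stable under extensions is purely formal: the defining conditions are intersections of preimages under the triangulated functors $i_k^*$ (resp.\ $i_k^!$) of subcategories with these properties in the unstratified case.

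For the constant tuple $\underline{a}=(a,\ldots,a)$, one inclusion is immediate from Proposition~\ref{functors}: each $i_k$ is a locally closed immersion of fiber dimension zero, hence $i_k^* \prescript{w}{}{D}^{\leq a}(X) \subset \prescript{w}{}{D}^{\leq a}(S_k)$ and dually for $i_k^!$. The converse I would prove by induction on $n$: taking $j : S_0 \hookrightarrow X$ to be the open stratum and $i : Z = X \setminus S_0 \hookrightarrow X$ its closed complement (stratified by $\{S_1,\ldots,S_n\}$, a stratification of length $n$), the distinguished triangle
\[
j_! j^* K \longrightarrow K \longrightarrow i_* i^* K \overset{+1}{\longrightarrow}
\]
exhibits $K$ as an extension of two terms each in $\prescript{w}{}{D}^{\leq a}(X)$ (using the inductive hypothesis on $Z$, and that $j_!, i_*$ preserve weight $\le a$ since both have zero relative dimension), hence $K \in \prescript{w}{}{D}^{\leq a}(X)$ by extension-stability. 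The case of $\prescript{w}{}{D}^{\geq a}$ is dual, via $i_* i^! K \to K \to Rj_* j^* K$.

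For the four-functor estimates, the hypothesis $f(S_k) \subset S'_k$, together with the fact that each point of $X$ lies in exactly one stratum and the $S'_j$'s are pairwise disjoint, forces $f^{-1}(S'_k) = S_k$, so each commutative square with vertices $S_k, X, S'_k, Y$ is Cartesian. Proper/smooth base change in this Cartesian square yields $i'_k{}^* Rf_! K = R(f_k)_! i_k^* K$ (and the analogous identities for $f^*$, $Rf_*$, $f^!$), whence the unstratified Proposition~\ref{functors} applied to $f_k : S_k \to S'_k$ (still of fiber dimension $\leq d$) produces the stratum-wise weight bound and hence the claimed estimate. Duality follows by applying $D$ to the defining conditions and combining $D \circ i_k^* = i_k^! \circ D$ with the unstratified identity $D \circ w_{\leq a} = w_{\geq -a} \circ D$ on each $S_k$.

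The main subtlety I anticipate is the converse direction in the constant tuple case: one must verify carefully that $j_!$ and $i_*$ really preserve $\prescript{w}{}{D}^{\leq a}$ as stated (a consequence of Proposition~\ref{functors} for zero-dimensional fibers, but dependent on the full six-functor formalism being available) and that the induction closes correctly in both the horizontal mixed complex setting and the mixed Hodge module setting. For this one needs the open/closed distinguished triangle and the six functors to behave uniformly in both frameworks, which requires the nontrivial technical input of Morel in the $l$-adic horizontal case and of Saito in the Hodge module case. Once these are granted, everything else reduces formally to the unstratified statements.
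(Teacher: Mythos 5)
The paper does not actually prove this statement: it is quoted verbatim from Morel's Proposition~3.4.1 in \cite{MR2862060} in the review Section~2.1, so there is no ``paper's own proof'' to compare against. Taking your proposal on its own merits, it is correct, and its overall strategy (reduction to the unstratified Proposition~\ref{functors} stratum-by-stratum, induction on the length of the stratification via the open/closed decomposition) is the natural one.

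A few precision points worth noting. Your invocation of ``proper/smooth base change'' for $i_k'^{*} Rf_{!} = R(f_k)_{!}\, i_k^{*}$ is a misnomer: that identity is the general $*$--$!$ base change valid in every Cartesian square of the six-functor formalism, and the dual identity $i_k'^{!} Rf_{*} = R(f_k)_{*}\, i_k^{!}$ needed for the $Rf_*$ estimate is obtained by Verdier duality from it; neither is proper or smooth base change in the usual sense. Relatedly, the square $(S_k,X,S'_k,Y)$ is only Cartesian on underlying reduced schemes ($f^{-1}(S'_k)$ may carry nilpotents), which is harmless here because the constructible and Hodge-module derived categories are insensitive to nilpotent thickenings, but it deserves a word. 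Finally, for the last assertion $D \circ w_{\leq \underline{a}} = w_{\geq -\underline{a}} \circ D$, what your check of the defining conditions actually produces is the subcategory equality $D(\prescript{w}{}{D}^{\leq \underline{a}}) = \prescript{w}{}{D}^{\geq -\underline{a}}$; passing from this to the identity of truncation \emph{functors} requires one more (formal, but non-tautological) step, namely uniqueness of the truncation triangle for the $t$-structure once the two sides of the $t$-structure are swapped by an anti-equivalence.

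As to route: the paper's own Proposition~\ref{split} (Morel's 3.3.4) suggests Morel derives 3.4.1 from the functorial factorization $w_{\leq \underline{a}} = w^n_{\leq a_n}\circ\cdots\circ w^0_{\leq a_0}$ and the single-stratum triangles there. Your argument for the constant-tuple case proceeds instead by induction on the number of strata via the distinguished triangle $j_!j^*K \to K \to i_*i^*K \to {+1}$, never invoking the factorization. Both are valid; yours is more elementary and self-contained for the statements actually in 3.4.1, while Morel's factorization carries more structural information (it computes the truncations explicitly, not merely identifies the $t$-structure).
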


The next proposition tells us how to compute $w_{\leq \underline{a}}$ and $w_{\geq \underline{a}}$ in terms of $w_{\leq a}$ and $w_{\geq a}$. 

\begin{proposition} \label{split}
(\cite{MR2862060} proposition 3.3.4)
Let $k \in \{0 \cdots n\}$ and $a \in \mathbb{Z}\cup \{\infty \}$, we denote 
\[
w^k_{\leq a} := w_{\leq (\infty, \cdots,\infty, a,\infty, \cdots, \infty) }
\]
\[
w^k_{\geq a} := w_{\geq (\infty, \cdots,\infty, a,\infty, \cdots, \infty) }
\]
where $a$ sits in the $k$-th position. We have 
\[
w_{\leq \underline{a}} = w^n_{\leq a_n } \circ \cdots \circ w^0_{\leq a_0}  
\]
\[
w_{\geq \underline{a}} = w^n_{\geq a_n } \circ \cdots \circ w^0_{\geq a_0}  
\]
For $K \in D^b_m(X)$, we have distinguished triangles 
\[
w_{\leq a}^kK \longrightarrow K \longrightarrow Ri_{k*} w_{\geq a+1} i_k^*K \overset{+1}{\longrightarrow} \cdot  
\]
\[
i_{k!} w_{\leq a-1} i_k^!K \longrightarrow K \longrightarrow w_{\geq a}^k K \overset{+1}{\longrightarrow} \cdot  
\]
\end{proposition}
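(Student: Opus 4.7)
The plan is to prove the two distinguished triangles first, which give explicit models for the single-stratum truncations, and then obtain the composition formulas by induction on $k$, exploiting the stratification hypothesis to show that later truncations leave earlier strata untouched.

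For the first triangle, consider the composition $K \to Ri_{k*} i_k^* K \to Ri_{k*} w_{\geq a+1} i_k^* K$, where the first arrow is the adjunction unit and the second is induced by the unrefined weight truncation on $S_k$. Let $F$ be its fiber. By the $\mathrm{RHom}$-vanishing between $\prescript{w}{}{D}^{\leq(\infty,\ldots,a,\ldots,\infty)}$ and $\prescript{w}{}{D}^{\geq(\infty,\ldots,a+1,\ldots,\infty)+\underline{1}}$ built into the t-structure, uniqueness of truncation triangles reduces matters to showing $F$ lies in the former and $Ri_{k*} w_{\geq a+1} i_k^* K$ in the latter. Applying $i_k^*$ and using $i_k^* Ri_{k*} = \mathrm{id}$ produces the triangle $i_k^* F \to i_k^* K \to w_{\geq a+1} i_k^* K$, identifying $i_k^* F$ with $w_{\leq a} i_k^* K$, which has weight $\leq a$. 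Applying $i_k^!$ and using $i_k^! Ri_{k*} = \mathrm{id}$ (factor $i_k$ as open-then-closed and combine $\bar{i}^! \bar{i}_* = \mathrm{id}$ with $j^! Rj_* = \mathrm{id}$) shows the cone restricts on $S_k$ to $w_{\geq a+1} i_k^* K$, of weight $\geq a+1$. For $j \neq k$ the conditions are trivial because the corresponding entries are $\infty$. The second triangle is proved dually, starting from the counit $i_{k!} i_k^! K \to K$ composed with $i_{k!} w_{\leq a-1} i_k^! K \to i_{k!} i_k^! K$, using $i_k^! i_{k!} = \mathrm{id}$.

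For the composition identity, I would argue by induction on $k$. The stratification hypothesis forces $\overline{S_{k+1}} \cap S_j = \emptyset$ for all $j \leq k$, whence $i_j^* Ri_{(k+1)*} = 0$. Applying $i_j^*$ to the first triangle at index $k+1$ then gives $i_j^*(w_{\leq a_{k+1}}^{k+1} K') = i_j^* K'$, so the weight conditions at strata $0, \ldots, k$ already achieved survive the application of $w_{\leq a_{k+1}}^{k+1}$. Iterating from $k = 0$ upward shows that $w_{\leq a_n}^n \circ \cdots \circ w_{\leq a_0}^0 K$ lies in $\prescript{w}{}{D}^{\leq \underline{a}}$ and receives a canonical map from $K$; the $w_{\geq}$ statement is strictly parallel, using $i_j^! i_{(k+1)!} = 0$ for $j \leq k$ in place of the corresponding vanishing. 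The subtlest step is to upgrade ``lies in $\prescript{w}{}{D}^{\leq \underline{a}}$'' to ``equals $w_{\leq \underline{a}}$'', that is, to verify that the cone of the composite $\to K$ actually lies in $\prescript{w}{}{D}^{\geq \underline{a}+\underline{1}}$. For this I would invoke the octahedral axiom iteratively to assemble the total cone from the successive increments $Ri_{(k+1)*} w_{\geq a_{k+1}+1} i_{k+1}^*(\cdot)$: each increment is supported on $\overline{S_{k+1}}$, vanishes under $i_j^!$ for $j \leq k$ by the same stratum-disjointness, carries weight $\geq a_{k+1}+1$ at stratum $k+1$, and pushes the prescribed weight bound through $Ri_{(k+1)*}$ onto deeper strata $j > k+1$. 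This octahedral book-keeping across $n+1$ iterations, combined with tracking weight bounds through pushforwards from non-closed strata, is where I expect the main technical obstacle to lie.
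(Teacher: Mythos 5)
Your skeleton (establish the two triangles, then compose, iterating with octahedra) is sound, and your instinct that ``tracking weight bounds through pushforwards from non-closed strata'' is the delicate point is correct, but the mechanism you describe does not close. You argue each increment $C_k=Ri_{k*}w_{\geq a_k+1}i_k^*(\cdots)$ ``pushes the prescribed weight bound through $Ri_{k*}$ onto deeper strata $j>k$'', which gives at $S_j$ only a bound of weight $\geq a_k+1$; membership in $\prescript{w}{}{D}^{\geq\underline{a}+\underline{1}}$ requires $i_j^!(\cdot)\in\prescript{w}{}{D}^{\geq a_j+1}$, and since $\underline{a}$ is not assumed monotone, $a_k+1\geq a_j+1$ can fail. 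The same confusion surfaces in the triangle step when you assert that ``for $j\neq k$ the conditions are trivial because the corresponding entries are $\infty$'': the condition $\prescript{w}{}{D}^{\geq\infty}$ at a stratum is not vacuous but is $i_j^!(\cdot)=0$ --- otherwise a nonzero complex supported on $S_1$ would lie in both halves of the $t$-structure for $\underline{a}=(a_0,\infty,\ldots,\infty)$, violating the $\mathrm{RHom}$-vanishing --- so the third term of your triangle must actually be shown to vanish under $i_j^!$ for $j\neq k$.

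The missing fact that repairs both points at once is that $i_j^!\,Ri_{k*}=0$ for \emph{all} $j\neq k$, not only for $j<k$. For $j<k$ it is the support disjointness you used. For $j>k$, factor $i_k$ as the open immersion $j_k\colon S_k\hookrightarrow Z_k:=\bigcup_{m\geq k}S_m$ followed by the closed immersion $\bar{i}_k\colon Z_k\hookrightarrow X$, so $Ri_{k*}=\bar{i}_{k*}\,Rj_{k*}$; since $S_j$ for $j>k$ lies in the closed complement $Z_{k+1}=Z_k\setminus S_k$, the pullback $i_j^!$ factors through shriek pullback along $Z_{k+1}\hookrightarrow Z_k$, which kills $Rj_{k*}$ by the localization triangle. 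Hence $i_j^!C_k=0$ for every $j\neq k$ while $i_k^!C_k=w_{\geq a_k+1}i_k^*(\cdots)$, so each $C_k$ lies in $\prescript{w}{}{D}^{\geq\underline{a}+\underline{1}}$ for any $\underline{a}$ whatsoever, and the total cone does too by stability under extensions --- no octahedral tracking of weight bounds at deeper strata is needed. The same vanishing completes the verification that $Ri_{k*}w_{\geq a+1}i_k^*K$ lies in the right half of the one-stratum $t$-structure, finishing the triangle step; with this fact in hand, the rest of your proposal goes through.
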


\begin{corollary} \label{co}
We have natural isomorphisms 
\[
i_k^* \circ w_{\leq a}^k = w_{\leq a} \circ i_k^*
\]
\[
i_k^! \circ w_{\geq a}^k = w_{\geq a} \circ i_k^!
\]
and 
\[
i_j^* \circ w_{\leq a}^k = i_j^*
\]
\[
i_j^! \circ w_{\geq a}^k = i_j^! 
\]
for $j < k$. 
\end{corollary}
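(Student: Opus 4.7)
The plan is to deduce all four isomorphisms from the two distinguished triangles in Proposition \ref{split}, by applying the appropriate restriction functors and invoking the usual identities for locally closed immersions together with uniqueness of weight truncations.

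For $i_k^* \circ w_{\leq a}^k \cong w_{\leq a} \circ i_k^*$, I would apply $i_k^*$ to the first triangle
\[
w_{\leq a}^k K \longrightarrow K \longrightarrow R i_{k*}\, w_{\geq a+1}\, i_k^* K \overset{+1}{\longrightarrow}.
\]
Since $i_k^* \circ R i_{k*} \cong \mathrm{id}$ (factor the locally closed immersion $i_k$ as an open immersion into $\overline{S_k}$ followed by a closed immersion), this becomes the distinguished triangle
\[
i_k^* w_{\leq a}^k K \longrightarrow i_k^* K \longrightarrow w_{\geq a+1}(i_k^* K) \overset{+1}{\longrightarrow},
\]
which is the defining triangle of $w_{\leq a}(i_k^* K)$; the $\mathrm{RHom}$-vanishing in the Definition/Theorem of the weight $t$-structure then identifies the first term canonically with $w_{\leq a}(i_k^* K)$. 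Dually, applying $i_k^!$ to the second triangle of Proposition \ref{split} and using $i_k^! \circ i_{k!} \cong \mathrm{id}$ yields $i_k^! w_{\geq a}^k K \cong w_{\geq a}(i_k^! K)$.

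For the statements with $j<k$, the key observation is that the stratification hypothesis forces $S_j \cap \overline{S_k} = \emptyset$. Indeed, by induction on $k$ the set $Z_k := \bigcup_{i \geq k} S_i$ is closed in $X$, so $\overline{S_k} \subset Z_k$, whereas $S_j \subset X \setminus Z_k$. Factoring $i_k$ into an open immersion $S_k \hookrightarrow \overline{S_k}$ followed by the closed immersion $\overline{S_k} \hookrightarrow X$, both $R i_{k*} L$ and $i_{k!} L$ are supported on $\overline{S_k}$ for any $L$ on $S_k$; consequently $i_j^* R i_{k*} L = 0$ and $i_j^! i_{k!} L = 0$ for $j<k$. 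Applying $i_j^*$ to the first triangle of Proposition \ref{split} therefore collapses it to $i_j^* w_{\leq a}^k K \cong i_j^* K$, and applying $i_j^!$ to the second triangle yields $i_j^! w_{\geq a}^k K \cong i_j^! K$.

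The argument is a formal manipulation of distinguished triangles in the six-functor formalism; no step presents a serious obstacle. The only verifications worth making explicit are the closedness of $Z_k$ in $X$ and the support statements for $R i_{k*}$ and $i_{k!}$, both of which are standard and hold uniformly in the $l$-adic and mixed Hodge module realizations subsumed by $D^b_m(X)$.
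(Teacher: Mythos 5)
Your proof is correct and is precisely the natural deduction from Proposition \ref{split} that the paper (which gives no proof and cites Morel for both the proposition and its consequences) has in mind: apply $i_k^*$, $i_k^!$, $i_j^*$, $i_j^!$ to the two triangles, invoke $i_k^*Ri_{k*}\cong\mathrm{id}$, $i_k^!i_{k!}\cong\mathrm{id}$, the support/vanishing facts for $j<k$ coming from closedness of $\bigcup_{i\geq k}S_i$, and the uniqueness of weight-truncation triangles (membership of the outer terms in $\prescript{w}{}{D}^{\leq a}$ resp.\ $\prescript{w}{}{D}^{\geq a+1}$, which follows from the definition of $\prescript{w}{}{D}^{\leq\underline{a}}$ and $\prescript{w}{}{D}^{\geq\underline{a}}$, plus orthogonality). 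No gaps.
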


\begin{theorem}
(\cite{MR2862060} proposition 3.4.2)
Let $U := S_0$ and $j= i_0: U \hookrightarrow X $ be the inclusion of the open stratum, then for $K\in D_m^b(U)$ a pure perverse sheaf of weight $a$ we have 
\[
w_{\geq (a, a+1, \cdots, a+1)} j_!K = j_{!*}K = w_{\leq (a, a-1, \cdots, a-1)}Rj_*K
\]
\end{theorem}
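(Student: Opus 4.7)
The plan is to verify directly that $j_{!*}K$ is the $w_{\leq (a, a-1, \ldots, a-1)}$-truncation of $Rj_*K$, and then deduce the second equality by Verdier duality. By the t-structure axioms for $(\prescript{w}{}{D}^{\leq \underline{a}}, \prescript{w}{}{D}^{\geq \underline{a}+\underline{1}})$, the first equality reduces to producing a distinguished triangle
\[
j_{!*}K \longrightarrow Rj_*K \longrightarrow N \overset{+1}{\longrightarrow}
\]
with $j_{!*}K \in \prescript{w}{}{D}^{\leq (a, a-1, \ldots, a-1)}(X)$ and $N \in \prescript{w}{}{D}^{\geq (a+1, a, \ldots, a)}(X)$.

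The triangle itself is supplied by Theorem \ref{ie}, which gives $j_{!*}K = w_{\leq a}Rj_*K$ and the triangle $j_{!*}K \to Rj_*K \to w_{\geq a+1}Rj_*K \to$. To verify the stratified bound on $N := w_{\geq a+1}Rj_*K$, note that $N$ is globally in $\prescript{w}{}{D}^{\geq a+1} = \prescript{w}{}{D}^{\geq (a+1,\ldots,a+1)}$, which is a fortiori contained in $\prescript{w}{}{D}^{\geq (a+1, a, \ldots, a)}$ by the containment of the weight conditions and Proposition \ref{functors} applied to $i_k^!$ for closed immersions. As a small check one can also see $i_0^! N = j^* w_{\geq a+1}Rj_*K = w_{\geq a+1}K = 0$, since $j^* = j^!$ commutes with weight truncations for an open immersion (being weight-t-exact by Proposition \ref{functors} with $d = 0$).

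The main obstacle is establishing the boundary bound $i_k^* j_{!*}K \in \prescript{w}{}{D}^{\leq a-1}(S_k)$ for $k \geq 1$, since purity of $j_{!*}K$ of weight $a$ combined with Proposition \ref{functors} only yields the weaker bound $\leq a$. I would overcome this by invoking the dual formula $j_{!*}K = w_{\geq a}j_!K$ from Theorem \ref{ie}, which sits in the triangle
\[
w_{\leq a-1}j_!K \longrightarrow j_!K \longrightarrow j_{!*}K \overset{+1}{\longrightarrow}.
\]
Applying $i_k^*$ and using the vanishing $i_k^* j_!K = 0$ (since $i_k$ factors through the closed complement of $U$), this triangle produces the key isomorphism
\[
i_k^* j_{!*}K \cong (i_k^* w_{\leq a-1}j_!K)[1].
\]
Because $w_{\leq a-1}j_!K \in \prescript{w}{}{D}^{\leq a-1}$, and both $i_k^*$ (Proposition \ref{functors} for closed $i_k$) and the shift $[1]$ preserve this weight bound, the conclusion follows. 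Conceptually, the vanishing $i_k^* j_!K = 0$ forces the boundary restriction of $j_{!*}K$ to arise from the weight $\leq a-1$ portion of $j_!K$, shifted by one cohomological degree — this shift is what absorbs one unit of weight.

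For the second equality, I would apply Verdier duality to the first. Starting from the identity $j_{!*}(DK) = w_{\leq (-a, -a-1, \ldots, -a-1)}Rj_*(DK)$, which is valid since $DK$ is pure of weight $-a$, and using the exchange relations $D \circ j_{!*} = j_{!*} \circ D$, $D \circ Rj_* = j_! \circ D$, and $D \circ w_{\leq \underline{b}} = w_{\geq -\underline{b}} \circ D$ from the stratified weight-t-structure theorem, one recovers $j_{!*}K = w_{\geq (a, a+1, \ldots, a+1)}j_!K$.
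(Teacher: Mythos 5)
Your argument is correct, and it is self-contained given the ingredients already stated in the paper; since the paper only cites Morel's Proposition 3.4.2 without reproducing the proof, there is nothing to compare against in the source, but the route you take is the natural one and almost certainly the one Morel intends. The two substantive verifications both check out: the containment $\prescript{w}{}{D}^{\geq a+1} = \prescript{w}{}{D}^{\geq (a+1,\ldots,a+1)} \subset \prescript{w}{}{D}^{\geq (a+1,a,\ldots,a)}$ gives the bound on $N = w_{\geq a+1}Rj_*K$ for free, and the trick of rewriting $i_k^* j_{!*}K \cong (i_k^* w_{\leq a-1}j_!K)[1]$ via the triangle $w_{\leq a-1}j_!K \to j_!K \to w_{\geq a}j_!K \to{}$ together with $i_k^*j_!K = 0$ is exactly the mechanism that improves the naive bound $\leq a$ to $\leq a-1$ on the boundary strata. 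One point worth making explicit, which you use implicitly: the isomorphism $j_{!*}K = w_{\leq (a,a-1,\ldots,a-1)}Rj_*K$ is then forced by the uniqueness of the truncation triangle, since $\mathrm{RHom}(L,M) = 0$ for $L \in \prescript{w}{}{D}^{\leq \underline{a}}$ and $M \in \prescript{w}{}{D}^{\geq \underline{a}+\underline{1}}$. The duality step is also clean; the only caveat is that $D \circ j_{!*} = j_{!*} \circ D$ is a standard fact but not one of the paper's stated propositions, so it deserves a citation (it follows from $D \circ j_! = Rj_* \circ D$, $D \circ Rj_* = j_! \circ D$, and the characterization of $j_{!*}$ as the image of $j_! \to Rj_*$ on perverse $H^0$). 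Overall a complete and correct proof.
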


\subsection{Applications to Shimura varieties} \label{app Sh}

In this section, we take $X$ to be a Shimura variety associated to a Shimura datum $(G, \mathscr{X})$, where $G$ is a reductive group over $\mathbb{Q}$ and $\mathscr{X}$ is a conjugacy class of cocharacters 
$Res_{\mathbb{C}/\mathbb{R}} \textbf{G}_m \longrightarrow G_{\mathbb{R}}$.
The pair has to satisfy a list of axioms to be a Shimura datum which we will not review.
We assume that $X$ is smooth, which can always be achieved if we take a small enough level structure. An important property of Shimura varieties is that they have a canonical model over a number field $F$, called the reflex field of $(G,\mathscr{X})$. For simplicity, we assume that $G$ is simple. 

An algebraic rational representation of $G$ gives naturally an admissible variation of Hodge structure on $X$, whence a mixed Hodge module. The representation creates a smooth $l$-adic sheaf on $X$ as well. However, unlike Hodge modules,  the $l$-adic sheaf is not known to be mixed in general, although this is expected to be the case. Fortunately, we know that the associated $l$-adic sheaves are of geometric origin, hence mixed,  if the Shimura variety is of abelian type. We will only need to work with Shimura varieties of PEL type (up to similitude) in this paper, so we make this assumption from now on. We note that PEL type Shimura varieties have the hereditary property that strata of the minimal compactification are also of PEL type.

Let $X^{\text{min}}$ be the minimal compactification of $X$, it has a natural stratification 
$X^{\text{min}} = \underset{0 \leq i \leq n}{\cup} S_i $
with $S_0 =X$ and $S_k$ open in 
$\underset{k \leq i \leq n}{\cup} S_i$ 
for each $k$. Each $S_i$ is the union of standard strata corresponding to parabolic subgroups of $G$ of a fixed type. We will not give an explicit description of $S_i$ here, see Nair for details.

Let $V$ be a rational algebraic representation of $G$, and
$\mathcal{F}V \in D^b_m(X)$  
the corresponding sheaf. We note that $\mathcal{F}V$ is concentrated in degree 0 and smooth. Let 
$j : X \hookrightarrow X^{\text{min}}$
be the open embedding, applying 
$R\Gamma (X^{\text{min}}, -)$ 
to the weight truncations 
$w_{\leq a}Rj_* (\mathcal{F}V)$
of 
$Rj_* (\mathcal{F}V)$
induces a spectral sequence 
\[
E_1^{p,q}= H^{p+q}(X^{\text{min}}, w_{\geq -p} w_{\leq -p} Rj_*(\mathcal{F}V)) \Rightarrow H^{p+q}(X, \mathcal{F}V)
\]
Since $G$ is simple reductive, we can assume that $V$ is irreducible and pure of weight $-a$. Note that the weight of $V$ is the weight of the representation 
$ \textbf{G}_{m\mathbb{R}} \hookrightarrow \text{Res}_{\mathbb{C}/\mathbb{R}}\textbf{G}_m \overset{h}{\rightarrow} G_{\mathbb{R}} \rightarrow \text{End}(V_{\mathbb{R}})$
for one (and hence any) $h \in \mathscr{X}$. Then $\mathcal{F}V$ is pure of weight $a$, and the first nontrivial truncation of 
$Rj_*(\mathcal{F}V))$
is 
\begin{equation} \label{a1}
w_{\geq a} w_{\leq a} Rj_*(\mathcal{F}V) =w_{\leq a}Rj_*(\mathcal{F}V) = j_{!*}(\mathcal{F}V)
\end{equation}
by proposition \ref{functors} and theorem \ref{ie}. It completes into a distinguished triangle 
\[
j_{!*}(\mathcal{F}V) \longrightarrow Rj_*(\mathcal{F}V) \longrightarrow w_{\geq a+1}Rj_*(\mathcal{F}V) \overset{+1}{\longrightarrow} \cdot
\]
which shows that  $w_{\geq a+1}Rj_*(\mathcal{F}V)$
has support in the complement of $X$ as 
$j^*j_{!*} = j^*Rj_*=id$.
Let 
$i : \underset{1 \leq i \leq n}{\cup} S_i \hookrightarrow X^{\text{min}}$ 
be the complement of $X$, then 
\[
\prescript{w}{}{D}^{\geq a+1} \ni
w_{\geq a+1}Rj_*(\mathcal{F}V) = i_* (i^* w_{\geq a+1}Rj_*(\mathcal{F}V))  
\]
Since $i_* = i_!$ is exact with respect to the weight $t$-structure by proposition \ref{functors}, 
$i^* w_{\geq a+1}Rj_*(\mathcal{F}V) \in \prescript{w}{}{D}^{\geq a+1}$. 
Applying $i^*$ to the distinguished triangle 
\[
w_{\leq a}Rj_*(\mathcal{F}V) \longrightarrow Rj_*(\mathcal{F}V) \longrightarrow w_{\geq a+1}Rj_*(\mathcal{F}V) \overset{+1}{\longrightarrow} \cdot
\]
we have 
\[
\begin{tikzcd}
i^*w_{\leq a}Rj_*(\mathcal{F}V) \arrow[r] & i^*Rj_*(\mathcal{F}V) \arrow[r] & 
i^* w_{\geq a+1}Rj_*(\mathcal{F}V) \arrow[r, "+1"] &
\cdot 
\end{tikzcd}
\]
We see by proposition \ref{functors} that 
$i^*w_{\leq a}Rj_*(\mathcal{F}V) \in \prescript{w}{}{D}^{\leq a}$.
Together with 
$i^* w_{\geq a+1}Rj_*(\mathcal{F}V) \in \prescript{w}{}{D}^{\geq a+1}$ 
that we have just observed, we obtain 
\[
i^* w_{\geq a+1}Rj_*(\mathcal{F}V) = w_{\geq a+1} i^*Rj_*(\mathcal{F}V)
\]
Therefore 
\[
w_{\leq a+1} w_{\geq a+1} Rj_*(\mathcal{F}V) = w_{\leq a+1} i_* w_{\geq a+1} i^*Rj_*(\mathcal{F}V)= i_* w_{\leq a+1} w_{\geq a+1} i^*Rj_*(\mathcal{F}V)
\]
and similarly 
\begin{equation} \label{b1}
w_{\leq a+k} w_{\geq a+k} Rj_*(\mathcal{F}V)=
i_* w_{\leq a+k} w_{\geq a+k} i^*Rj_*(\mathcal{F}V)
\end{equation}
for all $k>0$ (applying $w_{\leq a+k} w_{\geq a+k}$ to $w_{\geq a+   1} Rj_*(\mathcal{F}V)$ and use that $w_{\geq a +k} w_{\geq a+1} =w_{\geq a+k}$). 

It is shown that
$i^*Rj_*(\mathcal{F}V)$
is constructible with respect to the standard stratification (and in particular for $\{S_i\}$) by Burgos and Wildeshaus (\cite{burgos2004hodge}) in the Hodge Module case, and Pink (\cite{MR1149032}) in the $l$-adic case. Moreover, the restriction of 
$i^*Rj_*(\mathcal{F}V)$
to strata have automorphic cohomology sheaves in the sense that they are associated to algebraic representations of the group corresponding to the strata as a Shimura variety. We claim that 
$w_{\leq a+k} w_{\geq a+k} i^*Rj_*(\mathcal{F}V)$
is also constructible with respect to the standard stratification, and even automorphic when restricted to each stratum. Indeed, by proposition \ref{split}
\[
w_{\leq a+k}i^*Rj_*(\mathcal{F}V) = w_{\leq (a+k, \cdots, a+k)}i^*Rj_*(\mathcal{F}V) = w^n_{\leq a+k } \circ \cdots \circ w^1_{\leq a+k} i^*Rj_*(\mathcal{F}V)
\]
and there is a distinguished triangle
\[
w^1_{\leq a+k} i^*Rj_*(\mathcal{F}V) \longrightarrow i^*Rj_*(\mathcal{F}V) \longrightarrow i_{1*} w_{\geq a+k+1} i_1^* i^*Rj_*(\mathcal{F}V) \overset{+1}{\longrightarrow} \cdot
\]
Since both 
$i^*Rj_*(\mathcal{F}V)$
and 
$i_{1*} w_{\geq a+k+1} i_1^* i^*Rj_*(\mathcal{F}V)$ 
are constructible and automorphic with respect to the standard stratification  (using $w_{\leq a}\mathcal{F}V = \mathcal{F}(w_{\geq dimX-a}V)$, see \cite{MR2862060} 4.1.2),
so is 
$w^1_{\leq a+k} i^*Rj_*(\mathcal{F}V)$. 
The same argument applies to   
$w_{\leq a+k}^2$
by replacing 
$i^*Rj_*(\mathcal{F}V)$
to
$w^1_{\leq a+k} i^*Rj_*(\mathcal{F}V)$,
and an easy induction proves that 
$w_{\leq a+k}i^*Rj_*(\mathcal{F}V)$
is constructible and automorphic. The claim follows from the distinguished traingle
\[
w_{\leq a+k-1}i^*Rj_*(\mathcal{F}V) \longrightarrow w_{\leq a+k}i^*Rj_*(\mathcal{F}V) 
\longrightarrow w_{\geq a+k} w_{\leq a+k}i^*Rj_*(\mathcal{F}V)
\overset{+1}{\longrightarrow} \cdot
\]
and what we have just proved for the first two terms. Note that $w_{\geq a+k} w_{\leq a+k} =w_{\leq a+k} w_{\geq a+k}  $, see \cite{nairmixed} 2.2.3. 

Recall that proposition \ref{splitting} tells us that 
$w_{\geq a+k} w_{\leq a+k}i^*Rj_*(\mathcal{F}V)$
decompose into shifts of pure perverse sheaves, and the claim we have just proved shows that these perverse sheaves are intermediate extensions of automorphic sheaves on the standard strata.
This is also true for 
$w_{\geq a+k}w_{\leq a+k} Rj_*(\mathcal{F}V)$ 
using (\ref{a1}) and (\ref{b1}). 
We know that the normalization of the closure of a strata is 
the minimal compactification of the strata, and intersection cohomology is invariant under normalization, hence
$\mathbb{H}^{p+q}(X^{\text{min}}, w_{\geq -p} w_{\leq -p} Rj_*(\mathcal{F}V))$
is a sum of intersection cohomology of the minimal compactification of the strata with coefficients automorphic sheaves. We now summarize what we have proved. 

\begin{theorem} (Nair \cite{nairmixed}) \label{w.s.s.}
For $X$ a Shimura variety of PEL type with Shimura data $(G, \mathscr{X})$, and $V$ a representation of $G$, we have a spectral sequence 
\[
E_1^{p,q}= \mathbb{H}^{p+q}(X^{\text{min}}, w_{\geq -p} w_{\leq -p} Rj_*(\mathcal{F}V)) \Rightarrow H^{p+q}(X, \mathcal{F}V)
\]
where 
$\mathbb{H}^{p+q}(X^{\text{min}}, w_{\geq -p} w_{\leq -p} Rj_*(\mathcal{F}V))$
is a sum of 
$IH^*(Y^{\text{min}}, \mathcal{F}W) := \mathbb{H}^*(Y^{\text{min}}, j_{!*}\mathcal{F}W)$ 
with $Y \subset X^{\text{min}}$ a standard strata, and $W$ an algebraic representation of the group associated to $Y$. 
\end{theorem}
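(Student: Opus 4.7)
The plan is to construct the spectral sequence from the filtration by weight truncations $w_{\leq a} Rj_*(\mathcal{F}V)$ on $Rj_*(\mathcal{F}V)$ in the mixed category $D^b_m(X^{\text{min}})$, and then analyze the graded pieces using Morel's formalism together with the decomposition statement of Proposition \ref{splitting}. The spectral sequence itself arises formally by applying hypercohomology to this finite filtration; the substance lies in identifying each graded piece $w_{\geq -p} w_{\leq -p} Rj_*(\mathcal{F}V)$ as a sum of intersection cohomology sheaves of automorphic local systems on standard strata.

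Reducing first to the case $V$ irreducible, so that $\mathcal{F}V$ is pure of some weight $a$, I would identify the lowest nontrivial graded piece as the intermediate extension $j_{!*}(\mathcal{F}V)$ via Theorem \ref{ie}, giving intersection cohomology of $X^{\text{min}}$ itself. The higher graded pieces $w_{\leq a+k} w_{\geq a+k} Rj_*(\mathcal{F}V)$ are then shown to be supported on the boundary $i : \bigcup_{i \geq 1} S_i \hookrightarrow X^{\text{min}}$ by using the distinguished triangle comparing $j_{!*}(\mathcal{F}V)$ with $Rj_*(\mathcal{F}V)$ together with $j^* Rj_* = \mathrm{id}$. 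Since $i_* = i_!$ is $t$-exact for the weight structure by Proposition \ref{functors}, and $w_{\geq a+1}$ commutes with $i^*$ on this particular complex, one obtains for each $k \geq 1$ the identification $w_{\leq a+k} w_{\geq a+k} Rj_*(\mathcal{F}V) = i_* w_{\leq a+k} w_{\geq a+k} i^* Rj_*(\mathcal{F}V)$.

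Next, I would invoke Proposition \ref{splitting} to decompose each $w_{\leq a+k} w_{\geq a+k} i^* Rj_*(\mathcal{F}V)$ as a direct sum of shifts of pure perverse sheaves, which further decompose by support into intermediate extensions of smooth sheaves on locally closed smooth loci. The crucial geometric input is that $i^* Rj_*(\mathcal{F}V)$ is constructible with respect to the standard stratification $\{S_i\}$ and automorphic on each stratum: this is the theorem of Burgos--Wildeshaus in the Hodge module setting and Pink in the $l$-adic setting. To transfer this property to the weight-truncated pieces, I would induct using the factorization $w_{\leq \underline{a}} = w^n_{\leq a_n} \circ \cdots \circ w^0_{\leq a_0}$ of Proposition \ref{split} and the distinguished triangles it supplies. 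At each step, the fact that weight truncation of an automorphic sheaf is again automorphic (following \cite{MR2862060} 4.1.2) ensures constructibility and automorphicity are preserved through the truncation.

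The main technical obstacle lies precisely in this inductive step: keeping careful track of the stratification-refined truncations $w^k_{\leq a}$ and verifying, via Corollary \ref{co} and Proposition \ref{split}, that constructibility and automorphicity with respect to the standard stratification survive each local weight truncation. Once this is in hand, the normalization of the closure of a stratum $Y$ is $Y^{\text{min}}$ and intersection cohomology is invariant under normalization, so the decompositions by support on strata unwind the $E_1$-terms into the claimed direct sums of $IH^*(Y^{\text{min}}, \mathcal{F}W)$.
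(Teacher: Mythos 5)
Your proposal reproduces the paper's argument step for step: the spectral sequence from the finite filtration by $w_{\leq a}$, reduction to $V$ irreducible pure of weight $a$, identification of the lowest piece with $j_{!*}(\mathcal{F}V)$ via Theorem~\ref{ie}, localization of higher pieces to the boundary via $i_*=i_!$ and Proposition~\ref{functors}, decomposition by Proposition~\ref{splitting}, the Burgos--Wildeshaus/Pink input, the induction on local truncations $w^k_{\leq a}$ via Proposition~\ref{split} to preserve automorphicity, and the normalization step. This is exactly the route the paper takes, and your outline is sound.
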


\begin{remark}
It is possible to write $E_1^{p,q}$ more explicitly, using Pink (\cite{MR1149032}) or Burgos and Wildeshaus' (\cite{burgos2004hodge}) results. We will do that with Hilbert modular varieties later. 
\end{remark}

We know that the PEL type Shimura variety $X$ has a natural smooth integral model $\mathfrak{X} $ over an open subset $\mathcal{U}$ of $\text{Spec}\mathcal{O}_F $, and the automorphic sheaf $\mathcal{F}V$ extends to $\mathfrak{X}$, which we still denote by $\mathcal{F}V$. Let $Spec (k)$ be a closed point of $\mathcal{U}$, hence $k$ is a finite field. The above theorem gives us two spectral sequences
\[
{}_H E_1^{p,q}= \mathbb{H}^{p+q}(X^{min}(\mathbb{C}), w_{\geq -p} w_{\leq -p} Rj_*({}_H \mathcal{F}V)) \Rightarrow H^{p+q}(X(\mathbb{C}), {}_H \mathcal{F}V)
\]
and 
\[
{}_l E_1^{p,q}= \mathbb{H}^{p+q}(\mathfrak{X}_{\bar{k}}^{min}, w_{\geq -p} w_{\leq -p} Rj_*({}_l \mathcal{F}V)) \Rightarrow H^{p+q}(\mathfrak{X}_{\bar{k}}, {}_l \mathcal{F}V)
\]
where ${}_H \mathcal{F}V$ is the ($\mathbb{C}$-) Hodge module associated to $V$ (it is normalized so that 
$rat({}_H \mathcal{F}V) = \mathcal{F}V[0] \in D^b_c(X(\mathbb{C}), \mathbb{Q})$, 
in other words, 
${}_H \mathcal{F}V \in D^bMHM(X(\mathbb{C}))$ sit in degree $\text{dim}X$ ) in the first spectral sequence and 
${}_H E_1^{p,q}$ 
is obtained from the weight truncation in $D^b MHM(X(\mathbb{C}))$. Similarly, ${}_l \mathcal{F}V$ is the mixed $l$-adic lisse sheaf assoicated to $V$ in the second one, and the spectral sequence is obtained by looking at the weight truncation in $D^b_m(\mathfrak{X}_k, \overline{\mathbb{Q}_l})$
and then passing to the algebraic closure of $k$. Note that the first spectral sequence takes values in (complex) mixed Hodge structures, while the second takes values in $\text{Gal}(\overline{k}/k)$-modules. 
The next theorem provides a comparision between the two spectral sequences. Since it seems not to be in the literature, we give a proof. 

\begin{theorem} \label{comparison thm}
Fix an isomorphism 
$\imath: \mathbb{C} \cong \overline{\mathbb{Q}_l}$, 
then for all but finitely many $\text{Spec}(k) \subset \mathcal{U}$, 
there is a natural isomorphism 
\[
\imath_* H^n(X(\mathbb{C}),{}_H \mathcal{F}V) \cong H^n(\mathfrak{X}_{\overline{k}}, {}_l \mathcal{F}V)
\]
as $\overline{\mathbb{Q}_l}$-vector spaces, and the filtrations induced by ${}_H E_1^{p,q}$ and ${}_l E_1^{p,q}$ are identified through the isomorphism. 
\end{theorem}

\begin{proof}
Recall that
$D^b_m(X/F, \overline{\mathbb{Q}_l})$
is the derived category of horizontal mixed complexes on $X/F$, which is defined by the direct limit of suitable subcategories of $D^b_c(\mathfrak{X}_{\mathcal{V}}, \overline{\mathbb{Q}_l})$, 
indexed by open subsets
$\mathcal{V} \subset \mathcal{U}$. 
Since ${}_l \mathcal{F}V$ extends to $\mathfrak{X}$, it defines an element 
${}_l \mathcal{F}V \in D^b_m(X/F, \overline{\mathbb{Q}_l})$. As $X^{\text{min} }$ also descends to a canonical model
$\mathfrak{X}^{\text{min}}$
over $\mathcal{U}$, 
which is a compactification of $\mathfrak{X}$, 
we have that 
$Rj_* ({}_l \mathcal{F}V) \in D^b_c(\mathfrak{X}^{\text{min}}, \overline{\mathbb{Q}_l})$
defines an element of 
$D^b_m(X^{\text{min}}/F, \overline{\mathbb{Q}_l})$. 

The weight t-structure on 
$D^b_m(X^{\text{min}}/F, \overline{\mathbb{Q}_l})$
gives the truncations 
$w_{\leq a} Rj_* ({}_l \mathcal{F}V) \in D^b_m(X^{\text{min}}/F, \overline{\mathbb{Q}_l})$, which are represented by complexes on $\mathfrak{X}^{\text{min}}_{\mathcal{V}}$ 
for some nonempty open subset 
$\mathcal{V} \subset \mathcal{U} $
by definition of the horizontal complexes. Since there are only finitely many truncations, we can assume that $\mathcal{V}$ is chosen such that all the truncations are represented by complexes on $\mathfrak{X}^{\text{min}}_{\mathcal{V}}$,
which we still denote by 
$w_{\leq a} Rj_* ({}_l \mathcal{F}V) \in D^b_c(\mathfrak{X}^{\text{min}}_{\mathcal{V}}, \overline{\mathbb{Q}_l})$.

Recall that weights on  
$D^b_m(X^{\text{min}}/F, \overline{\mathbb{Q}_l})$
are defined by first reducing to finite fields and then taking the weights there. We have basically from definition that 
\[
(w_{\leq a} Rj_* ({}_l \mathcal{F}V))_k = w_{\leq a} ((Rj_{*} ({}_l \mathcal{F}V))_k)\in D^b_m(\mathfrak{X}^{\text{min}}_k, \overline{\mathbb{Q}_l})
\]
By the lemma below, we have 
$(Rj_{*} ({}_l \mathcal{F}V))_k = Rj_{k*} ({}_l \mathcal{F}V|_{\mathfrak{X}_k})$, 
where 
$j_k : \mathfrak{X}_k \hookrightarrow \mathfrak{X}^{\text{min}}_k$
is the base change of $j$ to $k$. Thus we have 
\begin{equation} \label{err}
(w_{\leq a} Rj_* ({}_l \mathcal{F}V))_k = w_{\leq a}Rj_{k*} ({}_l \mathcal{F}V|_{\mathfrak{X}_k})
\end{equation}
We base change 
$w_{\leq a} Rj_* ({}_l \mathcal{F}V) \in D^b_c(\mathfrak{X}^{\text{min}}_{\mathcal{V}}, \overline{\mathbb{Q}_l})$
to a complex points of $\mathcal{V}$, then the comparison between etale and classical sites and that $\mathcal{F}V$ is of geometric origin provide us with a natural isomorphism
\begin{equation} \label{afjkd}
(w_{\leq a} Rj_* ({}_l \mathcal{F}V))_{\mathbb{C}} \cong \imath_* rat(w_{\leq a} Rj_{\mathbb{C}*} ({}_H \mathcal{F}V))
\end{equation}

Let $\mathcal{V}_{(k)}$ be the etale localization of $\mathcal{V}$ at $spec(k)$ and $\bar{\eta}$ the geometric generic point of $\mathcal{V}_{(k)}$.   
By properness of 
$g: \mathfrak{X}^{\text{min}}_{\mathcal{V}_{(k)}} \rightarrow \mathcal{V}_{(k)}$,
we have 
\[
R\Gamma(\mathfrak{X}_{\bar{\eta}}^{\text{min}}, (w_{\leq a} Rj_* ({}_l \mathcal{F}V))_{\bar{\eta}})=
R\Gamma(\bar{\eta}, (Rg_*w_{\leq a} Rj_* ({}_l \mathcal{F}V))_{\bar{\eta}})=
R\Gamma(\mathcal{V}_{(k)}, Rg_*w_{\leq a} Rj_* ({}_l \mathcal{F}V))
\]
\[
= (Rg_*w_{\leq a} Rj_* ({}_l \mathcal{F}V))_{\bar{k}}
=R\Gamma(\mathfrak{X}^{\text{min}}_{\bar{k}} , (w_{\leq a} Rj_* ({}_l \mathcal{F}V))_{\bar{k}})
\]
Together with eqution (\ref{err}), we have 
\begin{equation} \label{akfkdjfdfsdf}
R\Gamma(\mathfrak{X}_{\bar{\eta}}^{\text{min}}, (w_{\leq a} Rj_* ({}_l \mathcal{F}V))_{\bar{\eta}})=
R\Gamma(\mathfrak{X}^{\text{min}}_{\bar{k}} , w_{\leq a}Rj_{k*} ({}_l \mathcal{F}V|_{\mathfrak{X}_k}))
\end{equation}
Choose an embedding of $\bar{\eta}$ into $\mathbb{C}$, then (\ref{afjkd}) and (\ref{akfkdjfdfsdf}) gives us 
\begin{equation}  \label{jkjkjkl}
\imath_* H^n(X^{\text{min}}(\mathbb{C}),w_{\leq a} Rj_{\mathbb{C}*} ({}_H \mathcal{F}V)) \cong 
H^n(X^{\text{min}}_{\mathbb{C}}, (w_{\leq a} Rj_* ({}_l \mathcal{F}V))_{\mathbb{C}}) 
\end{equation}
\[
=H^n(\mathfrak{X}_{\bar{\eta}}^{\text{min}}, (w_{\leq a} Rj_* ({}_l \mathcal{F}V))_{\bar{\eta}}) 
=H^n(\mathfrak{X}^{\text{min}}_{\bar{k}} , w_{\leq a}Rj_{k*} ({}_l \mathcal{F}V|_{\mathfrak{X}_k}))
\]

We know by definition of the spectral sequence ${}_H E_1^{p,q}$ that the image of 
$\imath_* H^n(X^{\text{min}}(\mathbb{C}),w_{\leq a} Rj_{\mathbb{C}*} ({}_H \mathcal{F}V))$
in 
\[
\imath_* H^n(X^{\text{min}}(\mathbb{C}), Rj_{\mathbb{C}*} ({}_H \mathcal{F}V)) = 
\imath_* H^n(X(\mathbb{C}), {}_H \mathcal{F}V)
\]
is the filtration corresponding to 
${}_H E_1^{p,q}$, and similarly for
$H^n(\mathfrak{X}^{\text{min}}_{\bar{k}} , w_{\leq a}Rj_{k*} ({}_l \mathcal{F}V|_{\mathfrak{X}_k}))$. 
The isomorphism (\ref{jkjkjkl}) for $a$ large enough defines the isomorphism in the statement of the theorem, and it respects the filtration by what we have just observed. 
\end{proof}

\begin{lemma}
Let $j: \mathfrak{X} \hookrightarrow \mathfrak{X}^{\text{min}}$
be the inclusion of a PEL Shimura variety into its minimal compactification, defined over 
$\mathcal{U} \subset \mathcal{O}_F$, and $V$ an algebraic representation of the group $G$ associated to the Shimura variety. Then for $Spec(k) \subset \mathcal{U}$ a closed point, we have an isomorphism
\[
(Rj_{*} ({}_l \mathcal{F}V))_k \cong Rj_{k*} ({}_l \mathcal{F}V|_{\mathfrak{X}_k})
\]
induced by the base change map.  

\end{lemma}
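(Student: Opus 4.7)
The plan is to insert a smooth toroidal compactification and reduce to proper base change plus a local computation near a relative normal crossings boundary. After shrinking $\mathcal{U}$ if necessary, we may assume that a smooth toroidal compactification $\mathfrak{X}^{\text{tor}}\to\mathcal{U}$ of $\mathfrak{X}$ is available with complement $D := \mathfrak{X}^{\text{tor}}\setminus\mathfrak{X}$ a relative normal crossings divisor, and with $l$ invertible on $\mathcal{U}$. Let $\bar{j}:\mathfrak{X}\hookrightarrow\mathfrak{X}^{\text{tor}}$ denote the open immersion and $\pi:\mathfrak{X}^{\text{tor}}\to\mathfrak{X}^{\text{min}}$ the canonical proper morphism, so that $j=\pi\circ\bar{j}$.

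Since $\pi$ is proper, proper base change yields $(R\pi_{*}F)_{k}\cong R\pi_{k*}(F_{k})$ for any constructible complex $F$. Applying this to $F = R\bar{j}_{*}\,{}_{l}\mathcal{F}V$ reduces the lemma to proving
\[
(R\bar{j}_{*}\,{}_{l}\mathcal{F}V)_{k}\ \cong\ R\bar{j}_{k*}\bigl({}_{l}\mathcal{F}V|_{\mathfrak{X}_{k}}\bigr),
\]
which is a local statement along $D$. The sheaf ${}_{l}\mathcal{F}V$ is lisse on $\mathfrak{X}$, and in the PEL setting its monodromy around each branch of $D$ is unipotent, as follows from the degeneration theory of abelian varieties recalled in Section~5; in any case it is tame, since $l$ is invertible on $\mathcal{U}$. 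Hence the stalks of $R\bar{j}_{*}\,{}_{l}\mathcal{F}V$ admit an explicit Koszul-type description in the logarithms of the monodromy operators around the local branches of $D$. These operators depend only on the relative fundamental group of a product of punctured discs at a boundary point, which is unchanged after restriction to a closed fiber of $\mathfrak{X}^{\text{tor}}\to\mathcal{U}$, so the base change map is an isomorphism. Equivalently, one may invoke Pink's formula (\cite{MR1149032}): it expresses $R\bar{j}_{*}\,{}_{l}\mathcal{F}V$ stratum-wise in terms of cohomology of unipotent radicals of parabolic subgroups of $G$ with coefficients in $V$, and these data are determined by the cone decomposition defining $\mathfrak{X}^{\text{tor}}$ together with $V$, neither of which depends on the fiber of $\mathcal{U}$.

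The main obstacle is this tame-base-change step, i.e.\ verifying rigorously that $R\bar{j}_{*}$ commutes with the non-proper base change from $\mathcal{U}$ to $\text{Spec}(k)$. This is a standard but nontrivial input, drawn either from the theory of tame \'etale cohomology along a relative normal crossings divisor, or directly from Pink's explicit formula applied over both the generic and the closed points of $\mathcal{U}$; shrinking $\mathcal{U}$ to avoid the finite set of bad primes for the degeneration data removes all residual subtleties and yields the compatibility with the base change map, which is what is needed in the proof of Theorem~\ref{comparison thm}.
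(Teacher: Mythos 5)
Your proposal is correct and shares the basic skeleton of the paper's proof — both factor $j$ through a toroidal compactification $\mathfrak{X}^{\text{tor}} \to \mathfrak{X}^{\text{min}}$ and use proper base change for the contraction map, reducing everything to showing that $R\bar{j}_*$ (pushforward from $\mathfrak{X}$ to $\mathfrak{X}^{\text{tor}}$) commutes with passage to the special fiber. The difference is how that remaining step is handled. You work directly with the local system ${}_l\mathcal{F}V$: you observe it has unipotent (in any case tame) monodromy along the branches of the relative normal-crossings boundary $D$, and then appeal to the resulting log/Koszul description of $R\bar{j}_*$ together with constancy of the relative tame fundamental group along the boundary to get base-change compatibility. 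The paper instead reduces to the \emph{constant} sheaf: it uses that ${}_l\mathcal{F}V$ is, up to Tate twist, a direct summand of $Rf_*\overline{\mathbb{Q}_l}$ for $f:\mathcal{A}^{\times n}\to\mathfrak{X}$ a power of the universal abelian scheme, replaces $\mathcal{A}^{\times n}$ by a $\mathbb{Z}_{(l)}^\times$-isogenous $Y$ that extends to a proper $\overline{Y}\to\mathfrak{X}^{\text{tor}}_\Sigma$ with $\overline{Y}\setminus Y$ a relative NCD (a nontrivial input from \cite{MR3719245}), and then applies proper base change for $\overline{\pi}:\overline{Y}\to\mathfrak{X}^{\text{tor}}_\Sigma$ together with the SGA $4\tfrac12$ base-change result for the constant sheaf along a relative NCD. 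What your route buys is conceptual directness — it avoids the Kuga–Sato detour and applies to any tamely ramified lisse coefficient system, not just automorphic ones. What the paper's route buys is that it only invokes the classical constant-coefficient statement of SGA $4\tfrac12$, sidestepping the need for the more general ``tame base change along a relative NCD'' for nonconstant coefficients. You flag this last point correctly as the nontrivial input you are taking for granted; it is a genuine theorem (provable, e.g., by Abhyankar's lemma reducing the tame local system to the constant case along a Kummer cover, or from the theory of universal local acyclicity) but not literally the statement the paper cites. Your alternative suggestion of invoking Pink's formula is less satisfying here, since you would still need to verify that Pink's formula is itself compatible with specialization from the generic to the closed fiber of $\mathcal{U}$, which is essentially the content of the lemma being proved.
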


\begin{proof}
We know that ${}_l \mathcal{F}V$ is up to a Tate twist a summand of 
$Rf_* \overline{\mathbb{Q}_l}$,
where 
$f: \mathcal{A}^{\times n}  \rightarrow \mathfrak{X}$
is the structure map of the $n$-th fiber product of the universal abelian scheme 
$\mathcal{A} \rightarrow \mathfrak{X}$,
i.e. $ \mathcal{A}^{\times n} := \mathcal{A} \times_{\mathfrak{X}} \cdots \times_{\mathfrak{X}} \mathcal{A}$, 
for some integer $n$. By \cite{MR3719245} 4.1, we have that 
$\mathcal{A}^{\times n}$ is 
$\mathbb{Z}^{\times}_{(l)}$-isogenous to another abelian scheme $Y $ over $\mathfrak{X}$ such that $Y$ extends to a proper scheme $\overline{Y}$ over $\mathfrak{X}^{\text{tor}}_{\Sigma}$ 
for some choice of smooth projective toroidal compactification 
$\mathfrak{X}^{\text{tor}}_{\Sigma} /\mathcal{U}$ 
with 
$\overline{Y} \setminus Y$ 
union of normal corssing diviosrs over $\mathcal{U}$. Since $\mathbb{Z}^{\times}_{(l)}$-isogeny does not change Tate modules, we see that ${}_l \mathcal{F}V$ is (up to a Tate twist) a summand of 
$R\pi_* \overline{\mathbb{Q}_l}$,
where 
$\pi : Y \rightarrow \mathfrak{X}$ the structure map. Thus it suffices to show 
\[
(Rj_{*} R\pi_* \overline{\mathbb{Q}_l})_k \cong Rj_{k*} (R\pi_* \overline{\mathbb{Q}_l}|_{\mathfrak{X}_k})
\]

Let 
$\overline{\pi} : \overline{Y} \rightarrow \mathfrak{X}^{\text{tor}}_{\Sigma}$
be the extension of $\pi$, 
$J: \mathfrak{X} \rightarrow \mathfrak{X}^{\text{tor}}_{\Sigma}$,
$J_Y : Y \rightarrow \overline{Y}$
the inclusion, which form a catesian diagram
\[
\begin{tikzcd}
Y \arrow[r, hook, "J_Y"] \arrow[d,"\pi"] & 
\overline{Y} \arrow[d, "\overline{\pi}"]
\\
\mathfrak{X} \arrow[r ,hook,  "J"] & 
\mathfrak{X}^{tor}_{\Sigma} 
\end{tikzcd}
\]
Let 
$\phi:  \mathfrak{X}^{tor}_{\Sigma} \rightarrow \mathfrak{X}^{min}$ 
be the natural proper projection map. We denote by $\pi_k $ for the base change of $\pi$ to $k$, and similarly for the other maps. We know that 
$j = \phi \circ J$, 
so 
\[
(Rj_{*} R\pi_* \overline{\mathbb{Q}_l})_k =
(R\phi_{*} RJ_* R\pi_* \overline{\mathbb{Q}_l})_k=
R\phi_{k*} (RJ_* R\pi_* \overline{\mathbb{Q}_l})_k 
\]
\[
=R\phi_{k*}(R\overline{\pi}_* RJ_{Y*}  \overline{\mathbb{Q}_l})_k =
R\phi_{k*} R\overline{\pi}_{k*} ( RJ_{Y*}  \overline{\mathbb{Q}_l})_k
\]
by proper base change. Moreover, 
\[
( RJ_{Y*}  \overline{\mathbb{Q}_l})_k =
  RJ_{Yk*}  \overline{\mathbb{Q}_l}
\]
by 5.1.3 in 7.5 of SGA 4.5 (\cite{MR463174}), where we use that 
$\overline{Y} \setminus Y$ 
are union of normal crossing divisors over $\mathcal{U}$. This gives 
\[
(Rj_{*} R\pi_* \overline{\mathbb{Q}_l})_k =
R\phi_{k*} R\overline{\pi}_{k*} RJ_{Yk*}  \overline{\mathbb{Q}_l}=
R\phi_{k*} RJ_{k*} R\pi_{k*}\overline{\mathbb{Q}_l}=
Rj_{k*} (R\pi_* \overline{\mathbb{Q}_l}|_{\mathfrak{X}_k})
\]
by proper base change again, proving the claim. 
\end{proof}

Lastly, we record the functoriality of the spectral sequence $E^{p,q}_1$. 

\begin{proposition} \label{func.s.s}
Let $\overline{X}$ and $\overline{Y}$ be varieties defined over a field $k$ which is either finitely generated over its prime field or the complex number, as in the previous section. Let  $X \subset \overline{X}$ and 
$Y \subset \overline{Y}$ 
be nonempty open subvarieties,  
$\overline{f}: \overline{X} \rightarrow \overline{Y}$
a finite morphism which restricts to a morphism 
$f : X \rightarrow Y$ making the following diagram cartesian 
\[
\begin{tikzcd}
X \arrow[r, hook, "j_X"] \arrow[d, "f"] &
\overline{X} \arrow[d,"\overline{f}"] \\
Y \arrow[r,hook, "j_Y"] &
\overline{Y} 
\end{tikzcd}
\]
Let $\mathcal{F} \in D^b_m(X)$
and 
$\mathcal{G} \in D^b_m(Y)$ 
together with a morphism 
$h: \mathcal{G} \rightarrow Rf_*\mathcal{F}$, 
then $h$ induces a morphism 
\[
{}_Y E_1^{p,q} \longrightarrow {}_X E_1^{p,q}
\]
between the weight spectral sequences
\[
{}_X E_1^{p,q}= \mathbb{H}^{p+q}(\overline{X}, w_{\geq -p} w_{\leq -p} Rj_{X*}\mathcal{F}) \Rightarrow \mathbb{H}^{p+q}(X, \mathcal{F})
\]
\[
{}_Y E_1^{p,q}= \mathbb{H}^{p+q}(\overline{Y}, w_{\geq -p} w_{\leq -p} Rj_{Y*}\mathcal{G}) \Rightarrow \mathbb{H}^{p+q}(Y, \mathcal{G})
\]
In particular the morphism 
$R^{p+q}\Gamma(Y, -) (h)  : \mathbb{H}^{p+q}(Y, \mathcal{G}) \rightarrow \mathbb{H}^{p+q}(X, \mathcal{F})$
respects filtrations induced by the spectral sequences. 
\end{proposition}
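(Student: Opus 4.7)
The plan is to reduce the statement to the fact that, for the finite morphism $\overline{f}$, the pushforward $R\overline{f}_{*}$ is exact for the weight $t$-structure, so it commutes with all the truncation functors involved. First, because $\overline{f}$ is finite, hence proper, proper base change along the given cartesian square produces a natural isomorphism
\[
Rj_{Y*}\, Rf_{*}\mathcal{F} \;\cong\; R\overline{f}_{*}\, Rj_{X*}\mathcal{F}.
\]
Composing with $Rj_{Y*}(h)$ gives a canonical morphism
\[
\tilde h \colon Rj_{Y*}\mathcal{G} \;\longrightarrow\; R\overline{f}_{*}\, Rj_{X*}\mathcal{F}
\]
in $D^{b}_{m}(\overline{Y})$.

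Next I would show that $R\overline{f}_{*}$ commutes with the weight truncations $w_{\leq a}$ and $w_{\geq a}$. Since $\overline{f}$ is finite, the fibers have dimension $0$, so Proposition \ref{functors} gives both
\[
R\overline{f}_{*}\bigl(\prescript{w}{}{D}^{\leq a}(\overline{X})\bigr) \subset \prescript{w}{}{D}^{\leq a}(\overline{Y}), \qquad R\overline{f}_{*}\bigl(\prescript{w}{}{D}^{\geq a}(\overline{X})\bigr) \subset \prescript{w}{}{D}^{\geq a}(\overline{Y})
\]
(noting $R\overline{f}_{*}=R\overline{f}_{!}$ for $\overline{f}$ finite). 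Applying $R\overline{f}_{*}$ to the defining triangle $w_{\leq a}K \to K \to w_{\geq a+1}K \xrightarrow{+1}$ on $\overline{X}$ therefore produces a triangle on $\overline{Y}$ whose first term lies in $\prescript{w}{}{D}^{\leq a}$ and whose third term lies in $\prescript{w}{}{D}^{\geq a+1}$; by uniqueness of the weight truncation triangle we obtain canonical isomorphisms
\[
R\overline{f}_{*}\, w_{\leq a} \;\cong\; w_{\leq a}\, R\overline{f}_{*}, \qquad R\overline{f}_{*}\, w_{\geq a} \;\cong\; w_{\geq a}\, R\overline{f}_{*}.
\]

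Combining these, apply $w_{\geq -p}\,w_{\leq -p}$ to $\tilde h$ to obtain a morphism
\[
w_{\geq -p}\,w_{\leq -p}\, Rj_{Y*}\mathcal{G} \;\longrightarrow\; w_{\geq -p}\,w_{\leq -p}\, R\overline{f}_{*}\, Rj_{X*}\mathcal{F} \;\cong\; R\overline{f}_{*}\bigl(w_{\geq -p}\,w_{\leq -p}\, Rj_{X*}\mathcal{F}\bigr).
\]
Taking $\mathbb{H}^{p+q}(\overline{Y},-)$ and using the identity $R\Gamma(\overline{Y},R\overline{f}_{*}(-)) = R\Gamma(\overline{X},-)$ yields the required map on $E_{1}$-terms
\[
{}_{Y}E_{1}^{p,q} \;\longrightarrow\; {}_{X}E_{1}^{p,q}.
\]
Functoriality of the entire construction with respect to the truncation triangles shows that this map is compatible with the differentials and with the abutment map $R\Gamma(Y,h)\colon \mathbb{H}^{p+q}(Y,\mathcal{G}) \to \mathbb{H}^{p+q}(X,\mathcal{F})$, so the induced filtrations are respected.

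The main subtle point is simply verifying the proper base-change isomorphism and the $t$-exactness statement in the horizontal/mixed setting of Morel (rather than just over a fixed finite field), but both are in the formalism of $D^{b}_{m}$ we have recalled; once these are in hand the construction is formal. I would not anticipate any nontrivial obstacle beyond correctly assembling these compatibilities.
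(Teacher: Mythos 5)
Your proposal is essentially the paper's own proof: pass to $Rj_{Y*}\mathcal{G}\to R\overline f_* Rj_{X*}\mathcal{F}$, use that $\overline f$ finite (so $R\overline f_*=R\overline f_!$ and $d=0$ in Proposition~\ref{functors}) makes $R\overline f_*$ commute with $w_{\leq a}$ and $w_{\geq a}$, then apply $R\Gamma(\overline Y,-)$. One small correction: the identity $Rj_{Y*}Rf_*\mathcal{F}\cong R\overline f_*Rj_{X*}\mathcal{F}$ is not proper base change but simply functoriality of derived pushforward along the equal composites $j_Y\circ f=\overline f\circ j_X$ (cartesianness is only needed to ensure the restricted $f$ is well defined with $X=\overline f^{-1}(Y)$), so you should not cite base change here.
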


\begin{proof}
Observe that $h$ induces a morphism
\begin{equation} \label{ kjkl}
Rj_{Y*} \mathcal{G} \overset{Rj_{Y*} h}{ \longrightarrow} Rj_{Y*} Rf_* \mathcal{F} = R\overline{f}_* Rj_{X*} \mathcal{F}
\end{equation}
and applying
$R\Gamma(\overline{Y}, -)$ 
to it recovers the usual morphism induced by $h$
\[
R\Gamma(Y, -) (h)  : R\Gamma(Y, \mathcal{G}) \longrightarrow R\Gamma(X, \mathcal{F})
\]
which is the sought-after morphism on $E_{\infty}$. 
Applying the functor $w_{\leq a}$ to (\ref{ kjkl}) gives us 
\begin{equation} \label{aaaa}
w_{\leq a} Rj_{Y*} \mathcal{G} \longrightarrow w_{\leq a} R\overline{f}_* Rj_{X*} \mathcal{F} 
= R\overline{f}_* w_{\leq a} Rj_{X*} \mathcal{F}
\end{equation}
where in the last equality we use that $\overline{f}$ is finite, hence $\overline{f}_* =\overline{f}_!$ preserves both $\prescript{w}{}{D}^{\leq a}$ 
and 
$\prescript{w}{}{D}^{\geq a}$ 
by proposition \ref{functors} ($d=0$ as $f$ is finite). Now (\ref{aaaa}) shows that the morphism (\ref{ kjkl}) preserves the filtration induced by the weight truncation $w_{\leq a}$, hence defining a morphism between spectral sequences as desired. 
\end{proof}

\section{PEL moduli problems}

\subsection{Kottwitz's PEL moduli problems} \label{kpelmp}

We begin by recalling the definition of PEL moduli problems given by Kottwitz in \cite{MR1124982}. We follow the notation of Lan (\cite{lan2013arithmetic}). 

Let $B$ be a finite dimensional simple algebra over $\mathbb{Q}$ with a positive involution $*$, and $\mathcal{O}$ a $\mathbb{Z}$-order in $B$ that is invariant under $*$ and maximal at $p$, where $p$ is a rational prime that is unramified in B, i.e. $B_{\mathbb{Q}_p} \cong M_n(K)$ for some finite unramified extension $K$ of $\mathbb{Q}_p$. Let $L$ be an $\mathcal{O}$-lattice in a finite dimensional $B$-module $V$, and $\langle \cdot, \cdot\rangle: L \times L \rightarrow \mathbb{Z}(1) $ an alternating nondegenerate bilinear form on $L$ which satisfies $\langle\alpha x,y\rangle = \langle x, \alpha^*y\rangle$ for $\alpha \in \mathcal{O}$ and $x,y \in L$. We also assume that when localized at $p$, $L$ is self-dual with respect to $\langle\cdot, \cdot\rangle$. Here we denote $\mathbb{Z}(1) := \text{Ker}(\text{exp}: \mathbb{C} \rightarrow \mathbb{C}^*)$. A choice of $\sqrt{-1}$ gives an identification of it with $\mathbb{Z}$, but we do not fix such an identification. 

We assume that there is an $\mathbb{R}$-algebra homormorphism $h : \mathbb{C} \rightarrow \text{End}_{\mathcal{O}_{\mathbb{R}}}(L_{\mathbb{R}})$ such that $\langle h(z)x,y\rangle_{\mathbb{R}}=\langle x,h(\bar{z})y\rangle_{\mathbb{R}}$ and $\langle\cdot, h(\sqrt{-1}) \cdot\rangle_{\mathbb{R}}$  is symmetric and positive definite, if we fix an identification $\mathbb{Z}(1) \cong \mathbb{Z}$ so that $\langle\cdot, \cdot\rangle_{\mathbb{R}}$ takes values in $\mathbb{R}$. Let 
\[G^*(R) := \{(g,r) \in GL_{\mathcal{O}_R} (L_R) \times \textbf{G}_m(R): \langle gx,gy\rangle = r\langle x,y\rangle, \forall x,y \in L_R \} \]    
for an $\mathbb{Z}$-algebra $R$, this defines an algebraic group $G^*$ over $\mathbb{Z}$. We assume that the derived group has type $\textbf{A}$ or 
$\textbf{C}$ in the classification. 

The morphism $h$ defines a decomposition $L \otimes \mathbb{C} = V_0 \oplus V_0^c$, where $h(z)$ acts as $1 \otimes z$ on $V_0$ and $1 \otimes \bar{z}$ on $V_0^c$. We know that $V_0 $ is an $\mathcal{O} \otimes \mathbb{C}$-module since $h(z)$ commutes with $\mathcal{O}_{\mathbb{R}}$ by definition. The reflex field $F_0$ is defined to be the field of definition of $V_0$ as an $\mathcal{O} \otimes \mathbb{C}$-module, see \cite{lan2013arithmetic} 1.2.5.4 for more details.  

\begin{definition} \label{isogeny}
Let $\mathcal{H}$ be an open compact subgroup of $G^*(\mathbb{A}^{p\infty})$, $M_{\mathcal{H}}^{rat}$ is defined to be the category fibered in groupoids over the category of locally Noetherian schemes defined over $\mathcal{O}_{F_0} \otimes \mathbb{Z}_{(p)}$, whose fiber over $S$ consists of tuples 
\[
(A, \lambda, i, [\hat{\alpha}]_{\mathcal{H}})
\]
where $A$ is an abelian scheme over S, 
\[
\lambda : A \rightarrow A^{\vee}
\]
is a prime-to-$p$ quasi-polarization of $A$, and 
\[
i : \mathcal{O} \otimes \mathbb{Z}_{(p)} \rightarrow End_S(A) \otimes \mathbb{Z}_{(p)}
\]
is a ring homomorphism such that 
\[
i(b)^{\vee} \circ \lambda= \lambda \circ i(b^*) 
\]
for every $b \in \mathcal{O} \otimes \mathbb{Z}_{(p)}$, and  $Lie_{A/S}$ satisfies the determinant condition specified by $h$, see \cite{lan2013arithmetic} 1.3.4.1 for a precise formulation. Moreover, if we choose a geometric point $\bar{s}$ in each connected component of $S$, $[\hat{\alpha}]_{\mathcal{H}}$ is an assignment to each $\bar{s}$ a  $\pi_1(S,\bar{s})$-invariant $\mathcal{H}$-orbit of $\mathcal{O} \otimes \mathbb{A}^{p\infty}$-equivariant isomorphisms 
\[\hat{\alpha} : L \otimes \mathbb{A}^{p \infty} \overset{\sim}{\rightarrow} V^pA_{\bar{s}}\]
together with an isomorphism 
\[ \nu(\hat{\alpha}) : \mathbb{A}^{p\infty}(1) \overset{\sim}{\rightarrow} V^p(\textbf{G}_{m,\bar{s}}) \]
such that 
\[
\langle \hat{\alpha}(x), \hat{\alpha}(y) \rangle_{\lambda} = \nu(\hat{\alpha}) \circ \langle x,y \rangle
\]
where $x,y \in L \otimes \mathbb{A}^{p\infty}, \langle \cdot, \cdot\rangle_{\lambda}$ is the Weil pairing associated to the polarization $\lambda$, and $V^p$ is the prime to $p$ rational Tate module of either $A$ or $\textbf{G}_m$. 

The isomorphisms in the groupoid are defined to be $(A, \lambda, i, [\hat{\alpha}]_{\mathcal{H}}) \sim (A', \lambda', i', [\hat{\alpha}']_{\mathcal{H}})$ if and only if there is a prime to $p$ quasi-isogeny $f : A \rightarrow A'$ such that over each connected component of $S$, 
\[
\lambda = r f^{\vee} \circ \lambda' \circ f
\]
for some $r \in \mathbb{Z}_{(p), \rangle0}^{\times}$, $f\circ i(b) = i'(b) \circ f $ for all $b \in \mathcal{O} \otimes \mathbb{Z}_{(p)}$. Moreover, we require that for each geometric point $\bar{s}$ of $S$, $\hat{\alpha'}^{-1} \circ V^p(f) \circ \hat{\alpha} \in \mathcal{H}$, and $\nu(\hat{\alpha}')^{-1} \circ \nu(\hat{\alpha}) \in \nu(\mathcal{H})r \subset \mathbb{A}^{p\infty,\times}$ for the $r$ specified by $\lambda = r f^{\vee} \circ \lambda' \circ f$ at $\bar{s}$. 
\end{definition}

\begin{remark} \label{r1}
Note that the moduli problem depends only on the $B$-module $V$, different choices of $L$ will only affect the choices of maximal compact subgroup of $G^*(\mathbb{A}^{p\infty})$. This will be helpful when we consider moduli problems defined by isomorphism classes where the choice of $L$ is important. We can compare different moduli problems defined by different choices of $L$ by identifying the moduli problems with the above one using isogeny classes.  
\end{remark}

\begin{remark} \label{r2}
We work with locally Noetherian test schemes rather than arbitrary schemes because etale fundamental groups do not behave well for general schemes. General (affine) schemes can be written as inverse limits of locally Noetherian schemes, and we can extend the moduli functor to the general case by taking limits.   
\end{remark}

\begin{remark} \label{r3}
$\nu(\hat{\alpha})$ is a rigidification of Kottwitz's definition of PEL moduli problems, where he allows the ambiguity that the Weil pairing is equal to the fixed pairing $\langle\cdot,\cdot\rangle$ up to a similitude factor. If $L \neq 0$, $\nu(\hat{\alpha})$ is uniquely determined by $\hat{\alpha}$, hence the two definitions are equivalent. If $L =0$, $\nu(\hat{\alpha})$ is the only non-trivial data. We include $L=0$ case because it will appear in the boundary of the minimal compactification of PEL Shimura varieties. 
\end{remark}

It is not hard to see that the moduli space is represented by an algebraic stack that is smooth of finite type over $\mathcal{O}_{F_0} \otimes \mathbb{Z}_{(p)}$, and it is even represented by a finite type smooth scheme over $\mathcal{O}_{F_0} \otimes \mathbb{Z}_{(p)}$ if $\mathcal{H}$ is small enough. We will use the same symbol $M_{\mathcal{H}}^{rat}$ to denote the stack or scheme it represents. 

The above definition uses isogeny classes of abelian varieties, we will next define another moduli problem using isomorphism classes. This is necessary for the toroidal compactifications because semiabelian varieties do not behave well under isogeny.

We will only define moduli problems for principal level structures, the general level structures can be defined by taking orbits of the principal ones, but we choose to ignore them for reasons to be explained later.  

\begin{definition} \label{isomorphism}
Let $n$ be a natural number prime to $p$, and define $M_n$ to be the category fibered in groupoids over the category of schemes over $\mathcal{O}_{F_0} \otimes \mathbb{Z}_{(p)}$, whose fiber over $S$ is  the groupoids with objects tuples 
\[
(A, \lambda, i, (\alpha_n, \nu_n))
\]
where $A$ is an abelian scheme over $S$, 
\[
\lambda : A \rightarrow A^{\vee}
\]
is a prime-to-$p$ polarization, 
\[
i : \mathcal{O} \rightarrow End_S(A)
\]
a ring homomorphism such that $i(b)^{\vee} \circ \lambda= \lambda \circ i(b^*) $ for every $b \in \mathcal{O}$. We require that $Lie_{A/S}$ satisfies the determinant condition given by $h$. The principal level-n structure is an $\mathcal{O}$-equivariant isomorphism 
\[\alpha_n : (L/nL)_S \overset{\sim}{\rightarrow} A[n]\]
together with an isomorphism 
\[\nu_n : (\mathbb{Z}/n \mathbb{Z}(1))_S \overset{\sim}{\rightarrow} \mu_{n,S}\]
of group schemes over S such that $\langle \alpha_n(x),\alpha_n(y) \rangle_{\lambda} = \nu_n \circ \langle x,y \rangle$ for $x,y \in (L/nL)_S$. The $(\alpha_n, \nu_n)$ has to satisfy a sympletic-liftablity condition which roughly says that it can be lifted to a level-m structure for arbitary $m$ that is prime to p and divisible by $n$, see \cite{lan2013arithmetic} 1.3.6.2 for precise definitions. 

The isomorphisms in the groupoid are defined to be 
\[
(A, \lambda, i, (\alpha_n, \nu_n)) \sim (A', \lambda', i', (\alpha_n’, \nu_n’))
\]
if and only if there is an isomorphism $f : A \rightarrow A'$ such that 
\[
\lambda = f^{\vee} \circ \lambda' \circ f,
\]
$f\circ i(b) = i'(b) \circ f $ for all $b \in \mathcal{O}$, and $\alpha_n’ = f \circ \alpha_n$, $\nu_n = \nu_n’$.
\end{definition}

Let 
\[\mathcal{U}(n) := Ker(G^*(\hat{\mathbb{Z}}^p) \rightarrow G^*(\mathbb{Z}/n\mathbb{Z}))\]
then we can show that $M_n \cong M_{\mathcal{U}(n)}^{\text{rat}}$, the map being the obvious one sending PEL abelian varieties to their isogeny classes. The inverse map is to choose an abelian variety in each isogeny class, determined by the choice of the $\mathcal{O}$-lattice $L$ inside $B$-module $V$, see \cite{lan2013arithmetic} 1.4.3 for a careful proof of the isomorphism. One subtle point is that $M_n$ and $M_{\mathcal{U}(n)}^{\text{rat}}$ are defined over different category of test schemes. We can show that $M_n$ is determined by its value on locally Noetherian schemes, by writing any (affine) scheme as an inverse limit of locally Noetherian ones, and note that the moduli functor $M_n$, being fintely presented, commutes with inverse limits. See also remark \ref{r2}.

\subsection{Similitude PEL moduli problems}

Let $F^c$ be the center of $B$, which is a number field by simplicity of $B$. Let $F:= F^{c,*=1}$, and we assume that $\mathcal{O}_F \subset \mathcal{O}$. We define a group scheme $H$ over $\mathcal{O}_F$ by 
\[H(R) := \{(g,r) \in GL_{\mathcal{O} \otimes_{\mathcal{O}_F }R} (L \otimes_{\mathcal{O}_F}R) \times \textbf{G}_m(R): \langle gx,gy\rangle = r\langle x,y\rangle, \forall x,y \in L\otimes_{\mathcal{O}_F}R\}\]
for an $\mathcal{O}_F$-algebra $R$. Let
\[G := Res_{\mathcal{O}_F/\mathbb{Z}} H\]
and we have the similitude map $\nu: G \rightarrow Res_{\mathcal{O}_{F}/\mathbb{Z}} \textbf{G}_m$, then 
\[G^* = \nu^{-1} (G_{m, \mathbb{Z}}) \subset G\]
Note that 
$h : \mathbb{C} \rightarrow End_{\mathcal{O}_{\mathbb{R}}}(L_{\mathbb{R}})$ 
defines a Deligne cocharacter 
$Res_{\mathbb{C}/\mathbb{R}} \textbf{G}_m \rightarrow G^*$, 
hence also 
$Res_{\mathbb{C}/\mathbb{R}} \textbf{G}_m \rightarrow G$. The conjugacy classes of them define Shimura varieties associated to $G$ and $G^*$, which we will denote by $Sh_K(G,h)$ and $Sh_{\mathcal{H}}(G^*,h)$ for compact open subgroups
$K \subset G(\hat{\mathbb{Z}}^p)$
and $\mathcal{H} \subset G^*(\hat{\mathbb{Z}}^p)$. This is abbreviated notions for $Sh_{K G(\mathbb{Z}_p)} (G,h)$ and $Sh_{\mathcal{H} G^*(\mathbb{Z}_p)} (G^*,h)$, which might be more standard.   

We have made the assumption that our PEL datum has type $\textbf{A}$ or $\textbf{C}$, then $M_{\mathcal{H}}^{rat}$ is an integral model of the Shimura variety  $Sh_{\mathcal{H}}(G^*, h)$ in case $(\textbf{A}, even)$ or $\textbf{C}$. In the case $(\textbf{A}, odd)$, $M_{\mathcal{H}}^{rat}$ is a disjoint union of integral models of the Shimura variety  $Sh_{\mathcal{H}}(G^*,h)$, due to the failure of Hasse principle. 

We will be working with the Shimura variety associated to $G$ instead of $G^*$, and one advantage of $G$ is that it always satisfies the Hasse principle. For our purpose, the more important reason are that Shimura varieties associated to $G$ is plectic, while Shimura varieties of $G^*$ are only plectic in positive dimension. More precisely, the difference between the two Shimura varieties is that they have different sets of connected components, and we have $\pi_0(Sh_K(G,h))$ is plectic while  $\pi_0(Sh_{\mathcal{H}}(G^*,h))$ is not, i.e. $\pi_0(Sh_K(G,h))$ is the zero dimensional Shimura variety associated to 
$\text{Res}_{F/\mathbb{Q}} \textbf{G}_m $ 
(plectic), 
while
$\pi_0(Sh_{\mathcal{H}}(G^*,h))$
has group $\textbf{G}_m$ (not plectic).  The plectic nature of $Sh_K(G,h)$ will give rise to the so called partial Frobenius, which will play an important role in our study. 

On the other hand, the price to pay when changing to $G$ is that we do not have a good fine moduli problem represented by $Sh_K(G,h)$. Instead, $Sh_K(G,h)$ will only be a coarse moduli space. We now give the moduli problem of $Sh_K(G,h)$. We will follow  Nekov\'a\v r's approach as in \cite{JanNekovar2018}. 

\begin{definition} \label{similitude}
Fix $\alpha \in (F \otimes \mathbb{A}^{p \infty})^{\times} $ and $K \subset G(\hat{\mathbb{Z}}^p)$ open compact subgroup, let us define $\mathscr{M}_{\alpha, K}$ to be the category fibered in groupoids over the category of locally Noetherian schemes over $\mathcal{O}_{F_0} \otimes \mathbb{Z}_{(p)}$, whose objects over $S$ are quadruples 
\[
(A, \lambda, i, \overline{(\eta, u)})
\]
where $A$ is an abelian scheme over S, 
\[
\lambda : A \rightarrow A^{\vee}
\]
is a prime-to-$p$ quasi-polarization of $A$, and 
\[
i : \mathcal{O}  \rightarrow End_S(A) 
\]
is a ring homomorphism such that $i(b)^{\vee} \circ \lambda= \lambda \circ i(b^*) $ for every $b \in \mathcal{O}$, and  $Lie_{A/S}$ satisfies the determinant condition specified by $h$, see \cite{lan2013arithmetic} 1.3.4.1 for a precise formulation. Moreover, if we choose a geometric point $\bar{s}$ in each connected component of $S$, the level structure $\overline{(\eta, u)}$ is an assignment to each $\bar{s}$ a  $\pi_1(S,\bar{s})$-invariant $K$-orbit of $\mathcal{O} \otimes \mathbb{A}^{p\infty}$-equivariant isomorphisms 
\[\eta : L \otimes \mathbb{A}^{p \infty} \overset{\sim}{\rightarrow} V^pA_{\bar{s}}\]
together with an $\mathcal{O}_F \otimes \hat{\mathbb{Z}}^{p}$-equivariant isomorphism 
\[ u : \mathfrak{d}_F^{-1} \otimes \hat{\mathbb{Z}}^{p}(1) \overset{\sim}{\rightarrow} T^p(\mathfrak{d}_F^{-1} \otimes_{\mathbb{Z}} \textbf{G}_{m,\bar{s}}) \]
such that 
\[\langle \eta(x), \eta(y) \rangle_{\lambda} =Tr_{\mathcal{O}_F/\mathbb{Z}} ( u \circ (\alpha \langle x,y\rangle_F))\]
where $x,y \in L \otimes \mathbb{A}^{p\infty}$, 
$\mathfrak{d}_F^{-1}$
is the inverse different of $F$, 
and $u$ extends naturally from 
$\mathfrak{d}_F^{-1} \otimes \mathbb{A}^{p\infty}(1)$ to the rational Tate module. Here 
$\mathfrak{d}_F^{-1} \otimes_{\mathbb{Z}} \textbf{G}_{m,\bar{s}} $ is defined in the category of fppf sheaf of abelian groups, which can be easily seen to be representable. The $\mathcal{O}_F$-action on the first factor equips $\mathfrak{d}_F^{-1} \otimes_{\mathbb{Z}} \textbf{G}_{m,\bar{s}}$ with an action of $\mathcal{O}_F$, hence defines a $\mathcal{O}_F \otimes \hat{\mathbb{Z}}^{p}$-module structure on the Tate module 
$T^p(\mathfrak{d}_F^{-1} \otimes_{\mathbb{Z}} \textbf{G}_{m,\bar{s}})$. The $Tr_{\mathcal{O}_F/\mathbb{Z}} :\mathfrak{d}_F^{-1} \otimes_{\mathbb{Z}} \textbf{G}_{m,\bar{s}} \rightarrow \mathbb{Z} \otimes_{\mathbb{Z}} \textbf{G}_{m,\bar{s}}$ is the Trace map on the first factor. Moreover, 
\[
\langle \cdot, \cdot \rangle_F: L \times L \rightarrow \mathfrak{d}_F^{-1} \otimes \mathbb{Z}(1)
\]
is the unique 
$\mathcal{O}_F$-linear pairing such that 
\[
Tr_{\mathcal{O}_F/\mathbb{Z}} \circ \langle \cdot, \cdot \rangle_F = \langle \cdot, \cdot \rangle.
\]
The action of 
$K$ on $(\eta, u)$ is given by 
$(\eta, u)g = (\eta \circ g, u \circ \nu(g))$ for $g\in K$, where $\nu(g) \in (\mathcal{O}_F \otimes \hat{\mathbb{Z}}^{p})^{\times}$ acts on $\mathfrak{d}_F^{-1} \otimes \hat{\mathbb{Z}}^{p}(1)$ in the obvious way. 

The isomorphisms in the groupoid are defined to be 
\[
(A, \lambda, i, \overline{(\eta, u)}) \sim (A', \lambda', i', \overline{(\eta’, u’)})
\]
if and only if there is a prime-to-$p$ quasi-isogeny $f : A \rightarrow A'$ such that over each connected component of $S$, 
\[
\lambda =  f^{\vee} \circ \lambda' \circ f,
\]
$f\circ i(b) = i'(b) \circ f $ for all $b \in \mathcal{O} \otimes \mathbb{Z}_{(p)}$, and $\overline{(\eta’,u’)} = \overline{(f \circ \eta, u)}$.  
\end{definition}

\begin{remark}
The above moduli problem is in some sense in between the isogeny classes and the isomorphism classes moduli problems, in that it uses isogeny as morphisms in the groupoid and rational Tate modules for level structures, while also encoding integral structures in endomorphism structures and $u$. This has the effect that the morphism $f$ in the groupoid is required to strictly preserve the polarization, $\lambda =  f^{\vee} \circ \lambda' \circ f$, without the factor $r$ in definition \ref{isogeny}. 
\end{remark}

It is not hard to see that $\mathscr{M}_{\alpha, K}$ is representable by a smooth quasi-projective scheme $M_{\alpha,K}$ over $\mathcal{O}_{F_0} \otimes \mathbb{Z}_{(p)}$. 

There is an action of totally positive prime to $p$ units $(\mathcal{O}_{F})_+^{\times}$ on $\mathscr{M}_{\alpha, K}$ given by the formula 
\[\epsilon \cdot (A, \lambda, i, \overline{(\eta, u)})= (A, i(\epsilon)\lambda, i, \overline{(\eta, \epsilon u)})\]
which factors through the finite quotient group $\Delta := (\mathcal{O}_{F})_+^{\times}/Nm_{F^c/F}((\mathcal{O}_{F^c})^{\times} \cap K)$. The quotient $M_{\alpha, K}/\Delta$ always exists. Let $\Omega =\{\alpha\} \subset (F \otimes \mathbb{A}^{(p\infty)})^{\times}$ be a set of representatives of the double cosets
\[(F \otimes \mathbb{A}^{(p\infty)})^{\times} = \underset{\alpha \in \Omega}{\coprod}(\mathcal{O}_F \otimes \mathbb{Z}_{(p)})^{\times}_{+} \alpha (\mathcal{O}_F \otimes \hat{\mathbb{Z}}^p)^{\times}\]
then we have 
\begin{equation} \label{alpha}
    Sh_K(G,h) = \underset{\alpha \in \Omega}{\coprod} M_{\alpha, K} /\Delta = M_K /\Delta
\end{equation} 
where $M_K := \underset{\alpha \in \Omega }{\coprod} M_{\alpha, K}$. It means that $M_{K}/\Delta$ has the same complex points as $Sh_K(G,h)$, hence defining an integral model of $Sh_K(G,h)$. This is a consequence of the fact that $G$ satisfies Hasse principle, see \cite{MR2055355} 7.1.5.

\begin{remark}
We can show that $M_K/\Delta$ is the coarse moduli space of the functor sending $S$ to quadruples 
\[
(A, \lambda, i, \overline{(\eta, u)})
\]
where $A$ is an abelian scheme over S, 
\[
\lambda : A \rightarrow A^{\vee}
\]
is a prime to $p$ quasi-polarization of $A$, and 
\[
i : \mathcal{O} \otimes \mathbb{Z}_{(p)} \rightarrow End_S(A) \otimes \mathbb{Z}_{(p)}
\]
is a ring homomorphism such that $i(b)^{\vee} \circ \lambda= \lambda \circ i(b^*) $ for every $b \in \mathcal{O} \otimes \mathbb{Z}_{(p)}$, and  $Lie_{A/S}$ satisfies the determinant condition specified by $h$, see \cite{lan2013arithmetic} 1.3.4.1 for a precise formulation. Moreover, if we choose a geometric point $\bar{s}$ in each connected component of $S$, the level structure $\overline{(\eta, u)}$ is an assignment to each $\bar{s}$ a  $\pi_1(S,\bar{s})$-invariant $K$-orbit of $\mathcal{O} \otimes \mathbb{A}^{p\infty}$-equivariant isomorphisms 
\[\eta : L \otimes \mathbb{A}^{p \infty} \overset{\sim}{\rightarrow} V^pA_{\bar{s}}\]
together with an $F \otimes \mathbb{A}^{p\infty}$-equivariant isomorphism 
\[ u : F \otimes \mathbb{A}^{p\infty}(1) \overset{\sim}{\rightarrow} V^p(\mathfrak{d}_F^{-1} \otimes_{\mathbb{Z}} \textbf{G}_{m,\bar{s}}) \]
such that 
\[\langle\eta(x), \eta(y)\rangle_{\lambda} =Tr_{\mathcal{O}_F/\mathbb{Z}} ( u \circ  \langle x,y\rangle_F)\]
where $x,y \in L \otimes \mathbb{A}^{p\infty}$.

The isomorphisms in the groupoid are defined to be 
\[
(A, \lambda, i, \overline{(\eta, u)}) \sim (A', \lambda', i', \overline{(\eta’, u’)})
\]
if and only if there is a prime to $p$ quasi-isogeny $f : A \rightarrow A'$ such that over each connected component of $S$, 
\[
\lambda \circ i(a) =  f^{\vee} \circ \lambda' \circ f
\]
for some $a \in (\mathcal{O}_F \otimes \mathbb{Z}_{(p)})^{\times}_+$, $f\circ i(b) = i'(b) \circ f $ for all $b \in \mathcal{O} \otimes \mathbb{Z}_{(p)}$, and $\overline{(\eta’,u’)} = \overline{(f \circ \eta, u \circ a)}$. 

Note that this functor kills all the integral structures in definition 3, see remark 4. Moreover, it enlarges the domain of ambiguity factor $r$ in definition 1 from $(\mathbb{Z}_{(p)})^{\times}_+$ to $(\mathcal{O}_F \otimes \mathbb{Z}_{(p)})^{\times}_+$. 

See \cite{MR2055355} 7.1.3 for more details on this moduli problem.  

\end{remark}

We will work with integral toroidal and minimal compactifications of $Sh_K(G,h)$. However, this has only been constructed by Lan for the PEL moduli problems in definition \ref{isogeny} and \ref{isomorphism}. Fortunately, $Sh_K(G,h)$ is not very different from $Sh_{\mathcal{H}}(G^*,h)$. The precise relation is that for each $\mathcal{H} \subset G^*(\hat{\mathbb{Z}}^p)$, there exists an open compact subgroup $K \subset G(\hat{\mathbb{Z}}^p)$ containing $\mathcal{H}$ such that the natural map $Sh_{\mathcal{H}}(G^*,h) \rightarrow Sh_K(G,h)$
induced by $G^* \subset G$ is an open immersion containing the identity component of $Sh_K(G,h)$, and the Hecke translates of $Sh_{\mathcal{H}}(G^*,h)$ cover $Sh_K(G,h)$, see for example \cite{MR0498581} 1.15. We need a more explicit description of $Sh_K(G,h)$ in terms of $Sh_{\mathcal{H}}(G^*,h)$, so we focus on principal level structures from now on. 

Suppose that $n$ is prime to $p$, let 
\[K(n) := Ker(G(\hat{\mathbb{Z}}^p) \rightarrow G(\mathbb{Z}/n\mathbb{Z}))
\]
observe that 
\begin{equation} \label{a}
\nu(K(n) )= Ker( (\mathcal{O}_F \otimes_{\mathbb{Z}} \textbf{G}_m) (\hat{\mathbb{Z}}^p) \rightarrow  (\mathcal{O}_F \otimes_{\mathbb{Z}} \textbf{G}_m ) (\mathbb{Z}/n \mathbb{Z})). 
\end{equation}
Recall that 
\[\mathcal{U}(n) := Ker(G^*(\hat{\mathbb{Z}}^p) \rightarrow G^*(\mathbb{Z}/n\mathbb{Z}))
\]
and 
\begin{equation} \label{b}
\nu(\mathcal{U}(n) )= Ker( \textbf{G}_m(\hat{\mathbb{Z}}^p) \rightarrow \textbf{G}_m(\mathbb{Z}/n \mathbb{Z}))
\end{equation}
Choose a set $\Lambda$ of representatives of the double quotient
\begin{equation} \label{O}
    (\mathcal{O}_F \otimes \hat{\mathbb{Z}}^p)^{\times} = \underset{\delta \in \Lambda} {\coprod} (\mathcal{O}_F)^{\times}_+ \delta (\nu(K(n)) \hat{\mathbb{Z}}^{p,\times} )
\end{equation}
then together with the representatives $\Omega$, 
\begin{equation} \label{A}
(F \otimes \mathbb{A}^{(p\infty)})^{\times} = \underset{\alpha \in \Omega}{\coprod}(\mathcal{O}_F \otimes \mathbb{Z}_{(p)})^{\times}_{+} \alpha (\mathcal{O}_F \otimes \hat{\mathbb{Z}}^p)^{\times}
\end{equation}
we have the decomposition 
\begin{equation} \label{adele}
    (F \otimes \mathbb{A}^{(p\infty)})^{\times} = \underset{\begin{subarray}{c}
  \alpha \in \Omega \\
  \delta \in \Lambda
  \end{subarray}}{\coprod}(\mathcal{O}_F \otimes \mathbb{Z}_{(p)})^{\times}_{+} \alpha \delta (\nu(K(n)) \hat{\mathbb{Z}}^{p,\times} )
\end{equation}

Let us change the notation $M_n$ in definition \ref{isomorphism} into $M_n(L, \langle \cdot, \cdot\rangle)$ to emphaisze the dependence of $M_n$ on $L$ and $\langle \cdot, \cdot\rangle$. Then there is a natural embedding
\[M_n(L, Tr_{\mathcal{O}_F/ \mathbb{Z}} \circ (\alpha \delta \langle \cdot, \cdot\rangle_F)) \hookrightarrow M_{\alpha, K(n)} / \Delta 
\]
sending $(A, \lambda, i, (\alpha_n, \nu_n))$ to $(A, \lambda, i, \overline{(\eta, u)})$, where $\eta : L \otimes \mathbb{A}^{p \infty} \overset{\sim}{\rightarrow} V^pA_{\bar{s}}$ is defined by a lifting of $\alpha_n$ to 
\[\hat{\alpha}: L \otimes \hat{\mathbb{Z}}^{p} \overset{\sim}{\rightarrow} T^pA_{\bar{s}}
\]
whose existence is a condition on the level structure in definition \ref{isomorphism}, then inverting all prime to p integers. 
$u: \mathfrak{d}_F^{-1} \otimes \hat{\mathbb{Z}}^{p}(1) \overset{\sim}{\rightarrow} T^p(\mathfrak{d}_F^{-1} \otimes_{\mathbb{Z}} \textbf{G}_{m,\bar{s}}) $
is defined by 
\[\langle\eta(x), \eta(y) \rangle_{\lambda} = Tr_{\mathcal{O}_F/\mathbb{Z}} (u \circ (\alpha \delta\langle x,y\rangle_F)) 
\]
for $x,y \in L \otimes \mathbb{A}^{p \infty}$. Here we abuse notation by denoting both $u$ and $u \otimes id_{\mathbb{Z}_{(p)}}$ by $u$. The $K(n)$-class of $(\eta,u)$ does not depend on the choice of the lifting $\hat{\alpha}$. 

\begin{warning} \label{W}
$Tr_{\mathcal{O}_F/ \mathbb{Z}} \circ (\alpha \delta \langle \cdot, \cdot\rangle_F))$
is not defined on $L$, but rather on $L \otimes \hat{\mathbb{Z}}^{p}$ 
since $\alpha \delta \in \mathcal{O}_F \otimes \hat{\mathbb{Z}}^p$ if we choose 
$\alpha \in \mathcal{O}_F \otimes \hat{\mathbb{Z}}^p$, which we assume form now on. This does not affect the moduli problem since the moduli problem in definition \ref{isomorphism} only depends on $L  \otimes \hat{\mathbb{Z}}^p$, $L \otimes \mathbb{R}$ and the corresponding pairing on them. 

The appropriate notation for 
$M_n(L, Tr_{\mathcal{O}_F/ \mathbb{Z}} \circ (\alpha \delta \langle \cdot, \cdot\rangle_F))$
would be 
$M_n(L,\langle \cdot, \cdot\rangle_1) $
for some pairing $\langle \cdot, \cdot\rangle_1$ on $L$ that is  isomorphic to $Tr_{\mathcal{O}_F/ \mathbb{Z}} \circ (\alpha \delta \langle \cdot, \cdot\rangle_F)$ on  $L \otimes \hat{\mathbb{Z}}^{p}$ , 
perfect on $L \otimes \mathbb{Z}_p$, 
and compatible with $h$ on $L \otimes \mathbb{R}$.  
Such a pairing   exists if the moduli problem is nonempty. See \cite{lan2013arithmetic} 1.4.3.14 for more explainations. 

We use the wrong notation $M_n(L, Tr_{\mathcal{O}_F/ \mathbb{Z}} \circ (\alpha \delta \langle \cdot, \cdot\rangle_F))$ for simplicity. 
\end{warning}

By definition of the moduli problem, the decomposition (\ref{O}) gives 
\[M_{\alpha, K(n)} / \Delta = \underset{\delta \in \Lambda}{\coprod} M_n(L, Tr_{\mathcal{O}_F/ \mathbb{Z}} \circ (\alpha \delta \langle \cdot, \cdot\rangle_F)). 
\]
Then it follows from (\ref{adele}) and the definition of $M_{K(n)}/\Delta$ that 
\begin{equation} \label{main}
     M_{ K(n)} / \Delta = \underset{\begin{subarray}{c}
  \alpha \in \Omega \\
  \delta \in \Lambda
  \end{subarray}}{\coprod} 
  M_n(L, Tr_{\mathcal{O}_F/ \mathbb{Z}} \circ (\alpha \delta \langle \cdot, \cdot\rangle_F)). 
\end{equation}
This will help us constructing toroidal and minimal compactifications of $M_{K(n)}/ \Delta$ 
from those constructed by Lan. We briefly recall Lan's results on minimal compactifications. 

\begin{theorem} (Lan \cite{lan2013arithmetic}) \label{Lan}
There exists a compactification $M_n(L, \langle\cdot, \cdot\rangle)^{min}$ of $M_n(L, \langle\cdot, \cdot\rangle)$ together with a stratification by locally closed subschemes
\[M_n(L, \langle\cdot, \cdot\rangle)^{min} = \underset{[(Z_n, \Phi_n, \delta_n)]}{\coprod} M_n(L^{Z_n}, \langle \cdot, \cdot\rangle^{Z_n})
\]
where 

(1) $Z_n$ is a $\mathcal{O}$-invariant filtration on $L/nL$, 
\[0 \subset Z_{n,-2} \subset Z_{n,-1} \subset Z_{n,0} = L/nL
\]
which can be lifted to a $\mathcal{O}$-invariant filtration $Z$ on $L \otimes \hat{\mathbb{Z}}^p$
\[0 \subset Z_{-2} \subset Z_{-1} \subset Z_{0} = L \otimes \hat{\mathbb{Z}}^p
\]
such that $Z$ is the restriction of a split $\mathcal{O}$-invariant filtration $Z_{\mathbb{A}^p}$ on $L \otimes \mathbb{A}^p$ satisfying 
$Z_{\mathbb{A}^p, -2}^{\bot} = Z_{\mathbb{A}^p, -1}$ 
and 
$Gr_i Z_{\mathbb{A}^p} \cong L_i \otimes \mathbb{A}^p$ 
for some $\mathcal{O}$-lattice $L_i$. Let 
$L^{\mathbb{Z}_n} := L_{-1}$ 
and $\langle \cdot, \cdot\rangle^{\mathbb{Z}_n}$ a pairing on $L^{\mathbb{Z}_n}$ which induces  $\langle \cdot, \cdot\rangle$ on $Gr_{-1}Z_{\mathbb{A}^p}$. 
There exists an 
$h^{Z_n} : \mathbb{C} \rightarrow End_{\mathcal{O}_{\mathbb{R}}}(L_{\mathbb{R}}^{Z_n})$ 
that makes $(L^{Z_n}, \langle\cdot, \cdot\rangle^{Z_n}, h^{Z_n})$  a PEL data defining the moduli problem  
$M_n(L^{Z_n}, \langle \cdot, \cdot\rangle^{Z_n})$. See \cite{lan2013arithmetic} 5.2.7.5 for details. 

(2) 
$\Phi_n$ is a tuple 
$(X,Y,\phi, \varphi_{-2,n}, \varphi_{0,n}) $, where $X,Y$ are $\mathcal{O}$-lattices that are isomorphic as $B$-modules after tensoring with $\mathbb{Q}$, $\phi : Y \hookrightarrow X$ is an $\mathcal{O}$-invariant embedding. 
\[\varphi_{-2,n} : Gr_{-2}^{Z_n} \overset{\sim}{\rightarrow} Hom(X/nX, (\mathbb{Z}/n \mathbb{Z})(1))\] 
and 
\[\varphi_{0,n} : Gr_{0}^{Z_n} \overset{\sim}{\rightarrow} Y/nY\]
are isomorphisms that are reduction modulo n of $\mathcal{O}$-equivariant isomorphisms
$\varphi_{-2}: Gr_{-2}^Z \overset{\sim}{\rightarrow} Hom_{\hat{\mathbb{Z}}^p} (X \otimes \hat{\mathbb{Z}}^p , \hat{\mathbb{Z}}^p(1))$ and 
$\varphi_0 : Gr_0^Z \overset{\sim}{\rightarrow} Y \otimes \hat{\mathbb{Z}}^p$ such that 
\[\varphi_{-2}(x)(\phi(\varphi_0(y))) = \langle x,y\rangle   
\]
for $x \in Gr_{-2}^Z$ and $y \in Gr_0^Z$. 

(3) 
$\delta_n : \underset{i}{\oplus} Gr^{Z_n}_i \overset{\sim }{\rightarrow} L/nL$ 
is a splitting that is reduction modulo n of a splitting $\underset{i}{\oplus} Gr^{Z}_i \overset{\sim }{\rightarrow} L \otimes \hat{\mathbb{Z}}^p$. 

The tuple $(Z_n, \Phi_n, \delta_n)$ is called a \textbf{cusp label} at principal level n, and  $[(Z_n, \Phi_n, \delta_n)]$ is the equivalence classes of the cusp label, see \cite{lan2013arithmetic} 5.4.1.9 for the precise definition of equivalences. 

There is a precise description of closure relations of strata in terms of the cusp labels parametrizing them, see \cite{lan2013arithmetic} 5.4.1.14 for details.
\end{theorem}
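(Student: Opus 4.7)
The plan is to follow Lan's strategy in \cite{lan2013arithmetic}, which proceeds by first constructing toroidal compactifications $M^{\text{tor}}_{n,\Sigma}$ depending on a rational polyhedral cone decomposition $\Sigma$, and then obtaining $M_n(L, \langle\cdot,\cdot\rangle)^{\min}$ as a canonical contraction that is independent of $\Sigma$. The stratification by cusp labels will be inherited from the corresponding stratification on toroidal compactifications, with strata indexed by equivalence classes that collapse the cone data.

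First I would build the local formal models of the boundary using the theory of degeneration data of polarized abelian varieties due to Mumford and Faltings--Chai, adapted to the PEL setting. Over a complete discrete valuation ring, a degenerating PEL abelian scheme fits in a Raynaud extension $0 \to T \to G^{\natural} \to A \to 0$ with $T$ an isotrivial torus and $A$ an abelian scheme; the $\mathcal{O}$-action, polarization and level-$n$ structure translate via biextensions into precisely the combinatorial data of a cusp label $(Z_n, \Phi_n, \delta_n)$, where the filtration $Z$ is the weight filtration on the Tate module induced by the monodromy, $X$ is the character group of $T$, $Y$ the character group of the dual torus, $\phi: Y \hookrightarrow X$ comes from the polarization, and the isomorphisms $\varphi_{-2,n}, \varphi_{0,n}$ record the level structure on the graded pieces. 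A choice of a rational polyhedral cone $\sigma$ in the space of symmetric bilinear forms on $Y \otimes \mathbb{R}$ cuts out a formal torus embedding, and gluing these along $\Sigma$ yields $M^{\text{tor}}_{n,\Sigma}$ together with its natural stratification.

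Next I would introduce the Hodge line bundle $\omega := \det \mathrm{Lie}_{G/M^{\text{tor}}_{n,\Sigma}}^{\vee}$, where $G$ is the universal semiabelian scheme. Lan shows $\omega$ is semi-ample (using Tate's theorem and the explicit description on local models), and defines
\[
M_n(L,\langle\cdot,\cdot\rangle)^{\min} := \mathrm{Proj} \bigoplus_{k \geq 0} \Gamma(M^{\text{tor}}_{n,\Sigma}, \omega^{\otimes k}).
\]
A Stein factorization/Koecher-type argument shows the resulting contraction $M^{\text{tor}}_{n,\Sigma} \to M_n(L,\langle\cdot,\cdot\rangle)^{\min}$ is independent of $\Sigma$, and the image of a boundary stratum of $M^{\text{tor}}_{n,\Sigma}$ indexed by $(Z_n, \Phi_n, \delta_n, \sigma)$ depends only on the equivalence class of $(Z_n, \Phi_n, \delta_n)$. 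To identify each boundary stratum with $M_n(L^{Z_n}, \langle\cdot,\cdot\rangle^{Z_n})$, one observes that the abelian part $A$ of the Raynaud extension naturally carries an induced PEL structure: the induced pairing on $\mathrm{Gr}_{-1}^Z$ is $\langle\cdot,\cdot\rangle^{Z_n}$, the induced Hodge cocharacter $h^{Z_n}$ is deduced from $h$ on the complement of the isotropic part, and the level-$n$ structure restricts to one on $L^{Z_n}/nL^{Z_n}$.

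The main obstacle is the construction and classification of PEL degeneration data itself, together with verifying that the gluing across different cones produces a scheme (not merely an algebraic space) with the asserted universal properties and the asserted description of strata. This requires delicate compatibilities between biextensions, the involution $*$, and the symplectic-liftability of level structures, and it is precisely the place where the assumption that $L$ be self-dual at $p$ and that $p$ be unramified in $B$ is used. Once this is in hand, the description of the closure relations among strata follows from tracking how cones degenerate and how the filtrations $Z$ refine under specialization, as carried out in \cite{lan2013arithmetic} 5.4.1.14.
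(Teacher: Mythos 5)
Your proposal accurately reproduces Lan's strategy, which is precisely what the paper reviews in Sections 5.1--5.3 and at the start of Section 5.5 before citing \cite{lan2013arithmetic} for this theorem: build toroidal compactifications from PEL degeneration data via Raynaud extensions, take $\mathrm{Proj}$ of the section ring of the Hodge line bundle $\omega$, and read off the strata as the PEL moduli of the abelian parts indexed by cusp labels. The one small imprecision is that the cones live in the space of positive semi-definite $\mathcal{O}$-Hermitian pairings on $Y \otimes \mathbb{R}$ valued in $\mathcal{O} \otimes \mathbb{R}$ (with admissible radical), not merely symmetric bilinear forms, but this does not affect the correctness of the outline.
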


\begin{remark} \label{explaination}
There are also toroidal compactifications of $M_n$ together with universal semi-abelian varieties over them, which parametrize how abelian varieties degenerate into semi-abelian varieties. The toric part of the universal semi-abelian variety is parametrized by the cusp labels, which is discrete in nature. The minimal compactification is roughly obtained by contracting the isomorphic toric part, so it keeps track of only information on the abelian part, which is where the strata in thoerem  \ref{Lan} come from. 

In other words, the toric part of toroidal compactifications degenerates into discrete indexing sets of the strata, and the abelian part is remembered in the strata themselves.  What is lost by passing to minimal compactifications is the extension between torus and abelian varieties. 

\end{remark}

\begin{corollary} \label{min}
$M_{K(n)}/ \Delta$ has a compactification $(M_{K(n)}/ \Delta)^{min}$ together with a stratification by locally closed subschemes
\[(M_{K(n)}/ \Delta)^{min} =\underset{\begin{subarray}{c}
  \alpha \in \Omega \\
  \delta \in \Lambda
  \end{subarray}}{\coprod}
  M_n(L, Tr_{\mathcal{O}_F/ \mathbb{Z}} \circ (\alpha \delta \langle \cdot, \cdot\rangle_F)) ^{min}
  \]
\[= \underset{\begin{subarray}{c}
  \alpha \in \Omega \\
  \delta \in \Lambda
  \end{subarray}}{\coprod}
  \underset{[(Z_{\alpha \delta,n}, \Phi_{\alpha \delta,n}, \delta_{\alpha \delta,n})]}{\coprod} 
  M_n(L^{Z_{\alpha \delta,n}},\langle\cdot, \cdot\rangle^{Z_{\alpha \delta,n}})
\]
where $(Z_{\alpha \delta,n}, \Phi_{\alpha \delta,n}, \delta_{\alpha \delta,n})$ are cusp labels of $M_n(L, Tr_{\mathcal{O}_F/ \mathbb{Z}} \circ (\alpha \delta \langle \cdot, \cdot\rangle_F)) $, and $M_n(L^{Z_{\alpha \delta,n}},\langle\cdot, \cdot\rangle^{Z_{\alpha \delta,n}})$ are as in the theorem. See warning \ref{W} for clarifications. 
\end{corollary}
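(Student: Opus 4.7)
The plan is to reduce this directly to Lan's Theorem \ref{Lan} by exploiting the decomposition (\ref{main}). First I would verify that the index sets $\Omega$ and $\Lambda$ are both finite: finiteness of $\Omega$ follows from finiteness of the narrow class group of $F$ (together with finiteness of $(\mathcal{O}_F \otimes \hat{\mathbb{Z}}^p)^{\times}/(\mathcal{O}_F \otimes \mathbb{Z}_{(p)})^{\times}_+$ up to a compact factor), while finiteness of $\Lambda$ follows from (\ref{O}) by a standard adelic class-number computation once one notes that $\nu(K(n))\hat{\mathbb{Z}}^{p,\times}$ has finite index in $(\mathcal{O}_F \otimes \hat{\mathbb{Z}}^p)^{\times}$. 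Consequently (\ref{main}) exhibits $M_{K(n)}/\Delta$ as a finite disjoint union of the PEL schemes studied by Lan.

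Next I would apply Theorem \ref{Lan} to each individual component $M_n(L, Tr_{\mathcal{O}_F/\mathbb{Z}} \circ (\alpha \delta \langle \cdot,\cdot \rangle_F))$. Strictly speaking the pairing $Tr_{\mathcal{O}_F/\mathbb{Z}} \circ (\alpha \delta \langle \cdot,\cdot \rangle_F)$ is only defined on $L \otimes \hat{\mathbb{Z}}^p$, so here I invoke Warning \ref{W} to replace it by an honest pairing $\langle \cdot,\cdot \rangle_1$ on $L$ that agrees with it at all relevant places and that is part of a PEL datum compatible with $h$ (such a pairing exists whenever the moduli problem is nonempty). Lan's theorem then furnishes, for each $(\alpha,\delta)$, a projective minimal compactification together with the stratification indexed by equivalence classes of cusp labels $[(Z_{\alpha\delta,n}, \Phi_{\alpha\delta,n}, \delta_{\alpha\delta,n})]$.

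Finally I would take the disjoint union over $(\alpha,\delta) \in \Omega \times \Lambda$ and \emph{define}
\[
(M_{K(n)}/\Delta)^{\min} := \coprod_{\alpha \in \Omega,\, \delta \in \Lambda} M_n\bigl(L, Tr_{\mathcal{O}_F/\mathbb{Z}} \circ (\alpha \delta \langle \cdot,\cdot \rangle_F)\bigr)^{\min}.
\]
Since a finite disjoint union of projective schemes is projective, and on each component the open immersion into the minimal compactification is dense with complement stratified as in Theorem \ref{Lan}, the resulting scheme is a compactification of $M_{K(n)}/\Delta$ carrying precisely the stratification asserted in the statement. Writing out the stratification componentwise yields the displayed formula.

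The proof is essentially formal once one has (\ref{main}), and no serious obstacle is present. The only subtle point is the bookkeeping indicated in Warning \ref{W}: one must check that the pairings $Tr_{\mathcal{O}_F/\mathbb{Z}} \circ (\alpha \delta \langle \cdot,\cdot \rangle_F)$ indexed by $(\alpha,\delta)$ genuinely fall within the class of PEL data to which Lan's construction applies, so that all components of the disjoint union admit minimal compactifications with a uniform cusp-label description. Once that verification is in hand (and it is essentially already carried out in \cite{lan2013arithmetic} 1.4.3), the corollary follows immediately.
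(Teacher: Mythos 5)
Your proposal is correct and follows exactly the (implicit) argument the paper intends: take the decomposition (\ref{main}) of $M_{K(n)}/\Delta$ into finitely many Lan-type PEL schemes, apply Theorem \ref{Lan} componentwise, and take the disjoint union; the paper states the corollary without further proof precisely because it is immediate from these two inputs. Your added observations — that $\Omega$ and $\Lambda$ are finite (so the disjoint union is actually a compactification) and that the pairings must be normalized as in Warning \ref{W} before Lan's machinery applies — are exactly the bookkeeping points one should note.
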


\subsection{The partial Frobenius} \label{thpartialf}

From now on, we assume that $p$ satisfies the following condition,  
\[p \text{ splits completely in the center } F^c \text{ of } B.
\]
This implies that $\mathcal{O} \otimes \mathbb{Z}_p \cong \prod M_{n_i}( \mathbb{Z}_p)$, $G$ and $G^*$ splits over $\mathbb{Q}_p$ and 
$G(\mathbb{Q}_p) =\underset{\mathfrak{p}_i}{\prod} H(\mathbb{Q}_p)$, where $\mathfrak{p}_i$ are prime ideals of  $F = F^{c, *=1}$ such that $p = \underset{i}{\prod} \mathfrak{p}_i$. 

In this section, all moduli problems are defined over $\mathcal{O}_{F_0} \otimes_{\mathbb{Z}} \mathbb{F}_p$, i.e. $M_n $ or $M_{K(n)}$ in this section denotes 
$M_n \times_{\mathcal{O}_{F_0} \otimes \mathbb{Z}_{(p)}} (\mathcal{O}_{F_0} \otimes \mathbb{F}_p)$ 
or
$M_{K(n)} \times_{\mathcal{O}_{F_0} \otimes \mathbb{Z}_{(p)}} (\mathcal{O}_{F_0} \otimes \mathbb{F}_p)$ using notations in previous sections. We follow  Nekov\'a\v r's approach as in \cite{JanNekovar2018}.  

\begin{definition} \label{pFdef}
We fix a $\xi \in F_+^{\times}$ satisfying 
$v_{\mathfrak{p}_i} (\xi)=1$ 
and $v_{\mathfrak{p}_{i'}}(c) =0$ for $i' \neq i$. 
The partial Frobenius 
$F_{\mathfrak{p}_i} : M_{K(n)}/ \Delta \rightarrow M_{K(n)}/ \Delta $ 
is defined by disoint union of maps 
\[M_{\alpha, K(n)}/ \Delta \rightarrow M_{\alpha', K(n)}/ \Delta 
\]
sending 
$(A, \lambda, i, \overline{(\eta, u)})$ 
in definition \ref{similitude} to 
$(A', \lambda', i', \overline{(\eta', u')})$
\footnote{$A'$ satisfies the determinant condition because $Lie_{A'} = Lie_{A'[p]} = \underset{j}{\oplus} Lie_{A'[\mathfrak{p}_j]} = \underset{j \neq i}{\oplus} Lie_{A[\mathfrak{p}_j]} \oplus F^*Lie_{A[\mathfrak{p}_i]}$
as $\mathcal{O} \otimes \mathbb{F}_p$-modules, showing that $Lie_{A'}$ has the same 
$\mathcal{O} \otimes \mathbb{F}_p$ 
structure as $Lie_A$, which satisfies the determinant condition by our choice.}, 
where 
\[A' := A/ (Ker(F)[\mathfrak{p}_i])\]
with $F$ the usual Frobenius and 
$Ker(F)[\mathfrak{p}_i] := \{ x \in Ker(F)| ax =0, \forall a \in \mathfrak{p}_i \}$,  
$i'$ is induced by the quotient map $\pi_{\mathfrak{p}_i} : A \rightarrow A'$,  $\lambda'$ is a prime to $p$ quasi-isogeny characterized by 
$\xi \lambda = \pi_{\mathfrak{p}_i}^{\vee} \circ \lambda' \circ \pi_{\mathfrak{p}_i}$, 
$\eta' = \pi_{\mathfrak{p}_i} \circ \eta $,
and $\alpha'$ is defined by 
\[\xi \alpha  = \epsilon \alpha' \lambda
\]
where 
$\alpha' \in \Omega$, 
$\epsilon \in (\mathcal{O}_F \otimes \mathbb{Z}_{(p)})^{\times}_{+}$
and 
$\lambda \in (\mathcal{O}_F \otimes \hat{\mathbb{Z}}^p)^{\times}$ 
as in decomposition (\ref{A}). Lastly, 
$u'$ is the comoposition of $\mathcal{O}_F \otimes \hat{\mathbb{Z}}^{p}$-equivariant isomorphisms
\[ u' : \mathfrak{d}_F^{-1} \otimes \hat{\mathbb{Z}}^{p}(1) \overset{\lambda}{\underset{\sim}{\longrightarrow}} \mathfrak{d}_F^{-1} \otimes \hat{\mathbb{Z}}^{p}(1) \overset{u}{\underset{\sim}{\longrightarrow}} T^p(\mathfrak{d}_F^{-1} \otimes_{\mathbb{Z}} \textbf{G}_{m,\bar{s}}).
\]

\end{definition}

\begin{remark}
It is easy to see that $F_{\mathfrak{p}_i} $ is independent of the choice of $\xi$. Moreover, the same definition works for $p$ not necessarily split in $F^c$. We make the assumption because that is the only case we will use. 
\end{remark}

We observe that 
\[F_{\mathfrak{p}_i} F_{\mathfrak{p}_j} = F_{\mathfrak{p}_j} F_{\mathfrak{p}_i}  
\]
and
\[\underset{i}{\prod} F_{\mathfrak{p}_i} = F 
\]
where F is the usual Frobenius, explaining the name partial Frobenius. 

It is helpful to write the partial Frobenius in terms of the decomposition (\ref{main}). We will use the description to prove that the partial Frobenius extends to minimal compactifications and toroidal compactifications. 

The equation 
\[\xi \alpha  = \epsilon \alpha' \lambda
\]
with 
$\alpha' \in \Omega$, 
$\epsilon \in (\mathcal{O}_F \otimes \mathbb{Z}_{(p)})^{\times}_{+}$
and 
$\lambda \in (\mathcal{O}_F \otimes \hat{\mathbb{Z}}^p)^{\times}$
plays an important role in the definition of the partial Frobenius. In particular, it determines how the partial Frobenius permutes components parametrized by $\alpha \in \Omega$ as in (\ref{alpha}). We refine the description by using the finer decomposition (\ref{main}) parametrized by $\alpha \delta$. 

With notations as in the previous paragraph, let 
\[\lambda \delta = \epsilon_0 \delta' \gamma
\]
where 
$\delta, \delta' \in \Lambda$, $\epsilon_0 \in \mathcal{O}_{F,+}^{\times}$ 
and 
$\gamma \in (\nu(K(n)) \hat{\mathbb{Z}}^{p,\times} )$, as in the decomposition (\ref{O}). From equations (\ref{a}) and (\ref{b}), we observe that 
\[\nu(K(n)) \hat{\mathbb{Z}}^{p,\times} = \underset{\kappa }{\coprod} \nu(K(n)) \kappa
\]
where  $\kappa $ ranges over a complete set of representatives of  
$\hat{\mathbb{Z}}^{p,\times} /\nu(\mathcal{U}(n)) \cong (\mathbb{Z}/ n\mathbb{Z})^{\times}$
in $\hat{\mathbb{Z}}^{p,\times}$. 
Let 
\[\gamma = \beta \kappa
\]
with $\beta \in \nu(K(n))$ and $\kappa$ as above be the decomposition of $\gamma$. 

The partial Frobenius 
$F_{\mathfrak{p}_i}$ induces a map 
\[M_n(L, Tr_{\mathcal{O}_F/ \mathbb{Z}} \circ (\alpha \delta \langle \cdot, \cdot\rangle_F)) \rightarrow 
M_n(L, Tr_{\mathcal{O}_F/ \mathbb{Z}} \circ (\alpha' \delta' \langle \cdot, \cdot\rangle_F))
\]
sending 
$(A, \lambda, i, (\alpha_n, \nu_n))$
in definition \ref{isomorphism} to 
$(A', \lambda', i', (\alpha'_n, \nu'_n))$, where 
$A' := A/ (Ker(F)[\mathfrak{p}_i])$,
$i'$ is induced by the quotient map $\pi_{\mathfrak{p}_i} : A \rightarrow A'$,  $\lambda'$ is characterized by 
$\xi \lambda = \pi_{\mathfrak{p}_i}^{\vee} \circ \lambda' \circ \pi_{\mathfrak{p}_i}$ which defines a quasi-isogeny $\lambda'$, 
$\alpha_n' = \pi_{\mathfrak{p}_i} \circ \alpha_n$
and 
$\nu_n' = \nu_n \circ \kappa $. 
In the last equation, we view $\kappa$ as an element of $(\mathbb{Z}/n\mathbb{Z})^{\times}$ which acts on $\mathbb{Z}/n\mathbb{Z}(1)$, and 
$\nu_n'$ is defined to be
\[\nu_n' : (\mathbb{Z}/n\mathbb{Z}(1))_S \overset{\kappa}{\underset{\sim}{\longrightarrow}} (\mathbb{Z}/n\mathbb{Z}(1))_S \overset{\nu_n}{\underset{\sim}{\longrightarrow}} \mu_{n,S}.
\]

A subtle point in the above description is that  in definition \ref{isomorphism}, $\lambda'$ should not only be a prime to $p$ quasi-isogeny, but an actual isogeny. The characterization  
$\xi \lambda = \pi_{\mathfrak{p}_i}^{\vee} \circ \lambda' \circ \pi_{\mathfrak{p}_i}$ defines a quasi-isogeny $\lambda'$, 
but does not give an isogeny $\lambda'$ a priori. We have to check that $\lambda'$ is indeed a prime to p isogeny to make the above a well-defined map. 

Before giving the proof, let us introduce some more suggestive notations. Let
$A^{(\mathfrak{p}_i)} := A/ (Ker(F)[\mathfrak{p}_i])$, 
and
$F^{(\mathfrak{p}_i)} := \pi_{\mathfrak{p}_i} : A \rightarrow A^{(\mathfrak{p}_i)}$. 
Then we observe that there is a natural map 
$V^{(\mathfrak{p}_i)} : A^{(\mathfrak{p}_i)} \rightarrow A \otimes_{\mathcal{O}_F} \mathfrak{p}_i^{-1}$
such that the composition 
\[ A \overset{F^{(\mathfrak{p}_i)} }{\longrightarrow} A^{(\mathfrak{p}_i)} \overset{V^{(\mathfrak{p}_i)} }{\longrightarrow} A \otimes_{\mathcal{O}_F} \mathfrak{p}_i^{-1}
\]
is the map 
$id_A \otimes_{\mathcal{O}_F} (\mathcal{O}_F \hookrightarrow \mathfrak{p}_i^{-1})$, which has kernel $A[\mathfrak{p}_i]$. Here $ \mathfrak{p}_i^{-1}$ is the inverse  of $ \mathfrak{p}_i$ as fractional ideals and 
$A \otimes_{\mathcal{O}_F} \mathfrak{p}_i^{-1}$
is defined in the category of fppf sheaf of $\mathcal{O}_F$-modules, which can be easily seen to be represented by an abelian scheme isogenious to $A$. Here $F^ {\mathfrak{p}_i}$ and $V^{ \mathfrak{p}_i} $ should be viewed as partial Frobenius and Verschiebung, whose products over all $i$ will be the usual ones.  

We have a commutative diagram
\[\begin{tikzcd}
[
  arrow symbol/.style = {draw=none,"\textstyle#1" description,sloped},
  isomorphic/.style = {arrow symbol={\simeq}},
  ]
&
A \arrow[r,"F^{(\mathfrak{p}_i)}"] \arrow[d,"\lambda" ] & A^{(\mathfrak{p}_i)} \arrow [r, "V^{(\mathfrak{p}_i)}"] \arrow[d,"\lambda^{(\mathfrak{p}_i)}"' ] 
\arrow[ddd, bend left =40, dashrightarrow,  "\lambda'" pos=.2]&  
A  \otimes_{\mathcal{O}_F} \mathfrak{p}_i^{-1} \arrow[d,"\lambda \otimes id" ]
\\
A^{\vee} \otimes_{\mathcal{O}_F} \mathfrak{p}_i 
\arrow[r, "id \otimes ( \mathfrak{p}_i \hookrightarrow \mathcal{O}_F)"] 
\arrow[rd,"\xi \otimes id_{\mathfrak{p}_i}"' ]&
A^{\vee} \arrow[r, "F_{A^{\vee}}^{(\mathfrak{p}_i)}"]  \arrow[rrd, "\xi"' near start] 
\arrow[d, dashrightarrow] & 
 (A^{\vee})^{(\mathfrak{p}_i)} \arrow[r,"V_{A^{\vee}}^{(\mathfrak{p}_i)}"]
 \arrow[d, dashrightarrow]& 
 A^{\vee} \otimes_{\mathcal{O}_F} \mathfrak{p}_i^{-1}
 \arrow[d, dashrightarrow]
 \\
&
A^{\vee} \otimes_{\mathcal{O}_F} \mathfrak{p}_i  
\arrow[r, "(V^{(\mathfrak{p}_i)})^{ \vee}"'] & (A^{\vee})^{(\mathfrak{p}_i)} \otimes_{\mathcal{O}_F} \mathfrak{p}_i  \arrow[r,"(F^{(\mathfrak{p}_i)})^{ \vee}"'] & 
 A^{\vee}
 \\
 & & (A^{(\mathfrak{p}_i)})^{\vee} \arrow[u, isomorphic]&
\end{tikzcd} 
\]
which induces the dashed arrows. For example, the left dashed arrow is induced by $A^{\vee} [\mathfrak{p}_i] \subset A^{\vee} [\xi]$, and similarly for the other two. We define $\lambda' $ to be composition of the middle vertical maps, which is an actual isogeny and satisfies 
$\xi \lambda = \pi_{\mathfrak{p}_i}^{\vee} \circ \lambda' \circ \pi_{\mathfrak{p}_i}$
as the diagram shows. 

The only non-trivial arrow in the above diagram is the isomorphism 
\[(A^{(\mathfrak{p}_i)})^{\vee} \cong (A^{\vee})^{(\mathfrak{p}_i)} \otimes_{\mathcal{O}_F} \mathfrak{p}_i
\]
We give a proof here. 

\begin{lemma} \label{uuuuyyyyy}
With notations as above, for any abelian scheme $A/S$ over a characteristic $p$ scheme $S$, together with a ring homomorphism $\mathcal{O} \rightarrow End_S(A)$, we have a canonical isomorphism
\[
(A^{(\mathfrak{p}_i)})^{\vee} \cong (A^{\vee})^{(\mathfrak{p}_i)} \otimes_{\mathcal{O}_F} \mathfrak{p}_i.
\]
\end{lemma}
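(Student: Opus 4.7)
The plan is to realize both $(A^{(\mathfrak{p}_i)})^\vee$ and $(A^\vee)^{(\mathfrak{p}_i)} \otimes_{\mathcal{O}_F} \mathfrak{p}_i$ as the intermediate term of two naturally defined factorizations of the same canonical isogeny $A^\vee \otimes_{\mathcal{O}_F} \mathfrak{p}_i \to A^\vee$, and then to identify the two factorizations via the classical Frobenius--Verschiebung duality. First, I would dualize the chain
\[
A \xrightarrow{F^{(\mathfrak{p}_i)}} A^{(\mathfrak{p}_i)} \xrightarrow{V^{(\mathfrak{p}_i)}} A \otimes_{\mathcal{O}_F} \mathfrak{p}_i^{-1}
\]
whose composition is the canonical isogeny induced by $\mathcal{O}_F \hookrightarrow \mathfrak{p}_i^{-1}$. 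Invoking the canonical identification $(A \otimes_{\mathcal{O}_F} \mathfrak{p}_i^{-1})^\vee \cong A^\vee \otimes_{\mathcal{O}_F} \mathfrak{p}_i$, valid because Serre duality is compatible with tensoring by invertible $\mathcal{O}_F$-modules, this produces the first factorization
\[
A^\vee \otimes_{\mathcal{O}_F} \mathfrak{p}_i \xrightarrow{(V^{(\mathfrak{p}_i)})^\vee} (A^{(\mathfrak{p}_i)})^\vee \xrightarrow{(F^{(\mathfrak{p}_i)})^\vee} A^\vee,
\]
whose composition is the canonical isogeny induced by $\mathfrak{p}_i \hookrightarrow \mathcal{O}_F$.

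Next I would apply the same $V \circ F$ construction directly to $A^\vee$ and tensor with $\mathfrak{p}_i$, obtaining the parallel factorization
\[
A^\vee \otimes_{\mathcal{O}_F} \mathfrak{p}_i \xrightarrow{F^{(\mathfrak{p}_i)}_{A^\vee} \otimes 1} (A^\vee)^{(\mathfrak{p}_i)} \otimes_{\mathcal{O}_F} \mathfrak{p}_i \xrightarrow{V^{(\mathfrak{p}_i)}_{A^\vee} \otimes 1} A^\vee
\]
of the same canonical isogeny. A direct degree computation confirms that both intermediate terms have the same degree $p^{g_i}$ over $A^\vee$, where $g_i = \dim_S \mathrm{Lie}(A[\mathfrak{p}_i^\infty])$; this rests on Serre duality of $p$-divisible groups, which relates the $\mathfrak{p}_i$-dimensions of $A$ and $A^\vee$ by $g_i + g_i^\vee = h_i$ with $h_i$ the common height of $A[\mathfrak{p}_i^\infty]$ and $A^\vee[\mathfrak{p}_i^\infty]$.

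It remains to canonically identify the two factorizations. For this, I would appeal to the classical Cartier duality $\mathrm{Ker}(F_A)^D \cong \mathrm{Ker}(V_{A^\vee})$ (the global form of $F_A^\vee = V_{A^\vee}$). Our hypothesis that $p$ splits completely in $F^c$ yields $\mathcal{O}_F \otimes \mathbb{Z}_p \cong \prod_i \mathcal{O}_{F,\mathfrak{p}_i}$, and Cartier duality preserves this $\mathfrak{p}_i$-decomposition on finite flat $\mathcal{O}_F$-module schemes, so $\mathrm{Ker}(F_A)[\mathfrak{p}_i]^D \cong \mathrm{Ker}(V_{A^\vee})[\mathfrak{p}_i]$. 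Combined with the degree match and the universal property of quotients by finite flat subgroup schemes, this identifies the kernels of the leftmost arrows in the two factorizations, and hence identifies the middle terms canonically. The main obstacle I expect is the careful bookkeeping of the $\mathcal{O}_F$-equivariant Weil/Cartier pairing that produces the requisite twist by $\mathfrak{p}_i$ on the right-hand side: this twist reflects the fact that the canonical $\mathcal{O}_F$-equivariant pairing relates $A$ to a fractional-ideal twist of $A^\vee$ rather than to $A^\vee$ itself, and it must be tracked consistently through the dualized chain so that the identification is truly canonical and not merely an abstract isomorphism.
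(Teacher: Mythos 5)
Your proposal follows essentially the same route as the paper: both arguments come down to dualizing the short exact sequence $0 \to \mathrm{Ker}(V^{(\mathfrak{p}_i)}) \to A^{(\mathfrak{p}_i)} \to A\otimes_{\mathcal{O}_F}\mathfrak{p}_i^{-1}\to 0$ (equivalently, applying $Hom_{fppf}(-,\textbf{G}_m)$ to it) and identifying the Cartier dual of $\mathrm{Ker}(V^{(\mathfrak{p}_i)})$ with the kernel of $F^{(\mathfrak{p}_i)}_{A^{\vee}}\otimes \mathrm{id}_{\mathfrak{p}_i}$ via the classical Frobenius--Verschiebung duality taken $\mathfrak{p}_i$-isotypically; your packaging as two factorizations of the canonical isogeny $A^{\vee}\otimes_{\mathcal{O}_F}\mathfrak{p}_i\to A^{\vee}$ is only cosmetically different, and the degree count is redundant once the kernels are matched as subgroup schemes. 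One small correction: since $\mathrm{Ker}(V^{(\mathfrak{p}_i)}) = A[\mathfrak{p}_i]/\mathrm{Ker}(F_A)[\mathfrak{p}_i]$ is the $\mathfrak{p}_i$-part of (a fractional-ideal twist of) $\mathrm{Ker}(V_A)$, the duality you actually need is $\mathrm{Ker}(V_A)^D\cong\mathrm{Ker}(F_{A^{\vee}})$ rather than the statement $\mathrm{Ker}(F_A)^D\cong\mathrm{Ker}(V_{A^{\vee}})$ you cite --- the two are equivalent, but only the former directly identifies the relevant kernels inside $A^{\vee}\otimes_{\mathcal{O}_F}\mathfrak{p}_i$, and the residual $\otimes\,\mathfrak{p}_i$ twist you rightly flag is exactly the one produced by comparing $\mathrm{Ker}(V^{(\mathfrak{p}_i)})$ with $\mathrm{Ker}(V_A)[\mathfrak{p}_i]$, as in the paper's exact sequence $0 \to \mathrm{Ker}(V) \to \mathrm{Ker}(V)\otimes_{\mathcal{O}_F}\mathfrak{p}_i^{-1} \to A[\mathfrak{p}_i]/\mathrm{Ker}(F)[\mathfrak{p}_i]\to 0$.
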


\begin{proof}
Applying 
$Hom_{fppf}(-,\textbf{G}_m)$ to the short exact sequence 
\[0 \rightarrow A[\mathfrak{p}_i] / Ker(F)[\mathfrak{p}_i] \rightarrow A^{(\mathfrak{p}_i)} \overset{V^{(\mathfrak{p}_i)}}{\longrightarrow} A \otimes_{\mathcal{O}_F} \mathfrak{p}_i^{-1} \rightarrow 0 
\]
and using that 
$Ext^1_{fppf} (A, \textbf{G}_m) \cong A^{\vee}$, we have 
\[0 \rightarrow Hom_{fppf}(A[\mathfrak{p}_i] /Ker(F)[\mathfrak{p}_i], \textbf{G}_m) \rightarrow A^{\vee} \otimes_{\mathcal{O}_F} \mathfrak{p}_i \rightarrow (A^{(\mathfrak{p}_i)})^{\vee} \rightarrow 0 
\]
We know that 
$Hom_{fppf}(A[\mathfrak{p}_i] / Ker(F)[\mathfrak{p}_i], \textbf{G}_m)$ 
is the Cartiar dual 
$(A[\mathfrak{p}_i] / Ker(F)[\mathfrak{p}_i])^{\vee}$ of 
$A[\mathfrak{p}_i] / Ker(F)[\mathfrak{p}_i]$, 
so the dual of the short exact sequence
\[0 \rightarrow Ker(V) \rightarrow Ker(V) \otimes_{\mathcal{O}_F} \mathfrak{p}_i^{-1}  \rightarrow
A[\mathfrak{p}_i] / Ker(F)[\mathfrak{p}_i] \rightarrow 0
\]
gives 
\[
(A[\mathfrak{p}_i] / Ker(F)[\mathfrak{p}_i])^{\vee} \cong Ker( Ker(V)^{\vee} \otimes_{\mathcal{O}_F} \mathfrak{p}_i  \rightarrow Ker(V)^{\vee})
\]
\[
\cong Ker(Ker(F_{A^{\vee}}) \otimes_{\mathcal{O}_F} \mathfrak{p}_i  \rightarrow Ker(F_{A^{\vee}}))
\]
\[\cong Ker(F_{A^{\vee}})[\mathfrak{p}_i] \otimes_{\mathcal{O}_F} \mathfrak{p}_i
\]
which is the kernel of 
\[
A^{\vee} \otimes_{\mathcal{O}_F} \mathfrak{p}_i  \overset{F^{(\mathfrak{p}_i)}\otimes id}{\longrightarrow} (A^{\vee})^{(\mathfrak{p}_i)} \otimes_{\mathcal{O}_F} \mathfrak{p}_i 
\]
proving 
$(A^{(\mathfrak{p}_i)})^{\vee} \cong (A^{\vee})^{(\mathfrak{p}_i)} \otimes_{\mathcal{O}_F} \mathfrak{p}_i$. 
\end{proof}

We now state the main technical result of the paper, which claims that the partial Frobenius extends to the minimal compactification in Corallary  \ref{min}. 

\begin{theorem} \label{pFextends}
$F_{\mathfrak{p}_i}$ extends to a morphism
\[
F_{\mathfrak{p}_i} : (M_{K(n)}/ \Delta)^{min} \longrightarrow (M_{K(n)}/ \Delta)^{min}
\]
sending the strata 
$M_n(L^{Z_{\alpha \delta,n}},\langle\cdot, \cdot\rangle^{Z_{\alpha \delta,n}})$
associated to 
$\alpha \in \Omega,  
  \delta \in \Lambda$
and the cusp label
$[(Z_{\alpha \delta,n}, \Phi_{\alpha \delta,n}, \delta_{\alpha \delta,n})]$ 
to the strata 
$M_n(L^{Z_{\alpha' \delta',n}},\langle\cdot, \cdot\rangle^{Z_{\alpha' \delta',n}})$
associated to 
$\alpha' \in \Omega,  
  \delta' \in \Lambda$
as in the above description of the partial Frobenius, and the cusp label
$[(Z_{\alpha' \delta',n}, \Phi_{\alpha' \delta',n}, \delta_{\alpha' \delta',n})]$ 
defined as follows, 
\[
Z_{\alpha' \delta',n} = Z_{\alpha \delta,n}. 
\]
If 
$ \Phi_{\alpha \delta,n} =(X,Y,\phi, \varphi_{-2,n}, \varphi_{0,n}) $,
then 
\[
\Phi_{\alpha' \delta',n} = (X \otimes_{\mathcal{O}_F} \mathfrak{p}_i,Y,\phi', \varphi_{-2,n}', \varphi'_{0,n}) \]
where 
\[
\varphi'_{-2,n} : Gr_{-2}^{Z_{\alpha' \delta',n} } = Gr_{-2}^{Z_{\alpha \delta,n} }  \overset{\varphi_{-2,n}}{\longrightarrow} Hom(X/nX, (\mathbb{Z}/n \mathbb{Z})(1)) 
\]
\[
\overset{\sim}{\longrightarrow} Hom(X \otimes \mathfrak{p}_i/n (X \otimes \mathfrak{p}_i), (\mathbb{Z}/n \mathbb{Z})(1))
\]
and 
\[
\varphi_{0,n}' : Gr_{0}^{Z_{\alpha' \delta',n} } = Gr_{0}^{Z_{\alpha \delta,n} }  \overset{\varphi_{0,n}}{\longrightarrow} Y/nY.
\]
Lastly, $\phi'$ is defined by the following diagram similar to the above diagram defining $\lambda'$,
\[\begin{tikzcd}
[
  arrow symbol/.style = {draw=none,"\textstyle#1" description,sloped},
  isomorphic/.style = {arrow symbol={\simeq}},
  ]
&
X  & 
X \otimes_{\mathcal{O}_F} \mathfrak{p}_i 
\arrow [l, "id \otimes (  \mathcal{O}_F \hookleftarrow \mathfrak{p}_i)"] 
\\
Y \otimes_{\mathcal{O}_F} \mathfrak{p}_i^{-1} &
Y \arrow[l, "id \otimes (\mathfrak{p}_i^{-1} \hookleftarrow \mathcal{O}_F)"] 
\arrow[u, "\phi"] 
 & 
 Y \otimes_{\mathcal{O}_F} \mathfrak{p}_i 
 \arrow[l,"id \otimes (  \mathcal{O}_F \hookleftarrow \mathfrak{p}_i)"]
 \arrow[u, "\phi \otimes id"]
 \\
&
Y \otimes_{\mathcal{O}_F} \mathfrak{p}_i^{-1} 
\arrow[lu, "\xi \otimes id"]
\arrow[u, dashrightarrow] & 
Y
\arrow[l,"id \otimes (\mathfrak{p}_i^{-1} \hookleftarrow \mathcal{O}_F)"]
\arrow[u, dashrightarrow]
\arrow[uu, bend right=90, dashrightarrow, "\phi'"]
\end{tikzcd} 
\]

Moreover, on each strata, $F_{\mathfrak{p}_i}$ induces the morphism
\[
M_n(L^{Z_{\alpha \delta,n}},\langle\cdot, \cdot\rangle^{Z_{\alpha \delta,n}}) \rightarrow  M_n(L^{Z_{\alpha' \delta',n}},\langle\cdot, \cdot\rangle^{Z_{\alpha' \delta',n}})
\]
sending 
$(A, \lambda, i, (\alpha_n, \nu_n))$
to 
$(A', \lambda', i', (\alpha'_n, \nu'_n))$ as in the description before the theorem. For completeness, we summarize the description as follows. Using the above notations, 
$A' := A/ (Ker(F)[\mathfrak{p}_i])$,
$i'$ is induced by the quotient map $\pi_{\mathfrak{p}_i} : A \rightarrow A'$,  $\lambda'$ is characterized by 
$\xi \lambda = \pi_{\mathfrak{p}_i}^{\vee} \circ \lambda' \circ \pi_{\mathfrak{p}_i}$ which defines a prime to $p$ isogeny $\lambda'$, 
$\alpha_n' = \pi_{\mathfrak{p}_i} \circ \alpha_n$
and 
$\nu_n' = \nu_n \circ \kappa $. In other words, restriction of the partial Frobenius to (suitable union of) strata recovers the partial Frobenius on them. 
\end{theorem}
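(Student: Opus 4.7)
The plan is to establish the extension first on toroidal compactifications and then descend to the minimal compactification, following the strategy sketched in the introduction. Via the decomposition (\ref{main}), we reduce to showing that the partial Frobenius on each factor $M_n(L, \text{Tr}_{\mathcal{O}_F/\mathbb{Z}} \circ (\alpha\delta\langle\cdot,\cdot\rangle_F))$ extends to the minimal compactification constructed in Theorem \ref{Lan}. The extension to a suitable toroidal compactification $M_{n,\Sigma}^{\text{tor}} \to M_{n,\Sigma'}^{\text{tor}}$ is obtained by invoking Lan's universal property: it suffices to exhibit a degenerating family of PEL data over $M_{n,\Sigma}^{\text{tor}}$ that realizes the partial Frobenius over the open part and satisfies the Faltings-Chai-Lan axioms.

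The natural candidate is $G^{(\mathfrak{p}_i)} := G/(\ker F)[\mathfrak{p}_i]$, where $G$ is the universal degenerating semi-abelian scheme. Since $(\ker F)[\mathfrak{p}_i]$ is a finite flat subgroup scheme, $G^{(\mathfrak{p}_i)}$ is again a semi-abelian scheme over the entire toroidal compactification. Its abelian quotient is $A^{(\mathfrak{p}_i)}$ as before, while on the toric part a direct Cartier-dual calculation — using the fact that $(\ker F)[\mathfrak{p}_i]$ on the torus $T = \mathrm{Hom}(X, \mathbb{G}_m)$ has Cartier dual $X/\mathfrak{p}_i X$ — identifies the new character lattice as $\mathfrak{p}_i X \cong X \otimes_{\mathcal{O}_F} \mathfrak{p}_i$. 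This predicts the prescribed cusp label transformation $(X,Y,\phi) \mapsto (X \otimes_{\mathcal{O}_F} \mathfrak{p}_i, Y, \phi')$. The induced polarization $\lambda'$ characterized by $\xi\lambda = \pi^{\vee} \circ \lambda' \circ \pi$ is well-defined exactly as in the discussion preceding the theorem, and the map $\phi'$ arises from the commutative diagram in the statement, with the identification $(A^{(\mathfrak{p}_i)})^{\vee} \cong (A^{\vee})^{(\mathfrak{p}_i)} \otimes_{\mathcal{O}_F} \mathfrak{p}_i$ from Lemma \ref{uuuuyyyyy} guaranteeing that the diagram indeed commutes.

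The descent to the minimal compactification is a standard argument adapted from Lan (\cite{lan2013arithmetic}): since the toroidal extension sends each toric stratum to the toric stratum indexed by the transformed cusp label, and depends on the stratum's abelian part only through the partial Frobenius on that abelian part, it factors through the contraction that defines $(M_{K(n)}/\Delta)^{\text{min}}$. Restriction to each open stratum $M_n(L^{Z_{\alpha\delta,n}}, \langle\cdot,\cdot\rangle^{Z_{\alpha\delta,n}})$ then reproduces the partial Frobenius on that smaller PEL moduli space by construction, which gives the final clause of the theorem. The main technical obstacle will be verifying that $G^{(\mathfrak{p}_i)}$ with its induced data satisfies all of the Faltings-Chai-Lan axioms — notably positivity of the twisted polarization, symplectic-liftability of the $\kappa$-twisted level structure, and the existence of a cone decomposition $\Sigma'$ compatible with the twist $X \mapsto X \otimes_{\mathcal{O}_F} \mathfrak{p}_i$ (which may require replacing $\Sigma$ by an admissible refinement). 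Once these compatibilities are established by unwinding the explicit degeneration data, Lan's universal property delivers the extension to toroidal compactifications and the descent to the minimal compactification becomes formal.
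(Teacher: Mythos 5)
Your overall strategy is the paper's strategy: extend through the toroidal compactification via Lan's universal property for degenerating PEL families, then descend to the minimal compactification. The identification $G^{(\mathfrak{p}_i)} := G/(\ker F)[\mathfrak{p}_i]$ as the candidate degenerating family, the computation $X' = X\otimes_{\mathcal O_F}\mathfrak p_i$ from the toric part, and the use of Lemma~\ref{uuuuyyyyy} are all as in the paper. However, you misidentify the key technical content at both stages, and the descent argument as you describe it has a genuine gap.

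For the toroidal stage, you flag positivity, symplectic-liftability, and existence of $\Sigma'$ as the main obstacles. These are either automatic or routine. The actual work is to compute the entire degeneration data of $G^{(\mathfrak p_i)}$ in terms of that of $G$, and in particular to show that the trivialization $\tau'$ of $((c'^{\vee}\times c')^*\mathcal P_{A'})^{\otimes -1}$ is precisely the restriction $\tau|_{Y'\times X'}$ under the natural inclusion $Y'\times X' = Y\times (X\otimes\mathfrak p_i)\hookrightarrow Y\times X$. This is what guarantees that, for every dominant $\mathrm{Spec}(V)\to M^{\mathrm{tor}}_{n,\Sigma_{\alpha\delta}}$, the associated valuation $v\circ B^{\dagger}\in S^{\vee}_{\Phi_{\alpha'\delta'}}$ is simply the restriction of $v\circ B$ and hence lies in the corresponding cone $\sigma'_{\alpha'\delta'}$ -- which is exactly the condition in part (3) of Lan's universal property (Theorem~\ref{lantor}). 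Without this computation (which involves working carefully through the theta-function/Fourier-coefficient construction of $\tau$) the verification of the universal property does not go through; $\Sigma'$ is not a refinement but the image of $\Sigma$ under the canonical isomorphism $(S_{\Phi_{\alpha\delta}})^{\vee}_{\mathbb R}\cong(S_{\Phi_{\alpha'\delta'}})^{\vee}_{\mathbb R}$.

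For the descent to the minimal compactification, "it factors through the contraction" is not enough. The minimal compactification is a Proj, and descent requires exhibiting a line bundle $L^{\mathrm{min}}$ on $M^{\mathrm{min}}_{n,\alpha\delta}$ with $\oint^* L^{\mathrm{min}}\cong (F^{\mathrm{tor}}_{\mathfrak p_i})^*\omega'$. The subtlety is that $(F^{\mathrm{tor}}_{\mathfrak p_i})^*\omega'$ is \emph{not} $\omega$ itself but the twist $(\bigotimes_{j\neq i}\omega_j)\otimes\omega_i^{\otimes p}$, where $\omega=\bigotimes_j\omega_j$ is the $\mathcal O_F\otimes\mathbb F_p$-eigendecomposition of the Hodge line bundle. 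Thus one must prove that each factor $\omega_i$ descends \emph{individually} to a line bundle $\omega_i^{\mathrm{min}}$ on $M^{\mathrm{min}}$; this is not part of Lan's standard result (which only gives descent of $\omega$), and in the paper it requires a separate argument via push-forward from the union $M^1$ of open and codimension-one strata, a reflexivity/normality comparison, and a fiberwise identification of $\omega_i$ over the formal completions $\mathfrak X_{\Phi_n,\delta_n,\sigma}$ with the pullback of $\bigwedge^{\mathrm{top}}e_i\underline{\mathrm{Lie}}^{\vee}_{A/M_n^{Z_n}}$. Your proposal as written does not identify this step and would not produce a complete proof of the descent.
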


\begin{remark}
The diagram defining $\phi'$ is similar to the diagram defining $\lambda'$, and there is a reason for that. We will see in the proof that the diagram defining $\lambda'$ also defines a polarization for the universal semi-abelian variety over toroidal compactfications, and the diagram for $\phi'$ is the one induced on the (character group of) toric part. 

Moreover, the theorem is proved by first extending $F_{\mathfrak{p}_i}$ to toroidal compactifications, then contracting to a morphism on the minimal compactification. The description of the morphism on strata is obtained by looking at how $F_{\mathfrak{p}_i}$ operates on semi-abelian varieties. In particular, the morphism on indexing sets are obtained by looking at the toric part, and the morphism on strata are determined by the abelian part. See also remark \ref{explaination}.

\end{remark}

\begin{remark}
The description of strata in the minimal compactification shows that minimal compactifications is "plectic", which is the underlying reason that partial Frobenius extends to the minimal compactification. This can be made precise if we take care of the subtlety in dimension zero, which means that taking appropriate unions of the strata to define Shimura varieties of similitude PEL type as in definition \ref{similitude} (the strata we use are of Kottwitz's type as in definition \ref{isomorphism}). 

Another way to see the phenomenon is through Pink's mixed Shimura varieties, where he uses mixed Shimura varieties associated to parabolic subgroups (more precisely, the Levi group) to define strata of the minimal boundary, called rational boundary components in his terminology. In our case, the Shimura variety is associated to $G= Res_{F/\mathbb{Q}} H$, and the parabloics are also of the form $Res_{F/\mathbb{Q}}P$. However, the strata are assocaited to a subgroup $P_1$, in Pink's notation, of the parabolic, which is the Hermitian part in classical language, and this is not necessarily "plectic", i.e. not of the form $Res_{F/\mathbb{Q}} (-) $. The reason is that in \cite{MR1128753} 4.7,  Pink defines $P_1$ as the group satisfying certain minimality property, see \cite{MR1128753} 12.21 for an example how this kills "plecticity". The failure is similar to the difference between $G$ and $G^*$, and the remedy is the same. We can replace $P_1$ by another group in the parabolic, which is different only up to a similitude. Pink's theory still works in this slightly different setting, as already observed by him in remark (ii) of \cite{MR1128753} 4.11.   
\end{remark}

The proof of the theorem is rather technical, and we defer to the last section for details. We first give an application of it on the construction of plectic weight filtration of cohomology of Hilbert modular varieties in the next section. 

\section{Hilbert modular varieties}
\subsection{Basics}

We now specialize discussions in the previous section to the Hilbert modular varieties. The notations in this section will be the same as in the previous one, we simply restrict everything to a special case as follows. 

We take $\mathcal{O} = \mathcal{O}_F$, with $F$ a totally real field of degree 
$[F: \mathbb{Q}] =d$
and $* = id$, which coincides with  notations in the previous section in that $B= F$ and $F=F^c$ is the $*$-invariant part of the center of $B$. Moreover,
$L= \mathcal{O}_F \oplus \mathcal{O}_F$,
$\langle \cdot, \cdot\rangle_F $ is the standard $\mathcal{O}_F$-bilinear alternating pairing defined by the matrix 
$\begin{pmatrix}
0&-1\\
1&0\\
\end{pmatrix}$,
and
$\langle \cdot, \cdot\rangle   = Tr_{\mathcal{O}_F/ \mathbb{Z}} (\langle \cdot, \cdot\rangle_F)$. 
The morphism 
$h : \mathbb{C} \rightarrow End_{\mathcal{O}_{\mathbb{R}}}(L_{\mathbb{R}})$ is defined by 
$h(x+ iy) = \underset{\tau : F \hookrightarrow \mathbb{R}}{\prod} 
\begin{pmatrix}
x & -y\\
y & x\\
\end{pmatrix}$. 
These data defines a type $\mathbf{C}$ PEL datum. It is easy to see that the reflex field $F_0$ is  $\mathbb{Q}$.   
The relevant groups are 
\[
G = Res_{\mathcal{O}_F/\mathbb{Z}} GL_2
\]
and 
\[
G^*= det^{-1}(\textbf{G}_m) \subset G
\]
where $det : G \rightarrow Res_{\mathcal{O}_F/\mathbb{Z}}\textbf{G}_m$ is the similitude map. 

We give a brief account of the moduli problem it defines, which is a special case of definition \ref{similitude}. Let $\alpha \in \Omega$ be as in decomposition (\ref{A}), then $M_{\alpha, K(n)}$ is the moduli space representing the functor associating a locally Noetherian $\mathbb{Z}_{(p)}$-scheme $S$ to the isomorphism classes of tuples 
$(A, \lambda, i, (\eta, u)) $.
Here $A$ is an abelian scheme over $S$, 
$\lambda : A \rightarrow A^{\vee}$ 
is a prime to $p$ polarization, and
$i : \mathcal{O}_F \rightarrow End_S(A)$ 
a ring isomorphism inducing the trivial involution on $\mathcal{O}_F$ through $\lambda$ and  a rank 1 $\mathcal{O}_F$-module structure on $Lie(A)$. Note that the last condition is Kottwitz's determinant condition in this special case. Moreover, the level structure $(\eta, u)$ is an 
$\pi_1(S,\bar{s})$-invariant $K(n)$-orbit of 
$\mathcal{O}_F \otimes \mathbb{A}^{p \infty}$-equivariant isomorphism
\[\eta : L \otimes \mathbb{A}^{p \infty} = (\mathcal{O}_F \otimes \mathbb{A}^{p \infty})^{\oplus 2} \overset{\sim}{\rightarrow} V^pA_{\bar{s}}\]
together with an $\mathcal{O}_F \otimes \hat{\mathbb{Z}}^{p}$-equivariant isomorphism 
\[ u : \mathfrak{d}_F^{-1} \otimes \hat{\mathbb{Z}}^{p}(1) \overset{\sim}{\rightarrow} T^p(\mathfrak{d}_F^{-1} \otimes_{\mathbb{Z}} \textbf{G}_{m,\bar{s}}) \]
such that 
\[\langle \eta(x), \eta(y)\rangle_{\lambda} =Tr_{\mathcal{O}_F/\mathbb{Z}} ( u \circ (\alpha \langle   x,y\rangle_F))\]
where $x,y \in L \otimes \mathbb{A}^{p\infty}$. Since we work only with principal level $n$ structures, the level structure can also be seen as isomorphisms 
\[
(\mathcal{O}_F/n \mathcal{O}_F)^{\oplus 2} \cong A[n]
\]
and 
\[
\mathfrak{d}_F^{-1}/n\mathfrak{d}_F^{-1} \cong \mathfrak{d}_F^{-1} \otimes \mu_n. 
\]

\begin{remark}
In the literature, it is common to use a variant of the above moduli problem. More  precisely, the polarization is defined as an 
$\mathcal{O}_F$-equivariant isomorphism 
\[
(\mathfrak{c}, \mathfrak{c}_+) \cong (Hom_{\mathcal{O}_F}^{Sym}(A, A^{\vee}), Hom_{\mathcal{O}_F}^{Sym}(A, A^{\vee})_+)
\]
where $\mathfrak{c}$ is a fixed prime-to-$p$ fractional ideal representing 
$[\alpha] \in Cl_+(F) = (\mathcal{O}_F \otimes \mathbb{Z}_{(p)})^{\times}_+ \setminus (F \otimes \mathbb{A}^{(p, \infty)})^{\times} /  (\mathcal{O}_F \otimes \hat{\mathbb{Z}}^p)^{\times}$,
$\mathfrak{c}_+$ is the totally positive part (the elements that are positive for all embeddings of $F$ into $\mathbb{R}$), 
$Hom_{\mathcal{O}_F}^{Sym}(A, A^{\vee})$
is the symmetric $\mathcal{O}_F$-equivariant homomorphisms and 
$Hom_{\mathcal{O}_F}^{Sym}(A, A^{\vee})_+$
is the set of polarizations. The level structure is defined as an $\mathcal{O}_F$-equivariant isomorphism 
$(\mathcal{O}_F/n \mathcal{O}_F)^{\oplus 2} \cong A[n]$
together with an isomorphism 
$\mathcal{O}_F/ n \mathcal{O}_F \cong \mu_n \otimes \mathfrak{c}^*$, see \cite{MR3581176} for details. For the equivalence to our definition, see \cite{MR2055355} 4.1.1 for some discussion. 
\end{remark}

Similar to the previous section, we have 
 \[
 Sh_{K(n)}(G,h) = \underset{\alpha \in \Omega}{\coprod} M_{\alpha, K(n)} /\Delta = M_{K(n)} /\Delta
\] 
where 
$M_{K(n)} := \underset{\alpha \in \Omega }{\coprod} M_{\alpha, K(n)}$,
and 
\[
 M_{ K(n)} / \Delta = \underset{\begin{subarray}{c}
  \alpha \in \Omega \\
  \delta \in \Lambda
  \end{subarray}}{\coprod} 
  M_n(L, Tr_{\mathcal{O}_F/ \mathbb{Z}} \circ (\alpha \delta \langle  \cdot, \cdot\rangle_F))
\]

We now describe the minimal compactification in more explicit terms. Recall from theorem \ref{min} that cusp labels are equivalence classes of tuples 
$[(Z_n, \Phi_n, \delta_n)]$, 
where $Z_n$ is an $\mathcal{O}_F$-invariant filtration
\[
0 \subset Z_{n,-2} \subset Z_{n,-1} \subset Z_{n,0} = L/nL
\]
on $L/nL$ satisfying 
$Z_{n,-2} ^{\perp} = Z_{n,-1}$
and some liftablitiy condition,  $\Phi_n$ is a tuple $(X,Y,\phi, \varphi_{-2,n}, \varphi_{0,n}) $,
and 
$\delta_n : \underset{i}{\oplus} Gr^{Z_n}_i \overset{\sim }{\rightarrow} L/nL$
is a splitting with a liftability condition.
In the definition of $\Phi_n$, $X,Y$ are $\mathcal{O}$-lattices that are isomorphic as $B$-modules after tensoring with $\mathbb{Q}$, $\phi : Y \hookrightarrow X$ is an $\mathcal{O}$-invariant embedding. 
\[\varphi_{-2,n} : Gr_{-2}^{Z_n} \overset{\sim}{\rightarrow} Hom(X/nX, (\mathbb{Z}/n \mathbb{Z})(1))\] 
and 
\[\varphi_{0,n} : Gr_{0}^{Z_n} \overset{\sim}{\rightarrow} Y/nY\]
are isomorphisms that are reduction modulo n of $\mathcal{O}$-equivariant isomorphisms
$\varphi_{-2}: Gr_{-2}^Z \overset{\sim}{\rightarrow} Hom_{\hat{\mathbb{Z}}^p} (X \otimes \hat{\mathbb{Z}}^p , \hat{\mathbb{Z}}^p(1))$ and 
$\varphi_0 : Gr_0^Z \overset{\sim}{\rightarrow} Y \otimes \hat{\mathbb{Z}}^p$ such that 
\[\varphi_{-2}(x)(\phi(\varphi_0(y))) = \langle x,y\rangle     
\]
for $x \in Gr_{-2}^Z$ and $y \in Gr_0^Z$.

In our case, $L = \mathcal{O}_F^{\oplus 2}$ and there are essentially two different filtrations on $L/nL$, either 
$Z_{n,-2}=0$ and $Z_{n,-1} = L/nL$,
or $Z_{n,-2}=Z_{n,-1}$ is a $\mathcal{O}_F$-submodule of $L/nL$ being reduction of a rank 1 
$\mathcal{O}_F \otimes \mathbb{A}^{p}$-submodule of $L \otimes \mathbb{A}^p$.
The first case is trivial, the corresponding strata is the open strata in the minimal compactification. We focus on the second case from now on. 

The isomorphisms $\varphi_{0,n}$ and $\varphi_{-2,n}$ force $X$ and $Y$ to be rank 1 $\mathcal{O}_F$-modules, which are isomorphic to fractional ideals of $F$ and classified by $Cl(F)$. We observe that 
$Gr_{-1}^{Z_n}=0$, 
implying that $L^{Z_n} =0 $. Thus the strata 
associated to 
$[(Z_n, \Phi_n, \delta_n)]$
must be 
$Isom( \mathbb{Z}/n \mathbb{Z}(1), \mu_n)$, i.e.
\[
M_n(L^{Z_n}, \langle  \cdot, \cdot\rangle^{Z_n}) = Isom( \mathbb{Z}/n \mathbb{Z}(1), \mu_n)
\]
see remark \ref{r3} for explanations. In other words, the boundary components all have dimension zero, and they are generally referred to as cusps.  

\subsection{The weight spectral sequence}

We now make the spectral sequence in theorem \ref{w.s.s.} more explicit in our special case. With notations as in section \ref{app Sh}, we take $V = \mathbb{Q}(0)$ to be the trivial representation of $G$, then 
$\mathcal{F}V = \mathbb{Q}(0)$
is the constant sheaf in 
$D^b_m(Sh_{K(n)} (G,h))$, 
i.e. $\mathbb{Q}(0)$ is either the constant Hodge module $\mathbb{Q}^H(0)$ or the constant mixed $l$-adic sheaf 
$\overline{\mathbb{Q}_l}(0)$. Let 
$j: Sh_{K(n)} (G,h) \hookrightarrow Sh_{K(n)} (G,h)^{min}$
be the open embedding, then the spectral sequence is 
\begin{equation} \label{s.s.h}
E_1^{p,q}= \mathbb{H}^{p+q}(Sh_{K(n)} (G,h)^{min}, w_{\geq -p} w_{\leq -p} Rj_*(\mathbb{Q}(0))) \Rightarrow H^{p+q}(Sh_{K(n)} (G,h), \mathbb{Q}(0))
\end{equation}
Since 
$\mathbb{Q}(0)$
is pure of weight 0, proposition \ref{functors} and theorem \ref{ie} tells us that the first nontrivial piece is 
\[
w_{\geq 0} w_{\leq 0} Rj_*(\mathbb{Q}(0)) =w_{\leq 0}Rj_*(\mathbb{Q}(0)) = j_{!*}(\mathbb{Q}(0))
\]
as we see in the discussion prior to theorem \ref{w.s.s.},
so 
\[
E^{0, q}_1 = IH^{q}(Sh_{K(n)} (G,h)^{min}, \mathbb{Q}(0))
\]
Similarly, the discussion before theorem \ref{w.s.s.} gives information on the rest of the rest of the pieces. In particular, equation (\ref{b1}) tells that for $k > 0$
\begin{equation} \label{289}
w_{\leq k} w_{\geq k} Rj_*(\mathbb{Q}(0))=
i_* w_{\leq k} w_{\geq k} i^*Rj_*(\mathbb{Q}(0))
\end{equation}
where 
\[
i : Sh_{K(n)} (G,h)^{min} \setminus Sh_{K(n)} (G,h) =
\underset{\begin{subarray}{c}
  \alpha \in \Omega \\
  \delta \in \Lambda
  \end{subarray}}{\coprod} 
  \underset{\begin{subarray}{c}
[(Z_{\alpha \delta,n}, \Phi_{\alpha \delta,n}, \delta_{\alpha \delta,n})] \\
Z_{\alpha \delta, n,-2} \neq 0
\end{subarray}}{\coprod} 
Isom( \mathbb{Z}/n \mathbb{Z}(1), \mu_n) 
\]
\begin{equation} \label{para}
\lhook\joinrel\longrightarrow Sh_{K(n)} (G,h)^{min}
\end{equation}
is the inclusion of the complement of $Sh_{K(n)} (G,h)$, i.e.   the inclusion of finitely many cusps.  Now Pink (\cite{MR1149032}) or Burgos and Wildeshaus’ (\cite{burgos2004hodge}) formula tells us that
\begin{equation} \label{899}
i_{\partial} ^* R^n j_*(\mathbb{Q}(0)) = \underset{a +b = n}{\oplus} \mathcal{F} (H^a (\overline{H}_C , H^b (Lie W_1, \mathbb{Q}(0))))
\end{equation}
where 
$\partial := [(Z_{\alpha \delta,n}, \Phi_{\alpha \delta,n}, \delta_{\alpha \delta,n})]$
and  $i_{\partial} $ is the inclusion of the cusp corresponding to $\partial$. Moreover, 
$W_1$ is the unipotent of the Borel subgroup corresponding to the cusp 
$\partial$, 
and 
$\overline{H}_C$
is an arithmetic subgroup of the linear part of the Levi group determined by the level $K(n)$. By proof of theorem 3.5 in 
\cite{MR2923168}, 
we have that 
  \begin{equation*}
  i_{\partial} ^* R^n j_*(\mathbb{Q}(0)) 
 = \begin{cases}
  \bigwedge^n(\mathbb{Q}(0)^{d-1}) & 0 \leq n \leq d-1, \\
  \bigwedge^{2d-1-n}(\mathbb{Q}(0)^{d-1})(-d) 
 & d \leq n \leq 2d-1 .
  \end{cases}
  \end{equation*}
Note that the author only works with the Hodge module case in \cite{MR2923168}, but the proof works equally well for the $l$-adic case. Indeed, if we view $\mathbb{Q}(0)$ as the trivial representation of $\textbf{G}_m$, which is the group corresponding to the zero dimensional Shimura variety 
$Isom(\mathbb{Z}/n \mathbb{Z}(1), \mu_n)$ 
indexed by $\partial$, and 
$(-d)$ twisting by $d$-th power of the dual of the standard representation, then the proof in \cite{MR2923168} shows that
 \begin{equation} \label{pink}
  i_{\partial} ^* R^n j_*(\mathbb{Q}(0)) 
 = \begin{cases}
  \mathcal{F}(\bigwedge^n(\mathbb{Q}(0)^{d-1})) & 0 \leq n \leq d-1, \\
  \mathcal{F}(\bigwedge^{2d-1-n}(\mathbb{Q}(0)^{d-1})(-d)) 
 & d \leq n \leq 2d-1 .
  \end{cases}
  \end{equation}
  
We will use a different parametrization of the cusps than (\ref{para}). Recall that  $\Lambda$ in (\ref{O}) is chosen such that 
\[
Isom_{\mathcal{O}_F}(\mathfrak{d}_F^{-1}/n\mathfrak{d}_F^{-1}(1),   \mathfrak{d}_F^{-1} \otimes_{\mathbb{Z}} \mu_n) = \underset{\delta \in \Lambda}{\coprod} Isom(\mathbb{Z}/n\mathbb{Z}(1), \mu_n)
\]
We use it to rewrite (\ref{para}) as 
\begin{equation} \label{newpara}
i : Sh_{K(n)} (G,h)^{min} \setminus Sh_{K(n)} (G,h) =
\underset{\begin{subarray}{c}
  \alpha \in \Omega 
  \end{subarray}}{\coprod} 
  \underset{\begin{subarray}{c}
\partial
\end{subarray}}{\coprod} 
Isom_{\mathcal{O}_F}(\mathfrak{d}_F^{-1}/n\mathfrak{d}_F^{-1}(1),   \mathfrak{d}_F^{-1} \otimes_{\mathbb{Z}} \mu_n)
\end{equation}
with a new parametrization set of cusps, which we still denote by $\partial$. For a precise description of $\partial$, see \cite{MR2099078}. For such a parametrization, the boundary is a union of zero-dimensional Shimura varieties associated to $Res_{F/ \mathbb{Q}}\textbf{G}_m$, and a minor modification of the proof in \cite{MR2923168} shows that 
\begin{equation} \label{pink2}
  i_{\partial} ^* R^n j_*(\mathbb{Q}(0)) 
 = \begin{cases}
  \mathcal{F}(\bigwedge^n(\mathbb{Q}(0)^{d-1})) & 0 \leq n \leq d-1, \\
  \mathcal{F}(\bigwedge^{2d-1-n}(\mathbb{Q}(0)^{d-1})(-d)) 
 & d \leq n \leq 2d-1 .
  \end{cases}
  \end{equation}
where $\partial$ denotes the cusps in (\ref{newpara}), and $(-d)$ is twisting by 
the one dimensional representation $Nm_{F/\mathbb{Q}}^{-1} :Res_{F/\mathbb{Q}}\textbf{G}_m \rightarrow \textbf{G}_m$
of $Res_{F/\mathbb{Q}}\textbf{G}_m$. Note that the corresponding sheaf is the $(-d)$-th power of the Tate twisting sheaf, explaining the notation. Further, this is the only new observation one needs in the proof of the above.  

Then together with equation (\ref{289}) we have that for $k > 0$, 
\begin{equation} \label{www}
w_{\geq k} w_{\leq k} R^nj_* (\mathbb{Q}(0)) 
= \begin{cases}
 i_*(\bigwedge^{2d-1-n}(\mathbb{Q}(0)^{d-1})(-d)) 
 & d \leq n \leq 2d-1, k=2d 
 \\
 0 & \text{otherwise}
 \end{cases}
\end{equation}
Thus the spectral sequence (\ref{s.s.h}) becomes

\begin{tikzpicture}
  \matrix (m) [matrix of math nodes,
    nodes in empty cells,nodes={minimum width=12ex,
    minimum height=4ex,outer sep=15pt},
    column sep=1ex,row sep=3ex]{
    \oplus \bigwedge^{0}(\mathbb{Q}(0)^{d-1})(-d) & 0 & \cdots & 0 & 0  & 4d-1
    \\
       \vdots & \vdots & \ddots & \vdots & \vdots & \vdots 
\\
       \oplus \bigwedge^{d-1}(\mathbb{Q}(0)^{d-1})(-d) & 0 & \cdots & 0 & 0  & 3d 
       \\
 0 &   0   &  \cdots   &   0  &          0              & 3d-1
 \\
      \vdots      &   \vdots   &  \ddots   &   \vdots  &          \vdots            & \vdots 
      \\
        0    &   0    &  \cdots &  0   &          0              & 2d+1
        \\
         0   &   0   &  \cdots   &  0   &   IH^{2d}(M^*,\mathbb{Q}(0))  & 2d 
         \\
          \vdots  &  \vdots    &  \ddots  &  \vdots   &  \vdots  & \vdots
          \\
        0  &  0   &  \cdots   &  0  &  IH^{d+1}(M^*,\mathbb{Q}(0))  & d+1
        \\
          \vdots     &  \vdots &  \ddots  & \vdots &  \vdots  & \vdots
          \\
          0     &  0  & \cdots &  0  &  IH^0(M^*,\mathbb{Q}(0)) & 0
          \\
    -2d &   -2d + 1  &  \cdots  &  -1  & 0 &  \strut
    \\};
 
    \draw[-stealth] (m-2-1.north east) -- (m-6-5.south west);
  \draw[-stealth] (m-4-1.north east) -- (m-8-5.south west);
\draw[thick] (m-1-5.east) -- (m-12-5.east) ;
\draw[thick] (m-12-1.north) -- (m-12-6.north) ;
\end{tikzpicture}
where 
$M^* := Sh_{K(n)}(G,h)^{min}$,
$M := Sh_{K(n)}(G,h)$
and 
\[
\oplus \bigwedge^i (\mathbb{Q}(0)^{d-1}) (-d) := \underset{\partial}{\oplus} (\bigwedge^i (\mathbb{Q}(0)^{d-1}) (-d))^{\oplus n}
\]
\[
\ \ \ \ \ \ \ \ \ \ \ \ \ \ \ \ \ \ \  \ \ \ \ \ \ \ \ \ \ \ \ \ \ \ \ \ \ 
= H^{2d-1-i}(M^* \setminus M, i^* Rj^*(\mathbb{Q}(0)))
\]
for $0 \leq i \leq d-1$. 
Note that in either case (Hodge modules or $l$-adic), the cohomology is taken after passing to the algebraic closure of the base field, so 
$M^* \setminus M = \underset{\partial}{\coprod} Isom(\mathbb{Z}/n\mathbb{Z}(1) , \mu_n)$ 
becomes 
$\underset{\partial}{\coprod} \underset{n}{\coprod} \{*\}$, 
explaining the second equality of the above. 

Now we can read off from the above computation that 
\[
0 \longrightarrow E_{\infty}^{-2d, 4d-1} \longrightarrow 
\underset{\partial}{\oplus} (\bigwedge^0 (\mathbb{Q}(0)^{d-1}) (-d))^{\oplus n}
\longrightarrow
IH^{2d}(M^*, \mathbb{Q}(0))
\longrightarrow E^{0,2d}_{\infty} \longrightarrow 0
\]
\[
0 \longrightarrow IH^{2d-1}(M^*, \mathbb{Q}(0)) \longrightarrow
H^{2d-1}(M, \mathbb{Q}(0)) 
\longrightarrow E_{\infty}^{-2d,4d-1} 
\longrightarrow 0
\]
where 
$E^{0,2d}_{\infty} = H^{2d}(M, \mathbb{Q}(0)) = 0$
as $M$ is non-proper of dimension $d$. Moreover, we observe easily that 
\[
H^i(M, \mathbb{Q}(0)) = IH^i(M^*, \mathbb{Q}(0))
\]
for $0 \leq i \leq d-1$, and 
\[
0 \longrightarrow IH^i(M^*, \mathbb{Q}(0)) \longrightarrow
H^i(M, \mathbb{Q}(0)) \longrightarrow 
\underset{\partial}{\oplus} (\bigwedge^{2d-1-i} (\mathbb{Q}(0)^{d-1}) (-d))^{\oplus n}
\longrightarrow 0
\]
for $d \leq i \leq 2d -2$. In the last exact sequence, we use that
\[
E^{-2d, 2d+i}_{\infty} = \underset{\partial}{\oplus} (\bigwedge^{2d-1-i} (\mathbb{Q}(0)^{d-1}) (-d))^{\oplus n}
\]
for $ d \leq i \leq 2d-2$, which follows because the domain and codomain of the differentials in the picture have different weights in this range. 

We observe from the above computation that the spectral sequence (\ref{s.s.h}) gives us the weight filtration on 
$H^*(M, \mathbb{Q}(0))$, which provides a new computation of the weight filtration of the cohomology of Hilbert modular varieties without using the Borel-Serre compactifications as done, for example, in the last section of \cite{nekovar2017plectic}. This is a philosophically better computation as it is performed in the algebraic category, whereas the older computation uses the non-algebraic Borel-Serre compactifications and proceeds in a more indirect way when establishing the mixed Hodge structures. See \cite{ayoub2012relative} for a modern treatment of the motivic meaning of the reductive Borel-Serre compactifications. 

\subsection{The plectic weight filtration}

Now we make use of the spectral sequence (\ref{s.s.h}) to construct the plectic weight filtration. Note that the filtration induced by (\ref{s.s.h}) is a $\mathbb{Z}$-filtration, but the plectic weight filtration we are looking for is a $\mathbb{Z}^d$-filtration. We will use the partial Frobenius to cut out the $\mathbb{Z}$-filtration into a $\mathbb{Z}^d$-filtration, and show that this is the sought-after plectic weight filtration.  

Firstly, we compute the eigenvalues of the partial Frobenius on the boundary cohomology 
$ H^*(M^* \setminus M, i^* Rj_*(\mathbb{Q}(0)))$. 
We denote the canonical PEL (up to similitude) smooth integral model $M_{K(n)}/ \Delta$ of $M$ by $\mathscr{M}$, which is defined over an open dense subset of $Spec(\mathbb{Z})$. Similarly, 
$\mathscr{M}^*$ 
is the integral model of the minimal compactification. 
Now choose a prime $p$ in the open subset such that it is split in $F$, and lies in  the applicable range of theorem \ref{comparison thm}. Then as we have already seen, the Frobenius $Frob_p$ on $\mathscr{M}^*_{\mathbb{F}_p}$
decomposes into 
$Frob_p= \underset{i}{\prod}
F_i$, 
where $F_i$ is the partial Frobenius corresponding to the prime $\mathfrak{p}_i$ in the prime decomposition 
$p=\underset{i}{\prod}\mathfrak{p}_i$
of $p$ in $F$.

Let us recall the construction of the $l$-adic sheaf on a Shimura variety coming from an algebraic representation, following Pink (\cite{MR1149032}). Let $G$ be a reductive group giving rise to a Shimura datum, with associated Shimura variety $Sh_K$, for compact open $K \subset G(\mathbb{A}_f)$. For $K’ \subset K$ normal, there is a natural Galois etale covering 
\[
\pi_{K’}: Sh_{K’} \rightarrow Sh_K
\]
with Galois group $K/K’$. We choose a system of $K’$ such that $K’$ differs from $K$ only in $l$-adic part $K’_l$, i.e. $K/K’ = K_l/K_l’$, 
and their $l$-adic parts $K’_l$ form a basis of $G(\mathbb{Q}_l)$. Let $V$ be an algebraic representation of $G$, then it gives rise to a continuous $l$-adic representation of $G(\mathbb{Q}_l)$, which contains a lattice $\Lambda$ stable by all $K’_l$, and for $K_l’ \subset K_l$, there exists a number $n$ such that the natural action of $K_l’$ and $K_l$ on $\Lambda$ induces a representation of $K_l/ K_l’$ on 
$\Lambda /l^n \Lambda$,
then we have an etale sheaf 
\[
\mathcal{V}_{K’} :=( \pi_{K’*}(\mathbb{Z}/l^n\mathbb{Z}) \otimes_{\mathbb{Z}/l^n\mathbb{Z}} \Lambda /l^n \Lambda)^{K_l /K_l’}
\]
where the action of $K_l/K_l’$ on the first factor is induced by the Galois covering $\pi_{K’}$, and the second factor is induced by the representation we have just constructed. These $\mathcal{V}_{K’}$ form an inverse system, and we define the associated $l$-adic sheaf by
\[
\mathcal{F}V := (\underset{K’}{\varprojlim} \mathcal{V}_{K’})\otimes_{\mathbb{Z}_l} \overline{\mathbb{Q}_l}
\]
This is independent of the choices we have made. Similar to Hecke operators, the partial Frobenius induces natural maps between $\mathcal{F}V$, i.e. 
$\mathcal{F}V \longrightarrow F_{i*}\mathcal{F}V$. The key to it is that the partial Frobenius is compatible with the projections $\pi_{K’}$, i.e.
\[
\begin{tikzcd}
Sh_{K’} \arrow[r,"F_i"] \arrow[d,"\pi_{K’}"] &
Sh_{K’} \arrow[d, "\pi_{K’}"]
\\
Sh_K \arrow[r, "F_i"] &
Sh_K 
\end{tikzcd}
\]
is commutative and equivariant for the Galois group.
It is a general heuristic that the partial Frobenius are amplified Hecke operators in characteristic $p$. 

Moreover, the isomorphism (\ref{899}) is compatible with the partial Frobenius. As in theorem \ref{pFextends} (for PEL Shimura varities), the partial Frobenius $F_i$ extends to the minimal compactification and preserves both the open 
$Sh_K \overset{j}{\hookrightarrow} Sh_K^{min}$
and the boundary 
$Sh_K^{min} \setminus Sh_K \overset{i}{\hookrightarrow} Sh_K^{min}$,
therefore inducing the map 
\[
i^*Rj_* \overline{\mathbb{Q}_l} \overset{i^*Rj_* (-) }{\longrightarrow } 
i^*Rj_* F_{i*} \overline{\mathbb{Q}_l}
= i^* F_{i*}Rj_* \overline{\mathbb{Q}_l}
\overset{\text{b.c.}}{\longrightarrow}  F_{i*}i^* Rj_* \overline{\mathbb{Q}_l}
\]
which under the natural isomorphism (\ref{899}), corresponding to the natural map 
$\mathcal{F}V \rightarrow F_{i*}\mathcal{F}V$
for $V$ specified in (\ref{899}). 

\begin{remark}
The above naturality can be proved with the same proof as in 4.8 of \cite{MR1149032}, where it is proved for the Hecke operators. The key property underlying the proof is the compatibility of Hecke operators with the toroidal compactifications. The same compatibility result holds for the partial Frobenius as we will see in the next section.
\end{remark}

Now we go back to the special case of Hilbert modular varieties. Applying the above functoriality to the isomorphism (\ref{pink}), we can reduce the computation of
$i^*Rj_*\overline{\mathbb{Q}_l} \rightarrow F_{i*}i^*Rj_*\overline{\mathbb{Q}_l}$
to the computation of 
$
\mathcal{F}V \rightarrow F_{i*}\mathcal{F}V
$
for $V$ as in (\ref{pink}). 

We make use of the parametrization (\ref{newpara}). For an arbitrary integer $k$, let \[
\pi_k : Isom_{\mathcal{O}_F}(\mathfrak{d}_F^{-1}/nl^k\mathfrak{d}_F^{-1}(1),   \mathfrak{d}_F^{-1} \otimes_{\mathbb{Z}} \mu_{nl^k}) 
\rightarrow 
Isom_{\mathcal{O}_F}(\mathfrak{d}_F^{-1}/n\mathfrak{d}_F^{-1}(1),   \mathfrak{d}_F^{-1} \otimes_{\mathbb{Z}} \mu_n)
\]
be the natural map, corresponding to the covering map $\pi_{K’}$ as above. Let
$\theta \in Isom_{\mathcal{O}_F}(\mathfrak{d}_F^{-1}/n\mathfrak{d}_F^{-1}(1),   \mathfrak{d}_F^{-1} \otimes_{\mathbb{Z}} \mu_n)$, 
and we suppose that $\theta $ lies in the position
$(\alpha, \delta, \partial)$
of the decomposition
\[
\mathscr{M}^*_{\mathbb{F}_p}\setminus \mathscr{M}_{\mathbb{F}_p} =
\underset{\begin{subarray}{c}
  \alpha \in \Omega 
  \end{subarray}}{\coprod} 
  \underset{\partial}{\coprod} 
Isom_{\mathcal{O}_F}(\mathfrak{d}_F^{-1}/n\mathfrak{d}_F^{-1}(1),   \mathfrak{d}_F^{-1} \otimes_{\mathbb{Z}} \mu_n) 
\]
Recall that $F_i$ maps 
$(\alpha, \delta,\partial) $
to 
$(\alpha_1, \delta_1, \partial_1)$, 
where 
$\alpha_1$ is defined by 
\[
\xi\alpha = \epsilon_1 \alpha_1 \lambda_1
\]
with $\xi \in \mathcal{O}_F$ such that $v_{\mathfrak{p}_i}(\xi)=1$ and $v_{\mathfrak{p}_j}(\xi)=0$ for $j \neq i$, 
$\alpha_1 \in \Omega, \epsilon_1 \in (\mathcal{O}_F \otimes \mathbb{Z}_{(p)})_+^{\times}$
and 
$\lambda_1 \in (\mathcal{O}_F \otimes \hat{\mathbb{Z}}^p)^{\times}$
as in decomposition (\ref{A}). Moreover, $\partial_1$ is defined as in theorem \ref{pFextends} (being a union of $\partial'$ in theorem \ref{pFextends}) , and $F_i$ maps $\theta$ to $\lambda_1 \theta$ as in definition \ref{pFdef}. The vague description of $\partial_1$ here suffices for our purpose. In summary, 
\[
\theta |_{(\alpha, \partial)} \overset{F_i}{\longrightarrow} (\lambda_1 \theta)|_{(\alpha_1,\partial_1)}
\]
with obvious notations.

We can repeat the above procedure and obtain
\[
\xi\alpha_1 = \epsilon_2 \alpha_2 \lambda_2
\]
\begin{equation} \label{jjjj}
\vdots
\end{equation}
\[
\xi\alpha_m = \epsilon_{m+1} \alpha_{m+1} \lambda_{m+1}
\]
where 
$\alpha_j \in \Omega, \epsilon_j \in (\mathcal{O}_F \otimes \mathbb{Z}_{(p)})_+^{\times}$
and 
$\lambda_j \in (\mathcal{O}_F \otimes \hat{\mathbb{Z}}^p)^{\times}$
as in decomposition (\ref{A}). Then 
\[
\theta |_{(\alpha, \partial)} \overset{F_i^m}{\longrightarrow} (\lambda_1 \cdots \lambda_{m} \theta)|_{(\alpha_m,\partial_m)}
\]
As $F_i$ permutes the cusps, we know that there is a minimal integer $N$ such that 
\[
F_i^N (\theta) = \theta.
\]
Note that this means that 
$\lambda_1 \cdots \lambda_{N} \theta = \theta$, $\alpha_N = \alpha$ and 
$\partial_N = \partial$.

We denote by $\widetilde{\mathscr{M}}$ the Hilbert modular variety of principal level $nl^k$, then we have a natural commutative diagram map
\[
\begin{tikzcd}
\widetilde{\mathscr{M}}_{\mathbb{F}_p}^* \setminus \widetilde{\mathscr{M}}_{\mathbb{F}_p} \arrow[r,"F_i"] 
\arrow[d,"\pi_{k}"] &
\widetilde{\mathscr{M}}_{\mathbb{F}_p}^* \setminus \widetilde{\mathscr{M}}_{\mathbb{F}_p}
\arrow[d, "\pi_{k}"]
\\
\mathscr{M}_{\mathbb{F}_p}^* \setminus \mathscr{M}_{\mathbb{F}_p} 
\arrow[r, "F_i"] & 
\mathscr{M}_{\mathbb{F}_p}^* \setminus \mathscr{M}_{\mathbb{F}_p}
\end{tikzcd}
\]
Together with the decomposition 
\[
\widetilde{\mathscr{M}}_{\mathbb{F}_p}^* \setminus \widetilde{\mathscr{M}}_{\mathbb{F}_p} =
\underset{\begin{subarray}{c}
  \alpha \in \Omega 
  \end{subarray}}{\coprod} 
  \underset{\widetilde{\partial}}{\coprod} 
Isom_{\mathcal{O}_F}(\mathfrak{d}_F^{-1}/nl^k\mathfrak{d}_F^{-1}(1),   \mathfrak{d}_F^{-1} \otimes_{\mathbb{Z}} \mu_{nl^k}) 
\]
we are reduced to the situation 
\[
\begin{tikzcd}
Isom_{\mathcal{O}_F}(\mathfrak{d}_F^{-1}/nl^k\mathfrak{d}_F^{-1}(1),   \mathfrak{d}_F^{-1} \otimes_{\mathbb{Z}} \mu_{nl^k})|_{(\alpha, \widetilde{\partial})} 
\arrow[r,"F_i"] 
\arrow[d,"\pi_{k}"] &
Isom_{\mathcal{O}_F}(\mathfrak{d}_F^{-1}/nl^k\mathfrak{d}_F^{-1}(1),   \mathfrak{d}_F^{-1} \otimes_{\mathbb{Z}} \mu_{nl^k})|_{(\alpha_1, \widetilde{\partial}_1)} 
\arrow[d, "\pi_{k}"]
\\
Isom_{\mathcal{O}_F}(\mathfrak{d}_F^{-1}/n\mathfrak{d}_F^{-1}(1),   \mathfrak{d}_F^{-1} \otimes_{\mathbb{Z}} \mu_{n})|_{(\alpha, \partial)} 
\arrow[r, "F_i"] & 
Isom_{\mathcal{O}_F}(\mathfrak{d}_F^{-1}/n\mathfrak{d}_F^{-1}(1),   \mathfrak{d}_F^{-1} \otimes_{\mathbb{Z}} \mu_{n})|_{(\alpha_1, \partial_1)}
\end{tikzcd}
\]
The same description of $F_i$ applies to $\widetilde{\mathscr{M}}_{\mathbb{F}_p}$. In summary, 
\[
\widetilde{\theta} |_{(\alpha, \widetilde{\partial})} \overset{F_i}{\longrightarrow} (\lambda_1 \widetilde{\theta})|_{(\alpha_1,\widetilde{\partial}_1)}
\]
One subtlety here is that there are more than one $\widetilde{\partial}$ lying over $\partial$. However, the cusps they parametrize are canonically isomorphic, and we can choose one $\widetilde{\partial}$ for each $\partial$. 

For simplicity, we assume that $l$ is prime to $n$, then the Galois group for the covering $\pi_k$ is 
$(\mathcal{O}_F / l^k\mathcal{O}_F)^{\times}$.
If we denote by $V$ the one dimensional representation 
$Nm_{F/ \mathbb{Q}}^{-1} : Res_{F/ \mathbb{Q}}\textbf{G}_m \rightarrow \textbf{G}_m$, 
then its $l$-adic points induces the reduced representation
$Nm_{F/\mathbb{Q}}^{-1} : (\mathcal{O}_F / l^k\mathcal{O}_F)^{\times} \longrightarrow (\mathbb{Z} / l^k \mathbb{Z})^{\times}$,
which we denote by $\mathcal{V}_k$.
We fix a non-zero element 
$v_k \in \mathcal{V}_k$ for each $k$, and we assume that they are compatible when $k$ varies. From the description we have just reviewed, we have
\[
\mathcal{F}V =  (\underset{k}{\varprojlim} ( \pi_{k*}(\mathbb{Z}/l^k\mathbb{Z}) \otimes_{\mathbb{Z}/l^k\mathbb{Z}} \mathcal{V}_k)^{(\mathcal{O}_F / l^k\mathcal{O}_F)^{\times}})
\otimes_{\mathbb{Z}_l} \overline{\mathbb{Q}_l}
\]
For a fixed k, if we choose a
\[
\widetilde{\theta} \in Isom_{\mathcal{O}_F}(\mathfrak{d}_F^{-1}/nl^k\mathfrak{d}_F^{-1}(1),   \mathfrak{d}_F^{-1} \otimes_{\mathbb{Z}} \mu_{nl^k})
\]
such that
$\pi_k(\widetilde{\theta} )= 
\theta$,
then 
\[
\pi_{k*}(\mathbb{Z}/l^k\mathbb{Z}) \otimes_{\mathbb{Z}/l^k\mathbb{Z}} \mathcal{V}_k)^{(\mathcal{O}_F / l^k\mathcal{O}_F)^{\times}} |_{\theta}
= (\mathbb{Z}/ \l^k \mathbb{Z}) \cdot
\underset{g \in (\mathcal{O}_F / l^k\mathcal{O}_F)^{\times}}{ \sum} (g \widetilde{\theta}) \otimes ( Nm_{F/ \mathbb{Q}}^{-1}(g) v_k) 
\]
i.e. the choice of $v_k$ and $\widetilde{\theta}$ gives a basis 
$\underset{g \in (\mathcal{O}_F / l^k\mathcal{O}_F)^{\times}}{ \sum} (g \widetilde{\theta}) \otimes ( Nm_{F/ \mathbb{Q}}^{-1}(g) v_k) $
of 
$\pi_{k*}(\mathbb{Z}/l^k\mathbb{Z}) \otimes_{\mathbb{Z}/l^k\mathbb{Z}} \mathcal{V}_k)^{(\mathcal{O}_F / l^k\mathcal{O}_F)^{\times}} |_{\theta}$.

Now using this explicit description, we can compute the natural morphism 
$\mathcal{F}V \rightarrow F_{i*}\mathcal{F}V$ (over 
$\mathscr{M}_{\mathbb{F}_p}^* \setminus \mathscr{M}_{\mathbb{F}_p}
$)
as follows. It is 
($\overline{\mathbb{Q}_l} \otimes (-)$) 
the direct limit of the morphism
\[
\left.
\underset{g \in (\mathcal{O}_F / l^k\mathcal{O}_F)^{\times}}{ \sum} (g \widetilde{\theta}) \otimes ( Nm_{F/ \mathbb{Q}}^{-1}(g) v_k) \right |_{(\alpha, \partial)} 
\rightarrow 
\left. \underset{g \in (\mathcal{O}_F / l^k\mathcal{O}_F)^{\times}}{ \sum} (g \lambda_1 \widetilde{\theta}) \otimes ( Nm_{F/ \mathbb{Q}}^{-1}(g) v_k) \right\vert_{(\alpha_1, \partial_1)} 
\]
For a fixed $\theta$ and the corresponding minimal $N$ as above, we can iterate the process and obtain a basis for the stalk of the sheaf at $F_i^m(\theta |_{(\alpha, \partial)})$
for $m < N$. Note that by the choice of $N$, 
$F_i^m(\theta |_{(\alpha, \partial)})$
are all different for $m < N$. When $m=N$, we have $F_i^N(\theta |_{(\alpha, \partial)}) = \theta |_{(\alpha, \partial)}$, and 
\[
\underset{g \in (\mathcal{O}_F / l^k\mathcal{O}_F)^{\times}}{ \sum} (g \widetilde{\theta}) \otimes ( Nm_{F/ \mathbb{Q}}^{-1}(g) v_k)
\overset{F_i^N}{\longrightarrow}
\underset{g \in (\mathcal{O}_F / l^k\mathcal{O}_F)^{\times}}{ \sum} (g \lambda_1 \cdots \lambda_N \widetilde{\theta}) \otimes ( Nm_{F/ \mathbb{Q}}^{-1}(g) v_k)
\]
\[
\ \ \ \ \ \ \ \ \ \ \ \ \ \ \ \ \ \ \ \ \ \ \ \ \ \ \ \ \ \ \ \ \ \ \ \ \ \ \ \ \\ \\ \ \ \ \ \ \ \ 
= Nm_{F/ \mathbb{Q}}(\lambda_1 \cdots \lambda_N)
\underset{g \in (\mathcal{O}_F / l^k\mathcal{O}_F)^{\times}}{ \sum} (g  \widetilde{\theta}) \otimes ( Nm_{F/ \mathbb{Q}}^{-1}(g) v_k)
\]
This tells us that with the basis we have chosen, 
$F_i$ has a block of the form
\[
\begin{pmatrix} 
 &   &  & &  Nm_{F/ \mathbb{Q}}(\lambda_1 \cdots \lambda_N)  \\
1 &   &     \\
 & 1  &    \\
 &  & \ddots \\
& & &  1
\end{pmatrix}
\]
This is a matrix expression of a morphism between free $\mathbb{Z}/ l^k \mathbb{Z}$-modules, taking the inverse limit over $k$ and tensor with $\overline{\mathbb{Q}_l}$, we have the same matrix (partial expression) for the desired morphism 
$\mathcal{F}V \rightarrow F_{i*}\mathcal{F}V$. 
Now from the equation (\ref{jjjj}) and $\alpha_N = \alpha$, we have 
\[
\xi^N \alpha = (\epsilon_1 \cdots \epsilon_N) \alpha (\lambda_1 \cdots \lambda_N)
\]
with 
$(\epsilon_1 \cdots \epsilon_N) \in  (\mathcal{O}_F \otimes \mathbb{Z}_{(p)})_+^{\times}$
and 
$\lambda_1 \cdots \lambda_N \in (\mathcal{O}_F \otimes \hat{\mathbb{Z}}^p)^{\times}$. Hence 
\[
\lambda_1 \cdots \lambda_N = \xi^N (\epsilon_1 \cdots \epsilon_N)^{-1}
\]
and 
\[
Nm_{F/ \mathbb{Q}}(\lambda_1 \cdots \lambda_N) = 
Nm_{F/ \mathbb{Q}}(c^N) = p^{N}
\]
It is easy to compute that the characteristic polynomial of the matrix  
\[
\begin{pmatrix} 
 &   &  & &  p^{N}  \\
1 &   &     \\
 & 1  &    \\
 &  & \ddots \\
& & &  1
\end{pmatrix}
\]
is 
$x^N- p^{N}$, hence the eigenvalues are of the form $p \zeta_N^i$ with $\zeta_N$ a primitive $N$-th root of unity. Therefore, they are Weil numbers with absolute value $p$. Since every block is of the above form, we see that the eigenvalues are all of absolute value $p$. If we base change everything to $\bar{\mathbb{F}}_p$, then the above computation computes the eigenvalues of the partial Frobenius $F_i$ on 
$
H^*(\mathscr{M}^*_{\bar{\mathbb{F}}_p} \setminus \mathscr{M}_{\bar{\mathbb{F}}_p}, \mathcal{F}V)
$, 
which we see are all of absolute value $p$. 
Then from ($l$-adic realization of) equation (\ref{www}), we have that  
$w_{\geq k} w_{\leq k} R^nj_* \overline{\mathbb{Q}_l} $
is a sum of $i_*\mathcal{F}V$ if $k >0$,
hence the partial Frobenius acts on
\[
H^*(\mathscr{M}^*_{\bar{\mathbb{F}}_p},  w_{\geq k} w_{\leq k} Rj_* \overline{\mathbb{Q}_l})
\]
with eigenvalues of absolute value $p$, if $k>0$. 

To summarize, we have proven the following proposition.

\begin{proposition}
The partial Frobenius $F_i$ acts on the spectral sequence  (\ref{s.s.h}) by proposition \ref{func.s.s}. More precisely, by proposition \ref{func.s.s},  $F_i$ acts on the special fiber variant of  ($l$-adic realization of) the spectral sequence (\ref{s.s.h})
\[
E_1^{p,q}= H^{p+q}(\mathscr{M}^*_{\bar{\mathbb{F}}_p}, w_{\geq -p} w_{\leq -p} Rj_*\overline{\mathbb{Q}_l}) 
\Rightarrow
H^{p+q}(\mathscr{M}_{\bar{\mathbb{F}}_p}, \overline{\mathbb{Q}_l})
\]
which is (at least up to convergence) isomorphic to the Hodge module realization of (\ref{s.s.h}) by theorem \ref{comparison thm} and choice of $p$.
If $p <0$, $F_i$ acts on $E_1^{p,q}$ with eigenvalues of absolute value $p$, hence of partial Frobenius weights $(2, \cdots, 2)$.  

On the other hand, the Hodge module realization of (\ref{s.s.h}) have 
$^H E_1^{p,q} = H^{p+q}(\mathscr{M}^*(\mathbb{C}),w_{\geq -p} w_{\leq -p} Rj_*\mathbb{C}) $,
which is a sum of  
$H^{p+q}(\mathscr{M}^*(\mathbb{C}) \setminus \mathscr{M}(\mathbb{C}), \mathcal{F}V)$
if $p <0$, 
hence of plectic Hodge type $(1, \cdots, 1 ; 1, \cdots, 1)$ (sum of $\mathbb{C}(-1)^{\otimes d}$, the $(-d)$-th power of Tate structure). These are of plectic weight $(2, \cdots, 2)$, and the above computation shows that under the comparison, the partial Frobenius weights is the same as the plectic Hodge weights.  
\end{proposition}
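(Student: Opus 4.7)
The plan is to combine the partial-Frobenius extension to the minimal compactification (Theorem \ref{pFextends}), the functoriality of the weight spectral sequence (Proposition \ref{func.s.s}), and the comparison between the $l$-adic and Hodge weight spectral sequences (Theorem \ref{comparison thm}), and then reduce the eigenvalue computation to a local calculation on $F_i$-orbits of cusps. The first step is a formal one: Theorem \ref{pFextends} supplies a finite endomorphism $F_i$ of $\mathscr{M}^*_{\mathbb{F}_p}$ restricting to the usual partial Frobenius on the interior, and the inclusion of the open stratum is $F_i$-Cartesian, so Proposition \ref{func.s.s} applied to $\mathcal{F}=\mathcal{G}=\overline{\mathbb{Q}_l}$ and the adjunction $\overline{\mathbb{Q}_l}\to F_{i*}\overline{\mathbb{Q}_l}$ yields an endomorphism of the $l$-adic weight spectral sequence on the special fiber. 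Theorem \ref{comparison thm} then identifies this with an endomorphism of the Hodge-theoretic spectral sequence at the level of filtered $\overline{\mathbb{Q}_l}$-vector spaces via $\imath$, so that $l$-adic Frobenius eigenvalues detect Hodge weights.

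Next I would reduce to the cusps. For $p<0$, equation (\ref{www}) shows that $w_{\geq -p}w_{\leq -p}Rj_*\overline{\mathbb{Q}_l}$ equals $i_*$ of a direct sum of copies of the automorphic sheaf $\mathcal{F}V$ on the finite cusp locus, with $V$ a power of the inverse-norm character of $\mathrm{Res}_{F/\mathbb{Q}}\mathbf{G}_m$. Hence $E_1^{p,q}$ is a direct sum of stalks of $\mathcal{F}V$ at geometric cusps, and since $F_i$ permutes the cusps (Theorem \ref{pFextends}) the endomorphism breaks into blocks indexed by $F_i$-orbits. This reduces the problem to evaluating $F_i$ on the stalks over a single orbit of minimal length $N$.

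For the eigenvalue computation I would use Pink's realization of $\mathcal{F}V$ as the $(\mathcal{O}_F/l^k\mathcal{O}_F)^{\times}$-invariants of $\pi_{k*}(\mathbb{Z}/l^k)\otimes\mathcal{V}_k$ along the level-$nl^k$ covers, together with the explicit formula from Definition \ref{pFdef} giving the action of $F_i$ on idelic data as $\widetilde{\theta}\mapsto\lambda_1\widetilde{\theta}$, where $\xi\alpha=\epsilon\alpha_1\lambda_1$ with $v_{\mathfrak{p}_i}(\xi)=1$ and $v_{\mathfrak{p}_j}(\xi)=0$ for $j\ne i$. In the stalk basis $\sum_g(g\widetilde{\theta})\otimes(\mathrm{Nm}_{F/\mathbb{Q}}^{-1}(g)\,v_k)$, iterating $F_i$ around the orbit and using the identity $\xi^N\alpha=(\prod_j\epsilon_j)\,\alpha\,(\prod_j\lambda_j)$ yields $\mathrm{Nm}_{F/\mathbb{Q}}(\prod_j\lambda_j)=p^N$ (up to a prime-to-$p$ totally positive unit whose action is killed by taking invariants); the $N\times N$ block of $F_i$ is then the companion matrix of $x^N-p^N$, with eigenvalues $p\zeta_N^j$ of absolute value equal to the prime. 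On the Hodge side, by the Hodge-module analogue of (\ref{www}) the same summand is $H^0$ of a cusp with coefficient $\mathbb{C}(-1)^{\otimes d}=\bigotimes_{v\mid\infty}\mathbb{C}(-1)$, of plectic Hodge type $(1,\ldots,1;1,\ldots,1)$ and plectic weight $(2,\ldots,2)$, matching the partial-Frobenius weights obtained symmetrically for each $F_j$.

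The hard part will be the eigenvalue computation: one must verify that Pink's realization of $\mathcal{F}V$ transports the idelic formula from Definition \ref{pFdef} into an honest endomorphism in the basis above, which requires the compatibility $F_i\circ\pi_k=\pi_k\circ F_i$ at every finite level (the analogue for partial Frobenius of Pink's compatibility between Hecke operators and level covers, justified by Theorem \ref{pFextends}), together with the correct identification of the deck group of $\pi_k$ with $(\mathcal{O}_F/l^k\mathcal{O}_F)^{\times}$. Once this bookkeeping is in place, the companion-matrix shape combined with the fact that $v_{\mathfrak{p}_i}(\xi)=1$ while $v_{\mathfrak{p}_j}(\xi)=0$ for $j\ne i$ forces the eigenvalue absolute value to be $p$ by a direct Newton-polygon argument, and the matching with plectic Hodge weights then follows formally from the description of $\mathcal{F}V$ at the cusps.
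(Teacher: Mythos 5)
Your proposal reproduces the paper's argument in all essentials: Theorem \ref{pFextends} plus the adjunction unit feed Proposition \ref{func.s.s} to act on the special-fiber weight spectral sequence, Theorem \ref{comparison thm} transports this across the comparison, equation (\ref{www}) collapses $E_1^{p,q}$ for $p<0$ to cusp stalks of $\mathcal{F}V$, and the $F_i$-orbit gives the companion matrix of $x^N - p^N$ with eigenvalues of absolute value $p$, matched against the plectic weight $(2,\ldots,2)$ on the Hodge side. Your final remark about a unit ``killed by taking invariants'' and a Newton-polygon argument is slightly off the mark: the paper needs neither, because $\lambda_1\cdots\lambda_N$ lies in $(\mathcal{O}_F\otimes\hat{\mathbb{Z}}^p)^\times$ and equals the diagonal image of $\xi^N(\epsilon_1\cdots\epsilon_N)^{-1}\in F^\times$, so its norm is a positive rational that is a unit away from $p$ and has $p$-adic valuation $N$, hence is exactly $p^N$ — a cleaner closing step you should adopt.
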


\begin{remark}
It is possible to avoid the comparison theorem \ref{comparison thm} in the special case of Hilbert modular varieties. We have observed that the spectral sequence (\ref{s.s.h}) induces (shifts of) the weight filtration on the open cohomology. Therefore the comparison automatically holds. To spell this out, we note that the identification with the weight filtration gives a motivic meaning of the filtration induced by (\ref{s.s.h}), namely, we can find a smooth projective compactification with smooth normal crossing boundary divisors, and the filtration can be expressed in terms of the cohomology of the natural strata. Then the comparison is reduced to the standard comparison between different cohomology theories. 

Note that in general the filtration induced by the spectral sequence in theoerem \ref{w.s.s.} is not the weight filtration. However, in some sense, it detects the non-trivial extensions of the weight filtration. 
\end{remark}

We have computed the partial Frobenius on 
$E_1^{p,q}$ for $p<0$, and checked the partial Frobenius weights is the same as the plectic Hodge weights. It remains to do the same for the remaining 
$E_1^{0,q}= IH^q(\mathscr{M}^*_{\bar{\mathbb{F}}_p}, \overline{\mathbb{Q}_l})$. 

We note that the Hecke algebra decomposes the cohomology into 
\[
IH^*(\mathscr{M}^*_{\bar{\mathbb{F}}_p}, \overline{\mathbb{Q}_l}) = 
IH^*(\mathscr{M}^*_{\bar{\mathbb{F}}_p}, \overline{\mathbb{Q}_l})_{\text{cusp}} \oplus 
IH^*(\mathscr{M}^*_{\bar{\mathbb{F}}_p}, \overline{\mathbb{Q}_l})_{\text{rest}}
\]
where 
$IH^*(\mathscr{M}^*_{\bar{\mathbb{F}}_p}, \overline{\mathbb{Q}_l})_{\text{cusp}}$
is the subspace on which the Hecke algebra acts with the same type as some cuspidal automorphic representations. Similarly,    
$IH^*(\mathscr{M}^*_{\bar{\mathbb{F}}_p}, \overline{\mathbb{Q}_l})_{\text{rest}}$
is the subspace on which the Hecke algebra acts as a discrete but non-cuspidal automorphic representation.

Note that the corresponding representation is cohomological and we can classify them. The cuspidal part corresponds to holomorphic Hilbert modualr forms of weight $(2, \cdots, 2)$, and the discrete non-cuspidal part corresponds to one-dimensional representations. 

We first compute the cuspidal part. We have
\[
IH^*(\mathscr{M}^*_{\bar{\mathbb{F}}_p}, \overline{\mathbb{Q}_l})_{\text{cusp}} 
= \underset{f}{\oplus} IH^*(\mathscr{M}^*_{\bar{\mathbb{F}}_p}, \overline{\mathbb{Q}_l})_f
\]
where $f$ ranges over  holomorphic Hilbert modualr forms of weight $(2, \cdots, 2)$, see \cite{MR1050763} chapter 3 for example. It is well-known from the $(g, K)$-cohomology computations that 
$IH^*(\mathscr{M}^*_{\bar{\mathbb{F}}_p}, \overline{\mathbb{Q}_l})_f$
is concentrated in degree $d$, and (its complex variant) has plectic Hodge type $((1,0)\oplus (0, 1))^{\otimes d}$, hence of plectic weight $(1, \cdots, 1)$. We want to check that the partial Frobenius weights are again of the same weight, namely, the eigenvalues of the partial Frobenius $F_i$ on 
$IH^*(\mathscr{M}^*_{\bar{\mathbb{F}}_p}, \overline{\mathbb{Q}_l})_f$
have absolute value $p^{\frac{1}{2}}$. 

Recall that Nekov\'a\v r have proved in \cite{JanNekovar2018} that the partial Frobenius satisfies an Eichler-Shimura relation. In the Hilbert modular case, it is 
\[
F_i^2 - (T_i/S_i) F_i + p/S_i =0
\]
where $T_i, S_i$ are standard Hecke operators of the Hecke algebra of $Res_{F/ \mathbb{Q}} GL_2 $ at 
$\mathbb{Q}_p$, i.e.
\[
T_i, S_i \in 
H(Res_{F/ \mathbb{Q}} GL_2(\mathbb{Q}_p) // Res_{F/ \mathbb{Q}} GL_2(\mathbb{Z}_p), \mathbb{Z}) = \otimes_i H( GL_2(\mathbb{Q}_p) // GL_2(\mathbb{Z}_p), \mathbb{Z}) 
\]
indexed by $\{\mathfrak{p}_i\}$, see \cite{JanNekovar2018} A6. The upshot is that this shows that the eigenvalues of the partial Frobenius $F_i$ on 
$IH^*(\mathscr{M}^*_{\bar{\mathbb{F}}_p}, \overline{\mathbb{Q}_l})_f$
is the same as the eigenvalues of the  (geometric) Frobenius $Frob_{\mathfrak{p}_i}$ on the representation $\rho_f^{\vee} (-1)$, where 
$\rho_f: Gal(\bar{\mathbb{Q}}/ F) \rightarrow GL_2(\overline{\mathbb{Q}_l})$
is the Galois representation associated to the Hilbert modular form $f$. We know from \cite{blasius2006hilbert} that the Galois representation $\rho_f$ is pure of weight 1,  so is  
$\rho_f^{\vee} (-1)$, proving that the eigenvalues of the partial Frobenius $F_i$ on 
$IH^*(\mathscr{M}^*_{\bar{\mathbb{F}}_p}, \overline{\mathbb{Q}_l})_f$
have absolute value $p^{\frac{1}{2}}$. 

Finally, we deal with 
$IH^*(\mathscr{M}^*_{\bar{\mathbb{F}}_p}, \overline{\mathbb{Q}_l})_{\text{rest}}$. It is known that it is concentrated in even degrees, and \[
IH^{2k}(\mathscr{M}^*_{\bar{\mathbb{F}}_p}, \overline{\mathbb{Q}_l})_{\text{rest}} 
= \bigwedge^k (IH^0(\mathscr{M}^*_{\bar{\mathbb{F}}_p}, \overline{\mathbb{Q}_l})_{\text{rest}} 
\oplus 
IH^2(\mathscr{M}^*_{\bar{\mathbb{F}}_p}, \overline{\mathbb{Q}_l})_{\text{rest}})
\]
The same holds for the complex variant (\cite{MR1050763} chapter 3), thus it is enough to concentrate on 
$IH^2(\mathscr{M}^*_{\bar{\mathbb{F}}_p}, \overline{\mathbb{Q}_l})_{\text{rest}}$. If we look at a connected component
$\mathscr{M}_{\bar{\mathbb{F}}_p,o }^{*}$
of 
$\mathscr{M}^*_{\bar{\mathbb{F}}_p}$, 
we have 
\[
IH^2(\mathscr{M}^{*}_{\bar{\mathbb{F}}_p, o}, \overline{\mathbb{Q}_l})_{\text{rest}}
= \underset{i}{\oplus} \overline{\mathbb{Q}_l} \cdot c_1(L_i) (-1)
\]
where $L_i$ is a line bundle on 
$\mathscr{M}_{\bar{\mathbb{F}}_p, o}$
to be defined below, and the equality is interpreted as  
$c_1(L_i)(-1) \in H^2(\mathscr{M}_{\bar{\mathbb{F}}_p, o},\overline{\mathbb{Q}_l})$ 
lying in the image of the natural embedding
$IH^2(\mathscr{M}^{*}_{\bar{\mathbb{F}}_p, o}, \overline{\mathbb{Q}_l})_{\text{rest}} \hookrightarrow 
H^2(\mathscr{M}_{\bar{\mathbb{F}}_p, o},\overline{\mathbb{Q}_l}) $. Let 
$p: \mathcal{A} \rightarrow \mathscr{M}_{\mathbb{F}_p,o}$  
be the universal abelian scheme over 
$\mathscr{M}_{\mathbb{F}_p,o}$, 
then 
$Lie^{\vee}_{\mathcal{A}/\mathscr{M}_{\mathbb{F}_p,o}}$
is a coherent sheaf of projective 
$\mathcal{O}_F \otimes_{\mathbb{Z}} \mathbb{F}_p$-module with rank 1. By the choice of $p$, we have 
$\mathcal{O}_F \otimes_{\mathbb{Z}} \mathbb{F}_p = \underset{i}{\prod} \mathbb{F}_p$ 
parametrized by $\{\mathfrak{p}_i\}$, hence 
\[
Lie^{\vee}_{\mathcal{A}/\mathscr{M}_{\mathbb{F}_p,o}} = \underset{i}{\oplus}L_i
\]
where 
$L_i := e_i Lie^{\vee}_{\mathcal{A}/\mathscr{M}_{\mathbb{F}_p,o}}$ 
and $e_i$ is the idempotent of
$\underset{i}{\prod} \mathbb{F}_p$
corresponding to the $i$-th factor. 
Another way to characterize $L_i$ is to note that 
\[
Lie_{\mathcal{A}/\mathscr{M}_{\mathbb{F}_p,o}}^{\vee} = Lie_{\mathcal{A}[p]/\mathscr{M}_{\mathbb{F}_p,o}}^{\vee}
=\underset{i}{\oplus} Lie_{\mathcal{A}[\mathfrak{p}_i]/\mathscr{M}_{\mathbb{F}_p,o}}^{\vee}
\]
and 
\[
L_i=Lie_{\mathcal{A}[\mathfrak{p}_i]/\mathscr{M}_{\mathbb{F}_p,o}}^{\vee}
\]
Now by definition of the partial Frobenius $F_i$, we have a Cartesian diagram 
\[
\begin{tikzcd}
\mathcal{A}^{(\mathfrak{p}_i)} : \arrow[equal]{r} &
\mathcal{A}/(Ker(F)[\mathfrak{p}_i]) \arrow[r,""] \arrow[d,""] &
\mathcal{A} \arrow[d, "p"]
\\
&
\mathscr{M}_{\mathbb{F}_p,o } \arrow[r, "F_i"] &
\mathscr{M}_{\mathbb{F}_p,o' } 
\end{tikzcd}
\]
with a possibly different connected component 
$\mathscr{M}_{\mathbb{F}_p,o' }$. By abuse of the notation, we use the same $\mathcal{A}$ to denote the universal abelian scheme on 
$\mathscr{M}_{\mathbb{F}_p,o' }$,
and similarly for $L_i$. 
The diagram tells us that 
\[
F_i^*(L_i) = e_j Lie_{\mathcal{A}^{(\mathfrak{p}_i)} /\mathscr{M}_{\mathbb{F}_p,o}} 
=Lie_{\mathcal{A}^{(\mathfrak{p}_i)}[\mathfrak{p}_j] /\mathscr{M}_{\mathbb{F}_p,o}}
\]
If $j \neq i$, then clearly 
$\mathcal{A}^{(\mathfrak{p}_i)}[\mathfrak{p}_j] = \mathcal{A}[\mathfrak{p}_j]$, hence 
\[
F_i^*(L_j) = Lie_{\mathcal{A}[\mathfrak{p}_j] /\mathscr{M}_{\mathbb{F}_p,o}} = L_j
\]
If $j=i$, then 
$\mathcal{A}^{(\mathfrak{p}_i)}[\mathfrak{p}_i] = \mathcal{A}^{(p)}[\mathfrak{p}_i] $, 
where 
$\mathcal{A}^{(p)} := \mathcal{A}/Ker(F)$ 
as usual, hence 
\[
F_i^*(L_i) = Lie_{\mathcal{A}^{(p)}[\mathfrak{p}_i] /\mathscr{M}_{\mathbb{F}_p,o}} = 
e_i Lie_{\mathcal{A}^{(p)} /\mathscr{M}_{\mathbb{F}_p,o}} =
e_i Frob^* Lie_{\mathcal{A} /\mathscr{M}_{\mathbb{F}_p,o}}
\]
\[
= e_i Frob^*(\underset{j}{\oplus} L_j)
= e_i (\underset{j}{\oplus} Frob^*L_j) 
= e_i (\underset{j}{\oplus} L_j^{ \otimes p})
= L_i^{ \otimes p}
\]
where 
$Frob:\mathscr{M}_{\mathbb{F}_p,o} \rightarrow \mathscr{M}_{\mathbb{F}_p,o} $
is the absolute Frobenius, and we use that 
$Frob^* L \cong L^{\otimes p}$
for any line bundle $L$ (by looking at the transition function of $L$). 

Now we have proved that 
\begin{equation*}
F_i^*(c_1(L_j)) = c_1(F_i^*L_j)= 
\begin{cases}
c_1(L_j)  & j \neq i, \\
pc_1(L_i) & j=i 
\end{cases}
\end{equation*}
Taking into the subtlety of the connected components, we see that 
\[
IH^2(\mathscr{M}^*_{\bar{\mathbb{F}}_p}, \overline{\mathbb{Q}_l})_{\text{rest}}
= \underset{i}{\oplus} W_i
\]
where $W_i$ is the subspace generated by $c_1(L_i)(-1)$ on each connected component. Then with the modification introduced by base changing to algebraic closure and the Tate twist, $F_i$ acts on  $W_i$ with blocks of the form 
\[
\begin{pmatrix} 
 &   &  & &  p  \\
p &   &     \\
 & p &    \\
 &  & \ddots \\
& & &  p
\end{pmatrix}
\]
hence have eigenvalues $p\zeta$ for $\zeta$ some roots of unity. If $j \neq i$, $F_i$ acts on $W_j$ with blocks of the form 
\[
\begin{pmatrix} 
 &   &  & &  1  \\
1 &   &     \\
 & 1  &    \\
 &  & \ddots \\
& & &  1
\end{pmatrix}
\]
which have eigenvalues roots of unity. This proves that $W_i$ have partial Frobenius weights $(0, \cdots, 0, 2, 0, \cdots 0)$
with 2 at the $i$-th position.

On the other hand, the same process gives line bundles $L_i$ on $\mathscr{M}_{\mathbb{C}}$, where we use that 
$Lie^{\vee}_{\mathcal{A} / \mathscr{M}_{\mathbb{C}}}$ 
is a sheaf of projective 
$\mathcal{O}_F \otimes_{\mathbb{Z}} \mathbb{C} = \underset{i}{\prod} \mathbb{C}$-modules, which are indexed by archimedean places of $F$. The $L_i$ can be further characterized by its transition functions, i.e. its sections corresponds to holomorphic Hilbert modular forms of weight $(0, \cdots, 0, 2, 0, \cdots 0)$ 
with 2 at the $i$-th position. In the comparison between Betti cohomology and $l$-adic cohomology of the special fiber at $p$, we implicitly fix an isomorphism 
$\overline{\mathbb{Q}_p} \cong \mathbb{C}$, which induces an identification between archimedean places  and $p$-adic places of $F$. Thus we can compare the $L_i$ in two different cases, the corresponding 
$W_i \subset IH^2(\mathscr{M}^*(\mathbb{C}), \mathbb{C})$ 
generated by $c_1(L_i)$
is easily seen to be of plectic Hodge type 
\[
(0, \cdots, 0, 1, 0, \cdots 0 ; 0, \cdots, 0, 1, 0, \cdots 0)
\]
with both $1$s in the $i$-th position ($c_1(L_i)$ is represented by 
$dz_i \wedge d\bar{z}_i$ 
with $(z_k)_k \in \mathbb{H}^d$).
Thus $W_i$ have plectic weight 
$(0, \cdots, 0, 2, 0, \cdots 0)$
with 2 at the $i$-th position, which is compatible with the partial Frobenius weights. 

To summarize, we have proved the following theorem.

\begin{theorem}
 The partial Frobenius  $F_i$ acts on the special fiber variant of  ($l$-adic realization of) the spectral sequence (\ref{s.s.h})
\[
E_1^{p,q}= H^{p+q}(\mathscr{M}^*_{\bar{\mathbb{F}}_p}, w_{\geq -p} w_{\leq -p} Rj_*\overline{\mathbb{Q}_l}) 
\Rightarrow
H^{p+q}(\mathscr{M}_{\bar{\mathbb{F}}_p}, \overline{\mathbb{Q}_l})
\]
by proposition \ref{func.s.s}, which is (at least up to convergence) isomorphic to the Hodge module realization of (\ref{s.s.h}) by theorem \ref{comparison thm} and choice of $p$. The Hodge module spectral sequence exhibits plectic Hodge structures on the graded pieces of the filtration induecd by (\ref{s.s.h}) through $(\mathfrak{g},K)$-cohomology, and the partial Frobenius weights are compatible with the exhibited plectic Hodge weights on each graded pieces. 

\end{theorem}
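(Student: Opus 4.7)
The plan is to assemble the ingredients established in the previous subsections rather than perform new computations. First, since the partial Frobenius $F_i$ extends to a finite self-morphism of the minimal compactification $\mathscr{M}^*_{\bar{\mathbb{F}}_p}$ by Theorem \ref{pFextends} and restricts to a finite self-morphism of the open $\mathscr{M}_{\bar{\mathbb{F}}_p}$, the square with $j$ on both sides is Cartesian. I would apply Proposition \ref{func.s.s} to this square with $\mathcal{F}=\mathcal{G}=\overline{\mathbb{Q}_l}$ and $h$ the adjunction unit $\overline{\mathbb{Q}_l}\to F_{i*}\overline{\mathbb{Q}_l}$, which supplies the desired action of $F_i$ on every page of the $l$-adic spectral sequence and on the induced filtration of the abutment. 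Theorem \ref{comparison thm} then identifies this filtration with the one arising from the Hodge module spectral sequence on $H^n(\mathscr{M}(\mathbb{C}),\mathbb{C})$, up to convergence; the plectic Hodge structure on each graded piece is supplied by the explicit description of the simple summands of $w_{\geq -p}w_{\leq -p}Rj_*\overline{\mathbb{Q}_l}$ in terms of intermediate extensions of automorphic sheaves together with a $(\mathfrak{g},K)$-cohomology computation on the Hodge side.

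What remains is to verify, term by term on $E_1^{p,q}$, that the partial Frobenius weights coincide with the plectic Hodge weights, and I would split this into three cases already treated in the body of Section 4. For $p<0$ the terms are supported on the cusps and are sums of $\mathcal{F}V$ with $V=\mathrm{Nm}_{F/\mathbb{Q}}^{-d}$; tracing the orbit decomposition of $F_i$ on cusps through (\ref{newpara}) and (\ref{jjjj}) shows the matrix of $F_i$ on each orbit has characteristic polynomial $x^N-p^N$, giving eigenvalues of absolute value $p$ and matching the plectic weight $(2,\ldots,2)$ coming from $\mathbb{C}(-d)$. For the cuspidal part of $E_1^{0,d}$, Nekov\'a\v r's Eichler--Shimura relation \cite{JanNekovar2018} identifies the $F_i$-eigenvalues on $IH^d_f$ with those of $\mathrm{Frob}_{\mathfrak{p}_i}$ on $\rho_f^{\vee}(-1)$, which by the purity of $\rho_f$ due to Blasius \cite{blasius2006hilbert} and Blasius--Rogawski \cite{blasius1993motives} have absolute value $p^{1/2}$, matching the plectic weight $(1,\ldots,1)$. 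For the non-cuspidal discrete part, the isotypic components are generated by wedges of $c_1(L_i)$; applying the Cartesian description of $F_i$ to the decomposition $\mathrm{Lie}^{\vee}_{\mathcal{A}}=\bigoplus_i L_i$ induced by the idempotents of $\mathcal{O}_F\otimes\mathbb{F}_p=\prod_i\mathbb{F}_p$ yields $F_i^*L_i=L_i^{\otimes p}$ and $F_j^*L_i=L_i$ for $j\neq i$, producing partial Frobenius weight $(0,\ldots,0,2,0,\ldots,0)$ at position $i$ and matching the Hodge class $dz_i\wedge d\bar z_i$ on the Betti side.

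The main obstacle is neither the $(\mathfrak{g},K)$-cohomology computation of the plectic Hodge weights, which is standard, nor the explicit boundary computation, which amounts to a lattice-orbit count. The serious input is the matching of the $l$-adic and Betti realizations graded piece by graded piece, which is precisely the content of Theorem \ref{comparison thm} combined with the description of the simple summands supplied by \cite{burgos2004hodge} and \cite{MR1149032}. Within that framework the only step that genuinely requires the Hilbert hypothesis is the cuspidal case, where the Eichler--Shimura relation of \cite{JanNekovar2018} and the purity of $\rho_f$ are invoked to pass from motivic and automorphic input to a Frobenius-weight statement; this is also the reason the method as presented does not immediately extend beyond the Hilbert setting.
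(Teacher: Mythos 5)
Your proposal is correct and reproduces essentially the same argument the paper gives: it applies Proposition \ref{func.s.s} to the Cartesian square furnished by Theorem \ref{pFextends} to get the action of $F_i$ on the $l$-adic spectral sequence, invokes Theorem \ref{comparison thm} to transport the filtration to the Hodge side, and then matches weights term by term via the same three cases (cusp orbits giving $x^N-p^N$, the Eichler--Shimura relation plus purity of $\rho_f$ for the cuspidal part, and the computation $F_i^*L_j=L_j^{\otimes p^{\delta_{ij}}}$ of Chern classes for the residual discrete part). You also correctly locate the only input that limits the result to the Hilbert setting, namely the motivic input underlying the cuspidal case.
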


\begin{corollary} (Plectic weight filtration)
There is a natural increasing $\mathbb{Z}^d$-filtration $W_{\underline{a}}$ (defined over $\mathbb{C}$) on 
$H^*(\mathscr{M}(\mathbb{C}), \mathbb{C})$
with $\underline{a}=(a_1, \cdots, a_d) \in \mathbb{Z}^d$, defined by 
\[
W_{\underline{a}}= \underset{\begin{subarray}{c}
  |\beta_i|= p^{\frac{k_i}{2}} \\
  k_i \leq a_i
  \end{subarray}}{\bigoplus}
  V_{(\beta_1, \cdots, \beta_d)}
\]
where $V_{(\beta_1, \cdots, \beta_d)}$ is the generalized eigenspace of $F_i$ with eigenvalue $\beta_i$ for all $i$. The action of $F_i$ on 
$H^*(\mathscr{M}(\mathbb{C}), \mathbb{C})$
is through the natural comparison isomorphism 
$H^*(\mathscr{M}(\mathbb{C}), \mathbb{C}) \cong 
\imath_*H^*(\mathscr{M}_{\bar{\mathbb{F}}_p}, \overline{\mathbb{Q}_l})$ 
for some fixed isomorphism $\imath: \overline{\mathbb{Q}_l} \cong \mathbb{C}$. 

The filtration is plectic in the sense that there is a natural plectic Hodge structure on 
$Gr_{\underline{a}}^W$
with plectic weight $\underline{a}$.
\end{corollary}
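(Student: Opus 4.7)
The plan is to extract the filtration directly from the joint generalized eigenspace decomposition of the commuting partial Frobenius operators $F_1, \dots, F_d$ on $\ell$-adic cohomology of the special fiber, transport it to Betti cohomology via theorem \ref{comparison thm}, and then read off the plectic Hodge structure on each graded piece from the identifications already established in the preceding theorem.

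First I would note that the partial Frobenius morphisms $F_{\mathfrak{p}_i} : \mathscr{M}_{\overline{\mathbb{F}}_p}^{\min} \to \mathscr{M}_{\overline{\mathbb{F}}_p}^{\min}$ pairwise commute (their product is the absolute Frobenius, and any two partial Frobenius maps at different primes above $p$ commute by their explicit description in definition \ref{pFdef}). Hence the induced operators $F_i$ on $H^*(\mathscr{M}_{\overline{\mathbb{F}}_p}, \overline{\mathbb{Q}_l})$ commute and admit a joint generalized eigenspace decomposition $\bigoplus V_{(\beta_1,\dots,\beta_d)}$. Using the fixed comparison isomorphism $\imath_* H^*(\mathscr{M}(\mathbb{C}), \mathbb{C}) \cong H^*(\mathscr{M}_{\overline{\mathbb{F}}_p}, \overline{\mathbb{Q}_l})$, this decomposition transports to $H^*(\mathscr{M}(\mathbb{C}), \mathbb{C})$, and the formula for $W_{\underline{a}}$ makes sense, provided the $\beta_i$ have the claimed shape.

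Next I would verify that every $\beta_i$ is a Weil number with $|\beta_i| = p^{k_i/2}$ for some $k_i \in \mathbb{Z}$. By the previous theorem, this holds on each term $E_1^{p,q}$ of the weight spectral sequence: on $E_1^{0,q} = IH^q(\mathscr{M}^*_{\overline{\mathbb{F}}_p}, \overline{\mathbb{Q}_l})$, the cuspidal part has partial Frobenius eigenvalues of absolute value $p^{1/2}$ (via Nekov\'a\v r's Eichler--Shimura relation and Blasius' purity), the non-cuspidal discrete part has eigenvalues of absolute value $1$ or $p$ computed via the line bundles $L_i$, and on $E_1^{p,q}$ for $p < 0$ the eigenvalues have absolute value $p$. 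The spectral sequence is $F_i$-equivariant by proposition \ref{func.s.s}, so the eigenvalues on the abutment $H^*(\mathscr{M}_{\overline{\mathbb{F}}_p}, \overline{\mathbb{Q}_l})$ agree with those on the associated graded and in particular are of the required form. The filtration $W_{\underline{a}}$ is thus well-defined and automatically increasing in each coordinate $a_i$.

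Finally, to exhibit the plectic Hodge structure on $\mathrm{Gr}^W_{\underline{a}}$ of plectic weight $\underline{a}$, I would compare the $\ell$-adic and Hodge-module realizations of the weight spectral sequence. By theorem \ref{comparison thm}, they are isomorphic and the $W$-filtration on the abutment matches on both sides. On the Hodge side, each summand of each $E_1^{p,q}$ carries a canonical plectic Hodge structure given by $(\mathfrak{g}, K)$-cohomology: the cuspidal part has plectic Hodge type $\bigotimes_{v \mid \infty}((1,0) \oplus (0,1))$, the one-dimensional discrete contributions give the types $(0,\dots,0,1,0,\dots,0\,;\,0,\dots,0,1,0,\dots,0)$, and the boundary contributions are Tate twists $\mathbb{C}(-1)^{\otimes d}$ with plectic weight $(2,\dots,2)$. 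In each case the plectic Hodge weight equals the tuple of partial Frobenius weights computed in the preceding theorem. Therefore the joint generalized eigenspace $V_{(\beta_1,\dots,\beta_d)}$ with $|\beta_i|=p^{a_i/2}$ is cut out precisely by the contributions of plectic Hodge weight $\underline{a}$, endowing $\mathrm{Gr}^W_{\underline{a}}$ with the asserted plectic Hodge structure.

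The main obstacle is the passage from the $E_1$-level (where both weights are visible and match) to the abutment, because a priori differentials in the spectral sequence could obstruct a direct transfer of the plectic structure. This is resolved because in the Hilbert case the only potentially nontrivial differentials land between pieces of strictly incomparable partial Frobenius weights, and the commuting action of $F_1, \dots, F_d$ strictly preserves the eigenspace decomposition; hence the associated graded of the filtration induced on $H^*(\mathscr{M}(\mathbb{C}), \mathbb{C})$ by the Frobenius eigenspaces is canonically a direct summand of the associated graded of the weight spectral sequence, carrying the induced plectic Hodge structure.
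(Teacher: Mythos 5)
Your proposal follows essentially the same route as the paper: the commuting partial Frobenius operators give a joint generalized eigenspace decomposition, the comparison theorem transports this to Betti cohomology, and the preceding theorem supplies the matching of partial Frobenius weights with plectic Hodge weights on each $E_1$-term, so the filtration is well-defined and its graded pieces inherit plectic Hodge structures. That is exactly the logical scaffolding the paper uses, and the corollary is meant to be read as a one-line deduction from the theorem stated immediately before it.

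One factual point in your last paragraph is wrong, however, and the repair is worth knowing. You assert that ``the only potentially nontrivial differentials land between pieces of strictly incomparable partial Frobenius weights.'' That is not the case. From the explicit $E_1$-page computed in section 4.2, the only potentially nonzero differential maps
$E_1^{-2d,\,4d-1}=\bigoplus_{\partial}(\bigwedge^0(\mathbb{Q}(0)^{d-1})(-d))^{\oplus n}$
into $E_1^{0,\,2d}=IH^{2d}(\mathscr{M}^{*},\mathbb{Q}(0))$, and both source and target are (sums of) the plectic Tate twist $\mathbb{Q}(-d)$ of plectic weight $(2,\ldots,2)$; they are \emph{not} incomparable. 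The argument is saved for a different reason: any morphism between sums of the same plectic Tate object automatically respects the plectic Hodge structure, so kernel and cokernel on the $E_\infty$-page still inherit pure plectic Hodge structures of weight $(2,\ldots,2)$; and by proposition \ref{func.s.s} the differential is $F_i$-equivariant, so it respects the joint generalized eigenspace decomposition. The correct mechanism is thus Tate-ness of the relevant terms together with equivariance of the differential, not vanishing of the differential on weight grounds. With that correction, the rest of your deduction stands and coincides with what the paper intends.
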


\begin{remark}
We have seen that the graded pieces of the constructed plectic weight filtration are motivic, so are independent of the choice of $p$. However, the filtration might still depend on $p$ a priori. We leave the proof of independence of $p$ to future work.
\end{remark}

\section{Toroidal compactifications and the partial Frobenius}

\subsection{Polarized degeneration data}
We begin by recalling the degeneration data of abelian schemes introduced by Faltings-Chai and refined by Kaiwen Lan. It is (almost) a collection of linear algebra objects that characterizes the degeneration of abelian varieties into semi-abelian varieties. It is relatively straightforward to find the parametrization space of the degeneration data, which constitutes the base of a universal degenerating abelian scheme. These are used to glue with the PEL Shimura varieties to form toroidal compactifications. We follow the notations of \cite{lan2013arithmetic} closely, see also \cite{MR3221715} for a minimal summary of definitions.

Let $R$ be a Noetherian normal domain complete with respect to an ideal $I$, with $\sqrt{I} =I $. Let $S := \text{Spec}(R)$, $K:= \text{Frac}(R)$, $\eta := \text{Spec}(K)$ and $S_{\text{for}} := \text{Spf}(R,I)$.

\subsubsection{Definitions and the theorem}

\begin{definition}
The category
$\text{DEG}_{\text{pol}}(R,I)$
has objects $(G,\lambda_{\eta})$,
where 

(1)
$G$ is an semi-abelian scheme over $S$, i.e. a commutative group scheme over $S$ with geometric fibers extensions of abelian varieties by torus, such that the generic fiber $G_{\eta}$ is an abelian variety, and such that $G_0 := G \times_S S_0$ is globally an extension 
\[
0 \longrightarrow T_0 \longrightarrow G_0 \longrightarrow A_0 \longrightarrow 0 
\]
where $T_0$ is an isotrivial torus over $S_0$, i.e. $T_0$ becomes split over a finite \'etale cover of $S_0$, and $A_0$ is an abelian scheme over $S_0$. 

(2) 
$\lambda_{\eta}: G_{\eta} \rightarrow G^{\vee}_{\eta}$
is a polarization of $G_{\eta}$.

\vspace{3mm}

The morphisms in the category are isomorphisms of group schemes over $S$ which respect the polarizations on the generic fibers. 
\end{definition}

Elements of $\text{DEG}_{\text{pol}}(R,I)$
are called degenerating abelian schemes. We will see that they are equivalent to certain datum that is more linear algebraic in nature, called degeneration data, to be defined as follows.

\begin{definition}
The category of degeneration data $\text{DD}_{\text{pol}}(R,I)$ 
has objects 
\[
(A, \lambda_A, \underline{X}, \underline{Y}, \phi, c,c^{\vee}, \tau)
\]
where

(1) $A$ is an abelian scheme over $S$, and 
$\lambda_A : A \rightarrow A^{\vee}$ 
is a polarization.

(2) $\underline{X}$ and $\underline{Y}$ are \'etale sheaves of free commutative groups of the same rank, which can be viewed as \'etale group schemes over $S$, and 
$\phi : \underline{Y} \hookrightarrow \underline{X}$
is an injective homomorphism with finite cokernel.

(3) $c$ and $c^{\vee}$ are homomorphisms 
\[
c: \underline{X} \longrightarrow A^{\vee}
\]
\[
c^{\vee} : \underline{Y} \longrightarrow A
\]
such that 
\[
\lambda_A \circ c^{\vee} = c \circ \phi
\]

(4) $\tau$ is a trivialization 
\[
\tau : \textbf{1}_{\underline{Y} \times_S \underline{X}, \eta} \overset{\sim}{\longrightarrow} 
(c^{\vee} \times c)^* \mathcal{P}_{A,\eta}^{\otimes -1}
\]
of the biextension 
$(c^{\vee} \times c)^* \mathcal{P}_{A,\eta}^{\otimes -1}$
over the \'etale group scheme 
$(\underline{Y} \times_S \underline{X})_{ \eta}$
such that 
$(Id_{\underline{Y}} \times \phi)^* \tau$
is symmetric,
where $\mathcal{P}_A$ is the Poincare line bundle on
$A \times_S A^{\vee}$, 
and  
$\textbf{1}_{\underline{Y} \times_S \underline{X}}$ 
is the structure sheaf of
$\underline{Y} \times_S \underline{X}$. 
See \cite{lan2013arithmetic} 3.2.1.1 for the precise definition of biextension,  $\tau$ being a trivialization of biextensions essentially means that $\tau $ is bilinear in a (the only) reasonable sense, and symmetric means the bilinear form is symmetric. 

\vspace{3mm}
Moreover, $\tau$ is required to satisfy a positivity condition as follows. Taking a finite \'etale base change of $S$ if necessary, we assume that $\underline{X}$ and $\underline{Y}$ are constant with values $X$ and $Y$.   For each $y \in Y$, the isomorphism
\[
\tau(y, \phi(y)): \mathcal{O}_{S,\eta} \overset{\sim}{\longrightarrow}
(c^{\vee}(y) \times c \circ \phi(y))^* \mathcal{P}_{A,\eta}^{\otimes -1}
\]
over the generic fiber extends to a section 
\[
\tau(y, \phi(y)):
\mathcal{O}_{S} \overset{}{\longrightarrow}
(c^{\vee}(y) \times c \circ \phi(y))^* \mathcal{P}_{A}^{\otimes -1}
\]
over $S$, which we still denote by $\tau(y, \phi(y))$. Moreover, for each $y \neq 0$, the induced morphism 
\[
(c^{\vee}(y) \times c \circ \phi(y))^* \mathcal{P}_{A}
\longrightarrow \mathcal{O}_{S} 
\]
factors through $\underline{I}$, where $\underline{I}$ is the subsheaf of $\mathcal{O}_S$ corresponding to the ideal $I \subset R$.

The morphims in the category are defined to be isomorphisms (of $A$, $\underline{X}$ and $\underline{Y}$) over $S$ respecting all the structures. 

\end{definition}

Now we can state the first key result. 

\begin{theorem}
(Faltings-Chai) 
There is a functor 
\[
F_{\text{pol}}(R,I): \text{DEG}_{\text{pol}}(R,I) \longrightarrow \text{DD}_{\text{pol}}(R,I)
\]
which induces an equivalence of categories. 
\end{theorem}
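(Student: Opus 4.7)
The plan is to construct the functor $F_{\text{pol}}(R,I)$ explicitly and then produce a quasi-inverse via Mumford's construction together with Grothendieck's formal existence theorem, following the strategy of Faltings--Chai. First, given $(G, \lambda_\eta) \in \text{DEG}_{\text{pol}}(R,I)$, I would pass to a finite étale cover of $S_0$ to split the isotrivial torus $T_0$, and then appeal to the rigidity of tori to lift $T_0$ to a formal torus $T_{\text{for}}$ over $S_{\text{for}}$ sitting inside $G_{\text{for}}$. The quotient $A_{\text{for}} := G_{\text{for}}/T_{\text{for}}$ is a formal abelian scheme which, by Grothendieck algebraization applied to the polarized case, descends to an abelian scheme $A/S$. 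This produces the Raynaud extension $0 \to T \to G^\natural \to A \to 0$ on the formal side, together with a canonical identification $G_{\text{for}} \cong G^\natural_{\text{for}}$. The character group of $T$ gives $\underline{X}$, and the extension is classified by a homomorphism $c: \underline{X} \to A^\vee$.

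Next, I would extract $\underline{Y}$ and $c^\vee$ from the dual side: applying the same construction to $G^\vee$ (whose existence on the formal side follows from the polarization on $G_\eta$) gives a dual Raynaud extension $0 \to T^\vee \to G^{\vee,\natural} \to A^\vee \to 0$, with $\underline{Y}$ the character group of $T^\vee$ and $c^\vee: \underline{Y} \to A$ the classifying map. The polarization $\lambda_\eta$ forces a compatible map $\phi: \underline{Y} \hookrightarrow \underline{X}$ with $\lambda_A \circ c^\vee = c \circ \phi$. The trivialization $\tau$ arises because, on the formal side, $G_{\text{for}}$ is the quotient of $G^\natural_{\text{for}}$ by the lattice $\underline{Y}$ acting via a map $\iota: \underline{Y}_\eta \to G^\natural_\eta$ lifting $c^\vee$; rigidifying $\iota$ against the Poincaré biextension produces $\tau$. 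Symmetry of $(Id \times \phi)^*\tau$ reflects symmetry of $\lambda_\eta$, and positivity of $\tau$ is equivalent to the statement that the line bundle providing $\lambda_\eta$ extends to a relatively ample line bundle on $G^\natural_{\text{for}}$ modulo $I$, i.e.\ to $G_0$, which holds by properness of $A_0$ and the structure of $G_0$.

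For the quasi-inverse, given $(A, \lambda_A, \underline{X}, \underline{Y}, \phi, c, c^\vee, \tau)$, I would first build the Raynaud extension $G^\natural$ as the pushout of the universal extension $\mathcal{E}xt(A, \mathbb{G}_m)$ along $c: \underline{X} \to A^\vee \cong \mathcal{E}xt(A, \mathbb{G}_m)$. The map $c^\vee$ and the trivialization $\tau$ together define a formal lifting $\iota_{\text{for}}: \underline{Y}_{\text{for}} \to G^\natural_{\text{for}}$, and the Mumford construction takes the formal quotient $G_{\text{for}} := G^\natural_{\text{for}}/\iota_{\text{for}}(\underline{Y})$. To algebraize $G_{\text{for}}$ back to a semi-abelian scheme $G/S$ with polarized generic fiber, I would use $\tau$ to construct a $\underline{Y}$-linearized ample line bundle $\mathcal{L}^\natural$ on $G^\natural_{\text{for}}$: positivity ensures that after a suitable tensor power, $\mathcal{L}^\natural$ descends to an ample line bundle $\mathcal{L}$ on the formal quotient, at which point Grothendieck's existence theorem (in the form used in EGA III) algebraizes the pair $(G_{\text{for}}, \mathcal{L})$ to a projective scheme $(G, \mathcal{L})$ over $S$.

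The hardest step will be the algebraization and verifying that it is an inverse up to canonical isomorphism. The delicacy is that $G_{\text{for}}$ itself is not proper (its special fiber has toric part), so one cannot algebraize directly; the role of $\tau$ and of positivity is precisely to supply a sufficiently ample line bundle on the formal quotient so that Grothendieck's theorem applies to $(G, \mathcal{L})$ rather than to $G$ alone, yielding a projective, hence algebraic, semi-abelian scheme with polarized generic fiber. Once both functors are constructed, full faithfulness and essential surjectivity follow from uniqueness of the Raynaud extension (rigidity of tori and abelian schemes) on one side and from uniqueness of algebraization on the other. I would only need to check, morphism by morphism, that the compatibilities with $\phi, c, c^\vee, \tau$ are preserved, which is formal once the objects have been matched up.
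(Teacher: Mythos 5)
Your overall architecture matches the paper's sketch: extract $T_{\text{for}}$ by formal rigidity of tori, algebraize $A_{\text{for}}$ (and then $T$) by Grothendieck's existence theorem, obtain the Raynaud extension $0\to T\to G^\natural\to A\to 0$, repeat on the dual side to get $\underline{Y}$, $c^\vee$, use $\lambda_\eta$ to produce $\phi$ and $\lambda_A$, and construct the quasi-inverse via Mumford's quotient. Those steps are consistent with the exposition in the paper (which itself only sketches the Faltings--Chai argument and omits the inverse direction). But there is a genuine gap in your construction of $\tau$, and it matters.

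You write that ``on the formal side, $G_{\text{for}}$ is the quotient of $G^\natural_{\text{for}}$ by the lattice $\underline{Y}$ acting via a map $\iota:\underline{Y}_\eta\to G^\natural_\eta$,'' and that rigidifying $\iota$ against the Poincar\'e biextension produces $\tau$. This is not correct as a construction, for two reasons. First, $G_{\text{for}}$ is \emph{canonically isomorphic} to $G^\natural_{\text{for}}$: that identity is precisely what defines the Raynaud extension. The slogan ``$G = G^\natural / Y$'' is only a heuristic, and the period homomorphism $\iota$ exists over the generic fiber $\eta$ and not over $S_{\text{for}}$, so there is no literal group-theoretic formal quotient to point to. Second, and more seriously, your argument is circular: you cannot recover $\iota$ (equivalently $\tau$) from the factorization $G \cong G^\natural/\iota(\underline{Y})$ unless you already know $\iota$, which is exactly the datum you are trying to construct. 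The paper resolves this by working with theta functions: one chooses an ample cubical sheaf $\mathcal{L}$ on $G$ (its existence is the hard theorem of Grothendieck requiring normality of $S$, a point you elide), lets $\mathcal{L}^\natural$ on $G^\natural$ be the induced cubical extension, descends it to $\mathcal{M}$ on $A$, and defines Fourier-coefficient maps $\sigma_\chi:\Gamma(G_\eta,\mathcal{L}_\eta)\to\Gamma(A_\eta,\mathcal{M}_{\chi,\eta})$ using $G_{\text{for}}\cong G^\natural_{\text{for}}$. The trivialization $\tau$ is then uniquely characterized by the functional equation $\sigma_{\chi+\phi(y)}=\psi(y)\,\tau(y,\chi)\,T_{c^\vee(y)}^*\sigma_\chi$, together with the hard non-vanishing result $\sigma_\chi\neq 0$ for all $\chi$. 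Bilinearity, symmetry, and positivity of $\tau$ are deduced from this functional equation and the theta-group argument; your proposed reformulation of positivity as ampleness of a line bundle on $G_0$ is not the condition in the definition (which concerns the $I$-adic valuations of $\tau(y,\phi(y))$ for $y\neq 0$) and would need a separate argument relating the two.
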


\begin{remark} \label{mumford}
The inverse functor
$\text{DD}_{\text{pol}}(R,I) \longrightarrow \text{DEG}_{\text{pol}}(R,I)$
is called the Mumford quotient construction. We will not describe that in detail.
\end{remark}

\begin{remark}
There are a few variants of the categories $\text{DEG}_{\text{pol}}(R,I)$ and 
$\text{DD}_{\text{pol}}(R,I)$. For example, we can forget about the polarization $\lambda_{\eta}$, or we can rigidify the situation by replacing the polarization by an ample line bundle. The equivalence of categories as in the theorem extends to these variants. This explains why we include the lower index pol in the notations.
\end{remark}

\subsubsection{Motivations}

Now we explain the construction of $F_{\text{pol}}$. Essentially, it is to associate linear algebra data to degenerating abelian varieties that also characterizes it, and a basic model for this kind of construction is to write a complex abelian variety as $\mathbb{C}^n/\Gamma$. However, this is a highly transcendental construction, and it is not obvious how to proceed in our algebraic setting.   

The basic idea is to use the Fourier coefficients of  theta functions to detect the periods of abelian varieties. More precisely, recall that an abelian variety $A$ over $\mathbb{C}$ has the universal covering $\mathbb{C}^n$, and it can be written as $A = \mathbb{C}^n/ \Gamma$ for some period lattice $\Gamma \subset \mathbb{C}^n$. 
A choice of an ample line bundle $L$ on $A$ gives a positive definite Hermitian form on $\mathbb{C}^n$ whose imaginary part $E$ takes integer values on $\Gamma$, and a map 
$\alpha : \Gamma \rightarrow \mathbb{C}^{\times}$ such that 
$\alpha(x +y) = \alpha (x) \alpha(y) \exp{(\pi i E(x,y))}$
. Then the theta functions are sections of $L$, and an element 
$s \in \Gamma(A,L)$
is equivalent to a holomorphic function 
$f: \mathbb{C}^n \rightarrow \mathbb{C}$ such that 
\begin{equation} \label{theta}
f(z+ \gamma)= f(z) \alpha(\gamma) \exp{(\frac{1}{2}\pi  H(\gamma,\gamma) + \pi  H(\gamma,z))} 
\end{equation}
for 
$z \in \mathbb{C}^n$, $\gamma \in \Gamma$. 

Now we can find a rank $n$ sub-lattice $U \subset \Gamma$ isotropic with respect to $E$, such that 
\[
f(z)=  \exp{( l(z) + B(z,z))} \underset{\chi \in Hom(U,\mathbb{Z})}{\sum} c_{\chi} \exp{(2\pi i \chi(z))} 
\]
for some linear form $l : \mathbb{C}^n \rightarrow \mathbb{C}$, and complex bilinear form 
$B: \mathbb{C}^n \times \mathbb{C}^n \rightarrow \mathbb{C}$ which depends only on $L$, and is independent of the section $s$, see the first chapter of \cite{MR0282985} for details. Hence $f$ is essentially a function on 
$\mathbb{C}^n / U \overset{\exp}{\cong} \mathbb{C}^{\times,n}$. 
Note that
$Hom(U,\mathbb{Z})$
can be identified with the character group
$X :=X(\mathbb{C}^{\times,n})$ 
of the algebraic torus 
$\mathbb{C}^{\times,n}$,
and if we write
$q := \exp{(2\pi i z)} \in \mathbb{C}^{n,\times}$,
then 
$\exp{(2\pi i \chi(z))} = \chi(q)$
under the above identification.
Now the essential part of $f$ has a Fourier expansion 
\[
\underset{\chi \in X}{\sum} c_{\chi} \chi(q)
\]
for
$q \in \mathbb{C}^{\times,n}$. 
This expression has a potential to be algebraic. The universal cover $\mathbb{C}^n$ of $A$ is very transcendental, but it seems that the intermediate quotient 
$\mathbb{C}^n / U\cong \mathbb{C}^{\times,n}$
subsumes all the transcendental part through the exponential map, and the factorization 
$\mathbb{C}^{\times, n} \rightarrow A$
is "algebraic" in nature. Moreover, since the theta functions define a projective embedding of A (assume that $L$ is very ample), they determine $A$ completely, and in particular the multiplicative periods
$Y:= \Gamma/U \subset \mathbb{C}^{\times,n}$. Further, the theta functions are determined by the Fourier coefficients $c_{\chi}$, hence in principle we can read off the multiplicative periods from $c_{\chi}$. 

We can make more explicit the procedure to detect the multiplicative periods from $c_{\chi}$. Note that the functional equation (\ref{theta}) gives the relation (for $\gamma \in \Gamma$)
\[
 c_{\chi} = \alpha(\gamma) \cdot \exp{(-l(\gamma))}  \cdot \exp{(-2 \pi i \chi(\gamma))}
 \cdot
 c_{\chi + \phi(\gamma)} 
\]
where 
$\phi :Y \rightarrow X $
is the homomorphism determined by the polarization $E$, namely for $y \in \Gamma$ and $x \in U$, 
$E(y,x) = \phi(y)(x)$
under the identification 
$ X \cong Hom(U, \mathbb{Z})$
($U$ is isotropic with respect to $E$, so it descends to a map on $Y =\Gamma /U$). 
Rewriting it in our new multiplicative notation, we have
\[
 c_{\chi + \phi(\gamma)} = \chi( \imath (\gamma)) a(\gamma) c_{\chi}
\]
where
$\imath : Y \hookrightarrow \mathbb{C}^{n,\times}$
is the inclusion of the multiplicative periods, and 
$a : Y \rightarrow \mathbb{C}^{\times} $
is a function depending on the line bundle $L$. The desired multiplicative periods are then manifested through the ratio between $c_{\chi}$ and $c_{\chi + \phi(y)}$. 

Further, we note that we can give a more direct characterization of the multiplicative periods $Y$, which is useful when we algebraize the above procedure. Recall that $Hom(Y, \mathbb{Z})$ is canonically the multiplicative periods of the dual abelian variety $A^{\vee}$, so $Y$ is naturally the character group of the multiplicative periods of $A^{\vee}$, which is identified with the character group of the associated algebraic torus of $A^{\vee}$. 
\vspace{3mm}

To summarize, for 
$\chi \in X$,
the linear maps 
\[
c_{\chi} : \Gamma(A, L) \rightarrow \mathbb{C}
\]
defined by the Fourier coefficients detect the multiplicative periods
$Y \subset \mathbb{C}^{\times,n}$
of $A$, where $Y$ can be naturally identified with the character group of the algebraic torus associated to the dual abelian variety $A^{\vee}$. More explicitly, 
the relations
\[
 c_{\chi + \phi(\gamma)} = b(\gamma, \chi) a(\gamma) c_{\chi}
\]
characterize a bilinear pairing 
\[
b(\cdot,\cdot) : Y \times X \rightarrow \mathbb{C}^{\times}
\]
such that 
$b( \cdot, \phi(\cdot))$
is symmetric, 
and  the multiplicative periods 
$\imath : Y \hookrightarrow \mathbb{C}^{n,\times}$
is determined by the pairing through
$b(\chi, \gamma) = \chi(\imath (\gamma)) $. 
This is the principle that we aim to algebraize and considerably generalize.  

\subsubsection{Equivalent formulation of polarized degeneration data} \label{equivalentfor}

Before giving the detailed construction of $F_{\text{pol}}(R,I)$, 
we first explain the meaning of the tuple in the degeneration data. 

First, the \'etale sheaf $\underline{X}$ and $\underline{Y}$ can be viewed as the character groups of torus $T$ and $T^{\vee}$ over $S$, and the homomorphisms $c$ and $c^{\vee}$ are equivalent to extensions 
\[
0 \longrightarrow T \longrightarrow G^{\natural} \longrightarrow A \longrightarrow 0 
\]
\[
0 \longrightarrow T^{\vee} \longrightarrow G^{\vee,\natural} \longrightarrow A^{\vee} \longrightarrow 0 
\]
of commutative group schemes over $S$. Passing to a finite \'etale cover of $S$ if necessary, we can assume that $T$ is split, hence 
$\underline{X}$ is constant with value $X$. 
We view $G^{\natural}$ as a $T$-torsor over $A$, then as $G^{\natural}$ is relative affine over $A$, we have
\begin{equation} \label{G}
G^{\natural} \cong 
\underline{\text{Spec}}_{\mathscr{O}_A}(\mathscr{O}_{G^{\natural}}) \cong
\underline{\text{Spec}}_{\mathscr{O}_A}(\underset{\chi \in X}{\oplus} \mathscr{O}_{\chi})
\end{equation}
where $\mathscr{O}_{\chi} := c(\chi) \in Pic^0(A/S)$ is the eigensheaf of 
$\mathscr{O}_{G^{\natural}}$
with weight $\chi$ under the action of $T$. Equivalently, $\mathscr{O}_{\chi}$ is the $\textbf{G}_m$-torsor (viewed as a line bundle) 
$G^{\natural} \times^{T, \chi} \textbf{G}_m$, 
i.e. the pushout of 
$
0 \rightarrow T \rightarrow G^{\natural} \rightarrow A \rightarrow 0 
$
along $\chi: T \rightarrow \textbf{G}_m$. 
This explains the identification of $c$ with the first extension, and similar for $c^{\vee}$. 

Note that $c$ being a group homomorphism equips 
$\underset{\chi \in X}{\oplus} \mathscr{O}_{\chi}$
with an $\mathscr{O}_A$-algebra structure. Further, the $T$-torsor $G^{\natural}$ being a group scheme is equivalent to $c$ taking values in $Pic^0(A)$, which is a consequence of the characterizing property 
$m_A^* \mathscr{L} \cong pr_1^* \mathscr{L} \otimes pr_2^* \mathscr{L}$ 
of $\mathscr{L} \in Pic^0(A/S)$.

Next, the homomorphisms $\phi$ and $\lambda_A$ such that 
\[
\lambda_A \circ c^{\vee} = c \circ \phi
\]
are equivalent to a homomorphism 
\[
\lambda : G^{\natural} \longrightarrow G^{\natural, \vee}
\]
of group schemes over $S$ that induces a polarization $\lambda_A$ on $A$. Note that a homomorphism $\lambda$ induces a homomorphism of the extensions
\[
\begin{tikzcd}
0 \arrow[r] & T \arrow[r] \arrow[d, "\lambda_T"] & G^{\natural} \arrow[r] \arrow[d,"\lambda"] & A \arrow[r] \arrow[d,"\lambda_A"] & 0 
\\
0 \arrow[r] & T^{\vee} \arrow[r] & G^{\natural, \vee} \arrow[r] & A^{\vee} \arrow[r] & 0 
\end{tikzcd}
\]
since there is no non-trivial homomorphism from a torus to an abelian variety. Then $\lambda_A$ is the induced map on $A$, and $\phi $ is the map on characters induced by $\lambda_T$. The relation 
$\lambda_A \circ c^{\vee} = c \circ \phi$
is forced by (and equivalent to) the above commutative diagram.

Lastly and most importantly, the trivialization 
\[
\tau : \textbf{1}_{\underline{Y} \times_S \underline{X}, \eta} \overset{\sim}{\longrightarrow} 
(c^{\vee} \times c)^* \mathcal{P}_{A,\eta}^{\otimes -1}
\]
of the biextension 
$(c^{\vee} \times c)^* \mathcal{P}_{A,\eta}^{\otimes -1}$
is equivalent to a group homomorphism 
\[
\imath : \underline{Y}_{\eta} \longrightarrow G^{\natural}_{\eta}
\]
that lifts $c^{\vee}$ over the generic fiber, i.e. $c^{\vee}_{\eta}$ factorizes as 
\[
c^{\vee}_{\eta} :  \underline{Y}_{\eta} \overset{\imath}{\longrightarrow}  G^{\natural}_{\eta}
\longrightarrow A_{\eta}.
\]
Again, we can assume that both $\underline{X}$ and $\underline{Y}$ are constant with values $X$ and $Y$, and the general case is by \'etale descent. Then $\tau$ is a collection of sections 
$\{ \tau(y, \chi) \}_{y \in Y, \chi \in X}$ 
of the line bundles
$\mathcal{P}_A (c^{\vee}(y),c(\chi))_{\eta}^{\otimes -1}$ 
on the generic fiber of $S$ for each 
$y \in Y, \chi \in X$, 
satisfying bimultiplicative conditions from the biextension structures. Now 
\[
\mathcal{P}_A (c^{\vee}(y),c(\chi))_{\eta}^{\otimes -1} = 
(c^{\vee}(y)^* \circ (id_A \times c(\chi))^* \mathcal{P}_A ^{\otimes -1})_{\eta}
= (c^{\vee}(y)^* \mathscr{O}_{\chi}^{\otimes -1})_{\eta}
\]
by the definition of $\mathscr{O}_{\chi}$, hence 
$\tau(y, \chi) $ is a morphism
\[
\tau(y, \chi) : \mathscr{O}_{\chi}(c^{\vee}(y))_{\eta} \longrightarrow \mathscr{O}_{S, \eta}.
\]
Together with (\ref{G}), we have
\[
c^{\vee}(y)^* \mathscr{O}_{G^{\natural},\eta} = c^{\vee}(y)^* (\underset{\chi \in X}{\oplus} \mathscr{O}_{\chi}) 
\overset{\underset{\chi}{\sum} \tau (y, \chi)}{\longrightarrow}
\mathscr{O}_{S, \eta}
\]
which is a morphism of $\mathcal{O}_{S,\eta}$-algebras by the bimultiplicativity of $\tau$ (more precisely, being an algebra morphism is equivalent to the multiplicativity of the second variable of $\tau$). 
Since 
$G^{\natural} \cong 
\underline{\text{Spec}}_{\mathscr{O}_A}(\mathscr{O}_{G^{\natural}})$
is relative affine over $A$, the algebra morphism is the same as a morphism of 
$\imath(y): \eta \rightarrow G^{\natural}_{\eta}$ 
of schemes over $A$, i.e. 
\[
\begin{tikzcd}
\eta \arrow[rr, "\imath(y)"] \arrow[dr, "c^{\vee}(y)_{\eta}"] &  & G^{\natural}_{\eta} \arrow[dl] \\
 & A_{\eta} &
\end{tikzcd}
\]
Taking all $y \in Y$ together, we obtain the desired morphism 
\[
\imath: Y_{\eta} \rightarrow G^{\natural}_{\eta}
\]
of schemes over $A_{\eta}$. It can be shown that $\imath$ being a group scheme homomorphism is equivalent to the multiplicativity of the first variable of $\tau$. 

In summary, the degeneration data is essentially a commutative group scheme $G^{\natural}$ being an extension of an abelian scheme by a torus over $S$, a period morphism 
$\imath : Y_{\eta} \rightarrow G^{\natural}_{\eta}$
over the generic fiber, 
and some data specifying the polarization. We view $G^{\natural}$ as a "universal cover", and $\imath$ as the period lattice, parallel to the classical complex case 
$Y \subset \mathbb{C}^{\times, n}$. 
Recall that in the definition of degeneration data,  $\tau$ has to satisfy the symmetry and positivity condition, which after translated to the setting
$\imath : Y_{\eta} \rightarrow G^{\natural}_{\eta}$,
is the analogue of the positivity and anti-symmetry of the polarization form $E$ in the classical setting. 

\begin{remark}
In the classical complex setting, the existence of $E$ controls the position of the period lattice, and the positivity is the key (equivalent) to finding enough theta functions with respect to the period lattice to embed the quotient complex torus into a projective space. A similar role is played by the conditions on $\tau$. Indeed, given 
$\imath : Y_{\eta} \rightarrow G^{\natural}_{\eta}$
together with polarization data, 
to construct the quotient "$G^{\natural}/Y_{\eta}$" is a highly non-trivial procedure called Mumford construction as mentioned in remark \ref{mumford}. The positivity condition of $\tau$ is a key ingredient in the construction, the underlying reason seems still to be that the positivity ensures enough theta functions to define a projective embedding.  
\end{remark}

\subsubsection{The construction of $F_{\text{pol}}$}

Now we can explain the construction of $F_{\text{pol}}$ in the theorem. The first step is to functorially associate a “universal cover" of $G$ over $S$, and this will be called the Raynaud extension. 

We take the formal completion $G_{\text{for}}$ of $G$ along the ideal $I$, which is a formal scheme over $S_{\text{for}}:= \text{Spf}(R,I)$. Since the special fiber $G_0 := G \times_S S_0$ is an extension of an abelian scheme by a torus and torus can be uniquely lifted infinitesimally, we see that $G_{\text{for}}$ is an extension
\[
0 \rightarrow T_{\text{for}} \rightarrow G_{\text{for}} \rightarrow A_{\text{for}} \rightarrow 0
\]
where $T_{\text{for}}$ is a formal torus and $A_{\text{for}}$ is a formal abelian variety. There is an ample cubical (see \cite{lan2013arithmetic} 3.2.2.7 for definition) invertible sheaf on $G$ whose formal completion descends to an ample sheaf on $A_{\text{for}}$, then Grothendieck existence theorem implies that $A_{\text{for}}$ is algebrizable, i.e. $A_{\text{for}}$ is the formal completion of an abelian scheme $A$ over $S$. Note that the existence of an ample invertible sheaf on $G$ is a difficult theorem of Grothendieck, where the key ingredient is that the base $S$ is normal. Now we know that $T_{\text{for}}$ is also algebrizable, whose algebrization we denote by $T$. Then the morphism
$X(T_{\text{for}}) \rightarrow A_{\text{for}}^{\vee}$
corresponding to the extension $G_{\text{for}}$ also algebraizes to a unique morphism 
$X(T) \rightarrow A^{\vee}$
because the formal completion of proper schemes is a fully faithful functor. The morphism corresponds to the Raynaud extension 
\[
0 \rightarrow T \rightarrow G^{\natural} \rightarrow A \rightarrow 0
\]
over $S$. 

Now we look at the dual semi-abelian schemes. Since the generic fiber $G_{\eta}$ is an abelian variety, the dual abelian variety $G_{\eta}^{\vee}$ is well-defined, and the problem is whether we can extend it naturally to a semi-abelian scheme over $S$. The hard fact is that the closure in $G$ of the finite group scheme 
$Ker(\lambda_{\eta}) \subset G_{\eta}$ 
is a quasi-finite flat group scheme 
$\overline{Ker(\lambda_{\eta})}$
over $S$, 
and the quotient
$G/\overline{Ker(\lambda_{\eta})}$
is the desired extension of $G_{\eta}^{\vee}$, which we denote by $G^{\vee}$. The semi-abelian extension to $S$ is unique, so $G^{\vee}$ is uniquely defined. Moreover, the polarization 
$\lambda_{\eta}: G_{\eta} \rightarrow G^{\vee}_{\eta}$
extends to a homomorphism
\[
\lambda_S: G \longrightarrow G^{\vee}
\]
over $S$. 

We can apply the previous argument to $G^{\vee}$ and obtain the Raynaud extension
\[
0 \rightarrow T^{\vee} \rightarrow G^{\vee,\natural} \rightarrow A^{\vee} \rightarrow 0
\]
It can be shown that the abelian part of 
$G^{\vee, \natural}$ is naturally identified with the dual abelian variety of $A$, explaining the notation. 
The morphism $\lambda_S$ induces the morphisms
\[
\begin{tikzcd}
0 \arrow[r] & T \arrow[r] \arrow[d, "\lambda_T"] & G^{\natural} \arrow[r] \arrow[d,"\lambda"] & A \arrow[r] \arrow[d,"\lambda_A"] & 0 
\\
0 \arrow[r] & T^{\vee} \arrow[r] & G^{\natural, \vee} \arrow[r] & A^{\vee} \arrow[r] & 0 
\end{tikzcd}
\]
where we can show that $\lambda_A$ is a polarization. By what we have observed, these objects are equivalent to  
\[
(A, \lambda_A, \underline{X}, \underline{Y}, \phi, c,c^{\vee})
\]
in the degeneration data. Hence we have constructed the first seven objects in 
$F_{\text{pol}}((G, \lambda_{\eta}))$. 

\vspace{5mm}

It remains to construct $\tau$ out of $(G, \lambda_{\eta})$. We have seen that $\tau$ essentially corresponds to the periods 
$\imath: Y_{\eta} \hookrightarrow G^{\natural}_{\eta}$
in the "universal cover" $G^{\natural}_{\eta}$, and 
"$G = G^{\natural}/Y_{\eta}$" 
as in the classical case. In particular, as the intuition suggests, $\tau$ is determined by $G$ and is independent of the polarization or ample invertible sheaves used in the construction. Our strategy is to use theta functions to extract the periods, as explained in the motivation part. Indeed, $\tau$ is essentially the analog of the bilinear form $b(\cdot, \cdot)$ that appears in the functional equations of Fourier coefficients of theta functions in the complex case. 

Now we begin to construct $\tau$, following the strategy described in the classical case. Without loss of generality, we assume that $\underline{X}$ and $\underline{Y}$ are constant with values $X$ and $Y$. We choose a cubical ample invertible sheaf $\mathcal{L}$ on $G$, then we can show that its formal completion extends to a cubical ample line bundle $\mathcal{L}^{\natural}$ on $G^{\natural}$. We introduce notations for the maps in the extension by the diagram
\[
0 \rightarrow T \overset{i}{\rightarrow} G^{\natural} \overset{\pi}{\rightarrow} A \rightarrow 0
\]
then we can choose a cubical trivialization
$i^*\mathcal{L}^{\natural} \cong \mathscr{O}_T$, 
which forces $\mathcal{L}^{\natural}$ to descend to an ample invertible sheaf $\mathcal{M}$ on $A$, i.e. 
$\pi^*\mathcal{M} \cong \mathcal{L}^{\natural}$. Further, we assume that $\mathcal{L}_{\eta}$ induces the polarization $\lambda_{\eta}$ on $G_{\eta}$. We can achieve this by possibly replacing $\lambda_{\eta}$ with  $\lambda_{\mathcal{L}_{\eta}}$, the construction of $\tau$ will not depend on the choice of $\lambda_{\eta}$ or $\mathcal{L}$.  

We know that
$G^{\natural} \cong \underline{\text{Spec}}_{\mathscr{O}_A}(\underset{\chi \in X}{\oplus} \mathscr{O}_{\chi})$
as in (\ref{G}), which implies that 
\[
\pi_*\mathcal{L}^{\natural} \cong \underset{\chi \in X}{\oplus} \mathcal{M}_{\chi}
\] 
where 
$\mathcal{M}_{\chi} := \mathcal{M} \otimes_{\mathscr{O}_A} \mathscr{O}_{\chi}$. 
Then by the relative affineness of $G^{\natural}$, 
\[
\Gamma (G^{\natural}, \mathcal{L}^{\natural}) =
\Gamma (A, \pi_* \mathcal{L}^{\natural})=
\underset{\chi \in X}{\oplus} \Gamma (A, \mathcal{M}_{\chi})
\]
this is also true if we base change to $S_i := \text{Spec}(R/I^i)$, which forms a compatible system, hence
\[
\Gamma (G^{\natural}_{\text{for}}, \mathcal{L}^{\natural}_{\text{for}}) 
\cong \hat{\underset{\chi \in X}{\oplus}} \Gamma (A, \mathcal{M}_{\chi})
\]
where the completion is with respect to the $I$-adic topology. Now by the definition of the Raynaud extension, we have that $G$ and $G^{\natural}$ have the same formal completion along $I$, i.e.
$G^{\natural}_{\text{for}} \cong G_{\text{for}}$. 
The canonical pullback map
$\Gamma(G, \mathcal{L}) \rightarrow \Gamma (G_{\text{for}}, \mathcal{L}_{\text{for}})$
becomes
\[
\Gamma(G, \mathcal{L}) \rightarrow \Gamma (G_{\text{for}}, \mathcal{L}_{\text{for}}) \cong 
\Gamma (G^{\natural}_{\text{for}}, \mathcal{L}^{\natural}_{\text{for}}) 
\cong \hat{\underset{\chi \in X}{\oplus}} \Gamma (A, \mathcal{M}_{\chi})
\]
and projecting to the $\chi$-th component we obtain 
\[
\Gamma(G, \mathcal{L}) \longrightarrow \Gamma (A, \mathcal{M}_{\chi}).
\]
Tensoring both sides with $K := \text{Frac}(R)$, we obtain 
\[
\sigma_{\chi}: \Gamma(G_{\eta}, \mathcal{L}_{\eta}) \longrightarrow \Gamma (A_{\eta}, \mathcal{M}_{\chi,\eta})
\]
by flat base change, 
which are the Fourier coefficients of theta functions with respect to $\mathcal{L}$.

Now as in the classical case, we aim to find the functional equation of $\sigma_{\chi}$ and read off the sought-after $\tau$ from it. Let $y \in Y$, and 
$T_{c^{\vee}(y)} : A \rightarrow A$
the translation by the point $c^{\vee}(y)$, then the equation 
$\lambda_A \circ c^{\vee} = c \circ \phi$
applied to $y$ translates into an isomorphism
\[
T_{c^{\vee}(y)}^* \mathcal{M}_{\chi} \cong
\mathcal{M}_{\chi + \phi(y)} \otimes_R \mathcal{M}_{\chi}(c^{\vee}(y))
\]
(using rigidified line bundles to represent elements of $A^{\vee}$, and elements of $A^{\vee}$ are characterized by the identity $T_x^*L \cong L$). This provides us with the natural map
\[
T_{c^{\vee}(y)}^* \circ \sigma_{\chi} : 
\Gamma(G_{\eta}, \mathcal{L}_{\eta}) \rightarrow 
\Gamma (A_{\eta}, T_{c^{\vee}(y)}^*\mathcal{M}_{\chi,\eta}) \cong 
\Gamma (A_{\eta},\mathcal{M}_{\chi + \phi(y),\eta}) \otimes_K 
\mathcal{M}_{\chi}(c^{\vee}(y))_{\eta}.
\]
On the other hand, we have the map
\[
\sigma_{\chi+\phi(y)}: \Gamma(G_{\eta}, \mathcal{L}_{\eta}) \longrightarrow \Gamma (A_{\eta}, \mathcal{M}_{\chi+\phi(y),\eta}). 
\]
The functional equation we are searching for is 
\begin{equation} \label{feq}
\sigma_{\chi + \phi(y)} = \psi (y) \tau(y, \chi) T_{c^{\vee}(y)}^* \circ \sigma_{\chi}
\end{equation}
where 
\[
\psi(y): \mathcal{M}(c^{\vee}(y))_{\eta} \overset{\sim}{\rightarrow} \mathscr{O}_{S, \eta}
\]
is a trivialization of the fiber of $\mathcal{M}$ at $c^{\vee}(y)$, 
and 
\[
\tau(y, \chi) : \mathscr{O}_{\chi}(c^{\vee}(y))_{\eta} \longrightarrow \mathscr{O}_{S, \eta}
\]
is a section of 
$\mathscr{O}_{\chi}(c^{\vee}(y))_{\eta}^{\otimes -1}$
for each $y \in Y$ and $\chi \in X$, so that 
$\psi (y) \tau(y, \chi)$ is a section of $\mathcal{M}_{\chi}(c^{\vee}(y))_{\eta}^{\otimes-1}$
(recall $\mathcal{M}_{\chi} = \mathcal{M} \otimes \mathscr{O}_{\chi}$). 

It is a hard fact that 
\[
\sigma_{\chi} \neq 0
\]
for all $\chi \in X$, hence $\tau$ (and $\psi$) is uniquely characterized by the functional equation (\ref{feq}). The positivity, bilinearity and symmetry of $\tau$ all follow relatively formally from (\ref{feq}) and $\sigma_{\chi} \neq 0$. Further,  $\tau$ is independent of the choice of $\mathcal{L}$ ($\psi$ depends on $\mathcal{L}$ but is independent of the choice of $\mathcal{M}$). 

Equation (\ref{feq}) follows formally if we know that
$\sigma_{\chi + \phi(y)}$ is proportional to 
$T_{c^{\vee}(y)}^* \circ \sigma_{\chi}$,
and this is proved using representations of theta groups. Indeed, we can prove that $\sigma_{\chi}$ factors through an equivariant homomorphism between two irreducible representations with respect to a subgroup of the theta group of $\mathcal{L}$ (isomorphic to the theta group of $\mathcal{M}_{\chi}$ which acts naturally on 
$\Gamma (A_{\eta}, \mathcal{M}_{\chi,\eta})$), and similarly for 
$T_{c^{\vee}(y)}^* \circ \sigma_{\chi}$.
The non-vanishing of $\sigma_{\chi}$ forces that both factorizations are non-zero, so Schur's lemma gives the proportionality.

\subsection{PEL degeneration data}

We want to generalize the polarized degeneration data to include endormorphisms and level structures, so that they characterize degenerations of PEL abelian schemes. It turns out that level structures create substantial technical difficulties, which is one of the main technical contributions of Kaiwen Lan. Following Lan's presentation, we separate the data with and without level structures. We use notations from the previous section, and the notations for PEL datum are as in section \ref{kpelmp}.

\subsubsection{Data without level structures}

We begin by defining the degenerating PE abelian varieties.

\begin{definition}
The category 
$\text{DEG}_{\text{PE}, \mathcal{O}}(R,I)$
has objects 
$(G,\lambda,i)$ 
where $(G, \lambda) \in \text{DEG}_{\text{pol}}(R,I)$, 
and 
\[
i: \mathcal{O} \rightarrow End_S(G)
\]
is a ring homomorphism such that 
\[
i_{\eta}(b)^{\vee} \circ \lambda_{\eta} = \lambda_{\eta} \circ i_{\eta}(b^*)
\]
for every $b \in \mathcal{O}$, where 
$i_{\eta}(b)^{\vee} : G_{\eta}^{\vee} \rightarrow G_{\eta}^{\vee}$ 
is the dual of $i_{\eta}(b)$. The morphisms are isomorphisms respecting all structures. 
\end{definition}

\begin{remark}
We know that the restriction to the generic fiber is a fully faithful functor from the category of degenerating abelian varieties to that of abelian varieties, which implies that
$\lambda_{\eta}: G_{\eta} \rightarrow G_{\eta}^{\vee}$ 
extends uniquely to a morphism 
$\lambda: G \rightarrow G^{\vee}$, 
thus it is unambiguous to write 
$(G, \lambda) \in \text{DEG}_{\text{pol}}(R,I)$. 
Similarly, we have
$End_S(G) \cong End_{\eta}(G_{\eta})$,
so the extra data are determined by their restriction to the generic fiber, and the generic fiber is a PE abelian variety by $\mathcal{O}$. 
\end{remark}

\begin{definition}
The category 
$\text{DD}_{\text{PE},\mathcal{O}}(R,I)$
has objects 
\[
(A, \lambda_A, i_A, \underline{X}, \underline{Y}, \phi, c,c^{\vee}, \tau)
\]
such that 
$(A, \lambda_A, \underline{X}, \underline{Y}, \phi, c,c^{\vee}, \tau) \in \text{DD}_{\text{pol}}(R,I)$
and 
\[
i_A : \mathcal{O} \rightarrow End_S(A)
\]
is a ring homomorphism such that 
$i_A(b)^{\vee} \circ \lambda_A = \lambda_A \circ i_A(b^*)$
for every $b \in \mathcal{O}$. The data are required to the additional $\mathcal{O}$-structures in the sense that

(1) $\underline{X}$ and $\underline{Y}$ are \'etale locally constant sheaf of projective $\mathcal{O}$-modules with structure morphisms 
$i_{\underline{X}}: \mathcal{O} \rightarrow End_S(\underline{X})$
and
$i_{\underline{Y}}: \mathcal{O} \rightarrow End_S(\underline{Y})$.
$\underline{X}$ and $\underline{Y}$ are required to be rationally equivalent as sheaves of
$\mathcal{O}\times_{\mathbb{Z}} \mathbb{Q}$-modules. Moreover, $\phi: \underline{Y} \rightarrow \underline{X}$ 
is $\mathcal{O}$-equivariant.

(2) $c: \underline{X} \rightarrow A^{\vee}$
and $c^{\vee}: \underline{Y} \rightarrow A$
are $\mathcal{O}$-equivariant.

(3) The trivialization 
$\tau : \textbf{1}_{\underline{Y} \times_S \underline{X}, \eta} \overset{\sim}{\longrightarrow} 
(c^{\vee} \times c)^* \mathcal{P}_{A,\eta}^{\otimes -1}$
satisfies 
\[
(i_{\underline{Y}}(b) \times Id_{\underline{X}})^*\tau = 
(Id_{\underline{Y}} \times i_{\underline{X}}(b^*))^*\tau 
\]
for all $b \in \mathcal{O}$, i.e. 
$\tau(by, \chi) = \tau(y, b^* \chi)$ 
for $y \in Y$ and $\chi \in X$ if $\underline{X}$ and $\underline{Y}$ are constant. 

The morphisms are isomorphisms respecting all structures. 
\end{definition}

The following theorem follows directly from the functoriality of $F_{\text{pol}}(R,I) $. 

\begin{theorem}
There is an equivalence of categories
\[
F_{PE,\mathcal{O}}(R,I) : \text{DEG}_{\text{PE}, \mathcal{O}}(R,I) \rightarrow 
\text{DD}_{\text{PE},\mathcal{O}}(R,I)
\]
\end{theorem}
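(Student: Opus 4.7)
The plan is to deduce the equivalence directly from the already-established polarized equivalence $F_{\text{pol}}(R,I)$ by transporting the $\mathcal{O}$-action along the functor. The crucial observation is that each step in the construction of $F_{\text{pol}}$ (passing to the Raynaud extension, extracting $\underline{X}, \underline{Y}, \phi, c, c^{\vee}$, and reading off $\tau$ from Fourier coefficients of theta functions) is functorial not merely in isomorphisms of $(G,\lambda_\eta)$ but in arbitrary homomorphisms of degenerating semi-abelian schemes over $S$. So any endomorphism $i(b) \in \text{End}_S(G)$ induces an endomorphism of the associated tuple, and the required compatibilities in $\text{DD}_{\text{PE},\mathcal{O}}$ will all be shadows of the single identity $i_\eta(b)^{\vee}\circ \lambda_\eta = \lambda_\eta \circ i_\eta(b^*)$ on $G_\eta$.

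First I would define $F_{\text{PE},\mathcal{O}}(R,I)$ on objects. Given $(G,\lambda_\eta,i)$, apply $F_{\text{pol}}$ to $(G,\lambda_\eta)$ to get $(A,\lambda_A,\underline{X},\underline{Y},\phi,c,c^{\vee},\tau)$. For each $b \in \mathcal{O}$, the endomorphism $i(b): G \to G$ induces an endomorphism of the formal completion $G_{\text{for}}$ and hence, by uniqueness of algebraization (Grothendieck existence applied as in the construction of the Raynaud extension), an endomorphism $i(b)^{\natural}: G^{\natural} \to G^{\natural}$. Rigidity of tori (no nontrivial maps between a torus and an abelian scheme in either direction) forces $i(b)^{\natural}$ to respect the filtration of the Raynaud extension, yielding induced ring homomorphisms $i_T: \mathcal{O} \to \text{End}_S(T)$ and $i_A: \mathcal{O} \to \text{End}_S(A)$, and dually on $G^{\vee,\natural}$. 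The $\mathcal{O}$-module structure on $\underline{X} = X^{*}(T)$ (and on $\underline{Y}$) is the one induced by $i_T$ (resp.\ $i_{T^{\vee}}$), and functoriality of the assignment $\chi \mapsto \mathscr{O}_\chi = c(\chi)$ immediately gives $\mathcal{O}$-equivariance of $c$ and $c^{\vee}$. The relation $\lambda_A \circ c^{\vee} = c \circ \phi$ being preserved, together with the identity $i(b)^{\vee}\circ \lambda_\eta = \lambda_\eta \circ i(b^*)$, yields $i_A(b)^{\vee}\circ \lambda_A = \lambda_A \circ i_A(b^*)$ and the $\mathcal{O}$-equivariance of $\phi$ (with the $*$-twist on one side); rational equivalence of $\underline{X}$ and $\underline{Y}$ as $\mathcal{O}\otimes\mathbb{Q}$-modules then follows from $\phi$ being an injection with finite cokernel.

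The most delicate compatibility is the symmetry $\tau(by,\chi) = \tau(y,b^{*}\chi)$. I would read this off from the functional equation \eqref{feq} for the Fourier coefficients $\sigma_\chi$ of theta functions attached to a cubical ample $\mathcal{L}$ on $G$: the $\mathcal{O}$-action pulls $\mathcal{L}$ back to another cubical ample sheaf, and the identity $i(b)^{\vee}\circ\lambda_\eta = \lambda_\eta\circ i(b^{*})$ translates under pullback of theta functions exactly into the interchange of the two variables of $\tau$ by $b$ and $b^{*}$. Since $\sigma_\chi \neq 0$ for every $\chi$ and $\tau$ is uniquely characterised by \eqref{feq}, the desired identity follows.

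Finally, for the equivalence itself: full faithfulness is immediate because morphisms on either side are detected on the generic fibre and compatible with the $\mathcal{O}$-actions by construction, while $F_{\text{pol}}$ is already fully faithful. For essential surjectivity, given $(A,\lambda_A,i_A,\underline{X},\underline{Y},\phi,c,c^{\vee},\tau)\in\text{DD}_{\text{PE},\mathcal{O}}(R,I)$, apply $F_{\text{pol}}^{-1}$ (the Mumford construction) to the underlying polarised datum to obtain $(G,\lambda_\eta)$; each $b\in\mathcal{O}$ acts on the datum, hence on $(G,\lambda_\eta)$ by applying $F_{\text{pol}}^{-1}$ to the induced endomorphism of the tuple, and these assemble into the desired $i:\mathcal{O}\to\text{End}_S(G)$ with the correct adjoint relation. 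The main technical point I expect to have to nail down is that $F_{\text{pol}}^{-1}$ really extends to arbitrary (not polarisation-preserving) homomorphisms of degeneration data—this is needed to run the Mumford construction on each $i_A(b)$ rather than only on isomorphisms—but this is exactly the content of the functorial form of Mumford's quotient construction as developed by Faltings--Chai and Lan, so no new ideas beyond the polarised case are required.
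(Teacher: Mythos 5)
Your proposal is correct and takes essentially the same approach as the paper, whose entire proof of this theorem is the one-line remark that it ``follows directly from the functoriality of $F_{\text{pol}}(R,I)$.'' Your write-up simply unpacks what that functoriality means concretely: algebraizing $i(b)$ on the formal completion to get $i(b)^{\natural}$ on the Raynaud extension, using rigidity of tori to split it into $i_T$ and $i_A$, transporting the $\mathcal{O}$-action to $\underline{X}, \underline{Y}, c, c^{\vee}$, and checking the $\tau$-symmetry via uniqueness in the functional equation for $\sigma_{\chi}$ — all of which is implicit in the paper's appeal to functoriality.
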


\vspace{6mm}

We can strengthen the theorem by adding the Lie algebra condition on both sides. It is the determinant condition in the definition of PEL moduli problems. 

\begin{definition}
The category 
$\text{DEG}_{\text{PE}_{Lie}, (L \otimes_{\mathbb{Z}} \mathbb{R}, \langle  \cdot,\cdot\rangle,h)}(R,I)$
has objects 
\[
(G,\lambda,i) \in \text{DEG}_{\text{PE}, \mathcal{O}}(R,I)
\] 
such that 
$(G_{\eta},\lambda_{\eta},i_{\eta})$ 
satisfies the determinant condition specified by 
$(L \otimes_{\mathbb{Z}} \mathbb{R}, \langle  \cdot,\cdot\rangle,h)$, 
see \cite{lan2013arithmetic} 1.3.4.1 for definitions. The morphisms are isomorphisms respecting all structures. 
\end{definition}

\begin{definition}
The category 
$\text{DD}_{\text{PE}_{Lie}, (L \otimes_{\mathbb{Z}} \mathbb{R}, \langle  \cdot,\cdot\rangle,h)}(R,I)$
has objects 
\[
(A, \lambda_A, i_A, \underline{X}, \underline{Y}, \phi, c,c^{\vee}, \tau) \in \text{DD}_{\text{PE}, \mathcal{O}}(R,I)
\]
such that 
there exists a totally isotropic embedding 
\[
Hom_{\mathbb{R}}(X \otimes \mathbb{R}, \mathbb{R}(1)) \hookrightarrow L \otimes \mathbb{R}
\]
of $\mathcal{O} \otimes \mathbb{R}$-modules with image denoted by $Z_{-2, \mathbb{R}}$, 
where $X$ is  the underlying $\mathcal{O}$-module of $\underline{X}$, and such that 
$(A_{\eta}, \lambda_{A,\eta}, i_{A, \eta})$ 
satisfies the determinant condition determined by 
$(Z_{-2, \mathbb{R}}^{\perp}/Z_{-2, \mathbb{R}}, \langle  \cdot, \cdot\rangle, h_{-1}) $ induced by the embedding. The morphisms are isomorphisms respecting all the structures. 
\end{definition}

\begin{theorem}
(Lan) There is an equivalence of categories
\[
F_{\text{PE}_{Lie}, (L \otimes_{\mathbb{Z}} \mathbb{R}, \langle  \cdot,\cdot\rangle,h)}(R,I) :
\]
\[
\text{DEG}_{\text{PE}_{Lie}, (L \otimes_{\mathbb{Z}} \mathbb{R}, \langle  \cdot,\cdot\rangle,h)}(R,I) 
\rightarrow 
\text{DD}_{\text{PE}_{Lie}, (L \otimes_{\mathbb{Z}} \mathbb{R}, \langle  \cdot,\cdot\rangle,h)}(R,I)
\]
\end{theorem}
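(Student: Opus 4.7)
The plan is to obtain this refinement by restricting the already-established equivalence
\[
F_{\text{PE},\mathcal{O}}(R,I) : \text{DEG}_{\text{PE},\mathcal{O}}(R,I) \xrightarrow{\sim} \text{DD}_{\text{PE},\mathcal{O}}(R,I)
\]
to the appropriate full subcategories. Concretely, I will show (i) the functor sends objects satisfying the $\text{PE}_{Lie}$-condition on the left to ones satisfying the $\text{PE}_{Lie}$-condition on the right, and (ii) that every object on the right lifts to a unique isomorphism class satisfying the condition on the left. Since the morphisms in both categories are isomorphisms respecting all structures and the underlying functor is already a fully faithful equivalence, (i) and (ii) together yield the theorem.

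The mechanism linking the two determinant conditions is the Raynaud extension. Starting from $(G,\lambda,i)$, the construction of $F_{\text{pol}}(R,I)$ produces the Raynaud extension
\[
0 \longrightarrow T \longrightarrow G^{\natural} \longrightarrow A \longrightarrow 0
\]
together with its dual, and the identification $G_{\text{for}} \cong G^{\natural}_{\text{for}}$ of formal completions gives a canonical isomorphism $\text{Lie}_{G/S} \cong \text{Lie}_{G^{\natural}/S}$ of $\mathcal{O} \otimes_{\mathbb{Z}} \mathcal{O}_S$-modules fitting into the short exact sequence
\[
0 \longrightarrow \text{Lie}_{T/S} \longrightarrow \text{Lie}_{G/S} \longrightarrow \text{Lie}_{A/S} \longrightarrow 0.
\]
Since $T$ has character group $\underline{X}$, we have $\text{Lie}_{T/S} \cong \text{Hom}_{\mathbb{Z}}(\underline{X}, \mathbb{Z}(1)) \otimes_{\mathbb{Z}} \mathcal{O}_S$ as $\mathcal{O}$-modules, and the polarization morphism $\lambda$ identifies $\text{Lie}_{T/S}$ inside $\text{Lie}_{G/S}$ with a totally isotropic $\mathcal{O}$-submodule whose orthogonal is exactly $\ker(\text{Lie}_{G/S} \to \text{Lie}_{A^{\vee}/S})$. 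This gives, after tensoring with $\mathbb{R}$ along any chosen geometric point, a short-exact-sequence picture parallel to the filtration $0 \subset Z_{-2,\mathbb{R}} \subset Z_{-2,\mathbb{R}}^{\perp} \subset L \otimes \mathbb{R}$ with $Z_{-2,\mathbb{R}}^{\perp}/Z_{-2,\mathbb{R}}$ playing the role of $\text{Lie}_{A}$.

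The key step is then the following: the determinant condition on $(G_{\eta},\lambda_{\eta},i_{\eta})$ specified by $(L\otimes\mathbb{R},\langle\cdot,\cdot\rangle,h)$ is, by the classical translation (cf.\ \cite{lan2013arithmetic} 1.3.4.1), an equality of characteristic polynomials of $\mathcal{O}$-action on $\text{Lie}_{G_{\eta}}$ and on a specified piece of $L\otimes\mathbb{C}$. Using multiplicativity of the determinant in short exact sequences of locally free $\mathcal{O}\otimes\mathcal{O}_S$-modules, this is equivalent to the existence of a totally isotropic embedding
\[
\text{Hom}_{\mathbb{R}}(X \otimes \mathbb{R},\mathbb{R}(1)) \hookrightarrow L \otimes \mathbb{R},
\]
compatible with the $\mathcal{O}$-action and pairing, together with the determinant condition on $(A_{\eta},\lambda_{A,\eta},i_{A,\eta})$ specified by $(Z_{-2,\mathbb{R}}^{\perp}/Z_{-2,\mathbb{R}},\langle\cdot,\cdot\rangle,h_{-1})$. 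This equivalence is precisely the matching of the two $\text{PE}_{Lie}$ conditions, so it gives (i) immediately, and for (ii) it suffices to run the Mumford quotient construction inside $\text{DEG}_{\text{PE},\mathcal{O}}(R,I)$ and observe that the resulting $(G,\lambda,i)$ has Lie algebra manifestly satisfying the determinant condition.

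The main obstacle is the bookkeeping in the key step: one must verify that the symplectic form on $\text{Lie}_{G}$ coming from $\lambda$ matches, after filtration, the form induced by $\langle\cdot,\cdot\rangle$ on $Z_{-2,\mathbb{R}}^{\perp}/Z_{-2,\mathbb{R}}$, and similarly that the Hodge cocharacter $h$ and the cocharacter $h_{-1}$ on the graded piece are compatible with the splittings coming from the Raynaud extension. Once these compatibilities are recorded, the equivalence is a formal consequence of the PE-level result together with the observation that the determinant condition is an open-and-closed condition stable under the isomorphisms defining the categories.
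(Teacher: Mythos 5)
The paper itself gives no proof here; it states the theorem with the attribution ``(Lan)'' and relies on \cite{lan2013arithmetic} (where the result and its proof live around \S 5.1.2). So there is no in-paper argument to compare yours to, and I will evaluate your sketch on its own merits.

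Your overall plan is the correct one and matches Lan's: restrict the already-established equivalence $F_{\text{PE},\mathcal{O}}(R,I)$ to the full subcategories cut out by the Lie-algebra condition, and relate the two determinant conditions through the Raynaud extension and the short exact sequence $0 \to \text{Lie}_{T/S} \to \text{Lie}_{G^\natural/S} \to \text{Lie}_{A/S} \to 0$, using the identification $\text{Lie}_{G,\text{for}} \cong \text{Lie}_{G^\natural,\text{for}}$ and the fact that the determinant condition is open and closed on the (connected) base. For essential surjectivity, invoking Mumford's construction is also right. Up to this point the sketch is sound.

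The gap is in what you call the ``key step'' and defer to ``bookkeeping.'' You assert that, by multiplicativity of determinants in short exact sequences, the determinant condition on $(G_\eta,\lambda_\eta,i_\eta)$ for $(L\otimes\mathbb{R},\langle\cdot,\cdot\rangle,h)$ is \emph{equivalent} to the existence of a totally isotropic $\mathcal{O}\otimes\mathbb{R}$-embedding $\text{Hom}_{\mathbb{R}}(X\otimes\mathbb{R},\mathbb{R}(1))\hookrightarrow L\otimes\mathbb{R}$ together with the determinant condition on $(A_\eta,\lambda_{A,\eta},i_{A,\eta})$ for the induced datum on $Z_{-2,\mathbb{R}}^\perp/Z_{-2,\mathbb{R}}$. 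Multiplicativity only gives the easy direction: \emph{if} such an embedding exists and $A_\eta$ satisfies the subquotient condition, then the determinants multiply and $G_\eta$ satisfies the full condition. The converse — that the determinant condition on $G_\eta$ together with the $\mathcal{O}$-module $X$ furnished by the torus part of the Raynaud extension forces the \emph{existence} of an isotropic $\mathcal{O}\otimes\mathbb{R}$-embedding of $\text{Hom}(X\otimes\mathbb{R},\mathbb{R}(1))$ into $L\otimes\mathbb{R}$ compatible with $*$, $\langle\cdot,\cdot\rangle$, and $h$ — is precisely the nontrivial content of Lan's proof and is \emph{not} a formal consequence of determinant multiplicativity. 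This existence statement is a question about the abstract structure of semisimple $\mathcal{O}\otimes\mathbb{R}$-modules with involution and pairing (worked out case-by-case for types $\mathbf{A}$ and $\mathbf{C}$ in \cite{lan2013arithmetic}), not about Lie algebras directly. Relatedly, your intermediate claim that ``the polarization morphism $\lambda$ identifies $\text{Lie}_{T/S}$ inside $\text{Lie}_{G/S}$ with a totally isotropic $\mathcal{O}$-submodule whose orthogonal is $\ker(\text{Lie}_{G/S}\to\text{Lie}_{A^\vee/S})$'' is not well-posed: $\lambda$ induces a map $\text{Lie}_{G/S}\to\text{Lie}_{G^\vee/S}$, not a pairing on $\text{Lie}_{G/S}$, so there is no intrinsic notion of ``isotropic'' there. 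The isotropicity Lan requires lives in $L\otimes\mathbb{R}$, where a pairing does exist, and bridging the Lie-algebra filtration to a filtration of $L\otimes\mathbb{R}$ is exactly what needs proving. You should state the existence of the embedding as a separate lemma about the $\mathcal{O}\otimes\mathbb{R}$-module $L\otimes\mathbb{R}$, prove it by the structure theory of such modules, and only then apply multiplicativity; similarly for the compatibility of $h$ with the induced $h_{-1}$.
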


\subsubsection{Data with level structures} \label{dlevel}

We will only work with principal level structures in this paper. The general level structures can be taken as orbits of principal level structures, and the modification with degeneration data is to take the quotient of the data with principal level structures by certain groups. 

We fix an integer $n$ in this section. We assume that the the generic point $\eta = Spec(K)$ is defined over $Spec(\mathcal{O}_{F_0, (\square)})$, 
where $F_0$ is the reflex field and $\square$ is the set of all primes not dividing $nI_{bad}Disc_{\mathcal{O}/\mathbb{Z}} [L^{\#}:L]$, see \cite{lan2013arithmetic} 1.4.1.1 for definitions of these bad primes. In particular, 
$Spec(\mathcal{O}_{F_0, (\square)})$ 
is the maximal base over which the PEL moduli variety is smooth. Moreover, we make the technical assumption that the $\mathcal{O}$-action on $L$ extends to a maximal order in $B$. 

All the morphisms in the category to be defined will be the obvious isomorphisms preserving all the structures, and we omit the description. 

\begin{definition} \label{degpel}
The category 
$\text{DEG}_{\text{PEL},M_n}(R,I)$
has objects 
\[
(G, \lambda, i, (\alpha_n, \nu_n))
\]
where 
\[
(G,\lambda, i) \in \text{DEG}_{\text{PE}_{Lie}, (L \otimes_{\mathbb{Z}} \mathbb{R}, \langle  \cdot,\cdot\rangle,h)}(R,I). 
\] 
Moreover, 
$\alpha_n : L/nL \overset{\sim}{\rightarrow} G[n]_{\eta}$
and 
$ \nu_n : \mathbb{Z}/n \mathbb{Z}(1) \overset{\sim}{\rightarrow} \mu_{n,\eta}$
are isomorphisms such that they
define a level-n structure for $G_{\eta}$ in the sense that
\[
(G_{\eta}, \lambda_{\eta}, i_{\eta}, (\alpha_n, \nu_n)) \in M_n (\eta)
\]
as in definition \ref{isomorphism}. 
\end{definition}

\begin{definition} \label{DDL}
The category 
$\text{DD}_{\text{PEL},M_n}(R,I)$
has objects
\[
(A, \lambda_A, i_A, \underline{X}, \underline{Y}, \phi, c,c^{\vee}, \tau, [ \alpha_n^{\natural}]) 
\]
where 
\[
(A, \lambda_A, i_A, \underline{X}, \underline{Y}, \phi, c,c^{\vee}, \tau) \in \text{DD}_{\text{PE}_{Lie}, (L \otimes_{\mathbb{Z}} \mathbb{R}, \langle  \cdot,\cdot\rangle,h)}(R,I)
\]
and 
\[
\alpha_n^{\natural} := (Z_n, \varphi_{-2,n}, (\varphi_{-1,n}, \nu_{-1,n}), \varphi_{0,n}, \delta_n, c_n, c_n^{\vee}, \tau_n)  
\]
is the level structure data
with objects to be defined as follows: 

(1) $Z_n$ is a filtration
\[
0 \subset Z_{n,-2} \subset Z_{n,-1} \subset Z_{n,0} = L/nL
\]
on $L/nL$, which can be written as the reduction modulo n of a filtration (of $\mathcal{O} \otimes_{\mathbb{Z}} \hat{\mathbb{Z}}^{\square}$-modules)
\[
0 \subset Z_{-2} \subset Z_{-1} \subset Z_{0} = L\otimes_{\mathbb{Z}} \hat{\mathbb{Z}}^{\square}
\]
on 
$L\otimes_{\mathbb{Z}} \hat{\mathbb{Z}}^{\square}$
such that $Z$ extends to a filtration 
$Z_{\mathbb{A}^{\square}}$ on 
$L\otimes_{\mathbb{Z}} \mathbb{A}^{\square}$
which has the property that it is split (as 
$\mathcal{O} \otimes_{\mathbb{Z}} \mathbb{A}^{\square}$-modules),
$Gr_i^{Z_{\mathbb{A}^{\square}}}$ is integral for every i, i.e.  
$Gr_i^{Z_{\mathbb{A}^{\square}}}= M_i \otimes_{\mathbb{Z}} \mathbb{A}^{\square}$ for some torsion-free finitely generated  $\mathcal{O}$-module $M_i$, 
and 
$Z_{\mathbb{A}^{\square}, -2}$ 
is the annihilator of $Z_{\mathbb{A}^{\square}, -1}$ 
under the natural pairing 
$\langle   \cdot, \cdot\rangle_{\mathbb{A}^{\square}}$
on 
$L\otimes_{\mathbb{Z}} \mathbb{A}^{\square}$. 

(2) $\varphi_{-1,n}: Gr_{-1}^{Z_n} \overset{\sim}{\rightarrow} A[n]_{\eta}$
and 
$\nu_{-1,n}:  \mathbb{Z}/n \mathbb{Z}(1) \overset{\sim}{\rightarrow} \mu_{n,\eta}$
are isomorphisms such that 
\[
(A_{\eta}, \lambda_{A, \eta}, i_{A, \eta}, (\varphi_{-1,n}, \nu_{-1,n})) \in M_n(\eta) 
\]
with respect to the PEL datum determined by $Gr_{-1,n}^{Z_n}$, which exists because $Z_n$ satisfying the conditions in (1). 

(3) 
\[
\varphi_{-2,n} : Gr_{-2}^{Z_n} \overset{\sim}{\rightarrow} \underline{Hom}_{\eta}((\underline{X}/n \underline{X})_{\eta}, (\mathbb{Z}/n\mathbb{Z})(1))
\]
and 
\[
\varphi_{0,n} : Gr_{0}^{Z_n} \overset{\sim}{\rightarrow}
(\underline{Y}/ n \underline{Y})_{\eta}
\]
are isomorphisms which are liftable to some isomorphisms 
$\varphi_{-2}: Gr_{-2}^Z  \overset{\sim}{\rightarrow} Hom(X\otimes_{\mathbb{Z}} \hat{\mathbb{Z}}^{\square}, \hat{\mathbb{Z}}^{\square}(1))$ 
and 
$\varphi_0 : Gr_0^Z \overset{\sim}{\rightarrow} Y \otimes_{\mathbb{Z}} \hat{\mathbb{Z}}^{\square}$
over $\bar{\eta}$. Moreover, they are required to satisfy the equation 
\[
\langle  \varphi_{-2,n} (\cdot), \phi \circ \varphi_{0,n} (\cdot)\rangle_{can} = \langle   \cdot, \cdot\rangle_{20,n}
\]
where 
$\langle  \cdot, \cdot\rangle_{can}:\underline{Hom}_{\eta}((\underline{X}/n \underline{X})_{\eta}, (\mathbb{Z}/n\mathbb{Z})(1)) 
\times (\underline{X}/n \underline{X})_{\eta} 
\rightarrow (\mathbb{Z}/n\mathbb{Z})(1)$
is the canonical evaluation pairing, and 
$\langle  \cdot, \cdot\rangle_{20,n} : Gr_{-2}^{Z_n} \times Gr_0^{Z_n} \rightarrow (\mathbb{Z}/n\mathbb{Z})(1)$ 
is the pairing induced by $\langle  \cdot, \cdot\rangle$ on $L$ (using that $Z_{n,-2}$ is the annihilator of $Z_{n,-1}$). 

(4)
\[
\delta_n : \underset{i}{\oplus} Gr_i^{Z_n} \overset{\sim}{\rightarrow} L/nL
\]
is a splitting of the filtration $Z_n$, which can be lifted to a splitting 
$\delta : \underset{i}{\oplus} Gr_i^{Z} \overset{\sim}{\rightarrow} L\otimes_{\mathbb{Z}} \hat{\mathbb{Z}}^{\square} $. 

(5)
\[
c_n : \frac{1}{n} \underline{Y}_{\eta} \rightarrow A_{\eta}
\]
and 
\[
c_n^{\vee} : \frac{1}{n} \underline{X}_{\eta} \rightarrow A_{\eta}^{\vee}
\]
are homomorphisms that lifts $c$ and $c^{\vee}$ over $\eta$, i.e. $c_{\eta} = c_n \circ (\underline{Y}_{\eta} \hookrightarrow \frac{1}{n} \underline{Y}_{\eta} )$
and similarly for $c^{\vee}$. They are required to be compatible with the splitting $\delta_n$ in the sense that 
\[
\langle  \varphi_{-1,n}( \cdot), (\lambda_A\circ c_n^{\vee} - c_n \circ \phi )\circ \varphi_{0,n}( \cdot) \rangle_A = \nu_{-1,n} \circ \langle   \cdot, \cdot\rangle_{10, n}
\]
where 
$\langle  \cdot, \cdot\rangle_A: A[n]_{\bar{\eta}} \times A^{\vee}[n]_{\bar{\eta}} \rightarrow \mu_{n,\bar{\eta}}$
is the Weil pairing of $A_{\bar{\eta}}$, 
and 
\[
\langle  \cdot, \cdot\rangle_{10,n}: Gr_{-1}^{Z_n} \times Gr_0^{Z_n} \rightarrow (\mathbb{Z}/n\mathbb{Z})(1)
\]
is the pairing induced by $\delta_n$ and the natural pairing $\langle  \cdot, \cdot\rangle$ on $L/nL$,  i.e. 
$\langle  \cdot, \cdot\rangle_{10,n} := \langle  \delta_n(\cdot), \delta_n(\cdot)\rangle $
with domain 
$ Gr_{-1}^{Z_n} \times Gr_0^{Z_n}$. 
Moreover, they need to satisfy a level-lifting condition compatible with all the previous lifting, see \cite{lan2013arithmetic} 5.2.7.8 for the precise description. 

(6)
\[
\tau_n: \textbf{1}_{\frac{1}{n}\underline{Y} \times_S \underline{X}, \eta} \overset{\sim}{\longrightarrow} 
(c^{\vee}_n \times c_{\eta})^* \mathcal{P}_{A,\eta}^{\otimes -1}
\]
is a lifting of $\tau$ in the obvious sense. Similar to (5), it is required to be compatible with $\delta_n$ in the sense that 
\[
d_{00,n}(\varphi_{0,n}(\cdot), \varphi_{0,n}(\cdot)) = \nu_{-1,n} \circ \langle  \cdot, \cdot\rangle_{00,n}
\]
where 
$d_{00,n}: \frac{1}{n}Y/Y \times \frac{1}{n}Y/Y \rightarrow \mu_{n,\bar{\eta}}$
is defined by
\[
d_{00,n}(\frac{1}{n}y, \frac{1}{n}y') := \tau_n(\frac{1}{n}y, \phi(y')) \tau_n(\frac{1}{n}y', \phi(y))^{-1}
\]
for 
$\frac{1}{n}y, \frac{1}{n}y' \in \frac{1}{n}Y$, 
and 
$\langle  \cdot, \cdot\rangle_{00,n}: Gr_{0}^{Z_n} \times Gr_0^{Z_n} \rightarrow (\mathbb{Z}/n\mathbb{Z})(1)$
is defined by 
$\langle  \cdot, \cdot\rangle_{00,n} := \langle  \delta_n(\cdot), \delta_n(\cdot)\rangle $. They again have to satisfy a level-lifting condition, see \cite{lan2013arithmetic} 5.2.7.8 for details. Note that we have tacitly used the canonical identification $ \frac{1}{n}Y/Y \cong Y/nY$. 

\vspace{5mm}

The bracket $[\alpha_n^{\natural}]$ means the equivalence class of $\alpha_n^{\natural}$, see \cite{lan2013arithmetic} 5.2.7.11 for the definition. Essentially, taking the equivalence class is to eliminate the choice of the splitting. The subtlety to define the equivalence is that the complicated relations among the data are described using splittings, and changing splittings will introduce modifications into various data. We only remark that the data
$(Z_n, \varphi_{-2,n}, (\varphi_{-1,n}, \nu_{-1,n}), \varphi_{0,n})$ 
is independent of the equivalence class, so the equivalence has effect only on
$(\delta_n, c_n, c_n^{\vee}, \tau_n)$. 
\end{definition}

\begin{remark}
There is redundancy in the above definition, namely, $c$ and $c^{\vee}$ are determined by $c_n$ and $c_n^{\vee}$, and the same is true for $\tau$. \end{remark}

\begin{theorem}
There is an equivalence of categories 
\[
F_{PEL, M_n}(R,I) : \text{DEG}_{\text{PEL},M_n}(R,I) \rightarrow \text{DD}_{\text{PEL},M_n}(R,I).
\]
\end{theorem}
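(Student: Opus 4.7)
The plan is to upgrade the equivalence $F_{\text{PE}_{Lie}}$ already established by functorially attaching level-$n$ data on both sides. Given $(G, \lambda, i, (\alpha_n, \nu_n)) \in \text{DEG}_{\text{PEL},M_n}(R,I)$, I first apply $F_{\text{PE}_{Lie}}$ to obtain the underlying PE degeneration data $(A, \lambda_A, i_A, \underline{X}, \underline{Y}, \phi, c, c^{\vee}, \tau)$. The key observation is that the Raynaud extension $0 \to T \to G^{\natural} \to A \to 0$ together with the period map $\imath: \underline{Y}_{\eta} \hookrightarrow G^{\natural}_{\eta}$ (discussed in section \ref{equivalentfor}) produces a canonical three-step filtration on the generic $n$-torsion
\[
0 \subset T[n] \subset G^{\natural}[n] \subset G_{\eta}[n]
\]
with graded pieces $T[n]$, $A[n]$, and $\underline{Y}/n\underline{Y}$. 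Pulling this back through $\alpha_n$ yields the required filtration $Z_n$ on $L/nL$, whose liftability to $L \otimes \hat{\mathbb{Z}}^{\square}$ follows because $\alpha_n$ is the reduction modulo $n$ of the integral Tate-module structure on the generic fiber. The maps $\varphi_{-2,n}$, $\varphi_{-1,n}$, $\varphi_{0,n}$ are then read off as the induced isomorphisms on graded pieces, using Cartier duality $T[n] \cong \underline{Hom}(\underline{X}/n\underline{X}, \mathbb{Z}/n\mathbb{Z}(1))$ and the rigidification $\nu_n$ in degree $-2$.

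Next I would produce the remaining data by choosing an auxiliary splitting $\delta_n$ of $Z_n$ (which exists because of the splitness of the lifting to $\mathbb{A}^{\square}$ postulated in Definition \ref{DDL}) and then passing to the equivalence class $[\alpha_n^{\natural}]$ to absorb this choice. The refined maps $c_n, c_n^{\vee}$ arise from $\tfrac{1}{n}$-refinements of $c^{\vee}, c$: the level-$n$ structure on $G_{\eta}$ singles out a canonical lift of $\imath$ to $\tfrac{1}{n}\underline{Y}_{\eta} \to G^{\natural}_{\eta}$, whose projection to $A_{\eta}$ gives $c_n^{\vee}$, and dually for $c_n$. The refined trivialization $\tau_n$ over $\tfrac{1}{n}\underline{Y} \times \underline{X}$ is constructed from $\tau$ together with the $n$-torsion information on the theta functions, parallel to how $\tau$ itself was produced from Fourier coefficients in the construction of $F_{\text{pol}}$. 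All the pairing compatibilities built into Definition \ref{DDL} — the identities involving $\langle\cdot,\cdot\rangle_{20,n}$, $\langle\cdot,\cdot\rangle_{10,n}$, $\langle\cdot,\cdot\rangle_{00,n}$ and the Weil pairing on $A[n]$ — now translate into the statement that the Weil pairing on $G_{\eta}[n]$, pulled back by $\alpha_n$, matches the standard symplectic form on $L/nL$; these follow by tracing through the three-step filtration and the biextension structure governing $\tau$.

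For the inverse functor, I use Mumford's construction (remark \ref{mumford}) to recover $(G, \lambda, i)$ from the underlying degeneration data, then equip $G$ with a principal level-$n$ structure by assembling the three graded pieces via $\delta_n$, $\varphi_{-2,n}, \varphi_{-1,n}, \varphi_{0,n}$, and using $c_n, c_n^{\vee}, \tau_n$ to pin down the required $\tfrac{1}{n}$-torsion lifts of the periods; the equivalence class $[\alpha_n^{\natural}]$ guarantees independence of the auxiliary splitting. Mutual inverseness is then verified graded piece by graded piece, reducing to the already-established equivalence $F_{\text{PE}_{Lie}}$ on the unlevel part. The principal obstacle — and the reason the definition of $[\alpha_n^{\natural}]$ is so intricate — is the bookkeeping of this equivalence: one must show that two splittings $\delta_n, \delta_n'$ induce data $(c_n, c_n^{\vee}, \tau_n)$ and $(c_n', c_n^{\vee\prime}, \tau_n')$ that differ precisely by the equivalence of \cite{lan2013arithmetic} 5.2.7.11, and that this equivalence becomes trivial once the semiabelian scheme $G$ is reassembled. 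Carrying out this comparison is the technical heart of Lan's construction in \cite{lan2013arithmetic} Chapter 5, from which the theorem here is obtained by specialization to principal level-$n$ structure.
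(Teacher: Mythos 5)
Your proposal follows the same route as the paper: read off $Z_n$ and the graded maps $\varphi_{i,n}$ from the monodromy filtration on $G[n]_{\eta}$, supply the rest of $\alpha_n^{\natural}$ by a chosen splitting, pass to the equivalence class to absorb the choice, and invert with Mumford's construction. One adjustment to how you describe the splitting data: the refined period $\imath_n$ (hence $c_n^{\vee}$ and $\tau_n$, the latter being just $\imath_n$ repackaged via the equivalence of section \ref{equivalentfor}, not a fresh theta-coefficient computation) is not determined canonically by the level structure alone but by the chosen splitting $\delta_n$ transported along $\alpha_n$ to a splitting $\zeta_n$ of the weight filtration on $G[n]_{\eta}$, and $c_n$ is produced independently from the splitting of the sub-extension $0 \rightarrow T[n]_{\eta} \rightarrow G^{\natural}[n]_{\eta} \rightarrow A[n]_{\eta} \rightarrow 0$ rather than by any formal duality with $c_n^{\vee}$.
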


\subsubsection{The construction of $F_{PEL,M_n}$}

We now explain the meaning of the above complicated data and the construction of $F_{PEL,M_n}(R,I)$. 

Without loss of generality, we assume that $\underline{X}$ and $\underline{Y}$ are constant with values $X$ and $Y$ from now on. we have already seen how to associate data that characterize the degenerating abelian scheme $G$, together with its PE structures, we now focus on the level structures. The key point is that Mumford construction tells us that $G[n]_{\eta}$ is naturally an extension 
\[
0 \rightarrow G^{\natural}[n]_{\eta} \rightarrow G[n]_{\eta} \rightarrow \frac{1}{n} Y/Y \rightarrow 0
\]
which further justifies the heuristic "$G = G^{\natural}/Y$". Moreover, $G^{\natural}$ being a global extension of an abelian variety by an algebraic torus implies that $G^{\natural}[n]_{\eta}$ is also an extension 
\[
0 \rightarrow T[n]_{\eta} \rightarrow G^{\natural}[n]_{\eta} \rightarrow A[n]_{\eta} \rightarrow 0
\]
It is clear by naturality that these extensions can be upgraded to extensions in terms of the Tate modules, i.e.
$T^{\square}G_{\eta} := \underset{(m, \square)=1 }{\varprojlim} G[m]_{\eta}$
for example. 

Now if we are given a level-n structure on the generic fiber, we have an isomorphism 
$\alpha_n : L/nL \overset{\sim}{\rightarrow} G[n]_{\eta}$
together with an isomorphism 
$\nu_n :\mathbb{Z}/n \mathbb{Z}(1) \overset{\sim}{\rightarrow} \mu_{n,\eta} $,
which is compatible with the Weil pairing and  liftable to the Tate module. The above two extensions endows a filtration $Z_n$ on $L/nL$ through $\alpha_n$, i.e. 
\[
0 \subset Z_{n,-2} \subset Z_{n,-1} \subset Z_{n,0} = L/nL,
\]
such that $\alpha_n$ identifies $Gr_{-2}^{Z_n}$, $Gr_{-1}^{Z_n}$ and $Gr_{0}^{Z_n}$ with 
$T[n]_{\eta}$, $A[n]_{\eta}$ and $\frac{1}{n}Y/Y$ respectively. Note that
$T[n]_{\eta} = Hom(X/nX, \mu_n) \overset{\nu_n}{\cong} Hom(X/nX,\mathbb{Z}/n \mathbb{Z}(1))$, 
and we denote the corresponding isomorphisms by
\[
\varphi_{-2,n} : Gr_{-2}^{Z_n} \overset{\sim}{\rightarrow} Hom(X/nX, (\mathbb{Z}/n\mathbb{Z})(1)),
\]
\[
\varphi_{-1,n}: Gr_{-1}^{Z_n} \overset{\sim}{\rightarrow} A[n]_{\eta}
\]
and 
\[
\varphi_{0,n} : Gr_{0}^{Z_n} \overset{\sim}{\rightarrow}
(Y/nY)_{\eta}.
\]

This explains where 
$(Z_n, \varphi_{-2,n}, \varphi_{-1,n}, \varphi_{0,n})$ 
come from. The respective liftability conditions in (1), (2) and (3) of definition \ref{DDL} corresponds to the liftability of the level structure $\alpha_n$ and the above extensions. That they satisfy the conditions on Weil pairings in (1), (2) and (3) are general theorems of Grothendieck in SGA 7, where the above two extensions are interpreted as monodromy filtration. The $\nu_{-1,n}$ in the degeneration data is defined to be $\nu_n$, which is forced by the Weil pairing condition in (2) of definition \ref{DDL}.  

We have produced the data 
$(Z_n, \varphi_{-2,n}, (\varphi_{-1,n}, \nu_{-1,n}), \varphi_{0,n})$, 
which characterizes $\alpha_n$ up to graded pieces. Now we aim to find more data from which we can recover the complete $\alpha_n$. The idea is to introduce auxiliary data that corresponds to splittings of the above two extensions, and then take equivalence relations by identifying different splittings. 

First, a splitting of the extension 
\[
0 \rightarrow T[n]_{\eta} \rightarrow G^{\natural}[n]_{\eta} \rightarrow A[n]_{\eta} \rightarrow 0
\]
is the same as a section of 
$G^{\natural}[n]_{\eta} \rightarrow A[n]_{\eta}$, 
which is equivalent to a subgroup scheme $H$ of $G^{\natural}[n]_{\eta}$
that is isomorphic to 
$ A[n]_{\eta}$
through the projection. Let 
$G^{\natural'}_{\eta} := G^{\natural}_{\eta}/H$, 
then the quotient map induces 
\[
\begin{tikzcd}
0 \arrow[r]  & T_{\eta} \arrow[d,equal]  \arrow[r]  & G^{\natural}_{\eta} \arrow[r]  \arrow[d] & A_{\eta} \arrow[r] \arrow[d,"n"]   & 0 
\\
0 \arrow[r]  & T_{\eta} \arrow[r]  & G^{\natural'}_{\eta} \arrow[r]  & A_{\eta} \arrow[r]   & 0
\end{tikzcd}
\]
which can be completed into 
\[
\begin{tikzcd}
0 \arrow[r]  & T_{\eta} \arrow[d,equal]  \arrow[r]  & G^{\natural}_{\eta} \arrow[r]  \arrow[d] \arrow[dd, bend left=30, "n"pos=0.3] & A_{\eta} \arrow[r] \arrow[d,"n"]   & 0 
\\
0 \arrow[r]  & T_{\eta} \arrow[r] \arrow[d,"n "]  & G^{\natural'}_{\eta} \arrow[d] \arrow[r]  & A_{\eta} \arrow[r] \arrow[d,equal]  & 0
\\
0 \arrow[r]  & T_{\eta}   \arrow[r]  & G^{\natural}_{\eta} \arrow[r]   & A_{\eta} \arrow[r]    & 0
\end{tikzcd}
\]
We see that the extension $G^{\natural'}_{\eta}$ together with the isogeny 
\[
\begin{tikzcd}
0 \arrow[r]  & T_{\eta} \arrow[r] \arrow[d,"n "]  & G^{\natural'}_{\eta} \arrow[d] \arrow[r]  & A_{\eta} \arrow[r] \arrow[d,equal]  & 0
\\
0 \arrow[r]  & T_{\eta}   \arrow[r]  & G^{\natural}_{\eta} \arrow[r]   & A_{\eta} \arrow[r]    & 0
\end{tikzcd}
\]
determines the splitting, hence a splitting of 
$0 \rightarrow T[n]_{\eta} \rightarrow G^{\natural}[n]_{\eta} \rightarrow A[n]_{\eta} \rightarrow 0$
is equivalent to a diagram as above, which is the same as a lifting  
$c_n: \frac{1}{n}X \rightarrow A^{\vee}_{\eta} $ 
of 
$c: X \rightarrow A^{\vee}$
over the generic fiber. 

Next, we look at the splitting of 
\[
0 \rightarrow G^{\natural}[n]_{\eta} \rightarrow G[n]_{\eta} \rightarrow \frac{1}{n} Y/Y \rightarrow 0
\]
From the Mumford quotient 
"$G= G^{\natural}_{\eta}/Y$",
it is reasonable to expect that a splitting 
$\frac{1}{n} Y/Y \rightarrow G[n]_{\eta}$
is equivalent to a lifting 
\[
\imath_n : \frac{1}{n}Y \longrightarrow G^{\natural}_{\eta}
\]
of the period homomorphism
$\imath : Y \rightarrow G^{\natural}_{\eta}$, 
and this can be proved rigorously. The composition of $\imath_n$ with projection to $A_{\eta}$ produces
\[
c^{\vee}_{n}: \frac{1}{n}Y \overset{\imath_n}{\longrightarrow} G^{\natural}_{\eta} \overset{\pi}{\longrightarrow} A_{\eta}
\]
which lifts $c_{\eta}^{\vee}$ because $\imath_n$ lifts $\imath$ and 
$c^{\vee}_{\eta} = \pi \circ \imath$.
As we have seen before, such a period homomorphism $\imath_n$ is equivalent to a trivialization of biextensions
\[
\tau_n: \textbf{1}_{\frac{1}{n}\underline{Y} \times_S \underline{X}, \eta} \overset{\sim}{\longrightarrow} 
(c^{\vee}_n \times c_{\eta})^* \mathcal{P}_{A,\eta}^{\otimes -1}
\]
that lifts $\tau$. 

We have seen that a splitting of the monodromy filtration on $G[n]_{\eta}$ is equivalent to the data 
\[
(c_n, c_n^{\vee}, \tau_n)
\]
that lifts 
$(c_{\eta},c^{\vee}_{\eta}, \tau)$.
From the isomorphism 
$\alpha_n: L/nL \overset{\sim}{\rightarrow} G[n]_{\eta}$, 
the splitting on $G[n]_{\eta}$ induces a splitting $\delta_n$ of the filtration $Z_n$ on $L/nL$, and this finishes the construction of the remaining
\[
(\delta_n, c_n, c_n^{\vee}, \tau_n).
\]
Note that the liftability condition is clearly satisfied. 

To summarize, the data 
$(c_n, c_n^{\vee}, \tau_n)$ 
determines a splitting of the monodromy filtration $W_n$ on $G[n]_{\eta}$, i.e. an isomorphism 
$\zeta_n: \underset{i}{\oplus} Gr_i^{W_n} \overset{\sim}{ \rightarrow} G[n]_{\eta}$, 
and the level structure $\alpha_n$ can be recovered as 
\begin{equation} \label{levelstru}
\alpha_n : L/nL \underset{\sim}{\overset{\delta_n^{-1}}{\longrightarrow}}  
\underset{i}{\oplus} Gr_i^{Z_n} 
\overset{\underset{i}{\oplus} \varphi_{i,n}}{\underset{\sim}{\longrightarrow}} 
\underset{i}{\oplus} Gr_i^{W_n}
\underset{\sim}{\overset{\zeta_n}{\longrightarrow}}
G[n]_{\eta}
\end{equation}
which is liftable by the liftability condition on all the intermediate isomorphisms.

The last ingredient is to find characterizing conditions for $\alpha_n$ to be compatible with the Weil pairing. The key is to use the degeneration data to describe the pairing on 
$\underset{i}{\oplus} Gr_i^{W_n}$
induced by the Weil pairing on $G[n]_{\eta}$ and the isomorphism $\zeta_n$. This is the most difficult part of the construction, as well as one of the main technical contributions of Lan. 

We know that the two pairings on
$Gr_{-2}^{W_n} \times Gr_0^{W_n}$ 
and 
$Gr_{-1}^{W_n} \times Gr_{-1}^{W_n}$
are independent of the splitting since $W_{n,-2}$ is the annihilator of $W_{n,-1}$, and has been determined by Grothendieck as we have already remarked. Since the Weil pairing is alternating, the remaining cases to be determined are 
$Gr_{-1}^{W_n} \times Gr_0^{W_n}$
and 
$Gr_{0}^{W_n} \times Gr_0^{W_n}$. The result is as follows, 
the pairing on 
$Gr_{-1}^{W_n} \times Gr_0^{W_n}$
is given by 
\[
Gr_{-1}^{W_n} \times Gr_0^{W_n} = A[n]_{\eta} \times \frac{1}{n} Y/Y \rightarrow \mu_{n,\eta}
\]
that sends $(a,\frac{1}{n}y)$ to
\[
\langle  a, (\lambda_{A,\eta} \circ c_n^{\vee} - c_n \circ \phi)(\frac{1}{n} y)\rangle_{A[n]}  
\]
where 
$\langle  \cdot, \cdot\rangle_{A[n]} : A[n]_{\eta} \times A^{\vee}[n]_{\eta} \rightarrow \mu_{n,\eta}$
is the canonical Weil pairing. 
On the other hand, the pairing 
\[
Gr_{0}^{W_n} \times Gr_0^{W_n} =\frac{1}{n} Y/Y \times \frac{1}{n} Y/Y \rightarrow \mu_{n,\eta}
\]
is given by 
\[
(\frac{1}{n} y_1, \frac{1}{n} y_2) \longrightarrow 
\tau_n(\frac{1}{n} y_1, \phi(y_2)) \tau_n(\frac{1}{n} y_2, \phi(y_1))^{-1}.
\]

We now transform the pairing from
$\underset{i}{\oplus} Gr_i^{W_n}$ 
to  $L/nL$ using $\delta_n$ and $\varphi_{i,n}$, then the compatibility of $\alpha_n$ with the Weil pairing is rephrased in the language of degeneration data, which is exactly the various pairing conditions in definition \ref{DDL}. 

Lastly, the equivalence is defined by identifying different splittings $\zeta_n$ and $\delta_n$ which induce the same $\alpha_n$ through (\ref{levelstru}). This is easily translated into a statement involving only degeneration data, see \cite{lan2013arithmetic} 5.2.7.11 for details. Since $\zeta_n$ is equivalent to 
$(c_n, c_n^{\vee}, \tau_n)$, clearly the equivalence only changes 
$(\delta_n, c_n, c_n^{\vee}, \tau_n)$. 
This concludes the construction of 
$F_{PEL,M_n}(R,I)$. 

\subsection{Toroidal compactifications} \label{tttor}

We now review the construction of toroidal compactifications of PEL Shimura varieties. This is in some sense the universal base of a degenerating PEL abelian scheme. We have already seen that degenerating abelian varieties over a Noetherian normal complete affine base is equivalent to a set of degeneration data. The basic idea of the construction of toroidal compactifications is to find the moduli space of the degenerating data, and glue them to the Shimura variety. 

More precisely, since the degeneration data characterizes the degenerating abelian varieties only over a complete base, the moduli space of degeneration data is the completion of the toroidal compactification along the boundary. To obtain the whole compactification, it is necessary to algebraize the complete situation, which is a subtle procedure that we will not review. 

Let us start with the construction of moduli space of degeneration data. We first construct the moduli space of data without the equivalence relation, i.e. we want to parametrize the tuple
\[
(A, \lambda_A, i_A, \underline{X}, \underline{Y}, \phi, c,c^{\vee}, \tau,  \alpha_n^{\natural}) 
\]
without bracket on $\alpha_n^{\natural}$, 
where 
\[
\alpha_n^{\natural} := (Z_n, \varphi_{-2,n}, (\varphi_{-1,n}, \nu_{-1,n}), \varphi_{0,n}, \delta_n, c_n, c_n^{\vee}, \tau_n).  
\]
The moduli space of the  degeneration data will be the quotient of this parametrizion space by a group action identifying equivalent data. Without loss of generality, we assume that $\underline{X}$ and $\underline{Y}$ are constant with values $X$ and $Y$ as before. 

Since 
$( c,c^{\vee}, \tau)$
is determined by 
$(c_n, c_n^{\vee}, \tau_n)$, 
the data we aim to parametrize is  
\[
(Z_n, (X,Y, \phi, \varphi_{-2,n}, \varphi_{0,n}),  (A, \lambda_A, i_A, (\varphi_{-1,n}, \nu_{-1,n})),\delta_n,(c_n, c_n^{\vee}, \tau_n))
\]
where 
\[
\Phi_n:= (X,Y, \phi, \varphi_{-2,n}, \varphi_{0,n})
\]
describes the torus part of the degeneration and 
\[
(A, \lambda_A, i_A, (\varphi_{-1,n}, \nu_{-1,n}))
\]
characterizes the abelian part, both with level structure specified by $Z_n$. 
Moreover, 
$(c_n, c_n^{\vee}, \tau_n)$
contains the information on the extension between abelian and torus part, the periods, and a splitting of the monodromy filtration, which, together with $\delta_n$, determine the level structure on the generic fiber of the degenerating abelian variety. 

The data 
\[
(Z_n, (X,Y, \phi, \varphi_{-2,n}, \varphi_{0,n}), \delta_n)
\]
is discrete in nature, and the equivalence class of the tuple is called the cusp label. Indeed, two tuples 
$(Z_n, (X,Y, \phi, \varphi_{-2,n}, \varphi_{0,n}), \delta_n)$
and 
$(Z'_n, (X',Y', \phi', \varphi'_{-2,n}, \varphi'_{0,n}), \delta'_n)$
are defined to be equivalent if $Z_n = Z_n'$, and there exists $\mathcal{O}$-equivariant isomorphisms 
$\gamma_X : X' \overset{\sim}{\rightarrow} X$
and 
$\gamma_y : Y \overset{\sim}{\rightarrow} Y'$
such that 
$\phi = \gamma_X \phi' \gamma_Y$,
$\varphi'_{-2,n} = \gamma_X^{\vee} \varphi_{-2,n}$
and
$\varphi'_{0,n} = \gamma_Y \varphi_{0,n}$. 
Note that the equivalence classes is independent of the splitting $\delta_n$. The cusp labels are essentially equivalence classes of PEL torus. Following Lan's notation, we sometimes abbreviate  the notation $(Z_n, \Phi_n, \delta_n)$ to $(\Phi_n, \delta_n)$ for simplicity. This is mostly used in the indexing of various objects.   

The abelian part 
\[
(A, \lambda_A, i_A, (\varphi_{-1,n}, \nu_{-1,n}))
\]
is precisely a point of the moduli space of PEL abelian varieties $M_n$ with PEL data determined by $Gr_{-1}^{Z_n}$, which we denote by $M_n^{Z_n}$. By abuse of notation, we use $A$ to denote the universal PEL abelian variety over $M_n^{Z_n}$. 

Next, the homomorphisms $c_n$ and $c_n^{\vee}$ are parametrized by the group schemes
$\underline{Hom}_{\mathcal{O}}(\frac{1}{n}X, A^{\vee})$
and 
$\underline{Hom}_{\mathcal{O}}(\frac{1}{n}Y, A)$
over $M_n^{Z_n}$. Recall that $c_n$ and $c_n^{\vee}$ lifts $c$ and $c^{\vee}$, and the latter satisfies  the relation 
$\lambda_A \circ c^{\vee} = c \circ \phi$,
which is equivalent to
$(c_n, c_n^{\vee})$ 
lies in the group scheme
\[
\overset{\dots}{C}_{\Phi_n} :=
\underline{Hom}_{\mathcal{O}}(\frac{1}{n}X, A^{\vee}) \underset{\underline{Hom}_{\mathcal{O}}(Y, A^{\vee})}{\times}
\underline{Hom}_{\mathcal{O}}(\frac{1}{n}Y, A)
\]
where the first projection map is
$c_n \rightarrow c_n \circ \phi \circ (Y \hookrightarrow \frac{1}{n}Y)$, 
and the second one is 
$c^{\vee}_n \rightarrow \lambda_A \circ c_n^{\vee} \circ (Y \hookrightarrow \frac{1}{n}Y)$. 

Further, $(c_n, c_n^{\vee})$ are required to satisfy the relation 
\begin{equation} \label{annoying}
\langle  \varphi_{-1,n}( \cdot), (\lambda_A\circ c_n^{\vee} - c_n \circ \phi )\circ \varphi_{0,n}( \cdot) \rangle_A = \nu_{-1,n} \circ \langle   \cdot, \cdot\rangle_{10, n}
\end{equation}
and we want to find the subspace of the parametrization space cut out by this relation. Note that the equation
\[
\langle  \varphi_{-1,n}( \cdot), b_{\Phi_n, \delta_n} \circ \varphi_{0,n}( \cdot) \rangle_A = \nu_{-1,n} \circ \langle   \cdot, \cdot\rangle_{10, n}
\]
defines a liftable homomorphism
\[
b_{\Phi_n, \delta_n}: \frac{1}{n}Y/Y \longrightarrow A^{\vee}[n]
\]
and the relation (\ref{annoying}) is rewritten as 
\[
b_{\Phi_n, \delta_n} = \lambda_A\circ c_n^{\vee} - c_n \circ \phi.
\]
Thus the parametrization space we are searching for is the fiber at $b_{\Phi_n, \delta_n}$ of the homomorphism
\[
\partial_n : \overset{\dots}{C}_{\Phi_n} \longrightarrow 
\underline{Hom}_{\mathcal{O}}(\frac{1}{n}Y/Y, A^{\vee}[n])
\]
that sends 
$(c_n, c_n^{\vee})$ to 
$\lambda_A\circ c_n^{\vee} - c_n \circ \phi$. We denote it by 
\[
\overset{\dots}{C}_{\Phi_n, b_n}: = \partial_n^{-1}(b_{\Phi_n, \delta_n}).
\]

It can be shown that
$\overset{\dots}{C}_{\Phi_n,b_n} $ 
is a trivial torsor with respect to a commutative proper group scheme over $M_n^{Z_n}$, but it is not necessarily geometrically connected. However, the liftability condition on $(c_n, c_n^{\vee})$ singles out a connected component 
$C_{\Phi_n,b_n} $ 
of  
$\overset{\dots}{C}_{\Phi_n,b_n} $, 
which is an abelian scheme. Thus we see that the tuple 
\[
(Z_n, (X,Y, \phi, \varphi_{-2,n}, \varphi_{0,n}),  (A, \lambda_A, i_A, (\varphi_{-1,n}, \nu_{-1,n})),\delta_n,(c_n, c_n^{\vee}))
\]
is parametrized by 
\[
\underset{(Z_n, \Phi_n, \delta_n)}{\coprod} C_{\Phi_n,b_n}
\]
where 
$C_{\Phi_n,b_n}$
is an abelian scheme over $M_n^{Z_n}$. 

The next step is to include $\tau_n$ into the parametrization space. By construction, we have two universal homomorphisms $(c_n,c_n^{\vee}) $ over $C_{\Phi_n, b_n}$.
There is a map 
\[
\frac{1}{n}Y \times X \longrightarrow Pic(C_{\Phi_n, b_n})
\]
defined by 
$(\frac{1}{n}y, \chi) \rightarrow (c_n^{\vee}(\frac{1}{n}y), c_n(\chi))^* \mathcal{P}_A$.
The linearity and $\mathcal{O}$-equivariance of $(c_n,c_n^{\vee}) $ implies that it descends to a morphism
\[
\Psi_n : \overset{\dots}{S}_{\Phi_n} := \frac{1}{n}Y \otimes_{\mathbb{Z}} X/\{
\begin{subarray}{c}
y\otimes \phi(y') - y'\otimes \phi(y) \\
b \frac{1}{n}y \otimes \chi - (\frac{1}{n}y) \otimes (b^*\chi)
\end{subarray}
\}_{
\begin{subarray}{c}
y,y' \in Y \\
\chi \in X, b \in \mathcal{O}
\end{subarray}}
\longrightarrow Pic(C_{\Phi_n, b_n})
\]
and such that 
\[
\underset{l \in \overset{\dots}{S}_{\Phi_n}}{\oplus} \Psi_n(l)
\]
is an $\mathscr{O}_{C_{\Phi_n, b_n}}$-algebra, hence we have 
\[
\overset{\dots}{\Xi}_{\Phi_n, b_n} := \underline{Spec}_{\mathscr{O}_{C_{\Phi_n, b_n}}}(\underset{l \in \overset{\dots}{S}_{\Phi_n}}{\oplus} \Psi_n(l))
\]
which is a 
$\overset{\dots}{E}_{\Phi_n} := \underline{Hom}(\overset{\dots}{S}_{\Phi_n}, \textbf{G}_m)$-torsor. By construction, there is a universal trivialization 
\[
\tau_n: \textbf{1}_{\frac{1}{n}Y \times X} \overset{\sim}{\longrightarrow} 
(c^{\vee}_n \times c)^* \mathcal{P}_{A}^{\otimes -1}
\]
over 
$\overset{\dots}{\Xi}_{\Phi_n, b_n}$.

Note that $\overset{\dots}{E}_{\Phi_n}$ is not necessarily  a torus since 
$\overset{\dots}{S}_{\Phi_n}$
can have torsion elements. However, as explained in \cite{lan2013arithmetic} 6.2.3.17,  the liftability of $\tau_n$ together with the pairing condition in (6) of definition \ref{DDL} cut out a subspace 
$\Xi_{\Phi_n, \delta_n}$
of
$\overset{\dots}{\Xi}_{\Phi_n, b_n}$, 
which is a 
\[
E_{\Phi_n} := \underline{Hom}(S_{\Phi_n}, \textbf{G}_m)
\]-torsor over $C_{\Phi_n, b_n}$
where
$S_{\Phi_n}:= \overset{\dots}{S}_{\Phi_n, free}$
is the free part of 
$\overset{\dots}{S}_{\Phi_n}$. 

\begin{remark}
We have seen that the liftability condition restores connectivity in both $C_{\Phi_n, b_n}$ and $\Xi_{\Phi_n, \delta_n}$. This is a subtlety caused by the non-trivial endomorphism structure $\mathcal{O}$. In particular, it does not appear in the Siegel case treated in Faltings-Chai, where level-n structure  is liftable.   
\end{remark}

Thus we have seen that the tuple 
\[
(Z_n, (X,Y, \phi, \varphi_{-2,n}, \varphi_{0,n}),  (A, \lambda_A, i_A, (\varphi_{-1,n}, \nu_{-1,n})),\delta_n,(c_n, c_n^{\vee}), \tau_n)
\]
is parametrized by 
\[
\underset{(Z_n, \Phi_n, \delta_n)}{\coprod} \Xi_{\Phi_n,\delta_n}
\]
where 
\[
\Xi_{\Phi_n,\delta_n}:= \underline{Spec}_{\mathscr{O}_{C_{\Phi_n, b_n}}}(\underset{l \in S_{\Phi_n}}{\oplus} \Psi_n(l))
\]
is a 
$E_{\Phi_n}$-torsor over the abelian scheme 
$C_{\Phi_n, b_n}$ defined over $M_n^{Z_n}$.
This is almost the parametrization space we are searching for, except that we have not dealt with the positivity condition on $\tau$ (or $\tau_n$). Indeed, $\Xi_{\Phi_n,\delta_n}$ is the moduli space of "degeneration data over the the generic fiber", it remains to construct the boundary on which the data extends and the positivity condition holds universally.  

Recall the positivity condition for $\tau $ is that the morphism
\[
\tau(y, \phi(y)): (c^{\vee}(y) \times c \circ \phi(y))^* \mathcal{P}_{A, \eta}
\longrightarrow \mathcal{O}_{S , \eta} 
\]
extends to $S$ for all $y \in Y$ and that for $y \neq 0$, it factors through the ideal of definition of $S$. We have constructed a universal $\tau_n$ over $\Xi_{\Phi_n,\delta_n}$
by making
$(c_n^{\vee}(\frac{1}{n}y), c_n(\chi))^* \mathcal{P}_A = \Psi(\frac{1}{n}y \otimes \chi)$
part of the structure sheaf of the relatively affine scheme $\Xi_{\Phi_n,\delta_n}$ over $C_{\Phi_n, b_n}$. There is a natural way to compactify the $E_{\Phi_n}$-torsor
$\Xi_{\Phi_n, \delta_n}$, namely the toroidal compactification, which produces directions where $\tau$ can extend. However, this is non-canonical and depends on an auxiliary choice of cone decomposition, since there are infinitely many potential directions and it is necessary to make a choice.  

More precisely, the cocharacters of $E_{\Phi_n}$ is 
$S_{\Phi_n}^{\vee} := Hom(S_{\Phi_n}, \mathbb{Z})$, 
and the corresponding real vector space
$(S_{\Phi_n})_{\mathbb{R}}^{\vee}$
can be naturally  identified with the space of Hermitian pairings 
$(|\cdot, \cdot|) : Y \otimes_{\mathbb{Z}} \mathbb{R} \times Y \otimes_{\mathbb{Z}} \mathbb{R} \rightarrow \mathcal{O}\otimes_{\mathbb{Z}} \mathbb{R}  $. 
Let 
$\textbf{P}_{\Phi_n} \subset (S_{\Phi_n})_{\mathbb{R}}^{\vee}$
be the subset of positive semidefinite Hermitian pairings with admissible radical, i.e. its radical is the $\mathbb{R}$-span of some direct summand $\mathcal{O}$-submodule of $Y$. 
Let $\Sigma_{\Phi_n}= \{\sigma_j\}$ 
be a rational polyhedral cone decomposition of $\textbf{P}_{\Phi_n}$, and let 
$\sigma^{\vee} := \{ v \in S_{\Phi_n} | f(v) \geq 0,  \forall f \in \sigma\}$, 
then we have the natural toroidal compactification
\[
\overline{\Xi}_{\Phi_n, \delta_n, \Sigma_{\Phi_n}}
\]
of $\Xi_{\Phi_n, \delta_n}$, obtained by gluing together the relatively affine toroidal varieties 
\[
\Xi_{\Phi_n, \delta_n}(\sigma_j) := \underline{Spec}_{\mathscr{O}_{C_{\Phi_n, b_n}}}(\underset{l \in \sigma_j^{\vee}}{\oplus} \Psi_n(l)).
\]
Alternatively, we can define 
$\overline{\Xi}_{\Phi_n, \delta_n, \Sigma_{\Phi_n}}$
as 
$\Xi_{\Phi_n, \delta_n} \times ^{E_{\Phi_n}} \overline{E}_{\Phi_n, \Sigma_{\Phi_n}}$, 
where $\overline{E}_{\Phi_n, \Sigma_{\Phi_n}}$
is the classical toric variety associated to the cone $\Sigma_{\Phi_n}$ (viewed as schemes over $C_{\Phi_n, b_n}$).

Note that the toroidal compactification 
$\overline{\Xi}_{\Phi_n, \delta_n, \Sigma_{\Phi_n}}$
has the universal property as follows. 
If $ \overline{S}$ is a Noetherian scheme over
$C_{\Phi_n, b_n}$ with $S \subset \overline{S}$ a dense open subscheme,  and 
$S \rightarrow \Xi_{\Phi_n, \delta_n}$
is a morphism defined over $C_{\Phi_n, b_n}$, then it extends to a morphism 
\[
\overline{S} \longrightarrow \overline{\Xi}_{\Phi_n, \delta_n, \Sigma_{\Phi_n}}
\]
over $C_{\Phi_n, b_n}$ if and only if 
for each geometric point $x$ of $\overline{S}$,  every dominant morphism 
$Spec(V) \rightarrow \overline{S}$ centered at $x$, with $V$ a discrete valuation ring, the associated character 
\[
S_{\Phi_n} \longrightarrow \mathbb{Z}
\]
lies in the closure $\overline{\sigma}$ for some 
$\sigma \in \Sigma_{\Phi_n}$ ($\sigma$ depends only on $x$). The naturally associated character is defined as follows. Let 
$p: \overline{S} \rightarrow C_{\Phi_n, b_n}$
be the structure morphism, then we have the commutative diagram
\[
\begin{tikzcd}
\eta: = \text{Spec}(\text{Frac}(V)) \arrow[r] \arrow[d, hook] & S \arrow[d, hook] \arrow[r] & \Xi_{\Phi_n, \delta_n} \arrow[d, ]
\\
\text{Spec}(V) \arrow[r, "f"] & \overline{S} \arrow[r, "p"]  & C_{\Phi_n, b_n}
\end{tikzcd}
\]
The generic fiber
$(f^* p^* \Psi_n(l))_{\eta}$ 
of the line bundle 
$f^* p^* \Psi_n(l)$ 
is equipped with a natural trivialization since it factorizes thorough $\Xi_{\Phi_n, \delta_n}$
as the top row of the diagram shows, while the line bundles 
$\Psi_n(l)$ on $\Xi_{\Phi_n, \delta_n}$ has a canonical trivialization by construction. 
Now under this trivialization, 
$f^* p^* \Psi_n(l)$
is identified with a $V$-submodule $I_l$ of $\text{Frac}(V)$, we define the desired character
$S_{\Phi_n} \longrightarrow \mathbb{Z}$
by sending $l$ to the lower bound of the valuation of elements of $I_l \subset \text{Frac}(K)$. In other words, $I_l = V \pi^{m_l} \subset \text{Frac}(V)$   with $\pi \in V$ the uniformizer and $m_l \in \mathbb{Z}$, then $l$ is sent to $m_l$.

The universal property follows simply by unravelling the definition of toroidal embedding. This is important because it is the ultimate origin of the universal property of toroidal compactifications of Shimura varieties to be discussed below. The formulation is useful because in the situation we will consider, $I_l$ can be directly read off from the degeneration data of a  degenerating abelian variety over $V$. 

We have now constructed the "moduli space" of the tuple 
\[
(Z_n, (X,Y, \phi, \varphi_{-2,n}, \varphi_{0,n}),  (A, \lambda_A, i_A, (\varphi_{-1,n}, \nu_{-1,n})),\delta_n,(c_n, c_n^{\vee}, \tau_n))
\]
with a specified direction of degeneration, namely
\[
\underset{(Z_n, \Phi_n, \delta_n)}{\coprod} \overline{\Xi}_{\Phi_n, \delta_n, \Sigma_{\Phi_n}},
\]
over which there is a universal degeneration data, and we would like to find the associated degenerating abelian variety. However, the equivalence between degenerating abelian varieties and degeneration data holds only over a complete base, so the correct object to consider is the completion of 
$\overline{\Xi}_{\Phi_n, \delta_n, \Sigma_{\Phi_n}}$
along the boundary, which we denote by 
$\mathfrak{X}_{\Phi_n, \delta_n, \Sigma_{\Phi_n}}$, 
and there is a universal degenerating abelian variety over 
$\mathfrak{X}_{\Phi_n, \delta_n, \Sigma_{\Phi_n}}$.
More precisely, the equivalence between degeneration data and degenerating abelian varieties are only proved over complete affine base, and 
$\overline{\Xi}_{\Phi_n, \delta_n, \Sigma_{\Phi_n}}$
is not affine in general. However, the global degeneration data defines degenerating abelian varieties Zariski locally, and it is not hard to use the functoriality of Mumford's construction to glue them together to obtain a global one.  

The next step is to quotient out the equivalence relation to find the moduli space of degeneration data. Recall that the equivalence classes 
$[(Z_n, \Phi_n, \delta_n)]$
are called cusp labels, which subsumes the ambiguity caused by equivalence classes of $\alpha_n^{\natural}$. Further, we need to take care of the isomorphism classes in the category of degeneration data, and this is described by the action of the automorphism group. We choose a representative 
$(Z_n, \Phi_n, \delta_n)$
for each cusp label, then the automorphism group of the chosen label is 
\[
\Gamma_{\Phi_n} := \{(\gamma_X, \gamma_Y) \in GL_{\mathcal{O}}(X) \times GL_{\mathcal{O}}(Y) | \varphi_{-2,n} = \gamma_X^{\vee} \varphi_{-2,n}, \varphi_{0,n}= \gamma_Y \varphi_{0,n}, \phi = \gamma_X \phi \gamma_Y \}
\]
which acts on $\Xi_{\Phi_n, \delta_n}$ and $\textbf{P}_{\Phi_n}$. We choose the cone decomposition $\Sigma_{\Phi_n}$ to be $\Gamma_{\Phi_n}$-admissible, i.e. 
$\gamma \sigma \in \Sigma_{\Phi_n}$ 
for all $\gamma \in \Gamma_{\Phi_n}$ and $\sigma \in \Sigma_{\Phi_n}$,
and the action of  $\Gamma_{\Phi_n}$ on $\Sigma_{\Phi_n}$  
has finitely many orbits. Under this condition on $\Sigma_{\Phi_n}$, the action of $\Gamma_{\Phi_n}$ extends to 
$\overline{\Xi}_{\Phi_n, \delta_n, \Sigma_{\Phi_n}}$, 
hence also on 
$\mathfrak{X}_{\Phi_n, \delta_n, \Sigma_{\Phi_n}}$,
and the moduli space of degeneration data is 
\[
\underset{[(Z_n, \Phi_n, \delta_n)]}{\coprod} \mathfrak{X}_{\Phi_n, \delta_n, \Sigma_{\Phi_n}}/ \Gamma_{\Phi_n}
\]
where we choose a representative $(Z_n, \Phi_n, \delta_n)$
for each cusp label, and 
$\mathfrak{X}_{\Phi_n, \delta_n, \Sigma_{\Phi_n}}/ \Gamma_{\Phi_n}$
is constructed with respect to this choice.  The degenerating abelian variety on 
$\mathfrak{X}_{\Phi_n, \delta_n, \Sigma_{\Phi_n}}$
descends to 
$\mathfrak{X}_{\Phi_n, \delta_n, \Sigma_{\Phi_n}}/ \Gamma_{\Phi_n}$ 
if a technical condition on $\Sigma_{\Phi_n}$ is satisfied, see \cite{lan2013arithmetic} 6.2.5.25, which we assume from now on.

Now the degenerating PEL abelian variety over 
$\mathfrak{X}_{\Phi_n, \delta_n, \Sigma_{\Phi_n}}/ \Gamma_{\Phi_n}$ 
is a PEL abelian variety over the generic fiber, hence defining a map from the generic fiber of 
$\mathfrak{X}_{\Phi_n, \delta_n, \Sigma_{\Phi_n}}/ \Gamma_{\Phi_n}$
to the moduli space $M_n$. An appropriate algebraization of these attaching maps will provide gluing maps along neighbourhoods of the boundary of the toroidal compactification of $M_n$. Hence we can glue them together to obtain the toroidal compactification. In order for the gluing process to work well, it is necessary to choose the cones $\Sigma_{\Phi_n}$ to be compatible for different $\Phi_n$, see \cite{lan2013arithmetic} 6.3.3.4 for details.

We remark that the algebraization process is very delicate and  not canonical. As a consequence, it is difficult to describe the Zariski neighborhood of the boundary. On the other hand, since the boundary is glued by the algebraization of a formal scheme which we constructed rather explicitly, we have a nice description of the formal neighborhood of the boundary, which is nothing but 
$\mathfrak{X}_{\Phi_n, \delta_n, \Sigma_{\Phi_n}}/ \Gamma_{\Phi_n}$. 
This also tells us what the boundary looks like, which is simply the support of  
$\mathfrak{X}_{\Phi_n, \delta_n, \Sigma_{\Phi_n}}/ \Gamma_{\Phi_n}$. 
Moreover, the universal property that we described for $\overline{\Xi}_{\Phi_n, \delta_n, \Sigma_{\Phi_n}}$
survives  all those completion, algebraization and gluing procedure, and
is transformed to a universal property for the toroidal compactification of $M_n$. 

To summarize, we have the following theorem.

\begin{theorem} \label{lantor}
(Lan \cite{lan2013arithmetic} 6.4.1.1))
To each compatible choice $\Sigma = \{\Sigma_{\Phi_n}\}_{[(\Phi_n,\delta_n)]}$ 
of admissible smooth rational polyhedral cone decomposition as in \cite{lan2013arithmetic} 6.3.3.4, 
there is an associated algebraic stack 
$M_{n, \Sigma}^{tor}$
(which is a scheme when $n > 3$) 
proper and smooth over $Spec(\mathcal{O}_{F_0, (\square)})$, containing $M_n$ as an open dense subspace whose complement consists of normal crossing divisors, together with a degenerating abelian variety 
\[
(G, \lambda, i, (\alpha_n, \nu_n))
\]
over $M_{n, \Sigma}^{tor}$ as in definition \ref{degpel}, such that we have the following:

(1) The restriction of 
$(G, \lambda, i, (\alpha_n, \nu_n))$
to $M_n$ is the universal PEL abelian variety over $M_n$. 

(2)$M_{n, \Sigma}^{tor}$ has a stratification by locally closed subschemes 
\[
M_{n, \Sigma}^{tor} = \underset{[(\Phi_n, \delta_n, \sigma)]}{\coprod } Z_{[(\Phi_n, \delta_n, \sigma)]}
\]
where 
$\sigma \in \Sigma_{\Phi_n} $ 
and 
$[(\Phi_n, \delta_n, \sigma)]$
are the equivalence classes of the tuples 
$(\Phi_n, \delta_n, \sigma)$, 
which are the obvious refinement of the equivalences used to define cusp labels, namely by requiring the isomorphisms to preserve $\sigma$, see \cite{lan2013arithmetic} 6.2.6.1 for details. Note that we suppress $Z_n$ in the notation, following Lan. 

The formal completion
$(M_{n,\Sigma}^{tor})^{\land}_{Z_{[(\Phi_n, \delta_n, \sigma)]}} $
of $M_{n,\Sigma}^{tor}$ 
along
$Z_{[(\Phi_n, \delta_n, \sigma)]}$
is canonically isomorphic to 
$\mathfrak{X}_{\Phi_n, \delta_n, \sigma}/ \Gamma_{\Phi_n,\sigma}$, 
\[
(M_{n,\Sigma}^{tor})^{\land}_{Z_{[(\Phi_n, \delta_n, \sigma)]}} \cong
\mathfrak{X}_{\Phi_n, \delta_n, \sigma}/ \Gamma_{\Phi_n, \sigma}
\]
where 
$\mathfrak{X}_{\Phi_n, \delta_n, \sigma}$
is the formal completion of 
\[
\overline{\Xi}_{\Phi_n, \delta_n}( \sigma) := \underline{Spec}_{\mathscr{O}_{C_{\Phi_n, b_n}}}(\underset{l \in \sigma^{\vee}}{\oplus} \Psi_n(l))
\]
along the boundary 
$\underline{Spec}_{\mathscr{O}_{C_{\Phi_n, b_n}}}(\underset{l \in \sigma^{\perp}}{\oplus} \Psi_n(l))$, 
with 
$\sigma^{\perp} := \{x \in S_{\Phi_n} | \langle  x,y\rangle=0,  \forall y \in \sigma\}$. 
The scheme 
$\overline{\Xi}_{\Phi_n, \delta_n}( \sigma)$
is a relative affine toroidal variety over a $E_{\Phi_n}$-torsor over $C_{\Phi_n, b_n}$, which is an abelian scheme over the PEL moduli space $M_n^{Z_n}$ with PEL data specified by $Gr_{-1}^{Z_n}$. Then the strata 
$Z_{[(\Phi_n, \delta_n, \sigma)]}$
is isomorphic to the support of 
$\mathfrak{X}_{\Phi_n, \delta_n, \sigma}/ \Gamma_{\Phi_n, \sigma}$.
If $n>3$, then the action of $\Gamma_{\Phi_n, \sigma}$
is trivial, and 
$\mathfrak{X}_{\Phi_n, \delta_n, \sigma}/ \Gamma_{\Phi_n, \sigma} \cong \mathfrak{X}_{\Phi_n, \delta_n, \sigma}$.

(3) If $S$ is an irreducible Noetherian normal scheme over $Spec(\mathcal{O}_{F_0, (\square)})$
over which we have a degenerating PEL abelian variety $(G^{\dagger}, \lambda^{\dagger}, i^{\dagger}, (\alpha_n^{\dagger}, \nu_n^{\dagger}))$
as in definition \ref{degpel} (with the same PEL data as that of $M_n$), then there exists a morphism
\[
S \longrightarrow M_{n, \Sigma}^{tor}
\]
over 
$Spec(\mathcal{O}_{F_0, (\square)})$
such that 
$(G^{\dagger}, \lambda^{\dagger}, i^{\dagger}, (\alpha_n^{\dagger}, \nu_n^{\dagger}))$
is the pull back of 
$(G, \lambda, i, (\alpha_n, \nu_n))$
if and only if the following condition is satisfied: 

\vspace{3mm}
For each geometric point $\bar{s}$ of $S$, and any dominant morphism $Spec(V) \rightarrow S$ centered at $\bar{s}$ with $V$ a complete discrete valuation ring, let 
$(G^{\ddagger}, \lambda^{\ddagger}, i^{\ddagger}, (\alpha_n^{\ddagger}, \nu_n^{\ddagger}))
$
be the pullback of 
$(G^{\dagger}, \lambda^{\dagger}, i^{\dagger}, (\alpha_n^{\dagger}, \nu_n^{\dagger}))$
along $Spec(V) \rightarrow S$, 
then the theorem on degeneration data provides us with the degeneration data
\[
(A^{\ddagger}, \lambda_{A^{\ddagger}}^{\ddagger}, i_{A^{\ddagger}}^{\ddagger}, X^{\ddagger}, Y^{\ddagger}, \phi^{\ddagger}, c^{\ddagger},(c^{\vee})^{\ddagger}, \tau^{\ddagger},  [(\alpha_n^{\natural})^{\ddagger}]) 
\]
over $V$. Note that $X$ and $Y$ are automatically constant (hence the notation) since $V$ is a complete discrete valuation ring. Moreover, $(Z_n^{\ddagger}, \Phi_n^{\ddagger})$ 
is determined by $[(\alpha_n^{\natural})^{\ddagger}]$. 
Let $\eta= Spec(K)$ be the generic fiber of $Spec(V)$, the isomorphism 
\[
\tau^{\ddagger}: 
((c^{\vee})^{\ddagger} \times c^{\ddagger})^* \mathcal{P}_{A^{\ddagger}, \eta}  \overset{\sim}{\longrightarrow} 
\textbf{1}_{Y^{\ddagger} \times X^{\ddagger}, \eta}
\]
defines a trivialization of the generic fiber of the invertible sheaf 
$
((c^{\vee})^{\ddagger}(y) \times c^{\ddagger}(\chi))^* \mathcal{P}_{A^{\ddagger}} 
$
on $Spec(V)$ for each 
$y \in Y^{\ddagger}$,
$\chi \in X^{\ddagger}$, 
with which we can identify
$
((c^{\vee})^{\ddagger}(y) \times c^{\ddagger}(\chi))^* \mathcal{P}_{A^{\ddagger}} 
$
with an invertible  $V$-submodule $I_{y, \chi}$ of $K$. This defines a morphism
\[
 Y^{\ddagger} \times X^{\ddagger} \rightarrow \text{Inv}(V)
\]
\[
(y, \chi) \rightarrow I_{y,\chi}
\]
with $\text{Inv}(V)$  the group of invertible $V$-modules (submodules of $K$). 
We can show that it extends to
\[
 \frac{1}{n}Y^{\ddagger} \times X^{\ddagger} \longrightarrow \text{Inv}(V), 
\]
which descends to a homomorphism
\[
B^{\ddagger}: S_{\Phi_n} \longrightarrow \text{Inv}(V)
\]
composed with the natural identification 
$\text{Inv}(V) \cong \mathbb{Z}$ defined by $\pi^m V \longleftrightarrow m$ with $\pi$ the uniformizer, we obtain a homomorphism
\[
v \circ B^{\ddagger}: S_{\Phi_n} \longrightarrow \mathbb{Z}
\]
which is an element of $S_{\Phi_n}^{\vee}$. 
The upshot is that we associate an element
$v \circ B^{\ddagger} \in S_{\Phi_n}^{\vee}$
for each dominant morphism $\text{Spec}(V) \rightarrow S$ centered at $\bar{s}$ with $V$ a complete discrete valuation ring. 

Then the condition is that for some choice of $\delta_n^{\ddagger}$ making 
$(Z_n^{\ddagger}, \Phi_n^{\ddagger}, \delta_n^{\ddagger})$ a representative of a cusp label, there is a cone $\sigma^{\ddagger} \in \Sigma_{\Phi_n^{\ddagger}}$
depending only on $\bar{s}$
such that $v \circ B^{\ddagger} \in \overline{\sigma}^{\ddagger}$
for all those $v \circ B^{\ddagger}$ coming from a dominant $Spec(V) \rightarrow S$ centered at $\bar{s}$ with $V$ a complete discrete valuation ring, 
where 
$\overline{\sigma}^{\ddagger}$
is the closure of 
$\sigma^{\ddagger}$.

\end{theorem}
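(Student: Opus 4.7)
The plan is to realize $M_{n,\Sigma}^{\mathrm{tor}}$ by gluing $M_n$ with the local formal models of degeneration data constructed in the preceding discussion, and to verify its properties through the equivalence of categories $F_{\mathrm{PEL},M_n}$ together with the universal property of toric embeddings. Concretely, for each representative cusp label $(\Phi_n,\delta_n)$ and each cone $\sigma \in \Sigma_{\Phi_n}$, I would start from the tautological trivialization $\tau_n$ on the relative affine toric scheme $\overline{\Xi}_{\Phi_n,\delta_n}(\sigma)$ over $C_{\Phi_n,b_n}$, which itself lies over the PEL moduli $M_n^{Z_n}$; on the formal completion $\mathfrak{X}_{\Phi_n,\delta_n,\sigma}$ along the closed toric stratum, the full tuple of PEL degeneration data is present, so applying the Mumford quotient construction (i.e.\ the inverse of $F_{\mathrm{PEL},M_n}$) produces a universal degenerating PEL abelian scheme. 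The $\Gamma_{\Phi_n,\sigma}$-action is $\Sigma_{\Phi_n}$-admissible by hypothesis, so the quotient $\mathfrak{X}_{\Phi_n,\delta_n,\sigma}/\Gamma_{\Phi_n,\sigma}$ exists as a formal algebraic stack and carries a descended degenerating family by functoriality of Mumford's construction.

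Next I would algebraize these formal models and glue them to $M_n$. Over the generic fiber the degenerating family is an actual PEL abelian scheme, hence defines a morphism to $M_n$, so one expects an \'etale neighborhood of the boundary stratum in the putative $M_{n,\Sigma}^{\mathrm{tor}}$ to be a common \'etale thickening of both $M_n$ (away from the boundary) and $\mathfrak{X}_{\Phi_n,\delta_n,\sigma}/\Gamma_{\Phi_n,\sigma}$ (along the boundary). Algebraization proceeds via Artin's approximation theorem applied to an appropriate moduli problem of degenerations of a fixed type; the output is an \'etale chart around each boundary point, and the compatibility condition on $\Sigma$ (\cite{lan2013arithmetic} 6.3.3.4) ensures these charts glue compatibly across different cusp labels and across faces of cones. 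The stratification in (2) then comes directly from the torus-orbit stratification of each $\overline{\Xi}_{\Phi_n,\delta_n}(\sigma)$, and smoothness follows from smoothness of $M_n$, $C_{\Phi_n,b_n}$, the torus-torsor $\Xi_{\Phi_n,\delta_n}$, and the (smooth) cone decomposition.

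The universal property (3) I would prove by using the valuative criterion to reduce to the case $S = \mathrm{Spec}(V)$ for a complete DVR $V$ with generic fiber $\eta$. The generic fiber of the given degenerating family yields a morphism $\eta \to M_n$, and applying the equivalence $F_{\mathrm{PEL},M_n}$ (in its $(R,I)$-formulation with $R=V$) produces degeneration data, from which one reads off the canonical homomorphism $B^{\ddagger} \colon S_{\Phi_n} \to \mathrm{Inv}(V) \cong \mathbb{Z}$; the hypothesis $v\circ B^{\ddagger} \in \bar{\sigma}^{\ddagger}$ is precisely the condition in the universal property of the toric embedding $\overline{\Xi}_{\Phi_n,\delta_n,\Sigma_{\Phi_n}}$ over $C_{\Phi_n,b_n}$ recalled earlier, so the map to the local chart extends, and descent through $\Gamma_{\Phi_n,\sigma}$ then gives the desired morphism $S \to M_{n,\Sigma}^{\mathrm{tor}}$. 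Uniqueness is forced by the fact that both the degenerating family and the morphism to $M_n^{Z_n}$, $C_{\Phi_n,b_n}$, $\Xi_{\Phi_n,\delta_n}$ are determined by the degeneration data via $F_{\mathrm{PEL},M_n}$, and the direction of degeneration is determined by $B^{\ddagger}$.

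The main obstacle, by a wide margin, is the algebraization and gluing step. The formal models $\mathfrak{X}_{\Phi_n,\delta_n,\sigma}/\Gamma_{\Phi_n,\sigma}$ are canonical, but showing that they arise as formal completions of a single separated algebraic stack proper over $\mathrm{Spec}(\mathcal{O}_{F_0,(\square)})$ requires a delicate descent through the incompatibility between formal and algebraic categories, and depends essentially on the compatibility of $\Sigma$ across cusp labels as well as on the fact that the automorphism group of the degeneration datum is controlled by $\Gamma_{\Phi_n}$ (so \'etale charts constructed via Artin approximation agree up to the expected finite ambiguity). Properness would be the final check: given the valuative criterion, it reduces to showing that for any degenerating PEL abelian variety over a complete DVR there exists \emph{some} cone in $\Sigma_{\Phi_n}$ containing $v\circ B^{\ddagger}$, which follows from the fact that $\Sigma_{\Phi_n}$ covers the positive cone $\mathbf{P}_{\Phi_n}$ together with the positivity condition on $\tau^{\ddagger}$ which forces $v\circ B^{\ddagger}$ into $\mathbf{P}_{\Phi_n}$.
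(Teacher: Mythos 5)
Your proposal follows essentially the same strategy as the paper's own exposition in section 5.3 (which is itself a review of Lan's construction, since this theorem is simply cited from \cite{lan2013arithmetic} 6.4.1.1 rather than proved in the paper): construct the local formal models $\mathfrak{X}_{\Phi_n,\delta_n,\sigma}/\Gamma_{\Phi_n,\sigma}$ from moduli of degeneration data via Mumford's construction, algebraize, glue to $M_n$, and deduce the universal property from the universal property of toric embeddings. Your description of the universal property (3), in terms of the position of $v\circ B^{\ddagger}$ relative to $\Sigma_{\Phi_n}$ as read off from the degeneration data via $F_{\mathrm{PEL},M_n}$, matches the paper's account precisely.

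One small caveat: in part (3), the statement is for an arbitrary irreducible Noetherian normal $S$, not just a DVR, and the passage from the DVR-criterion to an actual morphism $S\to M_{n,\Sigma}^{\mathrm{tor}}$ requires slightly more than a valuative-criterion argument (one needs to show the degeneration data over an \'etale neighborhood of each $\bar s$ matches the universal one over a local chart, which is where normality of $S$ is used in an essential way, via Koecher's principle and the extension of the $\tau$-trivializations across codimension $\geq 2$). You correctly identify the algebraization-and-gluing step as the technical core; Lan's treatment of that step in fact relies not only on Artin approximation but also on a careful comparison of obstruction theories for the formal and algebraic moduli problems, so that the approximation actually matches the formal models to first order along each boundary stratum.
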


\subsection{Partial Frobenius extends to toroidal compactifications}

Now we can prove the main technical results on the extension of partial Frobenius to toroidal compactifications. We follow the notations of section \ref{thpartialf}. In particular, we assume that 
\[
p \text{ splits completely in the center } F^c \text{ of } B,
\]
and the moduli problems 
\[
 M_{ K(n)} / \Delta = \underset{\begin{subarray}{c}
  \alpha \in \Omega \\
  \delta \in \Lambda
  \end{subarray}}{\coprod} 
  M_n(L, Tr_{\mathcal{O}_F/ \mathbb{Z}} \circ (\alpha \delta \langle   \cdot, \cdot\rangle_F))
\]
are defined over 
$\mathcal{O}_{F_0} \otimes_{\mathbb{Z}} \mathbb{F}_p$, 
where $\Omega$ and $\Lambda$ are  fixed sets of representatives of 
the double quotients 
\[
(F \otimes \mathbb{A}^{(p\infty)})^{\times} = \underset{\alpha \in \Omega}{\coprod}(\mathcal{O}_F \otimes \mathbb{Z}_{(p)})^{\times}_{+} \alpha (\mathcal{O}_F \otimes \hat{\mathbb{Z}}^p)^{\times}
\]
\[
 (\mathcal{O}_F \otimes \hat{\mathbb{Z}}^p)^{\times} = \underset{\delta \in \Lambda} {\coprod} (\mathcal{O}_F)^{\times}_+ \delta (\nu(K(n)) \hat{\mathbb{Z}}^{p,\times} )
\]

Let $p = \underset{i}{\prod} \mathfrak{p}_i$
be the decomposition of $p$ in $F$, and we will focus on a single $\mathfrak{p}_i$ from now on. 
We fix a $\xi \in F_+^{\times}$ satisfying 
$v_{\mathfrak{p}_i} (\xi)=1$ 
and $v_{\mathfrak{p}_{i'}}(\xi) =0$ for $i' \neq i$. 
Recall that the partial Frobenius 
\[
F_{\mathfrak{p}_i}: M_{ K(n)} / \Delta \longrightarrow M_{ K(n)} / \Delta
\]
is defined by union of the maps
\[
M_n(L, Tr_{\mathcal{O}_F/ \mathbb{Z}} \circ (\alpha \delta \langle   \cdot, \cdot\rangle_F)) \rightarrow 
M_n(L, Tr_{\mathcal{O}_F/ \mathbb{Z}} \circ (\alpha' \delta' \langle   \cdot, \cdot\rangle_F))
\]
with $\alpha' \in \Omega$, $\delta' \in \Lambda$ characterized by
\begin{equation} \label{wwwww}
\xi \alpha  = \epsilon \alpha' \lambda
\end{equation}
\begin{equation} \label{wawawaw}
\lambda \delta = \epsilon_0 \delta' \gamma
\end{equation}
where  
$\epsilon \in (\mathcal{O}_F \otimes \mathbb{Z}_{(p)})^{\times}_{+}$,
$\lambda \in (\mathcal{O}_F \otimes \hat{\mathbb{Z}}^p)^{\times}$, 
$\epsilon_0 \in \mathcal{O}_{F,+}^{\times}$ 
and 
$\gamma \in (\nu(K(n)) \hat{\mathbb{Z}}^{p,\times} )$
as in the above two double quotients of
$(F \otimes \mathbb{A}^{(p\infty)})^{\times}$
and 
$ (\mathcal{O}_F \otimes \hat{\mathbb{Z}}^p)^{\times}$.
The map is defined by
\[
(A, \lambda, i, (\alpha_n, \nu_n)) \longrightarrow
(A', \lambda', i', (\alpha'_n, \nu'_n)), 
\]
where 
\[
A' := A/ (Ker(F)[\mathfrak{p}_i]),
\]
$i'$ is induced by the quotient map $\pi_{\mathfrak{p}_i} : A \rightarrow A'$,  $\lambda'$ is characterized by 
$\xi \lambda = \pi_{\mathfrak{p}_i}^{\vee} \circ \lambda' \circ \pi_{\mathfrak{p}_i}$ which defines a quasi-isogeny $\lambda'$, 
$\alpha_n' = \pi_{\mathfrak{p}_i} \circ \alpha_n$
and 
$\nu_n' = \nu_n \circ \kappa $.

In the last equality, 
we fix a set of representatives of 
$\hat{\mathbb{Z}}^{p,\times} /\nu(\mathcal{U}(n)) \cong (\mathbb{Z}/ n\mathbb{Z})^{\times}$, 
which defines 
\[\nu(K(n)) \hat{\mathbb{Z}}^{p,\times} = \underset{\kappa }{\coprod} \nu(K(n)) \kappa
\]
where 
$\kappa \in \hat{\mathbb{Z}}^{p,\times}$ 
ranges over the chosen representatives. Then the $\kappa$ in the equality   
$\nu_n' = \nu_n \circ \kappa $
is defined by 
\[
\gamma = \beta \kappa
\]
where $\beta \in \nu(K(n))$ 
and $\gamma$ is obtained from the equation 
$\lambda \delta = \epsilon_0 \delta' \gamma$
as above. 

\begin{remark} \label{boring}
The above procedure can be performed to any PEL abelian variety 
$(A, \lambda, i, (\alpha_n, \nu_n))$
defined over a base scheme $S$ over $\mathcal{O}_{F_0} \otimes_{\mathbb{Z}} \mathbb{F}_p$, 
and obtains a new PEL abelian variety
$(A', \lambda', i', (\alpha'_n, \nu'_n))$
over $S$, 
which is nothing but the map $F_{\mathfrak{p}_i}$
on $S$-points.
\end{remark}

Now let 
$\Sigma = \{ \Sigma_{\alpha \delta} \} _{\alpha \in \Omega, \delta \in \Lambda}$, 
where $\Sigma_{\alpha \delta}$ is a compatible choice of admissible smooth rational polyhedral cone decomposition with respect to the PEL moduli variety 
$M_n(L, Tr_{\mathcal{O}_F/ \mathbb{Z}} \circ (\alpha \delta \langle   \cdot, \cdot\rangle_F))$. 
Hence each $\Sigma_{\alpha \delta}$ determines a toroidal compactification 
$M_n(L, Tr_{\mathcal{O}_F/ \mathbb{Z}} \circ (\alpha \delta \langle   \cdot, \cdot\rangle_F))_{\Sigma_{\alpha \delta}}^{tor}$, 
and the union of which defines the toroidal compactifiaction 
\[
(M_{K(n)}/ \Delta )_{\Sigma}^{tor} :=
\underset{\begin{subarray}{c}
  \alpha \in \Omega \\
  \delta \in \Lambda
  \end{subarray}}{\coprod} 
  M_n(L, Tr_{\mathcal{O}_F/ \mathbb{Z}} \circ (\alpha \delta \langle   \cdot, \cdot\rangle_F))_{\Sigma_{\alpha \delta}}^{tor}
\]
Moreover, the union of the strata
\[
M_n(L, Tr_{\mathcal{O}_F/ \mathbb{Z}} \circ (\alpha \delta \langle   \cdot, \cdot\rangle_F))_{\Sigma_{\alpha \delta}}^{tor}=
\underset{[(\Phi_{\alpha\delta,n}, \delta_{\alpha\delta,n}, \sigma_{\alpha\delta})]}{\coprod } Z_{[(\Phi_{\alpha\delta,n}, \delta_{\alpha\delta,n}, \sigma_{\alpha\delta})]}
\]
defines 
\begin{equation} \label{complication}
(M_{K(n)}/ \Delta )_{\Sigma}^{tor} =
\underset{\begin{subarray}{c}
  \alpha \in \Omega \\
  \delta \in \Lambda
  \end{subarray}}{\coprod}
  \underset{[(\Phi_{\alpha\delta,n}, \delta_{\alpha\delta,n}, \sigma_{\alpha\delta})]}{\coprod } Z_{[(\Phi_{\alpha\delta,n}, \delta_{\alpha\delta,n}, \sigma_{\alpha\delta})]}
\end{equation}

We now state the main result of this section.

\begin{theorem}
The partial Frobenius 
$F_{\mathfrak{p}_i}: M_{ K(n)} / \Delta \longrightarrow M_{ K(n)} / \Delta$ 
extends to a map 
\[
F_{\mathfrak{p}_i}: (M_{K(n)}/ \Delta )_{\Sigma}^{tor}  \longrightarrow (M_{K(n)}/ \Delta )_{\Sigma'}^{tor}
\]
where 
$\Sigma' = \{ \Sigma'_{\alpha \delta} \}_{\alpha \in \Omega, \delta \in \Lambda}$
is characterized as follows:  

 \vspace{3mm}
First, for each 
$[(Z_{\alpha \delta,n}, \Phi_{\alpha\delta,n}, \delta_{\alpha\delta,n})]$
we associate another 
$[(Z'_{\alpha' \delta',n}, \Phi'_{\alpha'\delta',n}, \delta'_{\alpha'\delta',n})]$
as follows: 
$\alpha' \in \Omega$, $\delta' \in \Lambda$ 
are determined by $\alpha$ and $\delta$
as in (\ref{wwwww}) and (\ref{wawawaw}), then
\[
Z'_{\alpha' \delta',n} = Z_{\alpha \delta,n}, 
\]
and if 
$ \Phi_{\alpha \delta,n} =(X,Y,\phi, \varphi_{-2,n}, \varphi_{0,n}) $,
we define
\[
\Phi'_{\alpha' \delta',n} = (X \otimes_{\mathcal{O}_F} \mathfrak{p}_i,Y,\phi', \varphi_{-2,n}', \varphi'_{0,n}) \]
where 
\[
\varphi'_{-2,n} : Gr_{-2}^{Z_{\alpha' \delta',n} } = Gr_{-2}^{Z_{\alpha \delta,n} }  \overset{\varphi_{-2,n}}{\longrightarrow} Hom(X/nX, (\mathbb{Z}/n \mathbb{Z})(1)) 
\]
\[
\overset{\sim}{\longrightarrow} Hom(X \otimes \mathfrak{p}_i/n (X \otimes \mathfrak{p}_i), (\mathbb{Z}/n \mathbb{Z})(1))
\]
and 
\[
\varphi_{0,n}' : Gr_{0}^{Z_{\alpha' \delta',n} } = Gr_{0}^{Z_{\alpha \delta,n} }  \overset{\varphi_{0,n}}{\longrightarrow} Y/nY.
\]
Further, $\phi'$ is defined by the diagram 
\[\begin{tikzcd}
[
  arrow symbol/.style = {draw=none,"\textstyle#1" description,sloped},
  isomorphic/.style = {arrow symbol={\simeq}},
  ]
&
X  & 
X \otimes_{\mathcal{O}_F} \mathfrak{p}_i 
\arrow [l, "id \otimes (  \mathcal{O}_F \hookleftarrow \mathfrak{p}_i)"] 
\\
Y \otimes_{\mathcal{O}_F} \mathfrak{p}_i^{-1} &
Y \arrow[l, "id \otimes (\mathfrak{p}_i^{-1} \hookleftarrow \mathcal{O}_F)"] 
\arrow[u, "\phi"] 
 & 
 Y \otimes_{\mathcal{O}_F} \mathfrak{p}_i 
 \arrow[l,"id \otimes (  \mathcal{O}_F \hookleftarrow \mathfrak{p}_i)"]
 \arrow[u, "\phi \otimes id"]
 \\
&
Y \otimes_{\mathcal{O}_F} \mathfrak{p}_i^{-1} 
\arrow[lu, "\xi \otimes id"]
\arrow[u, dashrightarrow] & 
Y
\arrow[l,"id \otimes (\mathfrak{p}_i^{-1} \hookleftarrow \mathcal{O}_F)"]
\arrow[u, dashrightarrow]
\arrow[uu, bend right=90, dashrightarrow, "\phi'"]
\end{tikzcd} 
\]

Now for every
$\sigma \in  \Sigma_{\Phi_{\alpha\delta, n}} $,
we associate 
$\sigma' \in  \Sigma'_{\Phi_{\alpha'\delta', n}} $
by ($\otimes \mathbb{R}$ of) the pullback map  $S_{\Phi_{\alpha \delta}}^{\vee} \rightarrow S_{\Phi_{\alpha' \delta'}}^{\vee}$ 
(recall that $S_{\Phi_{\alpha \delta}}^{\vee} $ is the set of bilinear pairings $Y \times X \rightarrow \mathbb{Z}$ which are $\mathcal{O}$-compatible and becomes symmetric Hermitian once we pullback to $Y \times Y$ along $\phi$) 
induced by the natural map
\[
Y \times (X \otimes \mathfrak{p}_i) \longrightarrow 
Y \times X
\]
More precisely, the pullback map induces an isomorphism
$(S_{\Phi_{\alpha \delta}})^{\vee}_{ \mathbb{R}} \overset{\sim}{\rightarrow}
(S_{\Phi_{\alpha' \delta'}})^{\vee}_{\mathbb{R}}$
preserving positive semi-definite pairings, hence defining an identification 
$\Sigma_{\Phi_{\alpha\delta, n}} \overset{\sim}{\rightarrow}
\Sigma'_{\Phi_{\alpha'\delta', n}}$,
and we define
$\Sigma'_{\alpha'\delta'} = \{ \Sigma'_{\Phi'_{\alpha'\delta', n}} \}_{[(\Phi'_{\alpha'\delta',n}, \delta'_{\alpha'\delta',n})]} $.

\vspace{3mm}

Moreover, with the association as described above, the map 
$F_{\mathfrak{p}_i}$
sends 
$Z_{[(\Phi_{\alpha\delta,n}, \delta_{\alpha\delta,n}, \sigma_{\alpha\delta})]}$
to
$Z_{[(\Phi'_{\alpha'\delta',n}, \delta'_{\alpha'\delta',n}, \sigma'_{\alpha'\delta'})]}$. 

\end{theorem}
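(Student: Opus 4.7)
The overall strategy is to apply the universal property of toroidal compactifications (Theorem~\ref{lantor}(3)). I will construct the extension by performing the partial Frobenius operation on the universal \emph{semi-abelian} variety over the toroidal compactification and then showing the result descends to a morphism into another toroidal compactification with a modified cone decomposition. Concretely, let $(G,\lambda, i,(\alpha_n,\nu_n))$ be the degenerating PEL abelian scheme on $(M_{K(n)}/\Delta)_{\Sigma}^{tor}$ provided by Theorem~\ref{lantor}. Since $p$ splits completely in $F^c$, the $\mathcal{O}$-action decomposes $G[p]$ into pieces indexed by the primes $\mathfrak{p}_i$, and $\ker(F)[\mathfrak{p}_i]$ makes sense as a finite flat closed subgroup scheme of $G$. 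Define $G':= G/\ker(F)[\mathfrak{p}_i]$, and transport $\lambda$, $i$, $\alpha_n$, $\nu_n$ through the quotient isogeny $\pi_{\mathfrak{p}_i}:G\to G'$ exactly as in Definition~\ref{pFdef} and Remark~\ref{boring}; on the open part $M_{K(n)}/\Delta$ this recovers the partial Frobenius. It is routine to check that $(G',\lambda',i',(\alpha_n',\nu_n'))$ is again a degenerating PEL abelian scheme in the sense of Definition~\ref{degpel}: semi-abelian schemes are stable under quotients by finite flat subgroup schemes, $\lambda'$ is a genuine isogeny by the argument using Lemma~\ref{uuuuyyyyy}, and the determinant condition is preserved by the $\mathcal{O}\otimes\mathbb{F}_p$-module computation in the footnote to Definition~\ref{pFdef}.

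Next I would compute, stratum by stratum, the transformation of degeneration data $F_{\mathrm{PEL},M_n}$ under this quotient. For a complete discrete valuation ring $V$ mapping dominantly to a geometric point of the stratum indexed by $(\Phi_{\alpha\delta,n},\delta_{\alpha\delta,n},\sigma_{\alpha\delta})$, the Raynaud extension $0\to T\to G^{\natural}\to A\to 0$ of $G_V$ has $\ker(F)[\mathfrak{p}_i]$ respecting the extension, so the Raynaud extension of $G'_V$ is $0\to T^{(\mathfrak{p}_i)}\to G^{\natural,(\mathfrak{p}_i)}\to A^{(\mathfrak{p}_i)}\to 0$. Since the partial Frobenius on the character group of $T$ is given by the inclusion $X\otimes_{\mathcal{O}_F}\mathfrak{p}_i\hookrightarrow X$, this identifies the new character group as $X' = X\otimes_{\mathcal{O}_F}\mathfrak{p}_i$, while $Y$ (parametrizing periods inside $G^{\natural}_{\eta}$) is unaffected: $Y'=Y$. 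The new $c',{c'}^{\vee}$ are the evident compositions, the new polarization $\phi'$ is forced by the diagram in the statement via Lemma~\ref{uuuuyyyyy}, and the new level data $(\alpha_n^{\natural})'$ comes from pushing forward along $\pi_{\mathfrak{p}_i}$, using the $\kappa$-twist prescribed in Section~\ref{thpartialf} for $\nu_n$. This verifies $\Phi_{\alpha'\delta',n}'$ and the strata matching claim.

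With the new degeneration data computed, the new pairing $\tau'$ is obtained from $\tau$ by pulling back along $Y\times(X\otimes_{\mathcal{O}_F}\mathfrak{p}_i)\hookrightarrow Y\times X$, because the universal $\Psi_n$-line bundles on $C_{\Phi'_{\alpha'\delta'},b'_n}$ are precisely the restrictions of those on $C_{\Phi_{\alpha\delta},b_n}$ under this inclusion. Consequently, for any dominant $\mathrm{Spec}(V)\to (M_{K(n)}/\Delta)_{\Sigma}^{tor}$ centered at a point of $Z_{[(\Phi_{\alpha\delta,n},\delta_{\alpha\delta,n},\sigma_{\alpha\delta})]}$, the element $v\circ B'{}^{\ddagger}\in S_{\Phi'_{\alpha'\delta'}}^{\vee}$ associated to $G'_V$ is the image of $v\circ B^{\ddagger}\in S_{\Phi_{\alpha\delta}}^{\vee}$ under the pullback map $S^{\vee}_{\Phi_{\alpha\delta}}\to S^{\vee}_{\Phi'_{\alpha'\delta'}}$. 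Defining $\Sigma'_{\Phi'_{\alpha'\delta',n}}$ to be the image of $\Sigma_{\Phi_{\alpha\delta,n}}$ under the induced $\mathbb{R}$-linear isomorphism of positive cones (which preserves positive semi-definiteness and admissibility of radicals tautologically, since the pullback of an $\mathcal{O}$-Hermitian form is one), the cone condition $v\circ B^{\ddagger}\in\overline{\sigma}$ transports directly to $v\circ B'{}^{\ddagger}\in\overline{\sigma'}$. Theorem~\ref{lantor}(3) then produces the desired morphism $F_{\mathfrak{p}_i}\colon(M_{K(n)}/\Delta)_{\Sigma}^{tor}\to(M_{K(n)}/\Delta)_{\Sigma'}^{tor}$, and the stratum-preservation statement follows because the boundary strata are read off from the cusp label triples and the explicit computation above matches $(\Phi_{\alpha\delta,n},\delta_{\alpha\delta,n},\sigma_{\alpha\delta})$ to $(\Phi'_{\alpha'\delta',n},\delta'_{\alpha'\delta',n},\sigma'_{\alpha'\delta'})$.

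The main obstacle is the bookkeeping of Section~\ref{dlevel}: tracking how each component of the level-$n$ datum $\alpha_n^{\natural}=(Z_n,\varphi_{-2,n},(\varphi_{-1,n},\nu_{-1,n}),\varphi_{0,n},\delta_n,c_n,c_n^{\vee},\tau_n)$ transforms under $\pi_{\mathfrak{p}_i}$, and in particular checking that the various Weil-pairing compatibilities in Definition~\ref{DDL} and the liftability of the resulting splittings are preserved after the quotient and after the $\kappa$-twist of $\nu_n$. The equivalence-class subtlety described after Definition~\ref{DDL}, compounded with the fact that $\pi_{\mathfrak{p}_i}$ changes $X$ to $X\otimes_{\mathcal{O}_F}\mathfrak{p}_i$ (and hence changes $S_{\Phi_n}$, $C_{\Phi_n,b_n}$ and $\Xi_{\Phi_n,\delta_n}$), means that the compatibility between the $\ddagger$-datum on the $V$-point side and the universal $\alpha_n^{\natural}$ on the target toroidal compactification has to be verified carefully; the arguments of \cite{lan2013arithmetic} Chapter~5 apply, but need to be combined with the explicit polarization diagram from Lemma~\ref{uuuuyyyyy} to identify $\phi'$. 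Once this verification is in place, everything else reduces to the universal property.
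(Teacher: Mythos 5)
Your overall strategy coincides with the paper's: apply the universal property of the toroidal compactification (Theorem~\ref{lantor}(3)) to the family $G' := G/\ker(F)[\mathfrak{p}_i]$, reduce to computing the degeneration data of $G'$ over a complete DVR in terms of that of $G$, and then transport the valuation characters $v\circ B^{\ddagger}$ under the pullback $S^{\vee}_{\Phi_{\alpha\delta}}\to S^{\vee}_{\Phi'_{\alpha'\delta'}}$. Your identifications $X'=X\otimes_{\mathcal{O}_F}\mathfrak{p}_i$, $Y'=Y$, and the formulas for $c',c'^{\vee},\phi'$ also match the paper's.

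The one place your plan is materially thinner than what is actually needed is the claim that $\tau'=\tau|_{Y\times(X\otimes\mathfrak{p}_i)}$ ``because the universal $\Psi_n$-line bundles on $C_{\Phi',b'_n}$ are precisely the restrictions of those on $C_{\Phi,b_n}$.'' That reasoning is circular: the $\Psi_n$-line bundles on the moduli space are \emph{built from} the degeneration data, so their compatibility cannot be invoked to \emph{derive} the degeneration data of the specific family $G'_V$ over $\mathrm{Spec}(V)$. What actually needs to be shown is that the period homomorphism $\imath':Y_\eta\to G^{\natural,(\mathfrak{p}_i)}_\eta$ of $G'_V$ is the composite of the period $\imath$ of $G_V$ with the projection $G^\natural_\eta\to G^{\natural,(\mathfrak{p}_i)}_\eta$. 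This is the hard core of the argument, and the paper proves it by choosing compatible ample sheaves $\mathcal{L}=\pi^*\mathcal{L}^{(\mathfrak{p}_i)}$, comparing the Fourier-coefficient maps $\sigma_\chi$ and $\sigma_{\chi'}$ through the commutative diagram relating $\Gamma(G,\mathcal{L})$ and $\Gamma(G^{(\mathfrak{p}_i)},\mathcal{L}^{(\mathfrak{p}_i)})$, and tracking the functional equations $\sigma_{\chi+\xi\phi(y)}=\psi(y)\tau(y,\chi)T^*_{c^\vee(y)}\circ\sigma_\chi$ on both sides. The translation of ample sheaf also forces the polarization on $G$ to become $\xi\lambda$, so that $\rho(\chi'+\phi'(y))=\rho(\chi')+\xi\phi(y)$ holds and the two functional equations can be matched; without this rescaling the comparison breaks. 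Your plan does not register this point. Everything else you describe (the Raynaud-extension bookkeeping, the $\kappa$-twist of $\nu_n$, the use of Lemma~\ref{uuuuyyyyy} to get the polarization diagram on the abelian part, and the bilinearity of the cone map on $S^\vee_{\Phi_n}$) is correct and matches the paper.
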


\begin{proof}
It is enough to prove that $F_{\mathfrak{p}_i}$ extends to the toroidal compactification on each component, i.e.  
\[
M_n(L, Tr_{\mathcal{O}_F/ \mathbb{Z}} \circ (\alpha \delta \langle   \cdot, \cdot\rangle_F)) \rightarrow 
M_n(L, Tr_{\mathcal{O}_F/ \mathbb{Z}} \circ (\alpha' \delta' \langle   \cdot, \cdot\rangle_F))
\]
extends to a morphism
\[
M_n(L, Tr_{\mathcal{O}_F/ \mathbb{Z}} \circ (\alpha \delta \langle   \cdot, \cdot\rangle_F))^{tor}_{\Sigma_{\alpha\delta}} \rightarrow 
M_n(L, Tr_{\mathcal{O}_F/ \mathbb{Z}} \circ (\alpha' \delta' \langle   \cdot, \cdot\rangle_F))^{tor}_{\Sigma'_{\alpha'\delta'}}
\]
and maps strata to the expected ones. This reduces the question to toroidal compactifications of Kottwitz's PEL moduli varieties, and we can apply the general machinery of Lan, and in particular the universal property in theorem \ref{lantor}. 

The idea is very simple. Let $G$ be the universal semi-abelian variety (with extra structures on the open part) over 
$M_n(L, Tr_{\mathcal{O}_F/ \mathbb{Z}} \circ (\alpha \delta \langle   \cdot, \cdot\rangle_F))^{tor}_{\Sigma_{\alpha\delta}}$. 
Since the partial Frobenius sends 
$A$ to $A/ (Ker(F)[\mathfrak{p}_i])$, 
it is natural to extend the map on semi-abelian varieties by the same formula  $G \rightarrow G/ (Ker(F)[\mathfrak{p}_i])$ (and take care of the extra structures), and this simple idea does indeed work. More precisely, we can define a new semi-abelian scheme 
\[
G/ (Ker(F)[\mathfrak{p}_i])
\]
(with extra structures on the part by definition of partial Frobenius) 
over 
$M_n(L, Tr_{\mathcal{O}_F/ \mathbb{Z}} \circ (\alpha \delta \langle   \cdot, \cdot\rangle_F))^{tor}_{\Sigma_{\alpha\delta}}$,
and we would like it to be the pullback of the universal semi-abelian variety over 
$M_n(L, Tr_{\mathcal{O}_F/ \mathbb{Z}} \circ (\alpha' \delta' \langle   \cdot, \cdot\rangle_F))^{tor}_{\Sigma'_{\alpha'\delta'}}$ 
through a morphism
\[
M_n(L, Tr_{\mathcal{O}_F/ \mathbb{Z}} \circ (\alpha \delta \langle   \cdot, \cdot\rangle_F))^{tor}_{\Sigma_{\alpha\delta}} \rightarrow 
M_n(L, Tr_{\mathcal{O}_F/ \mathbb{Z}} \circ (\alpha' \delta' \langle   \cdot, \cdot\rangle_F))^{tor}_{\Sigma'_{\alpha'\delta'}}.
\]
The universal property of 
$M_n(L, Tr_{\mathcal{O}_F/ \mathbb{Z}} \circ (\alpha' \delta' \langle   \cdot, \cdot\rangle_F))^{tor}_{\Sigma'_{\alpha'\delta'}}$
tells us exactly when this happens, and all we need to do is to verify the semi-abelian variety 
$G/ (Ker(F)[\mathfrak{p}_i])$
satisfies the condition of the universal property. This amounts to finding the period of the degenerating abelian variety, or more precisely the bilinear pairing $v \circ B^{\ddagger} \in S_{\Phi_{\alpha'\delta'}}^{\vee}$
in the notation of theorem \ref{lantor}, which is defined through the assoicated degeneration data. Hence we need to find the degeneration data of
$G/ (Ker(F)[\mathfrak{p}_i])$. More precisely, given the degeneration data of $G$, we aim to write the degeneration data of 
$G/ (Ker(F)[\mathfrak{p}_i])$
in terms of that of $G$, i.e. to translate the map 
$G \rightarrow G/ (Ker(F)[\mathfrak{p}_i])$
to the language of degeneration data (in a suitable formal setting). 

First, note that the restriction  of 
$G/ (Ker(F)[\mathfrak{p}_i])$ to the open stratum
is simply the old $A'$, and the definition of partial Frobenius already tells us that it comes with the PEL structure, i.e. we have a degenerating PEL family 
\[
(G', \lambda', i', (\alpha_n', \nu'_n)) 
\]
over 
$M_n(L, Tr_{\mathcal{O}_F/ \mathbb{Z}} \circ (\alpha \delta \langle   \cdot, \cdot\rangle_F))^{tor}_{\Sigma_{\alpha\delta}}$,
as all the extra data are defined  on the generic open part (although $\lambda'$ and $i'$ extends to the whole base by formal argument). 

Let us specialize the setting of the universal property in theorem \ref{lantor} to our case. Let us fix a geometric point $\bar{s}$ of 
$M_n(L, Tr_{\mathcal{O}_F/ \mathbb{Z}} \circ (\alpha \delta \langle   \cdot, \cdot\rangle_F))^{tor}_{\Sigma_{\alpha\delta}}$, 
and we assume that it lies in the strata 
$Z_{[(\Phi_{\alpha\delta,n}, \delta_{\alpha\delta,n}, \sigma_{\alpha\delta})]}$.
Let $V$ be a complete discrete valuation ring, and we are given a dominant morphism 
$Spec(V) \rightarrow M_n(L, Tr_{\mathcal{O}_F/ \mathbb{Z}} \circ (\alpha \delta \langle   \cdot, \cdot\rangle_F))^{tor}_{\Sigma_{\alpha\delta}}$
centered at $\bar{s}$, 
and
\[
(G^{\dagger}, \lambda^{\dagger}, i^{\dagger}, (\alpha_n^{\dagger}, \nu^{\dagger}_n)) 
\]
the pullback of 
$(G', \lambda', i', (\alpha_n', \nu'_n)) $
to $\text{Spec}(V)$. 

It is an easy observation that 
$G^{\dagger} 
\cong G_{\text{Spec}(V)} / (Ker(F) [\mathfrak{p}_i])$, 
i.e. we can first pullback the universal semi-abelian $G$, 
and then apply the partial Frobenius operation, and similarly for the extra structures. 

Let 
\[
(A, \lambda_A, i_A, X, Y, \phi, c,c^{\vee}, \tau, [ \alpha_n^{\natural}]) 
\]
be the degeneration data associated to 
$(G, \lambda, i, (\alpha_n, \nu_n))_{\text{Spec}(V)}$.
Since $V$ is centered at $\bar{s}$ which lies in 
$Z_{[(\Phi_{\alpha\delta,n}, \delta_{\alpha\delta,n}, \sigma_{\alpha\delta})]}$,
we see that the torus part of the degeneration data is the same as $\Phi_{\alpha\delta,n}$, 
and similarly for $\delta_{\alpha \delta,n}$ (if we choose a representative 
$(Z_{\alpha\delta,n}, \Phi_{\alpha\delta,n}, \delta_{\alpha\delta,n})$
of the cusp label
$[(Z_{\alpha\delta,n},\Phi_{\alpha\delta,n}, \delta_{\alpha\delta,n})]$).
Moreover, $\sigma_{\alpha\delta}$
determines the range of $\tau_n$ in $\alpha_n^{\natural}$. 

We want to write the degeneration data
\[
(A^{\dagger}, \lambda_{A^{\dagger}}^{\dagger}, i_{A^{\dagger}}^{\dagger}, X^{\dagger}, Y^{\dagger}, \phi^{\dagger}, c^{\dagger},(c^{\vee})^{\dagger}, \tau^{\dagger}, [ (\alpha_n^{\natural})^{\dagger}]) 
\]
of 
$(G^{\dagger}, \lambda^{\dagger}, i^{\dagger}, (\alpha_n^{\dagger}, \nu^{\dagger}_n)) $
in terms of that of 
$(G, \lambda, i, (\alpha_n, \nu_n))_{\text{Spec}(V)}$. 
More precisely, our aim is to describe $v \circ B^{\dagger}$ 
in terms of $v \circ B$, and it is enough to describe certain part of the degeneration data for our purpose, as in the next proposition.  

Recall that $B$ is a homomorphism 
$S_{\Phi_n} \rightarrow \text{Inv}(V)$ 
induced by 
\[
\frac{1}{n} Y \times X \longrightarrow \text{Inv}(V)
\]
which is 
\[
(y, \chi) \longrightarrow I_{y, \chi} := (c^{\vee}(y) \times c(\chi))^*P_{A}  \overset{\tau}{\subset} K := \text{Frac}(V)
\]
when restricted to $Y \times X$, and similarly for $B^{\dagger}$. By the proposition below, we have that 
\[
\tau^{\dagger} = \tau |_{Y^{\dagger} \times X^{\dagger}}
\]
under the natural inclusion
\[
Y^{\dagger} \times X^{\dagger} = Y \times (X \otimes_{\mathcal{O}_F} \mathfrak{p}_i) \hookrightarrow Y \times X
\]
and identification
\[
((c^{\vee})^{\dagger} \times c^{\dagger})^*P_{A^{\dagger}} \cong  ((c^{\vee} \times c)^*P_{A} )_{Y^{\dagger} \times X^{\dagger}}.
\]
Therefore, by abuse of notation (viewing $B$  as bilinear forms on $\frac{1}{n} Y \times X$, and similarly for $B^{\dagger}$)
\[
B^{\dagger} |_{Y^{\dagger} \times X^{\dagger}} = B|_{Y^{\dagger} \times X^{\dagger}}
\]
where the second restriction is induced by 
\[
Y^{\dagger} \times X^{\dagger} = Y \times (X \otimes_{\mathcal{O}_F} \mathfrak{p}_i) \hookrightarrow Y \times X.
\]
We now have obtained  
\[
v \circ B^{\dagger} |_{Y^{\dagger} \times X^{\dagger}} = v \circ B|_{Y^{\dagger} \times X^{\dagger}}
\]
which means that
\[
n(v \circ B^{\dagger}) = n( v \circ B|_{Y^{\dagger} \times X^{\dagger}}).
\]
By construction we have (for all dominant morphism 
$Spec(V) \rightarrow M_n(L, Tr_{\mathcal{O}_F/ \mathbb{Z}} \circ (\alpha \delta \langle   \cdot, \cdot\rangle_F))^{tor}_{\Sigma_{\alpha\delta}}$
centered at $\bar{s}$)
\[
v \circ B \in \sigma_{\alpha \delta},
\]
and by the definition of $\sigma'_{\alpha'\delta'}$
as in the statement of the theorem, we have 
\[
n(v \circ B^{\dagger}) = n( v \circ B|_{Y^{\dagger} \times X^{\dagger}}) \in \sigma'_{\alpha'\delta'},
\]
which implies that (for all dominant morphism 
$Spec(V) \rightarrow M_n(L, Tr_{\mathcal{O}_F/ \mathbb{Z}} \circ (\alpha \delta \langle   \cdot, \cdot\rangle_F))^{tor}_{\Sigma_{\alpha\delta}}$
centered at $\bar{s}$)
\[
v \circ B^{\dagger} \in \sigma'_{\alpha'\delta'}
\]
since 
$ \sigma'_{\alpha'\delta'}$ 
is a cone. This finishes the verification of the universal property and also the proof.
\end{proof}

\begin{proposition}
Let $V$ be a complete discrete valuation ring that is defined over $\mathcal{O}_{F_0}\otimes_{\mathbb{Z}} \mathbb{F}_p$ with generic fiber $\eta$,
and 
\[
(G, \lambda, i, (\alpha_n, \nu_n))
\]
is a PEL degenerating abelian variety over $Spec(V)$, i.e. 
$(G, \lambda, i, (\alpha_n, \nu_n)) \in DEG_{PEL,M_n}(V)$ 
as in definition \ref{degpel}. Let 
\[
(G', \lambda', i', (\alpha'_n, \nu'_n)) \in DEG_{PEL,M_n}(V)
\]
be the degenerating abelian variety defined by 
\[
G':= G/(Ker(F)[\mathfrak{p}_i]) 
\]
and the rest of structures obtained by applying the partial Frobenius to the generic fiber
$(G, \lambda, i, (\alpha_n, \nu_n))_{\eta}$ which is a PEL abelian variety, as in remark \ref{boring}. 

Let
\[
(A, \lambda_A, i_A, X, Y, \phi, c,c^{\vee}, \tau, [ \alpha_n^{\natural}])
\]
be the degeneration data of 
$(G, \lambda, i, (\alpha_n, \nu_n))$,
and 
\[
(A', {\lambda'}_{A'}, {i'}_{A'}, X', Y', \phi', c',{c'}^{\vee}, \tau', [ {\alpha'}_n^{\natural}])
\]
be the degeneration data of 
$(G', \lambda', i', (\alpha'_n, \nu'_n)) $. 
Then 
\[
(A', \lambda'_{A'}, i'_{A'}, (\varphi_{-1,n}',\nu_{-1,n}'))
\]
is obtained by applying the partial Frobenius to
$(A, \lambda_A, i_A, (\varphi_{-1,n},\nu_{-1,n}))$
over $V$, 
\[
Z'_{n} = Z_{n}, 
\]
\[
X' = X \otimes_{\mathcal{O}_F} \mathfrak{p}_i,
\]
\[
Y'=Y,
\]
\[
\varphi'_{-2,n} : Gr_{-2}^{Z'_{n} } = Gr_{-2}^{Z_{n} }  \overset{\varphi_{-2,n}}{\longrightarrow} Hom(X/nX, (\mathbb{Z}/n \mathbb{Z})(1)) 
\]
\[
\overset{\sim}{\longrightarrow} Hom(X \otimes \mathfrak{p}_i/n (X \otimes \mathfrak{p}_i), (\mathbb{Z}/n \mathbb{Z})(1)),
\]
\[
\varphi_{0,n}' : Gr_{0}^{Z'_{n} } = Gr_{0}^{Z_{n} }  \overset{\varphi_{0,n}}{\longrightarrow} Y/nY,
\]
and $\phi'$ is defined by the diagram 
\[\begin{tikzcd}
[
  arrow symbol/.style = {draw=none,"\textstyle#1" description,sloped},
  isomorphic/.style = {arrow symbol={\simeq}},
  ]
&
X  & 
X \otimes_{\mathcal{O}_F} \mathfrak{p}_i 
\arrow [l, "id \otimes (  \mathcal{O}_F \hookleftarrow \mathfrak{p}_i)"] 
\\
Y \otimes_{\mathcal{O}_F} \mathfrak{p}_i^{-1} &
Y \arrow[l, "id \otimes (\mathfrak{p}_i^{-1} \hookleftarrow \mathcal{O}_F)"] 
\arrow[u, "\phi"] 
 & 
 Y \otimes_{\mathcal{O}_F} \mathfrak{p}_i 
 \arrow[l,"id \otimes (  \mathcal{O}_F \hookleftarrow \mathfrak{p}_i)"]
 \arrow[u, "\phi \otimes id"]
 \\
&
Y \otimes_{\mathcal{O}_F} \mathfrak{p}_i^{-1} 
\arrow[lu, "\xi \otimes id"]
\arrow[u, dashrightarrow] & 
Y
\arrow[l,"id \otimes (\mathfrak{p}_i^{-1} \hookleftarrow \mathcal{O}_F)"]
\arrow[u, dashrightarrow]
\arrow[uu, bend right=90, dashrightarrow, "\phi'"]
\end{tikzcd} 
\]
Moreover, we have
\[
c': X' = X \otimes_{\mathcal{O}_F} \mathfrak{p}_i \overset{c \otimes id}{\longrightarrow} A^{\vee} \otimes_{\mathcal{O}_F} \mathfrak{p}_i 
\overset{\pi \otimes id}{\longrightarrow} {A^{\vee}}' \otimes_{\mathcal{O}_F} \mathfrak{p}_i \cong {A'}^{\vee} 
\]
\[
{c'}^{\vee} : Y'= Y \overset{c^{\vee}}{\longrightarrow} A
\overset{\pi}{\longrightarrow} A'
\]
where 
$\pi: A \rightarrow A' := A/(Ker(F)[\mathfrak{p}_i])$
is the projection map and the isomorphism 
${A^{\vee}}' \otimes_{\mathcal{O}_F} \mathfrak{p}_i \cong {A'}^{\vee}$
is as in lemma \ref{uuuuyyyyy}. 

Lastly and most importantly, there is a canonical isomorphism
\[
(c'^{\vee} \times c')^*P_{A'} \cong  ((c^{\vee} \times c)^*P_{A} )_{Y' \times X'}
\]
where the pullback to $Y' \times X'$ is through  the natural injection 
\[
Y' \times X' = Y \times (X \otimes_{\mathcal{O}_F} \mathfrak{p}_i) \hookrightarrow Y \times X
\]
induced by
$X \otimes_{\mathcal{O}_F} \mathfrak{p}_i \hookrightarrow X \otimes_{\mathcal{O}_F} \mathcal{O}_F \cong X$.  
Now
$\tau'$ is identified as 
\[
\tau': 1_{Y' \times X', \eta}  \overset{\tau |_{Y' \times X'}}{\longrightarrow} 
((c^{\vee}\times c)^*\mathcal{P}_{A,\eta}^{\otimes -1})_{Y' \times X'} \cong 
(c'^{\vee} \times c')^*P_{A',\eta}^{\otimes -1}.
\]
\end{proposition}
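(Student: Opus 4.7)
The plan is to apply the functor $F_{PEL, M_n}$ of section \ref{dlevel} directly to the isogeny $\pi: G \to G' := G/(\mathrm{Ker}(F)[\mathfrak{p}_i])$ and trace every piece of degeneration data through it. The first main step is to identify the Raynaud extension of $G'$. Since $\mathrm{Ker}(F)[\mathfrak{p}_i]$ is an infinitesimal subgroup scheme, it lives inside the formal completion $G_{\text{for}} = G^{\natural}_{\text{for}}$, so $\pi$ extends to a map $\pi^{\natural}: G^{\natural} \to G'^{\natural}$ of Raynaud extensions. The kernel of $\pi^{\natural}$ on the abelian quotient $A$ is $\mathrm{Ker}(F_A)[\mathfrak{p}_i]$ by functoriality, giving $A' = A/\mathrm{Ker}(F_A)[\mathfrak{p}_i]$ (which is by design the partial Frobenius of $A$). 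On the torus part, using our splitness assumption $\mathcal{O}_F \otimes \mathbb{F}_p = \prod_i \mathbb{F}_p$, together with the fact that $V_T$ is an isomorphism for a torus in characteristic $p$ so that $\mathrm{Ker}(F_T) = T[p] = \bigoplus_i T[\mathfrak{p}_i]$, we get $T' = T/T[\mathfrak{p}_i]$. Cartier duality then identifies $X' = X(T') \cong \mathfrak{p}_i X \cong X \otimes_{\mathcal{O}_F} \mathfrak{p}_i$, embedded in $X$ via the natural inclusion. The period lattice $Y' = Y$ because $\mathrm{Ker}(\pi^{\natural})$ is infinitesimal and hence meets $\iota(Y_\eta)$ trivially, so $\iota' := \pi^\natural \circ \iota : Y \to G'^{\natural}_\eta$ is still injective.

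The formulas for $c'$ and ${c'}^\vee$ follow directly from the functoriality of the extension classes. Projecting $\iota'$ to $A'$ yields ${c'}^\vee = \pi_A \circ c^\vee$ as stated. For $c'$, pushing out the Raynaud extension of $G'^\natural$ along $\chi' \in X' \hookrightarrow X$ gives a line bundle on $A'$ whose class, via the functoriality of pushout and lemma \ref{uuuuyyyyy}, equals the composition $X \otimes \mathfrak{p}_i \xrightarrow{c \otimes \mathrm{id}} A^\vee \otimes \mathfrak{p}_i \xrightarrow{\pi_{A^\vee} \otimes \mathrm{id}} (A^\vee)' \otimes \mathfrak{p}_i \cong (A')^\vee$. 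The polarization datum $\phi'$ is then forced by the compatibility $\lambda_{A'} \circ {c'}^\vee = c' \circ \phi'$, combined with the characterization $\xi \lambda = \pi^\vee \circ \lambda' \circ \pi$ of $\lambda'$; the diagram in the statement is exactly the dual of the diagram defining $\lambda'$ in section \ref{thpartialf}. The level-structure items $Z'_n$, $\varphi'_{-2,n}$, $\varphi'_{0,n}$, $\delta'_n$ are unchanged on underlying $n$-torsion because $\mathrm{Ker}(F)[\mathfrak{p}_i]$ is infinitesimal and hence invisible at prime-to-$p$ level; the formulas in the statement just transport them through the identification $X' = X \otimes \mathfrak{p}_i$.

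The crucial step is the identification $\tau' = \tau|_{Y' \times X'}$. The canonical isomorphism
\[
({c'}^\vee \times c')^* \mathcal{P}_{A'} \cong ((c^\vee \times c)^* \mathcal{P}_A)\big|_{Y' \times X'}
\]
is obtained by combining the formulas for $c'$, ${c'}^\vee$ with the standard biextension identity $(\pi_A \times \mathrm{id})^* \mathcal{P}_{A'} \cong (\mathrm{id} \times \pi_A^\vee)^* \mathcal{P}_A$ for the isogeny $\pi_A$, together with lemma \ref{uuuuyyyyy}. To see that the trivializations match, recall from section \ref{equivalentfor} that $\tau(y, \chi)$ is equivalent to the value of $\chi$ (viewed as a section of $\mathscr{O}_\chi^{\otimes -1}$ over $G^\natural$) at the period point $\iota(y) \in G^\natural_\eta$. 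For $\chi' \in X' \hookrightarrow X$, the pullback $(\pi^\natural)^* \chi' = \chi'$ as an element of $X$, so evaluating $\chi'$ at $\iota'(y) = \pi^\natural(\iota(y))$ in $G'^\natural_\eta$ coincides with evaluating $\chi'$ at $\iota(y)$ in $G^\natural_\eta$. This yields $\tau'(y, \chi') = \tau(y, \chi')$ under the identifications.

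The step I expect to require the most care is verifying that the level $n$ refinement $(c'_n, {c'}^\vee_n, \tau'_n)$ of the equivalence class $[(\alpha'_n)^\natural]$ produced by $F_{PEL,M_n}(G')$ agrees with the one induced by $F_{\mathfrak{p}_i}$ on the open part, since the degeneration data lives only up to the equivalence of section \ref{dlevel}. This amounts to checking that the splitting datum $\delta_n$ and the lifts $(c_n, c^\vee_n, \tau_n)$ transform compatibly with the prescribed $\nu'_n = \nu_n \circ \kappa$ and $\alpha'_n = \pi \circ \alpha_n$ of the partial Frobenius, and this is a direct but somewhat tedious diagram chase once the Raynaud-level identifications above are installed.
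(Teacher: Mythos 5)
Your overall strategy is sound and the structural identifications ($A' = A^{(\mathfrak{p}_i)}$, $X' = X \otimes \mathfrak{p}_i$, the formulas for $c'$ and $c'^{\vee}$, the level data) essentially match the paper's approach via the commutation of partial Frobenius with the Raynaud extension. Two issues, one minor and one serious.

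The minor one: your argument for $Y' = Y$ is not the right one. $Y$ is, by definition, the character group of the torus part of the \emph{dual} Raynaud extension, not a subset of $G^{\natural}_{\eta}$ for which injectivity of $\iota'$ would be the relevant property. The correct route is to apply lemma \ref{uuuuyyyyy} to get $G'^{\vee} \cong G^{\vee,(\mathfrak{p}_i)} \otimes_{\mathcal{O}_F} \mathfrak{p}_i$, then take Raynaud extensions on both sides and observe that the torus part is $T^{\vee,(\mathfrak{p}_i)} \otimes \mathfrak{p}_i \cong T^{\vee}$, whence $Y' = Y$. Your injectivity argument, even if corrected, would only give $Y \subseteq Y'$, not equality.

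The serious gap is the $\tau'$ identification. You write ``evaluating $\chi'$ at $\iota'(y) = \pi^{\natural}(\iota(y))$ coincides with evaluating $\chi'$ at $\iota(y)$,'' but this presupposes that the period homomorphism $\iota'$ of $G'$ is $\pi^{\natural} \circ \iota$. That is precisely the non-trivial content of the statement, not something you can take for granted. The equivalence $F_{\text{pol}}$ in the paper is stated as an equivalence of \emph{groupoids} (only isomorphisms as morphisms); there is no functoriality for general isogenies available to invoke off the shelf. The paper's proof establishes exactly this claim by an explicit computation: choosing a compatible pair of ample cubical sheaves $\mathcal{L}^{(\mathfrak{p}_i)}$ and $\mathcal{L} = \pi^* \mathcal{L}^{(\mathfrak{p}_i)}$, tracking the Fourier-coefficient maps $\sigma_{\chi}$ and $\sigma_{\chi'}$ through the isogeny, establishing $\pi_A^* \mathscr{O}_{\chi'} = \mathscr{O}_{\rho(\chi')}$, and then matching the functional equations (\ref{painnn}) and (\ref{ppppp}) against the nonvanishing $\sigma_{\chi} \neq 0$ to extract $\tau'(y,\chi') = \tau(y, \rho(\chi'))$. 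If you want to bypass that, you would need an explicit citation to a functoriality-under-isogeny result for the Mumford construction / degeneration data (some such statements exist in Faltings--Chai chapter III, but they must be invoked precisely and the hypotheses checked, including the subtlety that $\lambda_{\mathcal{L}} = \xi \lambda$ and not $\lambda$, which affects the $\phi$ appearing in the functional equation — cf.\ equation (\ref{mistake})). As written, your argument begs the question at the crux of the proposition.
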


\begin{proof}
We first fix the notation as follows. For any commutative group scheme $H$ over $V$ with an action of $\mathcal{O}$, we denote 
\[
H^{(\mathfrak{p}_i)} := H/(Ker(F)[\mathfrak{p}_i])
\]
where $F$ is the relative Frobenius. 

We begin by showing that taking the partial Frobenius quotient commutes with the Raynaud extension, i.e. we have
\begin{lemma}
\[
G^{(\mathfrak{p}_i), \natural} \cong G^{\natural, (\mathfrak{p}_i)}.
\]
\end{lemma}

\begin{innerproof}[Proof of the Lemma]
Recall that $G^{\natural}$ is characterized as the unique global extension of an abelian variety by a torus whose formal completion along the maximal ideal of $V$ is the same as that of $G$, i.e. $G^{\natural}$ sits in an extension 
\[
0 \rightarrow T \rightarrow G^{\natural} \rightarrow A \rightarrow 0 
\]
with $T$ a torus and $A$ an abelian scheme over $V$, which satisfies 
$G^{\natural}_{\text{for}} \cong G_{\text{for}}$. 
We have a commutative diagram for the relative Frobenius $F$
\[
\begin{tikzcd}
 0  \arrow[r] & T \arrow[r] \arrow[d,"F_T"] & G^{\natural} \arrow[r] \arrow[d,"F"] & A \arrow[r] \arrow[d,"F_A"] & 0 
\\
 0  \arrow[r] & T \arrow[r] & G^{\natural} \arrow[r] & A \arrow[r] & 0
\end{tikzcd}
\]
We observe that the relative Frobenius   is a faithfully flat morphism on smooth schemes, which in particular shows that $F_T$ is surjective as a morphism in the category of fppf sheaves of abelian groups. Then the associated long exact sequence of the diagram tells us that we have a short exact sequence
\[
0 \rightarrow Ker(F_T) \rightarrow Ker(F) \rightarrow Ker(F_A) \rightarrow 0
\]
of finite flat group schemes over $V$. Since the diagram is $\mathcal{O}$-equivariant, so is the short exact sequence of $Ker(F)$. From the observation that $Ker(F)$ is killed by $p$ (and so are the other two groups), we see that 
$Ker(F)= \underset{i}{\prod} Ker(F)[\mathfrak{p}_i]$ 
(and similarly for the other two),
and the above short exact sequence decomposes into a product of short exact sequences
\[
0 \rightarrow Ker(F_T)[\mathfrak{p}_i] \rightarrow Ker(F)[\mathfrak{p}_i] \rightarrow Ker(F_A)[\mathfrak{p}_i] \rightarrow 0
\]
Now the commutative diagram
\[
\begin{tikzcd}
 0  \arrow[r] & Ker(F_T)[\mathfrak{p}_i] \arrow[r] \arrow[d,hook] & Ker(F)[\mathfrak{p}_i] \arrow[r] \arrow[d,hook] & Ker(F_A)[\mathfrak{p}_i] \arrow[r] \arrow[d,hook] & 0 
\\
 0  \arrow[r] & T \arrow[r] & G^{\natural} \arrow[r] & A \arrow[r] & 0
\end{tikzcd}
\]
gives us a short exact sequence
\[
0 \rightarrow T^{(\mathfrak{p}_i)} \rightarrow G^{\natural, (\mathfrak{p}_i)} \rightarrow A^{(\mathfrak{p}_i)} \rightarrow 0 
\]
Let 
$m$ be the maximal ideal of $V$, $k \in \mathbb{Z}$, 
$G^{\natural}_k := G^{\natural}_{\text{Spec}(V/m^k)}$ and similarly for other groups defined over $V$. Then the naturality of $Ker(F)[\mathfrak{p}_i]$ (it commutes with base change) provides isomorphisms
\[
(G^{\natural, (\mathfrak{p}_i)})_k \cong
G^{\natural}_k/(Ker(F)[\mathfrak{p}_i]) 
\cong G_k/(Ker(F)[\mathfrak{p}_i]) =
G^{(\mathfrak{p}_i)}_k,
\]
which are compatible when $k$ varies. This implies that 
\[
G^{\natural, (\mathfrak{p}_i)}_{\text{for}} \cong G^{(\mathfrak{p}_i)}_{\text{for}}
\]
hence we have a canonical isomorphism
\[
G^{\natural, (\mathfrak{p}_i)} \cong G^{ (\mathfrak{p}_i), \natural}
\]
by the characterization of the Raynaud extension. 

\qedhere
\end{innerproof}

\vspace{5mm}

Now by definition of $X'$ and $A'$ in the degeneration data, together with the fact
$G^{ (\mathfrak{p}_i), \natural} \cong 
G^{\natural, (\mathfrak{p}_i)} $
and 
\[
0 \rightarrow T^{(\mathfrak{p}_i)} \rightarrow G^{\natural, (\mathfrak{p}_i)} \rightarrow A^{(\mathfrak{p}_i)} \rightarrow 0 
\]
that we have just proved, 
we have 
\[
A' = A^{(\mathfrak{p}_i)}
\]
\[
X' = Hom(T^{(\mathfrak{p}_i)}, \textbf{G}_m) \cong X \otimes_{\mathcal{O}_F} \mathfrak{p}_i
\]
where the last isomorphism follows since on a torus we have $F=p$, so
\[
Ker(F)[\mathfrak{p}_i] = T[\mathfrak{p}_i] = Ker(T= T\otimes_{\mathcal{O}_F} \mathcal{O}_F \overset{id \otimes \hookrightarrow}{\longrightarrow} T\otimes_{\mathcal{O}_F} \mathfrak{p}_i^{-1})
\]
which implies that 
\begin{equation} \label{ljkjtou}
T^{(\mathfrak{p}_i)} \cong T\otimes_{\mathcal{O}_F} \mathfrak{p}_i^{-1}
\end{equation}
whence the isomorphism on the characters. Note that everything has an $\mathcal{O}$-action, and the isomorphisms are $\mathcal{O}$-equivariant, and in particular the $\mathcal{O}$-structure $i'_{A'}$ on $A'$ is induced from $A$ by the projection $A \rightarrow A^{(\mathfrak{p}_i)}$, which is consistent with partial Frobenius operation on $A$. 

On the other hand, we have a canonical isomorphism 
\[
G^{(\mathfrak{p}_i), \vee}_{\eta} \cong G^{\vee, (\mathfrak{p}_i)}_{\eta} \otimes_{\mathcal{O}_F} \mathfrak{p}_i
\]
as proved in lemma \ref{uuuuyyyyy}, which extends to 
\[
G^{(\mathfrak{p}_i), \vee} \cong G^{\vee, (\mathfrak{p}_i)} \otimes_{\mathcal{O}_F} \mathfrak{p}_i
\]
by formal nonsense (the restriction to the generic fiber is a fully faithful functor from the category of degenerating abelian varieties to that of abelian varieties). We can now take the Raynaud extension of both sides, and obtain
\[
G^{(\mathfrak{p}_i), \vee, \natural} \cong G^{\vee, (\mathfrak{p}_i),\natural} \otimes_{\mathcal{O}_F} \mathfrak{p}_i \cong 
G^{\vee, \natural, (\mathfrak{p}_i)} \otimes_{\mathcal{O}_F} \mathfrak{p}_i
\]
where the first isomorphism follows from the functoriality of Raynaud extensions (which implies that $(-)\otimes_{\mathcal{O}_F} \mathfrak{p}_i$ commutes with the Raynaud extension), and the second isomorphism is the claim we have just proved. From the extension
\[
0 \rightarrow T^{\vee} \rightarrow G^{\vee, \natural} \rightarrow A^{\vee} \rightarrow 0 
\]
and the above isomorphism, we see that 
\[
0 \rightarrow T^{\vee,(\mathfrak{p}_i)} \otimes_{\mathcal{O}_F} \mathfrak{p}_i 
\rightarrow G^{(\mathfrak{p}_i), \vee, \natural} 
\rightarrow A^{\vee, (\mathfrak{p}_i)} \otimes_{\mathcal{O}_F} \mathfrak{p}_i \rightarrow 0 
\]
We have already observed that from (\ref{ljkjtou}) and lemma \ref{uuuuyyyyy} there are natural isomorphisms
$T^{\vee,(\mathfrak{p}_i)}   \cong T^{\vee} \otimes_{\mathcal{O}_F} \mathfrak{p}_i^{-1}$
and 
$A^{(\mathfrak{p}_i),\vee} \cong A^{\vee, (\mathfrak{p}_i)} \otimes_{\mathcal{O}_F} \mathfrak{p}_i$, 
which simplifies the extension to
\[
0 \rightarrow T^{\vee} \rightarrow G^{(\mathfrak{p}_i), \vee, \natural} \rightarrow A^{(\mathfrak{p}_i), \vee} \rightarrow 0 
\]
This tells us that the torus part of the dual Raynaud extension of $G'$ is the same as that of $G$, hence 
\[
Y' = Y
\]
as we expected.

\vspace{5mm}

Now we look at the polarization $\lambda'$ and the associated part of the degeneration data. Recall that
$\lambda'_{\eta}: G'_{\eta} \rightarrow G'^{\vee}_{\eta}$
is characterized by the formula
\[
\xi \lambda_{\eta} = \pi_{\eta}^{\vee} \circ \lambda'_{\eta} \circ \pi_{\eta}
\]
with 
$\pi_{\eta} : G_{\eta} \rightarrow G'_{\eta}$ 
the projection, which extends uniquely to a morphism 
 $\lambda': G' \rightarrow G'^{\vee}$
by formal properties of the degenerating abelian varieties. This extension also satisfies the characterizing relation
\[
\xi \lambda = \pi^{\vee} \circ \lambda' \circ \pi
\]
with 
$\pi : G \rightarrow G'$ 
the projection (note that $\pi^{\vee}$ here has to be interpreted as the unique extension of the $\pi_{\eta}^{\vee}$ being the dual morphism on the dual abelian varieties).
The functoriality of Raynaud extensions provides us with the morphism 
$\lambda'^{\natural}: G'^{\natural} \rightarrow G'^{\vee,\natural}$, 
which fits in a commutative diagram (with change of notation)
\[
\begin{tikzcd}
 0  \arrow[r] & T^{(\mathfrak{p}_i)} \arrow[r] \arrow[d,"\lambda_{T}^{(\mathfrak{p}_i)}"] & G^{(\mathfrak{p}_i),\natural} \arrow[r] \arrow[d,"\lambda^{(\mathfrak{p}_i),\natural}"] & A^{(\mathfrak{p}_i)} \arrow[r] \arrow[d,"\lambda_{A}^{(\mathfrak{p}_i)}"] & 0 
\\
 0  \arrow[r] & T^{(\mathfrak{p}_i), \vee} \arrow[r] &  G^{(\mathfrak{p}_i),\vee, \natural} \arrow[r] & A^{(\mathfrak{p}_i),\vee} \arrow[r] & 0
\end{tikzcd}
\]
The characterizing relation
$\xi \lambda = \pi^{\vee} \circ \lambda^{(\mathfrak{p}_i)} \circ \pi$
extends to
\[
\xi \lambda^{\natural} = \pi^{\vee,\natural} \circ \lambda^{(\mathfrak{p}_i),\natural} \circ \pi^{\natural}
\]
on the Raynaud extension by functoriality, which implies the two relations
\[
\xi \lambda_A = \pi^{\vee,\natural}_A \circ \lambda^{(\mathfrak{p}_i)}_A \circ \pi^{\natural}_A
\]
\[
\xi \lambda_T = \pi^{\vee,\natural}_T \circ \lambda^{(\mathfrak{p}_i)}_T \circ \pi^{\natural}_T
\]
on the abelian and torus part respectively. Note that the relation on the abelian part is exactly the characterizing relation of the partial Frobenius operation on $(A,\lambda_A)$. 

Alternatively, we can write down directly the diagram defining $\lambda'$ on the generic fiber, which extends formally to the whole base as follows
\[\begin{tikzcd}
[
  arrow symbol/.style = {draw=none,"\textstyle#1" description,sloped},
  isomorphic/.style = {arrow symbol={\simeq}},
  ]
&
G \arrow[r,"F^{(\mathfrak{p}_i)}"] \arrow[d,"\lambda" ] & G^{(\mathfrak{p}_i)} \arrow [r, "V^{(\mathfrak{p}_i)}"] \arrow[d,"\lambda^{(\mathfrak{p}_i)}"' ] 
\arrow[ddd, bend left =40, dashrightarrow,  "\lambda'" pos=.2]&  
G  \otimes_{\mathcal{O}_F} \mathfrak{p}_i^{-1} \arrow[d,"\lambda \otimes id" ]
\\
G^{\vee} \otimes_{\mathcal{O}_F} \mathfrak{p}_i 
\arrow[r, "id \otimes ( \mathfrak{p}_i \hookrightarrow \mathcal{O}_F)"] 
\arrow[rd,"\xi \otimes id_{\mathfrak{p}_i}"' ]&
G^{\vee} \arrow[r, "F_{G^{\vee}}^{(\mathfrak{p}_i)}"]  \arrow[rrd, "\xi"' near start] 
\arrow[d, dashrightarrow] & 
 (G^{\vee})^{(\mathfrak{p}_i)} \arrow[r,"V_{G^{\vee}}^{(\mathfrak{p}_i)}"]
 \arrow[d, dashrightarrow]& 
 G^{\vee} \otimes_{\mathcal{O}_F} \mathfrak{p}_i^{-1}
 \arrow[d, dashrightarrow]
 \\
&
G^{\vee} \otimes_{\mathcal{O}_F} \mathfrak{p}_i  
\arrow[r, "(V^{(\mathfrak{p}_i)})^{ \vee}"'] & (G^{\vee})^{(\mathfrak{p}_i)} \otimes_{\mathcal{O}_F} \mathfrak{p}_i  \arrow[r,"(F^{(\mathfrak{p}_i)})^{ \vee}"'] & 
 G^{\vee}
 \\
 & & (G^{(\mathfrak{p}_i)})^{\vee} \arrow[u, isomorphic]&
\end{tikzcd} 
\]
The functoriality of Raynaud extension allows us to draw the same diagram with Raynaud extensions, and so does the abelian and torus part, with which we obtain a rather explicit description of $\lambda_A^{(\mathfrak{p}_i)}$ and $\lambda_T^{(\mathfrak{p}_i)}$. This tells us that $\lambda_A^{(\mathfrak{p}_i)}$ is obtained as in the partial Frobenius operation, and the morphism $\phi'$ on characters induced by 
$\lambda_T^{(\mathfrak{p}_i)}$
is as in the description of the proposition. 

\vspace{5mm}

The next step is to look at the level structures. Recall that the level structure
$\alpha_n' : L/nL \cong G'[n] $ 
on $G'$ is defined by the composition 
\[
\alpha_n': 
L/nL \overset{\alpha_n}{\cong} G[n] \overset{\pi}{\cong} G'[n] 
\]
where 
$\pi: G\rightarrow G'$ is the projection, which induces an isomorphism on $n$-torsion points since $n$ is prime to $p$. Since $\pi$ preserves the monodromy filtration on $G[n]$ and $G'[n]$, we have 
\[
Z_n' = Z_n
\]
be definition. More explicitly, $\pi$ induces isomorphisms of the extensions
\[
\begin{tikzcd}
 0  \arrow[r] & T[n] \arrow[r] \isoarrow{d,"\pi_T"'} & G^{\natural}[n] \arrow[r] \isoarrow{d,"\pi^{\natural}"'}
 & A[n] \arrow[r] \isoarrow{d,"\pi_A"'} & 0 
\\
 0  \arrow[r] & T^{(\mathfrak{p}_i)}[n] \arrow[r] & G^{(\mathfrak{p}_i), \natural}[n] \arrow[r] & A^{(\mathfrak{p}_i)}[n] \arrow[r] & 0
\end{tikzcd}
\]
\[
\begin{tikzcd}
 0  \arrow[r]  & G^{\natural}[n] \arrow[r] \isoarrow{d,"\pi^{\natural}"'}
 & G[n] \arrow[r] \isoarrow{d,"\pi"'} & 
 \frac{1}{n}Y/Y \arrow[d,equal] \arrow[r] & 0 
\\
 0  \arrow[r] & G^{(\mathfrak{p}_i), \natural}[n] \arrow[r] & G^{(\mathfrak{p}_i)}[n] \arrow[r] &  
 \frac{1}{n}Y/Y \arrow[r] & 0
\end{tikzcd}
\]
where we use $Y'=Y$ in the last isomorphism. We have seen that $\pi_T : T \rightarrow T^{(\mathfrak{p}_i)}$ is the natural morphism
$T= T \otimes_{\mathcal{O}_F } \mathcal{O}_F \overset{id \otimes \hookrightarrow}{\longrightarrow} T \otimes_{\mathcal{O}_F} \mathfrak{p}_i^{-1} \cong T^{(\mathfrak{p}_i)}$, 
and the corresponding map on characters is 
$X \otimes_{\mathcal{O}_F} \mathfrak{p}_i \overset{id \otimes \hookrightarrow}{\longrightarrow} X \otimes_{\mathcal{O}_F} \mathcal{O}_F = X$, 
which clearly implies that the degree -2 part of the level-$n$ structure is
\[
\varphi'_{-2,n} : Gr_{-2}^{Z'_{n} } = Gr_{-2}^{Z_{n} }  \overset{\varphi_{-2,n}}{\longrightarrow} Hom(X/nX, (\mathbb{Z}/n \mathbb{Z})(1)) 
\]
\[
\overset{\sim}{\longrightarrow} Hom(X \otimes \mathfrak{p}_i/n (X \otimes \mathfrak{p}_i), (\mathbb{Z}/n \mathbb{Z})(1)).
\]
Similarly, we have
\[
\varphi_{0,n}' : Gr_{0}^{Z'_{n} } = Gr_{0}^{Z_{n} }  \overset{\varphi_{0,n}}{\longrightarrow} Y/nY,
\]
and 
\[
\varphi'_{-1,n}: Gr_{-1}^{Z'_n} = Gr_{-1}^{Z_n} \overset{\varphi_{-1,n}}{\longrightarrow} A[n] \overset{\pi_A}{\cong} A^{(\mathfrak{p}_i)}[n].
\]
Moreover, we have 
\[
\nu'_{-1,n} = \nu'_n = \nu_n \circ \kappa
\]
where the first equality is tautological and the second is part of the definition of the partial Frobenius. 

To summarize, we have proved that 
\[
(A', \lambda'_{A'}, i'_{A'}, (\varphi_{-1,n}',\nu_{-1,n}'))
\]
is exactly the partial Frobenius operation applied to
$(A, \lambda_A, i_A, (\varphi_{-1,n},\nu_{-1,n}))$, 
and have identified the torus argument
\[
(X', Y', \phi', \varphi'_{-2,n}, \varphi'_{0,n})
\]
together with the filtration $Z'_n$. The remaining part to be identified is $(c',c'^{\vee}, \tau')$.

\vspace{5mm}

First, we  note that for any degenerating abelian variety $H$, the canonical morphism
$H=H\otimes_{\mathcal{O}_F} \mathcal{O}_F \overset{id \otimes \hookrightarrow}{\longrightarrow} H\otimes_{\mathcal{O}_F} \mathfrak{p}_i^{-1}$
factors through 
$H \rightarrow H^{(\mathfrak{p}_i)} \rightarrow H\otimes_{\mathcal{O}_F} \mathfrak{p}_i^{-1}$,
and tensoring the factoring morphism with $\mathfrak{p}_i$ we obtain 
\[
V^{(\mathfrak{p}_i)}: H^{(\mathfrak{p}_i)}\otimes_{\mathcal{O}_F} \mathfrak{p}_i \rightarrow H
\]
which is characterized by the commutative diagram
\[
\begin{tikzcd}
 H\otimes_{\mathcal{O}_F} \mathfrak{p}_i \arrow[r,"\pi_H \otimes id_{\mathfrak{p}_i}"] \arrow[dr,"id_H \otimes (\mathfrak{p}_i \hookrightarrow \mathcal{O}_F)"']
 & H^{(\mathfrak{p}_i)} \otimes_{\mathcal{O}_F} \mathfrak{p}_i \arrow[d,"V^{(\mathfrak{p}_i)}"]
 \\
 & H
\end{tikzcd}
\]
We observe that when $H$ is an abelian scheme,
under the identification $H^{(\mathfrak{p}_i),\vee} \cong H^{\vee,(\mathfrak{p}_i)} \otimes_{\mathcal{O}_F} \mathfrak{p}_i$ 
 the morphism
$H^{(\mathfrak{p}_i)}\otimes_{\mathcal{O}_F} \mathfrak{p}_i \rightarrow H$
is the dual of the projection 
$\pi_H: H \rightarrow H^{(\mathfrak{p}_i)}$,
i.e. we have a commutative diagram
\[
\begin{tikzcd}
  H^{(\mathfrak{p}_i),\vee} \arrow[r,"\sim"] 
 \arrow[dr, "(\pi_H)^{\vee}"]
 & H^{\vee,(\mathfrak{p}_i)} \otimes_{\mathcal{O}_F} \mathfrak{p}_i
\arrow[d,"V^{(\mathfrak{p}_i)}"]
 \\
 & H
\end{tikzcd}
\]
Dually, we have that $V^{(\mathfrak{p}_i),\vee} $ is $\pi_{A^{\vee}}$ under canonical isomorphism, i.e.
\[
\begin{tikzcd}
 H^{\vee} \arrow[r,"V^{(\mathfrak{p}_i),\vee}"] 
 \arrow[dr, "\pi_{H^{\vee}}"]
 & (H^{(\mathfrak{p}_i)} \otimes_{\mathcal{O}_F}\mathfrak{p}_i)^{\vee} 
 \isoarrow{d}
 \\
 & H^{\vee,(\mathfrak{p}_i)}
\end{tikzcd}
\]
is commutative.

\vspace{5mm}

\begin{lemma}
\[
c': X' = X \otimes_{\mathcal{O}_F} \mathfrak{p}_i \overset{c \otimes id}{\longrightarrow} A^{\vee} \otimes_{\mathcal{O}_F} \mathfrak{p}_i 
\overset{\pi \otimes id}{\longrightarrow} {A^{\vee}}' \otimes_{\mathcal{O}_F} \mathfrak{p}_i \cong {A'}^{\vee}
\]
\end{lemma}

\begin{innerproof}[Proof of the Lemma]

We have seen that there is a natural morphism
\[
V^{(\mathfrak{p}_i)}: G^{\natural,(\mathfrak{p}_i)} \otimes_{\mathcal{O}_F} \mathfrak{p}_i \rightarrow G^{\natural}
\]
which induces the morphism between extensions
\[
\begin{tikzcd}
 0  \arrow[r] & T \arrow[r] \arrow[d,equal]  
 & G^{\natural,(\mathfrak{p}_i)} \otimes_{\mathcal{O}_F} \mathfrak{p}_i
 \arrow[r] \arrow[d,] 
 & A^{(\mathfrak{p}_i)} \otimes_{\mathcal{O}_F} \mathfrak{p}_i \arrow[r] \arrow[d,"V_A^{(\mathfrak{p}_i)}"] & 0 
\\
 0  \arrow[r] & T \arrow[r] & G^{\natural} \arrow[r] & A \arrow[r] & 0
\end{tikzcd}
\]
where we use the canonical isomorphism
$T^{(\mathfrak{p}_i)}\otimes_{\mathcal{O}_F} \mathfrak{p}_i \cong T$. 
We note that the extension in the first row is determined by the morphism
\[
 X \cong (X\otimes_{\mathcal{O}_F} \mathfrak{p}_i) \otimes_{\mathcal{O}_F} \mathfrak{p}_i^{-1} \overset{c' \otimes id_{\mathfrak{p}_i^{-1}}}{\longrightarrow} 
A^{(\mathfrak{p}_i),\vee} \otimes_{\mathcal{O}_F} \mathfrak{p}_i^{-1} 
\cong A^{\vee,(\mathfrak{p}_i)}
\]
and the extension in the second row is determined by
\[
c: X \longrightarrow A^{\vee}
\]
For $\chi \in X$, we write $\mathscr{O}_{\chi}:= c(\chi) \in Pic^0(A)$  and 
$L_{\chi} := c' \otimes id_{\mathfrak{p}_i^{-1}}(\chi) \in Pic^0(A^{(\mathfrak{p}_i)}\otimes_{\mathcal{O}_F} \mathfrak{p}_i)$. 
By abuse of notation, we will write $\mathscr{O}_{\chi}$ and $L_{\chi}$ for both the line bundle and the $\textbf{G}_m$-torsor.

Recall that $\mathscr{O}_{\chi}$ is defined as the pushout of $G^{\natural}$ along $\chi$, i.e. we have a pushout diagram
\[
\begin{tikzcd}
 0  \arrow[r] & T \arrow[r] \arrow[d,"\chi"]  \arrow[dr, phantom, "\ulcorner", very near start]
 & G^{\natural} \arrow[r] \arrow[d,] & A \arrow[r] \arrow[d,equal] & 0 
\\
 0  \arrow[r] & \textbf{G}_m \arrow[r] & \mathscr{O}_{\chi} \arrow[r] & A \arrow[r] & 0
\end{tikzcd}
\]
and similarly for $L_{\chi}$. We can complete this into the previous diagram and obtain
\[
\begin{tikzcd}
 0  \arrow[r] & T \arrow[r] \arrow[d,equal]  
 & G^{\natural,(\mathfrak{p}_i)} \otimes_{\mathcal{O}_F} \mathfrak{p}_i
 \arrow[r] \arrow[d,] 
 & A^{(\mathfrak{p}_i)} \otimes_{\mathcal{O}_F} \mathfrak{p}_i \arrow[r] \arrow[d,"V_A^{(\mathfrak{p}_i)}"] & 0 
\\
0  \arrow[r] & T \arrow[r] \arrow[d,"\chi"]  \arrow[dr, phantom, "\ulcorner", very near start]
 & G^{\natural} \arrow[r] \arrow[d,] & A \arrow[r] \arrow[d,equal] & 0 
\\
 0  \arrow[r] & \textbf{G}_m \arrow[r] & \mathscr{O}_{\chi} \arrow[r] & A \arrow[r] & 0
\end{tikzcd}
\]
Similarly, $L_{\chi}$ is defined by the pushout of
$G^{\natural,(\mathfrak{p}_i)} \otimes_{\mathcal{O}_F} \mathfrak{p}_i$
along $\chi$, and the universal property of pushout implies that the diagram factorizes through $L_{\chi}$, i.e. we have a commutative diagram
\[
\begin{tikzcd}
 0  \arrow[r] & T \arrow[r] \arrow[d,"\chi"]  
 \arrow[dr, phantom, "\ulcorner", very near start]
 & G^{\natural,(\mathfrak{p}_i)} \otimes_{\mathcal{O}_F} \mathfrak{p}_i
 \arrow[r] \arrow[d,] 
 & A^{(\mathfrak{p}_i)} \otimes_{\mathcal{O}_F} \mathfrak{p}_i \arrow[r] \arrow[d,equal] & 0 
\\
0  \arrow[r] & \textbf{G}_m \arrow[r] \arrow[d,equal]  
 & L_{\chi} \arrow[r] \arrow[d,] & A^{(\mathfrak{p}_i)} \otimes_{\mathcal{O}_F} \mathfrak{p}_i
 \arrow[r] \arrow[d,"V_A^{(\mathfrak{p}_i)}"] & 0 
\\
 0  \arrow[r] & \textbf{G}_m \arrow[r] & \mathscr{O}_{\chi} \arrow[r] & A \arrow[r] & 0
\end{tikzcd}
\]
which tells us that
\[
(V_A^{(\mathfrak{p}_i)})^*\mathscr{O}_{\chi} = L_{\chi}
\]
In other words, we have a commutative diagram
\[
\begin{tikzcd}
 X  \arrow[r,"c' \otimes id_{\mathfrak{p}_i^{-1}}"] \arrow[d,"c"] 
 & 
 A^{(\mathfrak{p}_i),\vee} \otimes_{\mathcal{O}_F} \mathfrak{p}_i^{-1} \isoarrow{d}
\\
A^{ \vee} \arrow[r, "(V_A^{(\mathfrak{p}_i)})^{\vee}"] 
& 
(A^{(\mathfrak{p}_i)} \otimes_{\mathcal{O}_F} \mathfrak{p}_i)^{\vee}
\end{tikzcd}
\]
Now with the help of the commutative diagram
\[
\begin{tikzcd}
 A^{\vee} \arrow[r,"V^{(\mathfrak{p}_i),\vee}"] 
 \arrow[dr, "\pi_{A^{\vee}}"]
 & (A^{(\mathfrak{p}_i)} \otimes_{\mathcal{O}_F}\mathfrak{p}_i)^{\vee} 
 \isoarrow{d}
 \\
 & A^{\vee,(\mathfrak{p}_i)}
\end{tikzcd}
\]
we see that 
\[
\begin{tikzcd}
 X  \arrow[r,"c' \otimes id_{\mathfrak{p}_i^{-1}}"] \arrow[d,"c"] 
 & 
 A^{(\mathfrak{p}_i),\vee} \otimes_{\mathcal{O}_F} \mathfrak{p}_i^{-1} \isoarrow{d}
\\
A^{ \vee} \arrow[r, "\pi_{A^{\vee}}"] 
& 
A^{\vee,(\mathfrak{p}_i)}
\end{tikzcd}
\]
i.e. under the identification $A^{\vee, (\mathfrak{p}_i)} \cong A^{(\mathfrak{p}_i),\vee} \otimes_{\mathcal{O}_F} \mathfrak{p}_i^{-1}$, 
\[
c' \otimes id_{\mathfrak{p}_i^{-1}} = \pi_{A^{\vee}} \circ c
\]
and tensoring with $\mathfrak{p}_i$ we obtain 
\[
c' = (\pi_{A^{\vee}} \circ c) \otimes id_{\mathfrak{p}_i} =(X \overset{c}{\rightarrow} A^{\vee} \overset{\pi_{A^{\vee}}}{\rightarrow} A^{\vee,(\mathfrak{p}_i)}) \otimes id_{\mathfrak{p}_i}
\]
which is what we want to prove. 

\qed
\end{innerproof}

Dually, we have

\begin{lemma}
\[
c'^{\vee} : Y'= Y \overset{c^{\vee}}{\longrightarrow} A
\overset{\pi}{\longrightarrow} A'
\]
\end{lemma}

\begin{innerproof}[Proof of the Lemma]

For $y \in Y$, we write $L_{c^{\vee}(y)}:= c^{\vee}(y) \in Pic^0(A^{\vee})$  and 
$L_{c'^{\vee}(y)} := c'^{\vee}(y) \in Pic^0(A^{(\mathfrak{p}_i),\vee})$. 
We have seen that 
$G^{(\mathfrak{p}_i),\vee,\natural} \cong (G^{\vee,\natural})^{(\mathfrak{p}_i)} \otimes_{\mathcal{O}_F} \mathfrak{p}_i$,
which equips with a natural morphism
\[
(G^{\vee,\natural})^{(\mathfrak{p}_i)} \otimes_{\mathcal{O}_F} \mathfrak{p}_i 
\rightarrow
G^{\vee,\natural}
\]

Recall that 
$G^{(\mathfrak{p}_i),\vee,\natural}$
is an extension
\[
0 \rightarrow T^{\vee} \rightarrow G^{(\mathfrak{p}_i), \vee, \natural} \rightarrow A^{(\mathfrak{p}_i), \vee} \rightarrow 0 
\]
and we observe that the morphism 
$G^{(\mathfrak{p}_i),\vee,\natural} \cong
(G^{\vee,\natural})^{(\mathfrak{p}_i)} \otimes_{\mathcal{O}_F} \mathfrak{p}_i 
\rightarrow
G^{\vee,\natural}$
gives rise to a morphism of the extension 
\[
\begin{tikzcd}
 0  \arrow[r] & T^{\vee} \arrow[r] \arrow[d,equal]  
 & G^{(\mathfrak{p}_i), \vee, \natural} \arrow[r] \arrow[d,] & A^{(\mathfrak{p}_i), \vee} \arrow[r] \arrow[d,"(\pi_A)^{\vee}"] & 0 
\\
 0  \arrow[r] & T^{\vee} \arrow[r] & G^{\vee,\natural} \arrow[r] & A^{\vee} \arrow[r] & 0
\end{tikzcd}
\]
Now, similar as before, $L_{c^{\vee}(y)}$ is the pushout of $G^{\vee,\natural}$ along $y$, and we have a commutative diagram
\[
\begin{tikzcd}
 0  \arrow[r] & T^{\vee} \arrow[r] \arrow[d,equal]  
 & G^{(\mathfrak{p}_i), \vee, \natural} \arrow[r] \arrow[d,] & A^{(\mathfrak{p}_i), \vee} \arrow[r] \arrow[d,"(\pi_A)^{\vee}"] & 0 
\\
 0  \arrow[r] & T^{\vee} \arrow[dr, phantom, "\ulcorner", very near start] \arrow[r] \arrow[d,"y"] & G^{\vee,\natural} \arrow[r] \arrow[d] & A^{\vee} \arrow[r] \arrow[d,equal] & 0
 \\
 0  \arrow[r] & \textbf{G}_m \arrow[r] & L_{c^{\vee}(y)} \arrow[r] & A^{\vee} \arrow[r] & 0
\end{tikzcd}
\]
By the universal property of pushout, we have the factorization
\[
\begin{tikzcd}
 0  \arrow[r] & T^{\vee} \arrow[r] \arrow[d,"y"] \arrow[dr, phantom, "\ulcorner", very near start]  
 & G^{(\mathfrak{p}_i), \vee, \natural} \arrow[r] \arrow[d,] & A^{(\mathfrak{p}_i), \vee} \arrow[r] \arrow[d,equal] & 0 
\\
 0  \arrow[r] & \textbf{G}_m  \arrow[r] \arrow[d,equal] & L_{c'^{\vee}(y)} \arrow[r] \arrow[d] & A^{(\mathfrak{p}_i), \vee} \arrow[r] \arrow[d,"(\pi_A)^{\vee}"] & 0
 \\
 0  \arrow[r] & \textbf{G}_m \arrow[r] & L_{c^{\vee}(y)} \arrow[r] & A^{\vee} \arrow[r] & 0
\end{tikzcd}
\]
which implies that 
\[
(\pi_A)^{\vee *}L_{c^{\vee}(y)} = L_{c'^{\vee}(y)}.
\]
In other words, we have 
\[
c'^{\vee}(y) = ((\pi_A)^{\vee})^{\vee} \circ c^{\vee}(y) =\pi_A \circ c^{\vee}(y)
\]
under the canonical identification $(A^{(\mathfrak{p}_i), \vee})^{\vee} = A^{(\mathfrak{p}_i)}$, 
which means that
\[
c'^{\vee} : Y \overset{c^{\vee}}{\longrightarrow} A \overset{\pi_A}{\longrightarrow} A^{(\mathfrak{p}_i)}.
\]

\qed 
\end{innerproof}

\vspace{5mm}

Lastly, we determine $\tau'$ from $\tau$.

Let us  write
$\mathscr{O}_\chi := c(\chi) \in Pic^0(A)$ 
for $\chi \in X$,
and similarly $\mathscr{O}_{\chi'}:= c'(\chi') \in Pic^0(A')$ 
for $\chi' \in X'$. By abuse of notation, we will write $\mathscr{O}_{\chi}$ for both the line bundle and the $\textbf{G}_m$-torsor. 

We first make an observation on the relation between $\mathscr{O}_{\chi}$ and $\mathscr{O}_{\chi'}$, which can be used to write the canonical morphism $G^{\natural} \rightarrow G^{(\mathfrak{p}_i), \natural}$ in a more explicit way.

Recall that $\mathscr{O}_{\chi}$ is defined as the pushout of $G^{\natural}$ along $\chi$, i.e. we have a pushout diagram
\[
\begin{tikzcd}
 0  \arrow[r] & T \arrow[r] \arrow[d,"\chi"]  \arrow[dr, phantom, "\ulcorner", very near start]
 & G^{\natural} \arrow[r] \arrow[d,] & A \arrow[r] \arrow[d,equal] & 0 
\\
 0  \arrow[r] & \textbf{G}_m \arrow[r] & \mathscr{O}_{\chi} \arrow[r] & A \arrow[r] & 0
\end{tikzcd}
\]
and similarly for $\mathscr{O}_{\chi'}$. We have a diagram
\[
\begin{tikzcd}
 0  \arrow[r] & T \arrow[r] \arrow[d,"\pi_T"]  
 & G^{\natural} \arrow[r] \arrow[d,] & A \arrow[r] \arrow[d,"\pi_A"] & 0 
\\
0  \arrow[r] & T^{(\mathfrak{p}_i)} \arrow[r] \arrow[d,"\chi'"]  \arrow[dr, phantom, "\ulcorner", very near start]
 & G^{\natural,(\mathfrak{p}_i)} \arrow[r] \arrow[d,] & A^{(\mathfrak{p}_i)} \arrow[r] \arrow[d,equal] & 0 
 \\
 0  \arrow[r] & \textbf{G}_m \arrow[r] & \mathscr{O}_{\chi'} \arrow[r] & A^{(\mathfrak{p}_i)} \arrow[r] & 0
\end{tikzcd}
\]
Let
\[
\rho: X'= X \otimes_{\mathcal{O}_F} \mathfrak{p}_i \rightarrow X
\]
be the map induced by $\pi_T$ (being the obvious map induced by $\mathfrak{p}_i \hookrightarrow \mathcal{O}_F$), then $\mathscr{O}_{\rho(\chi')}$ is the pushout along $\pi_T \circ \chi'$, and the universal property of the pushout provides us with a factorization of the short exact sequence
\[
\begin{tikzcd}
 0  \arrow[r] & T \arrow[dr, phantom, "\ulcorner", very near start] \arrow[r] \arrow[d,"\rho(\chi')"]  
 & G^{\natural} \arrow[r] \arrow[d,] & A \arrow[r] \arrow[d,equal] & 0 
\\
0  \arrow[r] & \textbf{G}_m \arrow[r] \arrow[d,equal] 
 & \mathscr{O}_{\rho(\chi')}
 \arrow[r] \arrow[d,] & A \arrow[r] \arrow[d,"\pi_A"] & 0 
 \\
 0  \arrow[r] & \textbf{G}_m \arrow[r] & \mathscr{O}_{\chi'} \arrow[r] & A^{(\mathfrak{p}_i)} \arrow[r] & 0
\end{tikzcd}
\]
This shows 
\begin{equation} \label{bbss}
\pi_A^*\mathscr{O}_{\chi'} = \mathscr{O}_{\rho(\chi')}
\end{equation}
which implies that under the identifications 
$G^{\natural} = \underline{\text{Spec}}_{\mathscr{O}_A} (\underset{\chi \in X}{\oplus} \mathscr{O}_{\chi})$
and 
$G^{(\mathfrak{p}_i), \natural} = \underline{\text{Spec}}_{\mathscr{O}_{A^{(\mathfrak{p}_i)}}} (\underset{\chi' \in X'}{\oplus} \mathscr{O}_{\chi'})$
the morphism
\[
G^{\natural} \rightarrow G^{(\mathfrak{p}_i), \natural}
\]
is induced from the map 
\begin{equation} \label{longgg}
\pi_A^*(\underset{\chi' \in X'}{\oplus} \mathscr{O}_{\chi'})
\cong
\underset{\chi' \in X'}{\oplus} \pi_A^*\mathscr{O}_{\chi'}
\cong 
\underset{\chi' \in X'}{\oplus} \mathscr{O}_{\rho(\chi')}
\hookrightarrow
\underset{\chi \in X}{\oplus} \mathscr{O}_{\chi}
\end{equation}
on relatively affine algebras over $A$.

\vspace{5mm}

Let us recall how we associate $\tau$ to the degenerating abelian variety $G$. We start by choosing an ample invertible cubical sheaf $\mathcal{L}$ on $G$ (whose existence is guaranteed by the normality of the base $\text{Spev}(V)$), then we can show that its formal completion extends canonically to a cubical ample line bundle $\mathcal{L}^{\natural}$ on $G^{\natural}$, which descends to  an ample invertible sheaf $\mathcal{M}$ on $A$, i.e. if we denote by $p: G^{\natural} \rightarrow A$ the projection map, then $p^*\mathcal{M} = \mathcal{L}^{\natural}$. 
Replacing $\lambda_{\eta}$ by $\lambda_{\mathcal{L}_{\eta}}$ if necessary (so $\lambda$ is the unique extension of $\lambda_{\mathcal{L}_{\eta}}$ to $G$), we assume that $\lambda_{\eta}= \lambda_{\mathcal{L}_{\eta}}$. The construction of $\tau$ is independent of the choice of $\lambda$ or $\mathcal{L}$. 

The canonical isomorphism 
$G^{\natural} = \underline{\text{Spec}}_{\mathscr{O}_A} (\underset{\chi \in X}{\oplus} \mathscr{O}_{\chi})$
tells us that 
\[
p_*\mathcal{L}^{\natural} \cong \underset{\chi \in X}{\oplus} \mathcal{M}_{\chi}
\]
with $\mathcal{M}_{\chi} := \mathcal{M} \otimes_{\mathscr{O}_A} \mathscr{O}_{\chi}$, from which we obtain 
\[
\Gamma(G^{\natural},\mathcal{L}^{\natural}) = \underset{\chi \in X}{\oplus} \Gamma(A, \mathcal{M}_{\chi})
\]
and 
\[
\Gamma(G^{\natural}_{\text{for}},\mathcal{L}^{\natural}_{\text{for}}) = \underset{\chi \in X}{\hat{\oplus}} \Gamma(A, \mathcal{M}_{\chi})
\]
where $\hat{}$ denotes the completion with respect to the maximal ideal of $V$.
Now we have the canonical map
\[
\Gamma(G,\mathcal{L}) \rightarrow \Gamma(G_{\text{for}},\mathcal{L}_{\text{for}})
\cong \Gamma(G^{\natural}_{\text{for}},\mathcal{L}^{\natural}_{\text{for}}) \cong 
\underset{\chi \in X}{\hat{\oplus}} \Gamma(A, \mathcal{M}_{\chi}) 
\rightarrow \Gamma(A, \mathcal{M}_{\chi}) 
\]
where the first map is the restriction and the last is the projection  on $\chi$-th component. Tensoring both sides with $\text{Frac}(V)$, we obtain 
\[
\sigma_{\chi}: \Gamma(G_{\eta},\mathcal{L}_{\eta}) \longrightarrow
\Gamma(A_{\eta}, \mathcal{M}_{\chi,\eta})
\]

Let $y \in Y$ and $T_{c^{\vee}(y)}: A \rightarrow A$ the translation by $c^{\vee}(y)$, then 
\[
\lambda_A \circ c^{\vee} = c \circ \phi
\]
tells us that 
we have a canonical isomorphism of rigidified line bundles
\[
T_{c^{\vee}(y)}^* \mathcal{M}_{\chi} \cong
\mathcal{M}_{\chi + \phi(y)} \otimes_R \mathcal{M}_{\chi}(c^{\vee}(y))
\]
and this is the place where we use the assumption 
$\lambda_{\eta} = \lambda_{\mathcal{L}_{\eta}}$
on $\mathcal{L}$. 
Now we have the map
\[
T_{c^{\vee}(y)}^* \circ \sigma_{\chi} : 
\Gamma(G_{\eta}, \mathcal{L}_{\eta}) \rightarrow 
\Gamma (A_{\eta}, T_{c^{\vee}(y)}^*\mathcal{M}_{\chi,\eta}) \cong 
\Gamma (A_{\eta},\mathcal{M}_{\chi + \phi(y),\eta}) \otimes_K 
\mathcal{M}_{\chi}(c^{\vee}(y))_{\eta}.
\]
The desired $\tau$ is obtained by comparing 
$T_{c^{\vee}(y)}^* \circ \sigma_{\chi}$
with the map
\[
\sigma_{\chi+\phi(y)}: \Gamma(G_{\eta}, \mathcal{L}_{\eta}) \longrightarrow \Gamma (A_{\eta}, \mathcal{M}_{\chi+\phi(y),\eta}), 
\]
and the result is  
\[
\sigma_{\chi + \phi(y)} = \psi (y) \tau(y, \chi) T_{c^{\vee}(y)}^* \circ \sigma_{\chi}
\]
where 
\[
\psi(y): \mathcal{M}(c^{\vee}(y))_{\eta} \overset{\sim}{\rightarrow} \mathscr{O}_{S, \eta}
\]
is a trivialization of the fiber of $\mathcal{M}$ at $c^{\vee}(y)$, 
and 
\[
\tau(y, \chi) : \mathscr{O}_{\chi}(c^{\vee}(y))_{\eta} \longrightarrow \mathscr{O}_{S, \eta}
\]
is a section of 
$\mathscr{O}_{\chi}(c^{\vee}(y))_{\eta}^{\otimes -1}$
for each $y \in Y$ and $\chi \in X$, so that 
$\psi (y) \tau(y, \chi)$ is a section of $\mathcal{M}_{\chi}(c^{\vee}(y))_{\eta}^{\otimes-1}$
(recall $\mathcal{M}_{\chi} = \mathcal{M} \otimes \mathscr{O}_{\chi}$). This uniquely characterizes $\tau$ since $\sigma_{\chi} \neq 0$ for every $\chi \in X$.

\vspace{5mm}

\begin{lemma}
There is a canonical isomorphism
\[
(c'^{\vee} \times c')^*P_{A'} \cong  ((c^{\vee} \times c)^*P_{A} )_{Y' \times X'}
\]
where the pullback to $Y' \times X'$ is through  the natural injection 
\[
Y' \times X' = Y \times (X \otimes_{\mathcal{O}_F} \mathfrak{p}_i) \hookrightarrow Y \times X
\]
induced by
$X \otimes_{\mathcal{O}_F} \mathfrak{p}_i \hookrightarrow X \otimes_{\mathcal{O}_F} \mathcal{O}_F \cong X$.  
Now
$\tau'$ is identified as 
\[
\tau': 1_{Y' \times X', \eta}  \overset{\tau |_{Y' \times X'}}{\longrightarrow} 
((c^{\vee}\times c)^*\mathcal{P}_{A,\eta}^{\otimes -1})_{Y' \times X'} \cong 
(c'^{\vee} \times c')^*P_{A',\eta}^{\otimes -1}.
\]
\end{lemma}

\begin{remark}
As in section \ref{equivalentfor}, the lemma is equivalent to the statement that the period of $G^{(\mathfrak{p}_i)}$ is given by 
\[
Y_{\eta} \overset{\imath}{\longrightarrow} G^{\natural}_{\eta} \longrightarrow G^{(\mathfrak{p}_i),\natural}_{\eta}
\]
where $\imath$ is the period of $G$, and 
$G^{\natural}_{\eta} \longrightarrow G^{(\mathfrak{p}_i),\natural}_{\eta}$
is the natural projection map. 
\end{remark}

\begin{innerproof}[Proof of the Lemma]

We have the same description as above for $\tau'$, and we want to compare it with $\tau$. Let us first compare $\sigma_{\chi'}$ and $\sigma_{\chi}$. 

We choose an ample cubical invertible sheaf $\mathcal{L}^{(\mathfrak{p}_i)}$ on $G^{(\mathfrak{p}_i)}$
whose associated line bundle
$\mathcal{L}^{(\mathfrak{p}_i), \natural}$ 
on 
$G^{(\mathfrak{p}_i),\natural}$
descends to an ample invertible sheaf
$\mathcal{M}^{(\mathfrak{p}_i)}$ 
on $A^{(\mathfrak{p}_i)}$.
Let $\mathcal{L}$ (resp. $\mathcal{M}$) be the pullback of 
$\mathcal{L}^{(\mathfrak{p}_i)}$ 
(resp. 
$\mathcal{M}^{(\mathfrak{p}_i)}$)
along the natural map 
$G \rightarrow G^{(\mathfrak{p}_i)}$
(resp. 
$A \rightarrow A^{(\mathfrak{p}_i)}$). 
Note that both $\mathcal{L}$ 
and $\mathcal{M}$ are ample since they are pullback of ample line bundles along finite maps 
$G \rightarrow G^{(\mathfrak{p}_i)}$
and
$A \rightarrow A^{(\mathfrak{p}_i)}$ 
respectively. 

We assume that 
$\mathcal{L}^{(\mathfrak{p}_i)}_{\eta}$ 
induces the polarization 
$\lambda'_{\eta}$ on $G^{(\mathfrak{p}_i)}_{\eta}$,
so we have
\begin{equation} \label{subtlll}
T_{c'^{\vee}(y)}^* \mathcal{M}^{(\mathfrak{p}_i)}_{\chi'} \cong
\mathcal{M}_{\chi' + \phi'(y)}^{(\mathfrak{p}_i)} \otimes_R \mathcal{M}_{\chi'}^{(\mathfrak{p}_i)}(c'^{\vee}(y))
\end{equation}
Let $\pi : G \rightarrow G^{(\mathfrak{p}_i)}$ 
be the projection map,  then
$\mathcal{L} := \pi^*\mathcal{L}^{(\mathfrak{p}_i)}$
and the associated polarization 
\[
\lambda_{\mathcal{L}_{\eta}} = \lambda_{\pi^*\mathcal{L}^{(\mathfrak{p}_i)}_{\eta}}= \pi^{\vee}_{\eta} \circ \lambda_{\mathcal{L}^{(\mathfrak{p}_i)}_{\eta}} \circ
\pi_{\eta} =\pi^{\vee}_{\eta} \circ \lambda'_{\eta} \circ
\pi_{\eta}= \xi \lambda_{\eta}
\]
which has the effect that 
\begin{equation} \label{mistake}
T_{c^{\vee}(y)}^* \mathcal{M}_{\chi} \cong
\mathcal{M}_{\chi + \xi\phi(y)} \otimes_R \mathcal{M}_{\chi}(c^{\vee}(y))
\end{equation}
as we have to replace the relation 
$\lambda_A \circ c^{\vee} = c \circ \phi$
by
$\xi\lambda_A \circ c^{\vee} = c \circ \xi\phi$. 

Let
\[
\rho: X'= X \otimes_{\mathcal{O}_F} \mathfrak{p}_i \rightarrow X
\]
be the  map induced by $\mathfrak{p}_i \hookrightarrow \mathcal{O}_F$ 
as before, then for $\chi' \in X'$,
the natural map 
$G \rightarrow G^{(\mathfrak{p}_i)}$
induces a commutative diagram
\[
\begin{tikzcd}
 \Gamma(G,\mathcal{L})  \arrow[r] 
 & \Gamma(G_{\text{for}},\mathcal{L}_{\text{for}}) \arrow[r,phantom, "\cong"]  
 & \Gamma(G^{\natural}_{\text{for}},\mathcal{L}^{\natural}_{\text{for}}) \arrow[r] 
 & \Gamma(A, \mathcal{M}_{\rho(\chi')})  
\\
\Gamma(G^{(\mathfrak{p}_i)},\mathcal{L}^{(\mathfrak{p}_i)})  \arrow[r] \arrow[u] 
& \Gamma(G^{(\mathfrak{p}_i)}_{\text{for}},\mathcal{L}^{(\mathfrak{p}_i)}_{\text{for}}) \arrow[r,phantom, "\cong"] \arrow[u]
& \Gamma(G^{(\mathfrak{p}_i),\natural}_{\text{for}},\mathcal{L}^{(\mathfrak{p}_i),\natural}_{\text{for}}) \arrow[r] \arrow[u]
& \Gamma(A^{(\mathfrak{p}_i)}, \mathcal{M}^{(\mathfrak{p}_i)}_{\chi'}) \arrow[u]
\end{tikzcd}
\]
where the last two horizontal maps are projections 
\[
\Gamma(G^{\natural}_{\text{for}},\mathcal{L}^{\natural}_{\text{for}}) \cong 
\underset{\chi \in X}{\hat{\oplus}} \Gamma(A, \mathcal{M}_{\chi}) 
\rightarrow \Gamma(A, \mathcal{M}_{\rho(\chi')}) 
\]
and 
\[
\Gamma(G^{(\mathfrak{p}_i),\natural}_{\text{for}},\mathcal{L}^{(\mathfrak{p}_i), \natural}_{\text{for}}) \cong 
\underset{\chi' \in X}{\hat{\oplus}} \Gamma(A^{(\mathfrak{p}_i)}, \mathcal{M}^{(\mathfrak{p}_i)}_{\chi'}) 
\rightarrow \Gamma(A^{(\mathfrak{p}_i)}, \mathcal{M}^{(\mathfrak{p}_i)}_{\chi'}) 
\]
respectively,
and the last vertical map is induced from
\[
\pi_A^*\mathcal{M}^{(\mathfrak{p}_i)}_{\chi'} = \pi_A^*(\mathcal{M}^{(\mathfrak{p}_i)} \otimes_{\mathscr{O}_{A^{(\mathfrak{p}_i)}}} \mathscr{O}_{\chi'})=
\mathcal{M} \otimes_{\mathscr{O}_A} \pi^*\mathscr{O}_{\chi'} \overset{(\ref{bbss})}{\cong}
\mathcal{M} \otimes_{\mathscr{O}_A} \mathscr{O}_{\rho(\chi')}=
\mathcal{M}_{\rho(\chi')}
\]
i.e. it is taking  the global section of the map 
$\mathcal{M}^{(\mathfrak{p}_i)}_{\chi'} \rightarrow \pi_{A*}\pi_A^* \mathcal{M}^{(\mathfrak{p}_i)}_{\chi'} 
\overset{\sim}{\rightarrow} \pi_{A*} \mathcal{M}_{\rho(\chi')}$
with the last isomorphism being $\pi_{A*}$ of the  isomorphism (\ref{bbss}). 
The commutativity of the first two squares is tautological, and that of the last square follows from (\ref{longgg}). Tensoring with $\text{Frac}(V)$ of the above diagram, we obtain a commutative diagram
\[
\begin{tikzcd}
\Gamma(G_{\eta},\mathcal{L}_{\eta})  \arrow[r,"\sigma_{\rho(\chi')}"] 
 & 
 \Gamma(A_{\eta}, \mathcal{M}_{\rho(\chi'),\eta})  
\\
\Gamma(G^{(\mathfrak{p}_i)}_{\eta},\mathcal{L}^{(\mathfrak{p}_i)}_{\eta}) \arrow[r, "\sigma_{\chi'}"] 
\arrow[u]
& 
\Gamma(A_{\eta}^{(\mathfrak{p}_i)}, \mathcal{M}^{(\mathfrak{p}_i)}_{\chi',\eta}) 
\arrow[u]
\end{tikzcd}
\]
for every $\chi' \in X'$. 

For $y \in Y$, we can complete the diagram into \[
\begin{tikzcd}
\Gamma(G_{\eta},\mathcal{L}_{\eta})  \arrow[r,"\sigma_{\rho(\chi')}"] 
 & 
 \Gamma(A_{\eta}, \mathcal{M}_{\rho(\chi'),\eta})  
 \arrow[r,"T_{c^{\vee}(y)}^*"]
 &
 \Gamma(A_{\eta}, (T_{c^{\vee}(y)})^*\mathcal{M}_{\rho(\chi'),\eta}) 
 \arrow[r,equal]
 & \cdots
\\
\Gamma(G^{(\mathfrak{p}_i)}_{\eta},\mathcal{L}^{(\mathfrak{p}_i)}_{\eta}) \arrow[r, "\sigma_{\chi'}"] 
\arrow[u]
& 
\Gamma(A_{\eta}^{(\mathfrak{p}_i)}, \mathcal{M}^{(\mathfrak{p}_i)}_{\chi',\eta}) 
\arrow[u]
\arrow[r,"T_{c'^{\vee}(y)}^*"]
&
\Gamma(A_{\eta}^{(\mathfrak{p}_i)}, (T_{c'^{\vee}(y)})^* \mathcal{M}^{(\mathfrak{p}_i)}_{\chi',\eta})
\arrow[u]
\arrow[r,equal]
& \cdots
\end{tikzcd}
\]
\begin{equation} \label{longdiffic}
\begin{tikzcd}
\cdots \arrow[r,equal,"(\ref{mistake})"]
&
\Gamma(A_{\eta}, \mathcal{M}_{\rho(\chi')+\xi\phi(y),\eta}) \otimes \mathcal{M}_{\rho(\chi')}(c^{\vee}(y))_{\eta}
\\
\cdots \arrow[r,equal,"(\ref{subtlll})"]
&
\Gamma(A_{\eta}^{(\mathfrak{p}_i)}, \mathcal{M}^{(\mathfrak{p}_i)}_{\chi'+\phi'(y),\eta})
\otimes \mathcal{M}_{\chi'}^{(\mathfrak{p}_i)}(c'^{\vee}(y))_{\eta}
\arrow[u]
\end{tikzcd}
\end{equation}
where the last vertical arrow is the tensor product of the morphism
\[
\Gamma(A_{\eta}^{(\mathfrak{p}_i)}, \mathcal{M}^{(\mathfrak{p}_i)}_{\chi'+\phi'(y),\eta})
\rightarrow
\Gamma(A_{\eta}, \mathcal{M}_{\rho(\chi')+\xi\phi(y),\eta})
\]
induced by 
$\pi_A^*\mathcal{M}^{(\mathfrak{p}_i)}_{\chi'+\phi'(y)} \cong
\mathcal{M}_{\rho(\chi'+\phi'(y))} =
\mathcal{M}_{\rho(\chi')+\xi \phi(y)}$
($\rho \circ \phi' = \xi \phi$ by the diagram defining $\phi'$), 
with the isomorphism
\[
\mathcal{M}_{\rho(\chi')}(c^{\vee}(y))_{\eta} = \pi_A^* \mathcal{M}_{\chi'}^{(\mathfrak{p}_i)} (c^{\vee}(y))_{\eta} = \mathcal{M}_{\chi'}^{(\mathfrak{p}_i)} (\pi_A \circ c^{\vee}(y))_{\eta}
= \mathcal{M}_{\chi'}^{(\mathfrak{p}_i)}(c'^{\vee}(y))_{\eta}
\]
where we use 
$c'^{\vee}= \pi_A \circ c^{\vee}$
in the last equality. 
The middle square commutes since we have a commutative diagram \[
\begin{tikzcd}
A 
\arrow[d, "\pi_A"]
&
A \arrow[d, "\pi_A"] \arrow[l,"T_{c^{\vee}(y)}"'] 
\\
A^{(\mathfrak{p}_i)}
&
A^{(\mathfrak{p}_i)}
\arrow[l,"T_{c'^{\vee}(y)}"']
\end{tikzcd}
\]
which follows from 
$c'^{\vee}= \pi_A \circ c^{\vee}$ 
and $\pi_A$ being a group homomorphism. The commutativity of the last square follows from the commutativity of the square
\[
\begin{tikzcd}
(T_{c^{\vee}(y)})^*\mathcal{M}_{\rho(\chi'),\eta}
\arrow[r,"\sim"]
&
\mathcal{M}_{\rho(\chi')+\xi\phi(y),\eta} \otimes \mathcal{M}_{\rho(\chi')}(c^{\vee}(y))_{\eta}
\\
\pi_A^*(T_{c'^{\vee}(y)})^* \mathcal{M}^{(\mathfrak{p}_i)}_{\chi',\eta} 
\isoarrow{u}
\arrow[r,"\sim"]
&
\pi_A^*(\mathcal{M}^{(\mathfrak{p}_i)}_{\chi'+\phi'(y),\eta}
\otimes \mathcal{M}_{\chi'}^{(\mathfrak{p}_i)}(c'^{\vee}(y))_{\eta})
\isoarrow{u}
\end{tikzcd}
\]
and we can prove its commutativity as follows. Recall that the first horizontal map is (using $\lambda_L (x)= T_x^*L \otimes L^{-1} \otimes L_x^{-1}$)
\[
(T_{c^{\vee}(y)})^*\mathcal{M}_{\rho(\chi'),\eta} 
=
(\lambda_{\mathcal{M}} \circ c^{\vee}(y))_{\eta} \otimes \mathcal{M}_{\rho(\chi'),\eta}  \otimes 
\mathcal{M}_{\rho(\chi')}(c^{\vee}(y))_{\eta}
\]
\[
\overset{\lambda_{\mathcal{M}}=\xi \lambda_A}{=\joinrel=\joinrel=\joinrel=\joinrel=}(c \circ \xi\phi(y))_{\eta} 
\otimes \mathcal{M}_{\rho(\chi'),\eta}  \otimes 
\mathcal{M}_{\rho(\chi')}(c^{\vee}(y))_{\eta}
\]
\[
=\joinrel=\joinrel=\joinrel=\joinrel=
\mathscr{O}_{\xi \phi(y),{\eta}}
\otimes \mathcal{M}_{\rho(\chi'),\eta}  \otimes 
\mathcal{M}_{\rho(\chi')}(c^{\vee}(y))_{\eta}
\]
\[
=\joinrel=\joinrel=\joinrel=\joinrel=
\mathcal{M}_{\rho(\chi')+\xi\phi(y),\eta} \otimes \mathcal{M}_{\rho(\chi')}(c^{\vee}(y))_{\eta}
\]
and similarly the second horizontal map is $\pi_A^*$ of
\[
(T_{c'^{\vee}(y)})^* \mathcal{M}^{(\mathfrak{p}_i)}_{\chi',\eta} =
(\lambda'_{A'} \circ c'^{\vee}(y))_{\eta} \otimes \mathcal{M}^{(\mathfrak{p}_i)}_{\chi',\eta}  \otimes
\mathcal{M}_{\chi'}^{(\mathfrak{p}_i)}(c'^{\vee}(y))_{\eta}
\]
\[
=(c' \circ \phi'(y))_{\eta} \otimes \mathcal{M}^{(\mathfrak{p}_i)}_{\chi',\eta}  \otimes
\mathcal{M}_{\chi'}^{(\mathfrak{p}_i)}(c'^{\vee}(y))_{\eta}
\]
\[
= \mathscr{O}_{\phi'(y),\eta} \otimes \mathcal{M}^{(\mathfrak{p}_i)}_{\chi',\eta}  \otimes
\mathcal{M}_{\chi'}^{(\mathfrak{p}_i)}(c'^{\vee}(y))_{\eta}
\]
\[
=\mathcal{M}^{(\mathfrak{p}_i)}_{\chi'+\phi'(y),\eta}
\otimes \mathcal{M}_{\chi'}^{(\mathfrak{p}_i)}(c'^{\vee}(y))_{\eta}
\]
We want to prove that $\pi_A^*$ of the second isomorphism is the first isomorphism under canonical identifications, and the only non-trivial part is to observe that 
\[
\pi_A^*(\lambda'_{A'} \circ c'^{\vee}(y)) = \pi_A^{\vee} \circ \lambda'_{A'} \circ c'^{\vee}(y) \overset{c'^{\vee}=\pi_A \circ c^{\vee}}{=\joinrel=\joinrel=\joinrel=} \pi_A^{\vee} \circ \lambda'_{A'} \circ \pi_A \circ c^{\vee}(y) 
=\xi \lambda_A \circ c^{\vee}(y).
\]

We now want to compare the diagram (\ref{longdiffic}) with 
\begin{equation} \label{shorter}
\begin{tikzcd}
\Gamma(G_{\eta},\mathcal{L}_{\eta})  \arrow[r,"\sigma_{\rho(\chi')+\xi \phi(y)}"] 
 & 
 \Gamma(A_{\eta}, \mathcal{M}_{\rho(\chi')+\xi \phi(y),\eta})  
\\
\Gamma(G^{(\mathfrak{p}_i)}_{\eta},\mathcal{L}^{(\mathfrak{p}_i)}_{\eta}) \arrow[r, "\sigma_{\chi'+\phi'(y)}"] 
\arrow[u]
& 
\Gamma(A_{\eta}^{(\mathfrak{p}_i)}, \mathcal{M}^{(\mathfrak{p}_i)}_{\chi'+\phi'(y),\eta}) 
\arrow[u]
\end{tikzcd}
\end{equation}
Recall that we have 
\begin{equation} \label{painnn}
\sigma_{\rho(\chi') + \xi\phi(y)} = \psi (y) \tau(y, \rho(\chi')) T_{c^{\vee}(y)}^* \circ \sigma_{\rho(\chi')}
\end{equation}
where 
\[
\psi(y): \mathcal{M}(c^{\vee}(y))_{\eta} \overset{\sim}{\rightarrow} \mathscr{O}_{S, \eta}
\]
is a trivialization of the fiber of $\mathcal{M}$ at $c^{\vee}(y)$, 
and 
\[
\tau(y, \chi) : \mathscr{O}_{\chi}(c^{\vee}(y))_{\eta} \longrightarrow \mathscr{O}_{S, \eta}
\]
is a section of 
$\mathscr{O}_{\chi}(c^{\vee}(y))_{\eta}^{\otimes -1}$
for each $y \in Y$ and $\chi \in X$, so that 
$\psi (y) \tau(y, \chi)$ is a section of $\mathcal{M}_{\chi}(c^{\vee}(y))_{\eta}^{\otimes-1}$. 
Note that here we have tacitly changed the polarization of $G$ from $\lambda$ to $\xi \lambda$, which has the effect of replacing $\phi$ by $\xi\phi$. This does not affect $\tau$, but may affect $\psi$, which we use the same notation as before for simplicity.  

Similarly, $\tau'$ is characterized by the equation
\begin{equation} \label{ppppp}
\sigma_{\chi' + \phi'(y)} = \psi' (y) \tau'(y, \chi') T_{c'^{\vee}(y)}^* \circ \sigma_{\chi'}
\end{equation}
with isomorphism
\[
\psi'(y): \mathcal{M}^{(\mathfrak{p}_i)}(c'^{\vee}(y))_{\eta} \overset{\sim}{\rightarrow} \mathscr{O}_{S, \eta}
\]
and 
\[
\tau'(y, \chi') : \mathscr{O}_{\chi'}(c'^{\vee}(y))_{\eta} \overset{\sim}{\rightarrow} \mathscr{O}_{S, \eta}
\]
so that 
$\psi' (y) \tau'(y, \chi')$ defines a section of $\mathcal{M}^{(\mathfrak{p}_i)}_{\chi'}(c'^{\vee}(y))_{\eta}^{\otimes-1}$.

Now (\ref{longdiffic}), (\ref{shorter}), (\ref{painnn}) and (\ref{ppppp}) together implies that we have a commutative diagram
\[
\begin{tikzcd}
\Gamma(A_{\eta}, \mathcal{M}_{\rho(\chi')+\xi\phi(y),\eta}) \otimes \mathcal{M}_{\rho(\chi')}(c^{\vee}(y))_{\eta}  \  \ \  \  
\arrow[r,"{\tau(y,\rho(\chi'))\psi(y)}"] 
 & 
 \  \ \  \  
 \Gamma(A_{\eta}, \mathcal{M}_{\rho(\chi')+\xi \phi(y),\eta}) 
\\
\Gamma(A_{\eta}^{(\mathfrak{p}_i)}, \mathcal{M}^{(\mathfrak{p}_i)}_{\chi'+\phi'(y),\eta})
\otimes \mathcal{M}_{\chi'}^{(\mathfrak{p}_i)}(c'^{\vee}(y))_{\eta}  \  \ \  \  
\arrow[r,"{\tau'(y,\chi')\psi'(y)}"] 
\arrow[u]
& 
\  \ \  \ 
\Gamma(A_{\eta}^{(\mathfrak{p}_i)}, \mathcal{M}^{(\mathfrak{p}_i)}_{\chi'+\phi'(y),\eta}) 
\arrow[u]
\end{tikzcd}
\]
where we use that $\sigma_{\chi} \neq 0$ and $\sigma_{\chi'} \neq 0$ 
for every $\chi \in X$ and $\chi' \in X'$. Observe that the vertical arrows are non-zero, and we obtain
\[
\tau'(y,\chi') = \tau(y, \rho(\chi'))
\]
under the canonical identification 
$\mathscr{O}_{\rho(\chi')}(c^{\vee}(y))_{\eta} \cong 
\pi_A^*\mathscr{O}_{\chi'} (c^{\vee}(y))_{\eta} \cong
\mathscr{O}_{\chi'} (\pi_A \circ c^{\vee}(y))_{\eta} =
\mathscr{O}_{\chi'} (c'^{\vee}(y))$, 
which completes the proof if we take the equivalent formulation of $\tau$ in \ref{equivalentfor}. 
\qed
\end{innerproof}
\qed
\end{proof}

\subsection{Partial Frobenius extends to minimal compactifications}

In this final section, we deduce our main theorem \ref{pFextends} from the theorem proved in the last section. We retain the setting of the last section, so in particular every scheme is defined over 
$\mathcal{O}_{F_0} \otimes_{\mathbb{Z}} \mathbb{F}_p$.

We begin by recalling the construction of the minimal compactifications. In the analytic setting, the minimal compactifications can be constructed directly using rational boundary components. However, in the algebraic settings, the only known method to proceed is to first construct the toroidal compactifications and then contract the boundary to obtain the minimal compactifications. 

More precisely, let 
$\omega^{\text{tor}} := \bigwedge^{\text{top}} \underline{\text{Lie}}^{\vee}_{G^{\text{tor}}/M_{n,\Sigma}^{\text{tor}}}$,
where 
$G^{\text{tor}}$ 
is the universal semi-abelian scheme over the toroidal compactification 
$M_{n,\Sigma}^{\text{tor}}$. Then $\omega^{\text{tor}}$ is an invertible sheaf generated by its global sections, and we define
\[
M_n^{\text{min}} := \text{Proj}(\underset{k \geq 0}{\oplus} \Gamma(M_{n,\Sigma}^{\text{tor}}, (\omega^{\text{tor}})^{\otimes k}))
\]
Alternatively, $M_n^{\text{min}}$ is the Stein factorization of the map 
\[
M_{n,\Sigma}^{\text{tor}} \longrightarrow \mathbb{P}(\Gamma(M_{n,\Sigma}^{\text{tor}}, \omega^{\text{tor}}))
\]
defined by global sections of $\omega^{\text{tor}}$, i.e. it factors through 
\[
\oint : M_{n,\Sigma}^{\text{tor}} \rightarrow M_n^{\text{min}} 
\]
with 
$M_n^{\text{min}} \rightarrow \mathbb{P}(\Gamma(M_{n,\Sigma}^{\text{tor}}, \omega^{\text{tor}}))$
finite, and 
\[
\mathscr{O}_{M_n^{\text{min}}} \overset{\sim}{\rightarrow}  \oint_*\mathscr{O}_{M_{n,\Sigma}^{\text{tor}}}.
\]
It can be shown that $M_n^{\text{min}}$ is independent of the choice of the toroidal compactification. 
Moreover, by construction we have a canonical ample invertible sheaf $\omega^{\text{min}}:= \mathscr{O}(1)$ on $M_n^{\text{min}}$ such that
\[
\oint^*\omega^{\text{min}} \cong \omega^{\text{tor}}
\]

We can show that $M_n^{\text{min}}$ has a stratification
\[
M_n^{\text{min}} =
\underset{[(Z_n, \Phi_n, \delta_n)]}{\coprod}
Z_{[(Z_n, \Phi_n, \delta_n)]}
\]
where 
$Z_{[(Z_n, \Phi_n, \delta_n)]} = M_n^{Z_n}$
as defined in section \ref{tttor}, and the index ranges through all cusp labels. Moreover the map $\oint$ preserves the stratification, and sends $Z_{[(\Phi_n, \delta_n, \sigma)]}$ 
to $Z_{[(Z_n, \Phi_n, \delta_n)]}$.

Similar to the toroidal compactifications, the minimal compactification of $M_{K(n)}/ \Delta$
is defined to be the union of minimal compactifications of 
$M_n(L, Tr_{\mathcal{O}_F/ \mathbb{Z}} \circ (\alpha \delta \langle   \cdot, \cdot\rangle_F)) $. 

\begin{theorem}
$F_{\mathfrak{p}_i}$ extends to a morphism
\[
F_{\mathfrak{p}_i}^{\text{min}} : (M_{K(n)}/ \Delta)^{min} \longrightarrow (M_{K(n)}/ \Delta)^{min}
\]
sending the strata 
$M_n(L^{Z_{\alpha \delta,n}},\langle  \cdot, \cdot\rangle^{Z_{\alpha \delta,n}})$
associated to 
$\alpha \in \Omega,  
  \delta \in \Lambda$
and the cusp label
$[(Z_{\alpha \delta,n}, \Phi_{\alpha \delta,n}, \delta_{\alpha \delta,n})]$ 
to the strata 
$M_n(L^{Z_{\alpha' \delta',n}},\langle  \cdot, \cdot\rangle^{Z_{\alpha' \delta',n}})$
associated to 
$\alpha' \in \Omega,  
  \delta' \in \Lambda$
with the usual notations as before, and the cusp label
$[(Z_{\alpha' \delta',n}, \Phi_{\alpha' \delta',n}, \delta_{\alpha' \delta',n})]$ 
defined as follows:
\[
Z_{\alpha' \delta',n} = Z_{\alpha \delta,n}. 
\]
If 
$ \Phi_{\alpha \delta,n} =(X,Y,\phi, \varphi_{-2,n}, \varphi_{0,n}) $,
then 
\[
\Phi_{\alpha' \delta',n} = (X \otimes_{\mathcal{O}_F} \mathfrak{p}_i,Y,\phi', \varphi_{-2,n}', \varphi'_{0,n}) \]
where 
\[
\varphi'_{-2,n} : Gr_{-2}^{Z_{\alpha' \delta',n} } = Gr_{-2}^{Z_{\alpha \delta,n} }  \overset{\varphi_{-2,n}}{\longrightarrow} Hom(X/nX, (\mathbb{Z}/n \mathbb{Z})(1)) 
\]
\[
\overset{\sim}{\longrightarrow} Hom(X \otimes \mathfrak{p}_i/n (X \otimes \mathfrak{p}_i), (\mathbb{Z}/n \mathbb{Z})(1))
\]
and 
\[
\varphi_{0,n}' : Gr_{0}^{Z_{\alpha' \delta',n} } = Gr_{0}^{Z_{\alpha \delta,n} }  \overset{\varphi_{0,n}}{\longrightarrow} Y/nY.
\]
Lastly, $\phi'$ is defined by the following diagram similar to the diagram defining $\lambda'$,
\[\begin{tikzcd}
[
  arrow symbol/.style = {draw=none,"\textstyle#1" description,sloped},
  isomorphic/.style = {arrow symbol={\simeq}},
  ]
&
X  & 
X \otimes_{\mathcal{O}_F} \mathfrak{p}_i 
\arrow [l, "id \otimes (  \mathcal{O}_F \hookleftarrow \mathfrak{p}_i)"] 
\\
Y \otimes_{\mathcal{O}_F} \mathfrak{p}_i^{-1} &
Y \arrow[l, "id \otimes (\mathfrak{p}_i^{-1} \hookleftarrow \mathcal{O}_F)"] 
\arrow[u, "\phi"] 
 & 
 Y \otimes_{\mathcal{O}_F} \mathfrak{p}_i 
 \arrow[l,"id \otimes (  \mathcal{O}_F \hookleftarrow \mathfrak{p}_i)"]
 \arrow[u, "\phi \otimes id"]
 \\
&
Y \otimes_{\mathcal{O}_F} \mathfrak{p}_i^{-1} 
\arrow[lu, "\xi \otimes id"]
\arrow[u, dashrightarrow] & 
Y
\arrow[l,"id \otimes (\mathfrak{p}_i^{-1} \hookleftarrow \mathcal{O}_F)"]
\arrow[u, dashrightarrow]
\arrow[uu, bend right=90, dashrightarrow, "\phi'"]
\end{tikzcd} 
\]

Moreover, on each strata, $F_{\mathfrak{p}_i}^{\text{min}}$ induces the morphism
\[
M_n(L^{Z_{\alpha \delta,n}},\langle  \cdot, \cdot\rangle^{Z_{\alpha \delta,n}}) \rightarrow  M_n(L^{Z_{\alpha' \delta',n}},\langle  \cdot, \cdot\rangle^{Z_{\alpha' \delta',n}})
\]
sending 
$(A, \lambda, i, (\alpha_n, \nu_n))$
to 
$(A', \lambda', i', (\alpha'_n, \nu'_n))$ as in the description before the theorem. For completeness, we summarize the description as follows. Using the above notations, 
$A' := A/ (Ker(F)[\mathfrak{p}_i])$,
$i'$ is induced by the quotient map $\pi_{\mathfrak{p}_i} : A \rightarrow A'$,  $\lambda'$ is characterized by 
$\xi \lambda = \pi_{\mathfrak{p}_i}^{\vee} \circ \lambda' \circ \pi_{\mathfrak{p}_i}$ which defines a prime to $p$ isogeny $\lambda'$, 
$\alpha_n' = \pi_{\mathfrak{p}_i} \circ \alpha_n$
and 
$\nu_n' = \nu_n \circ \kappa $. In other words, restriction of the partial Frobenius to (suitable union of) strata recovers the partial Frobenius on them. 
\end{theorem}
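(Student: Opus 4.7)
The plan is to deduce the theorem from the toroidal extension
$F^{\text{tor}}_{\mathfrak{p}_i}: (M_{K(n)}/\Delta)^{\text{tor}}_\Sigma \longrightarrow (M_{K(n)}/\Delta)^{\text{tor}}_{\Sigma'}$
already established, by descending along the canonical contraction
$\oint: (M_{K(n)}/\Delta)^{\text{tor}}_\Sigma \to (M_{K(n)}/\Delta)^{\text{min}}$. Since the minimal compactification is independent of the choice of cone decomposition and is the Stein factorization of the map to $\mathbb{P}(\Gamma(M^{\text{tor}}, \omega^{\text{tor}}))$, so that $\oint_*\mathscr{O}_{(M_{K(n)}/\Delta)^{\text{tor}}_\Sigma} \cong \mathscr{O}_{(M_{K(n)}/\Delta)^{\text{min}}}$ with geometrically connected fibers, it suffices to show that the proper composition $\oint' \circ F^{\text{tor}}_{\mathfrak{p}_i}$ is set-theoretically constant along each geometric fiber of $\oint$; the factorization through $\oint$ then follows automatically from standard properties of Stein factorization in the proper, normal setting.

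To verify the fiber condition, I would argue stratum by stratum using the formal models given in Theorem \ref{lantor}. The formal completion
$(M^{\text{tor}}_\Sigma)^\wedge_{Z_{[(\Phi_n, \delta_n, \sigma)]}} \cong \mathfrak{X}_{\Phi_n, \delta_n, \sigma}/\Gamma_{\Phi_n, \sigma}$
fibers, via the $E_{\Phi_n}$-torsor structure and the projection of the abelian scheme $C_{\Phi_n, b_n}$, over the PEL stratum $M_n^{Z_n}$, and the contraction $\oint$ collapses both toric and torsor fibers, retaining only the projection to $M_n^{Z_n}$. The explicit description of $F^{\text{tor}}_{\mathfrak{p}_i}$ on toroidal strata from the preceding section says that, under the natural isomorphism
$\Sigma_{\Phi_{\alpha\delta,n}} \overset{\sim}{\to} \Sigma'_{\Phi_{\alpha'\delta',n}}$
induced by $Y \times (X\otimes_{\mathcal{O}_F}\mathfrak{p}_i) \hookrightarrow Y\times X$,
$F^{\text{tor}}_{\mathfrak{p}_i}$ respects this fibration and acts on the base $M_n^{Z_n} \to M_n^{Z'_n}$ by the partial Frobenius $A\mapsto A/(\text{Ker}(F)[\mathfrak{p}_i])$. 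Consequently, $\oint' \circ F^{\text{tor}}_{\mathfrak{p}_i}$ depends only on the image in $M_n^{Z'_n}$, hence on the point of the minimal stratum $Z_{[(\Phi_n, \delta_n)]} = M_n^{Z_n}$, and is therefore constant on each geometric fiber of $\oint$.

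The main technical obstacle is upgrading the above formal analysis to a global statement, since $(M^{\text{tor}})^\wedge_Z \cong \mathfrak{X}$ is only an isomorphism of formal completions along the stratum. Following Lan's descent of Hecke correspondences to minimal compactifications, the standard remedy is to work directly with the Proj description: one verifies that $(F^{\text{tor}}_{\mathfrak{p}_i})^*\omega^{\text{tor}}_{\Sigma'}$ is an ample invertible sheaf on $(M_{K(n)}/\Delta)^{\text{tor}}_\Sigma$ — our hypothesis that $p$ splits completely in $F^c$ decomposes $\omega^{\text{tor}}$ into $\mathfrak{p}_j$-components, and the pullback identifies with $(\omega^{\text{tor}}_{\Sigma,i})^{\otimes p}\otimes\bigotimes_{j\neq i}\omega^{\text{tor}}_{\Sigma,j}$ because quotienting by $\text{Ker}(F)[\mathfrak{p}_i]$ Frobenius-twists the $\mathfrak{p}_i$-part of the Hodge bundle — and then the induced map on graded rings of global sections of powers of $\omega^{\text{tor}}$ produces the desired morphism of Proj's. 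Once $F^{\text{min}}_{\mathfrak{p}_i}$ is produced, the description on the strata $M_n(L^{Z_{\alpha\delta,n}}, \langle\cdot,\cdot\rangle^{Z_{\alpha\delta,n}}) \to M_n(L^{Z_{\alpha'\delta',n}}, \langle\cdot,\cdot\rangle^{Z_{\alpha'\delta',n}})$ reads off directly from the toroidal strata description and the identification $Z_{[(\Phi_n,\delta_n)]} = M_n^{Z_n}$, matching the partial Frobenius $(A,\lambda,i,(\alpha_n,\nu_n))\mapsto (A^{(\mathfrak{p}_i)}, \lambda',i',(\alpha'_n,\nu'_n))$ on the abelian part as claimed.
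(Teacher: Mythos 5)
Your second paragraph follows the paper's approach via the Proj construction, but the key technical lemma is missing, and without it the argument does not close. You correctly derive (or at least plausibly claim) the decomposition
$(F^{\text{tor}}_{\mathfrak{p}_i})^*\omega'_{\Sigma'} \cong \bigl(\bigotimes_{j\neq i}\omega_j\bigr)\otimes\omega_i^{\otimes p}$
using the $\mathcal{O}_F/p$-idempotent decomposition of the Hodge bundle. But you then assert that "the induced map on graded rings of global sections of powers of $\omega^{\text{tor}}$ produces the desired morphism of Proj's." That step does not work as stated: $(F^{\text{tor}}_{\mathfrak{p}_i})^*\omega'$ is not a power of $\omega^{\text{tor}}$ (it is a different monomial in the $\omega_j$), so a graded-ring map does not directly give a morphism of Proj's. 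To invoke the universal property of Proj for $M^{\text{min}}_{n,\alpha'\delta'} = \text{Proj}\bigl(\oplus_k\Gamma(\omega'^{\otimes k})\bigr)$ with source $M^{\text{min}}_{n,\alpha\delta}$, you need to exhibit the line bundle $(F^{\text{tor}}_{\mathfrak{p}_i})^*\omega'$ as $\oint^*L^{\text{min}}$ for some $L^{\text{min}}$ on $M^{\text{min}}_{n,\alpha\delta}$. The paper achieves this by proving that each individual factor $\omega_i$ descends to a line bundle $\omega_i^{\text{min}}$ with $\oint^*\omega_i^{\text{min}} \cong \omega_i$, which then lets it set $L^{\text{min}} = (\omega_i^{\text{min}})^{\otimes p}\otimes\bigotimes_{j\neq i}\omega_j^{\text{min}}$. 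Proving this descent is the genuinely hard part: the paper constructs $\omega_i^{\text{min}}$ as $j_*(\omega_i|_{M^1_{n,\alpha\delta}})$, shows it is coherent and reflexive by normality and codimension estimates, and verifies it is a line bundle with the correct pullback by an fpqc-descent argument on completions of strict localizations, using the formal models $\mathfrak{X}_{\Phi_n,\delta_n,\sigma}$ and the factorization $\omega_i \cong (\bigwedge^{\text{top}}e_iX)\otimes p^*(\bigwedge^{\text{top}}e_i\underline{\text{Lie}}^\vee_{A/M_n^{Z_n}})$. Your proposal omits this entirely.

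A smaller but real error: you claim $(F^{\text{tor}}_{\mathfrak{p}_i})^*\omega'_{\Sigma'}$ is \emph{ample} on the toroidal compactification. It is not; $\omega^{\text{tor}}$ is merely generated by global sections and is trivial on the toric fibers of $\oint$, which is precisely why the Stein factorization contracts them. Ampleness of the pullback would in fact contradict the existence of a positive-dimensional contraction. Your first paragraph's fiberwise-constancy/rigidity route could be made to work in principle, but the verification of constancy on fibers is only sketched via formal completions, and the paper avoids this by working directly with line bundles. The net effect is that your proposal identifies the correct pullback formula and the correct final step (universal property of Proj) but leaps over the descent of $\omega_i$, which is where most of the proof's content lives.
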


\begin{proof}
It is enough to prove that $F_{\mathfrak{p}_i}$ extends to the minimal compactification of each component, i.e.  
\[
M_n(L, Tr_{\mathcal{O}_F/ \mathbb{Z}} \circ (\alpha \delta \langle   \cdot, \cdot\rangle_F)) \rightarrow 
M_n(L, Tr_{\mathcal{O}_F/ \mathbb{Z}} \circ (\alpha' \delta' \langle   \cdot, \cdot\rangle_F))
\]
extends to a morphism
\[
M_n(L, Tr_{\mathcal{O}_F/ \mathbb{Z}} \circ (\alpha \delta \langle   \cdot, \cdot\rangle_F))^{\text{min}} \rightarrow 
M_n(L, Tr_{\mathcal{O}_F/ \mathbb{Z}} \circ (\alpha' \delta' \langle   \cdot, \cdot\rangle_F))^{\text{min}}
\]
and maps strata to the expected ones. We are thus reduced to the situation that we are familiar with. 

We have morphisms
\begin{equation} \label{bugb}
\begin{tikzcd}
M_n(L, Tr_{\mathcal{O}_F/ \mathbb{Z}} \circ (\alpha \delta \langle   \cdot, \cdot\rangle_F))^{\text{tor}}_{\Sigma_{\alpha\delta}}
\arrow[r,"F_{\mathfrak{p}_i}^{\text{tor}}"] 
\arrow[d, "\oint"]
&
M_n(L, Tr_{\mathcal{O}_F/ \mathbb{Z}} \circ (\alpha' \delta' \langle   \cdot, \cdot\rangle_F))^{\text{tor}}_{\Sigma'_{\alpha'\delta'}}
\arrow[d, "\oint'"]
\\
M_n(L, Tr_{\mathcal{O}_F/ \mathbb{Z}} \circ (\alpha \delta \langle   \cdot, \cdot\rangle_F))^{\text{min}}
\arrow[r,dashed,"F_{\mathfrak{p}_i}^{\text{min}}"]
&
M_n(L, Tr_{\mathcal{O}_F/ \mathbb{Z}} \circ (\alpha' \delta' \langle   \cdot, \cdot\rangle_F))^{\text{min}}
\end{tikzcd}
\end{equation}
where the horizontal arrow is the extension of the partial Frobenius to toroidal compactifications as we have proved in the previous section, and the dashed arrow is the morphism we are searching for that makes the diagram commute. Once the existence of the dashed arrow is established, the description of $F_{\mathfrak{p}_i}^{\text{min}}$  follows from the commutativity of the diagram and the description of the first horizontal arrow as stated in the last section.   

Let
$G_{\Sigma_{\alpha\delta}}$ 
(resp. $G_{\Sigma'_{\alpha'\delta'}}$)
be the universal semi-abelian scheme over 
$M^{\text{tor}}_{n,\Sigma_{\alpha\delta}}:= 
M_n(L, Tr_{\mathcal{O}_F/ \mathbb{Z}} \circ (\alpha \delta \langle   \cdot, \cdot\rangle_F))^{\text{tor}}_{\Sigma_{\alpha\delta}}$
(resp. 
$M^{\text{tor}}_{n,\Sigma'_{\alpha'\delta'}}:= M_n(L, Tr_{\mathcal{O}_F/ \mathbb{Z}} \circ (\alpha' \delta' \langle   \cdot, \cdot\rangle_F))^{\text{tor}}_{\Sigma'_{\alpha'\delta'}}$).
Recall that 
$F_{\mathfrak{p}_i}^{\text{tor}}$
is characterized by
\[
(F_{\mathfrak{p}_i}^{\text{tor}})^*G_{\Sigma'_{\alpha'\delta'}} \cong 
G_{\Sigma_{\alpha\delta}}^{(\mathfrak{p}_i)} := G_{\Sigma_{\alpha\delta}}/(Ker(F)[\mathfrak{p}_i])
\]
hence
\[
(F_{\mathfrak{p}_i}^{\text{tor}})^*\underline{\text{Lie}}^{\vee}_{G_{\Sigma'_{\alpha'\delta'}}/M^{\text{tor}}_{n,\Sigma'_{\alpha'\delta'}}}
\cong
\underline{\text{Lie}}^{\vee}_{G_{\Sigma_{\alpha\delta}}^{(\mathfrak{p}_i)}/ M^{\text{tor}}_{n,\Sigma_{\alpha\delta}}}.
\]
Since the action of $\mathcal{O}_F$ on 
$\underline{\text{Lie}}^{\vee}_{G_{\Sigma_{\alpha\delta}}/ M^{\text{tor}}_{n,\Sigma_{\alpha\delta}}}$
(resp. 
$\underline{\text{Lie}}^{\vee}_{G_{\Sigma'_{\alpha'\delta'}}/M^{\text{tor}}_{n,\Sigma'_{\alpha'\delta'}}}$)
factors through 
$\mathcal{O}_F/p \cong \underset{i}{\prod} \mathcal{O}_F/\mathfrak{p}_i$,
we have 
\[
\underline{\text{Lie}}^{\vee}_{G_{\Sigma_{\alpha\delta}}/ M^{\text{tor}}_{n,\Sigma_{\alpha\delta}}} 
=
\underset{i}{\oplus} e_i \underline{\text{Lie}}^{\vee}_{G_{\Sigma_{\alpha\delta}}/ M^{\text{tor}}_{n,\Sigma_{\alpha\delta}}}
\]
(resp. 
$
 \underline{\text{Lie}}^{\vee}_{G_{\Sigma'_{\alpha'\delta'}}/M^{\text{tor}}_{n,\Sigma'_{\alpha'\delta'}}}
=
\underset{i}{\oplus} e_i \underline{\text{Lie}}^{\vee}_{G_{\Sigma'_{\alpha'\delta'}}/M^{\text{tor}}_{n,\Sigma'_{\alpha'\delta'}}}
$) with $e_i$ the idempotent of $\mathcal{O}_F/p$ corresponds to the factor $\mathcal{O}_F/\mathfrak{p}_i$. 
Since everything is $\mathcal{O}$-equivaraint, we 
obtain
\begin{equation} \label{badbad}
(F_{\mathfrak{p}_i}^{\text{tor}})^*(e_j\underline{\text{Lie}}^{\vee}_{G_{\Sigma'_{\alpha'\delta'}}/M^{\text{tor}}_{n,\Sigma'_{\alpha'\delta'}}})
\cong
e_j\underline{\text{Lie}}^{\vee}_{G_{\Sigma_{\alpha\delta}}^{(\mathfrak{p}_i)}/ M^{\text{tor}}_{n,\Sigma_{\alpha\delta}}}
= 
\begin{cases}
e_j \underline{\text{Lie}}^{\vee}_{G_{\Sigma_{\alpha\delta}}/ M^{\text{tor}}_{n,\Sigma_{\alpha\delta}}}  
& j \neq i, 
\\
F^*(e_i \underline{\text{Lie}}^{\vee}_{G_{\Sigma_{\alpha\delta}}/ M^{\text{tor}}_{n,\Sigma_{\alpha\delta}}})
& j=i 
\end{cases}
\end{equation}
where the last equality follows from 
\[
e_j\underline{\text{Lie}}^{\vee}_{G_{\Sigma_{\alpha\delta}}^{(\mathfrak{p}_i)}/ M^{\text{tor}}_{n,\Sigma_{\alpha\delta}}} 
=
\underline{\text{Lie}}^{\vee}_{G_{\Sigma_{\alpha\delta}}^{(\mathfrak{p}_i)}[\mathfrak{p}_j]/ M^{\text{tor}}_{n,\Sigma_{\alpha\delta}}}
=
\begin{cases}
\underline{\text{Lie}}^{\vee}_{G_{\Sigma_{\alpha\delta}}[\mathfrak{p}_j]/ M^{\text{tor}}_{n,\Sigma_{\alpha\delta}}}  
& j \neq i, 
\\
F^*(\underline{\text{Lie}}^{\vee}_{G_{\Sigma_{\alpha\delta}}[\mathfrak{p}_i]/ M^{\text{tor}}_{n,\Sigma_{\alpha\delta}}})
& j=i 
\end{cases}
\]
in which we use 
\[
\underline{\text{Lie}}^{\vee}_{G_{\Sigma_{\alpha\delta}}^{(\mathfrak{p}_i)}/ M^{\text{tor}}_{n,\Sigma_{\alpha\delta}}}
=
\underline{\text{Lie}}^{\vee}_{G_{\Sigma_{\alpha\delta}}^{(\mathfrak{p}_i)}[p]/ M^{\text{tor}}_{n,\Sigma_{\alpha\delta}}} 
=
\underset{j}{\oplus}
\underline{\text{Lie}}^{\vee}_{
G_{\Sigma_{\alpha\delta}}^{(\mathfrak{p}_i)}[\mathfrak{p}_j]/ M^{\text{tor}}_{n,\Sigma_{\alpha\delta}}} 
\]
and similarly for 
$\underline{\text{Lie}}^{\vee}_{G_{\Sigma_{\alpha\delta}}/ M^{\text{tor}}_{n,\Sigma_{\alpha\delta}}}$, 
together with
\[
G_{\Sigma_{\alpha\delta}}^{(\mathfrak{p}_i)}[\mathfrak{p}_j]
=
\begin{cases}
G_{\Sigma_{\alpha\delta}}[\mathfrak{p}_j]
& j \neq i, 
\\
G_{\Sigma_{\alpha\delta}}^{(p)}[\mathfrak{p}_i]
& j=i 
\end{cases}
\]
where 
$G_{\Sigma_{\alpha\delta}}^{(p)}
:= (G_{\Sigma_{\alpha\delta}}/Ker(F))$
is the usual base change by the absolute Frobenius $F$ on 
$M^{\text{tor}}_{n,\Sigma_{\alpha\delta}}$.

Let 
$\omega_i := \bigwedge^{\text{top}} 
e_i\underline{\text{Lie}}^{\vee}_{G_{\Sigma_{\alpha\delta}}/ M^{\text{tor}}_{n,\Sigma_{\alpha\delta}}}$
(resp. 
$\omega_i' := \bigwedge^{\text{top}} 
e_i\underline{\text{Lie}}^{\vee}_{G_{\Sigma'_{\alpha'\delta'}}/M^{\text{tor}}_{n,\Sigma'_{\alpha'\delta'}}}
$),
then we have 
\[
\omega= \underset{i}{\otimes} \omega_i
\]
and
\[
\omega'= \underset{i}{\otimes} \omega'_i
\]
with 
$\omega :=  \bigwedge^{\text{top}} 
\underline{\text{Lie}}^{\vee}_{G_{\Sigma_{\alpha\delta}}/ M^{\text{tor}}_{n,\Sigma_{\alpha\delta}}}$
and 
$\omega' := \bigwedge^{\text{top}} 
\underline{\text{Lie}}^{\vee}_{G_{\Sigma'_{\alpha'\delta'}}/M^{\text{tor}}_{n,\Sigma'_{\alpha'\delta'}}}$
as before. Moreover, (\ref{badbad}) tells us that
\[
F^{\text{tor}*}_{\mathfrak{p}_i}\omega'_j = \begin{cases}
\omega_j
& j \neq i, 
\\
F^*\omega_i = \omega_i^p
& j=i 
\end{cases}
\]
hence
\[
F^{\text{tor}*}_{\mathfrak{p}_i}\omega' = 
(\underset{j\neq i}{\otimes} \omega_j) \otimes \omega^p_i
\]
If we can show that each $\omega_i$ descends to a line bundle on 
\[
M_{n, \alpha\delta}^{\text{min}} :=
M_n(L, Tr_{\mathcal{O}_F/ \mathbb{Z}} \circ (\alpha \delta \langle   \cdot, \cdot\rangle_F))^{\text{min}}
\]
through $\oint$, i.e. there exists a line bundle $\omega_i^{\text{min}}$ 
on 
$M_{n, \alpha\delta}^{\text{min}} $
such that 
\[
\oint^*\omega_i^{\text{min}} = \omega_i
\]
then 
$(F^{\text{tor}}_{\mathfrak{p}_i})^*\omega' = 
(\underset{j\neq i}{\otimes} \omega_j) \otimes \omega^p_i$
tells us that we can find a line bundle
$L^{\text{min}} := (\underset{j\neq i}{\otimes} \omega^{\text{min}}_j) \otimes (\omega^{\text{min}}_i)^p$
such that 
\[
\oint^*L^{\text{min}} = (F^{\text{tor}}_{\mathfrak{p}_i})^*\omega'
\]
and the universal property of Proj construction tells us that there exists 
$F^{\text{min}}_{\mathfrak{p}_i}$
which makes the diagram (\ref{bugb}) commutative.

\vspace{5mm}

Indeed,  recall that the universal property of the Proj construction is as follows. Let $\mathcal{A}= \underset{k\geq 0}{\oplus} \mathcal{A}_k$ be a graded $R$-algebra finitely generated by degree 1 elements, and $T$ be a scheme defined over $R$ with structure map $f : T \rightarrow \text{Spec}(R)$. Suppose we are given a line bundle $\mathcal{L}$ on $T$, and
a morphism of graded $R$-algebras 
\[
\psi:
\mathcal{A} \rightarrow f_*(\underset{k \geq 0}{\oplus} \mathcal{L}^{\otimes k} )= \underset{k \geq 0}{\oplus} \Gamma(T,\mathcal{L}^{\otimes k})
\]
whose adjoint morphism at degree 1 
$ f^*\mathcal{A}_1 \rightarrow \mathcal{L}$ 
is surjective (viewing $\mathcal{A}_1$ as a quasi-coherent module on $\text{Spec}(R)$), then there exists a unique morphism 
\[
g: T \longrightarrow \text{Proj}_R(\mathcal{A}) 
\]
of $R$-schemes together with an isomorphism 
\[
\theta: g^*\mathscr{O}(1) \cong \mathcal{L}
\]
such that $\psi$ factorizes as
\[
\psi:
\mathcal{A} \cong \underset{k \geq 0}{\oplus} \Gamma(\text{Proj}_R(\mathcal{A}), \mathscr{O}(1)^{\otimes k}) 
\overset{g^*}{\rightarrow} 
\underset{k \geq 0}{\oplus} \Gamma(T,g^*\mathscr{O}(1)^{\otimes n})
\overset{\theta}{\cong} \underset{k \geq 0}{\oplus} \Gamma(T,\mathcal{L}^{\otimes n}).
\]

\vspace{5mm}

In our setting, 
$\oint' \circ F^{\text{tor}}_{\mathfrak{p}_i}$
is induced by 
\begin{equation} \label{daiaiii}
\underset{k \geq 0}{\oplus} \Gamma(M^{\text{tor}}_{n,\Sigma'_{\alpha'\delta'}}, \omega'^{\otimes k})
\overset{(F^{\text{tor}}_{\mathfrak{p}_i})^*}{\rightarrow} 
\underset{k \geq 0}{\oplus}
\Gamma(M^{\text{tor}}_{n,\Sigma_{\alpha\delta}}, ((F^{\text{tor}}_{\mathfrak{p}_i})^*\omega')^{\otimes k})
\end{equation}
with $\mathcal{L} = (F^{\text{tor}}_{\mathfrak{p}_i})^*\omega'$. 
Assume that we know the existence of $\omega_i^{\text{min}}$, then we have $L^{\text{min}}$ such that 
$\oint^*L^{\text{min}} = (F^{\text{tor}}_{\mathfrak{p}_i})^*\omega'$. 
Since 
$\mathscr{O}_{M_{n, \alpha\delta}^{\text{min}}} 
\overset{\sim}{\rightarrow}  \oint_*\mathscr{O}_{M^{\text{tor}}_{n,\Sigma_{\alpha\delta}}}$,
we have by projection formula
\[
\oint_*\oint^*L^{\text{min}} = L^{\text{min}} \otimes
\oint_*\mathscr{O}_{M^{\text{tor}}_{n,\Sigma_{\alpha\delta}}}
=L^{\text{min}} \otimes \mathscr{O}_{M_{n, \alpha\delta}^{\text{min}}} 
=L^{\text{min}}
\]
(Note that since $L^{\text{min}}$ is locally free, the derived tensor prodoct and left derived pullback is just the usual one), 
which implies that 
\[
\Gamma(M_{n, \alpha\delta}^{\text{min}}, L^{\text{min}})
\overset{\oint^*}{\cong} 
\Gamma(M^{\text{tor}}_{n,\Sigma_{\alpha\delta}}, \oint^*L^{\text{min}})
\cong
\Gamma(M^{\text{tor}}_{n,\Sigma_{\alpha\delta}}, (F^{\text{tor}}_{\mathfrak{p}_i})^*\omega')
\]
and similar for 
$(L^{\text{min}})^{\otimes k}$.
Thus (\ref{daiaiii}) gives us a morphism
\[
\underset{k \geq 0}{\oplus} \Gamma(M^{\text{tor}}_{n,\Sigma'_{\alpha'\delta'}}, \omega'^{\otimes k})
\rightarrow 
\underset{k \geq 0}{\oplus}
\Gamma(M_{n, \alpha\delta}^{\text{min}}, (L^{\text{min}})^{\otimes k})
\]
which by the universal property of Proj construction induces a morphism
\[
F_{\mathfrak{p}_i}^{\text{min}}:
M_{n, \alpha\delta}^{\text{min}}
\longrightarrow
M_{n,\alpha'\delta'}^{\text{min}}
:= 
\text{Proj}(\underset{k \geq 0}{\oplus} \Gamma(M^{\text{tor}}_{n,\Sigma'_{\alpha'\delta'}}, \omega'^{\otimes k}))
\]
which makes the diagram (\ref{bugb}) commutative. 

\vspace{5mm}

Thus we are reduced to show the existence of 
$\omega_i^{\text{min}}$ 
such that 
$\oint^*\omega_i^{\text{min}} = \omega_i$.
Let
$M_{n, \alpha\delta}^1 \subset M_{n, \alpha\delta}^{\text{min}}$
be the union of the open stratum and all the codimension 1 strata, then it follows from \cite{lan2013arithmetic} 7.2.3.13 that 
\[
\oint : \oint^{-1}(M_{n, \alpha\delta}^1) \cong M_{n, \alpha\delta}^1 
\] 
so we can view $M_{n, \alpha\delta}^1$ as an open subscheme of 
$M_{n, \Sigma_{\alpha\delta}}^{\text{tor}}$ 
as well. Let
\[
\omega^{\text{min}}_i := (M_{n, \alpha\delta}^1 \hookrightarrow 
M_{n, \alpha\delta}^{\text{min}})_* (\omega_i|_{M_{n, \alpha\delta}^1})
\]
we will show that
$\omega^{\text{min}}_i$ 
is a line bundle and 
$\oint^*\omega^{\text{min}}_i \cong \omega_i$. This is a direct adaption of the proof of \cite{lan2013arithmetic} 7.2.4.1 in our case.

First observe that $\omega^{\text{min}}_i$  is a  coherent sheaf since 
$M_{n, \alpha\delta}^{\text{min}}$ 
is normal and the complement of 
$M_{n, \alpha\delta}^1$
has codimension at least 2 (\cite{MR0476737} \rom{8} Prop. 3.2). Then to show that it is a line bundle, it is enough to show that its stalk at every point is free of rank 1. By fpqc descent, it is enough to show this for the completions of the strict localizations of 
$M_{n, \alpha\delta}^{\text{min}}$, i.e. 
it is enough to prove that for every geometric point $\bar{x}$ of $M_{n, \alpha\delta}^{\text{min}}$, 
the pullback of $\omega^{\text{min}}_i$ to
$(M_{n, \alpha\delta}^{\text{min}})_{\bar{x}}^{\land}$,
the completions of the strict localization of  
$M_{n, \alpha\delta}^{\text{min}}$
at $\bar{x}$, is free of rank 1. Similarly, it is enough to prove that 
$\oint^*\omega^{\text{min}}_i \cong \omega_i$
holds naturally over 
$(M_{n, \Sigma_{\alpha\delta}}^{\text{tor}})_{\bar{y}}^{\land}$
for every geometric point $\bar{y}$ of 
$M_{n, \Sigma_{\alpha\delta}}^{\text{tor}}$.

Suppose that $\bar{x}$ lies in the stratum 
$Z_{[(Z_n, \Phi_n, \delta_n)]}$, 
and we choose a stratum
$Z_{[(\Phi_n, \delta_n, \sigma)]}$
lying above 
$Z_{[(Z_n, \Phi_n, \delta_n)]}$. 
Then from (2) of theorem \ref{lantor} we have a natural identification
\[
(M_{n, \Sigma_{\alpha\delta}}^{\text{tor}})_{Z_{[(\Phi_n, \delta_n, \sigma)]}}^{\land} 
\cong
\mathfrak{X}_{\Phi_n,\delta_n, \sigma}
\]
where we do not have the quotient by $\Gamma_{\Phi_n,\sigma}$ 
since we assume that $n >3$. We have a canonical map
$\mathfrak{X}_{\Phi_n,\delta_n, \sigma} 
\rightarrow 
(M_{n, \alpha\delta}^{\text{min}})^{\land}_{Z_{[(Z_n, \Phi_n, \delta_n)]}}$
induced by $\oint$, and by abuse of notation we let 
\[
(\mathfrak{X}_{\Phi_n,\delta_n, \sigma})^{\land}_{\bar{x}}
:= \mathfrak{X}_{\Phi_n,\delta_n, \sigma}  
\times_{(M_{n, \alpha\delta}^{\text{min}})^{\land}_{Z_{[(Z_n, \Phi_n, \delta_n)]}}} 
(M_{n, \alpha\delta}^{\text{min}})_{\bar{x}}^{\land}
\]
then by definition we have a morphism
\[
(\mathfrak{X}_{\Phi_n,\delta_n, \sigma})^{\land}_{\bar{x}}
\rightarrow 
(M_{n, \alpha\delta}^{\text{min}})_{\bar{x}}^{\land}
\]
The key point is that we have a morphism 
\[
(M_{n, \alpha\delta}^{\text{min}})_{\bar{x}}^{\land} 
\rightarrow
(M_n^{Z_n})^{\land}_{\bar{x}}
\]
such that the composition
\[
(\mathfrak{X}_{\Phi_n,\delta_n, \sigma})^{\land}_{\bar{x}}
\rightarrow 
(M_{n, \alpha\delta}^{\text{min}})_{\bar{x}}^{\land} 
\rightarrow
(M_n^{Z_n})^{\land}_{\bar{x}}
\]
is induced by the structural morphism 
$p: \mathfrak{X}_{\Phi_n,\delta_n, \sigma}
\rightarrow
M_n^{Z_n}$ 
(recall that 
$\mathfrak{X}_{\Phi_n,\delta_n, \sigma}$
is the formal completion along the boundary of an affine toroidal compactification of a torus torsor over an abelian scheme over $M_n^{Z_n}$),
see \cite{lan2013arithmetic} 7.2.3.16 for details.

We observe that the pull back of the line bundle $\omega_i$ over 
$\mathfrak{X}_{\Phi_n,\delta_n, \sigma}$
is canonically identified with 
$(\bigwedge_{\mathbb{Z}}^{\text{top}} e_iX ) \otimes_{\mathbb{Z}} p^*(\bigwedge^{\text{top}} e_i \underline{\text{Lie}}^{\vee}_{A/M_n^{Z_n}})$,
where $A$ is the universal abelian variety over $M_n^{Z_n}$. 
This is a trivial variant of \cite{lan2013arithmetic} 7.1.2.1, and we briefly recall the proof. By \'etale descent, we can assume that the base is $S=\text{Spf}(R,I)$, with $R$ normal noetherian and $I$-adically complete so that we are in the setting of section \ref{dlevel}. We have 
\[
\underline{\text{Lie}}^{\vee}_{G_{\text{for}}/S} = \underline{\text{Lie}}^{\vee}_{G^{\natural}_{\text{for}}/S} 
\]
hence 
\[
\omega_i = \bigwedge^{\text{top}} e_i \underline{\text{Lie}}^{\vee}_{G_{\text{for}}/S} 
= \bigwedge^{\text{top}} e_i
\underline{\text{Lie}}^{\vee}_{G^{\natural}_{\text{for}}/S} 
=
(\bigwedge_{\mathbb{Z}}^{\text{top}} e_iX ) \otimes_{\mathbb{Z}} \bigwedge^{\text{top}} e_i \underline{\text{Lie}}^{\vee}_{A_{\text{for}}/S}
\]
where the last equality follows from the short exact sequence
\[
0 \rightarrow e_i \underline{\text{Lie}}_{T/S} \rightarrow e_i \underline{\text{Lie}}_{G^{\natural}/S} 
\rightarrow e_i \underline{\text{Lie}}_{A/S}
\rightarrow 0
\]
induced by the global semi-abelian structure 
\[
0\rightarrow T \rightarrow G^{\natural} \rightarrow A \rightarrow 0
\]
of $G^{\natural}$.

From what we have seen, the restriction of $\omega_i$ to 
$(\mathfrak{X}_{\Phi_n,\delta_n, \sigma})^{\land}_{\bar{x}}$
is the pullback of 
$(\bigwedge_{\mathbb{Z}}^{\text{top}} e_iX ) \otimes_{\mathbb{Z}} (\bigwedge^{\text{top}} e_i \underline{\text{Lie}}^{\vee}_{A/M_n^{Z_n}})$
along the composition 
\[
(\mathfrak{X}_{\Phi_n,\delta_n, \sigma})^{\land}_{\bar{x}}
\rightarrow 
(M_{n, \alpha\delta}^{\text{min}})_{\bar{x}}^{\land} 
\rightarrow
(M_n^{Z_n})^{\land}_{\bar{x}}
\]
which in particular shows that it is the pullbcak of some line bundle $L$ on 
$(M_{n, \alpha\delta}^{\text{min}})_{\bar{x}}^{\land}$,
i.e. by abuse of notation 
\begin{equation} \label{ljlkl''''}
(\oint^{\land}_{\bar{x}} )^* L\cong (\omega_i)_{\bar{x}}^{\land}.
\end{equation} 
This implies that both $(\omega_i^{\text{min}})_{\bar{x}}^{\land}$ 
and 
$L$ are extensions of (the completion of the strict localization at $\bar{x}$ of)
$\omega_i|_{M_{n, \alpha\delta}^1}$,
which by Stacks Project 30.12.12 is equivalent ($(\omega_i^{\text{min}})_{\bar{x}}^{\land}$ is reflexive since it is the pushforward of an open embedding and $L$ is reflexive since it is a line bundle on a normal scheme).  

This proves that $(\omega_i^{\text{min}})_{\bar{x}}^{\land}$ 
is free of rank 1 and for every geometric point
$\bar{y}$ of $ Z_{[(\Phi_n, \delta_n, \sigma)]}$
with $\bar{x}=\oint(\bar{y})$, 
then we have a natural map 
$h: (M_{n, \Sigma_{\alpha\delta}}^{\text{tor}})_{\bar{y}}^{\land}
\rightarrow 
(\mathfrak{X}_{\Phi_n,\delta_n, \sigma})^{\land}_{\bar{x}}$
and 
\[
(\oint^*(\omega_i^{\text{min}}))_{\bar{y}}^{\land}
\cong 
h^*(\oint^{\land}_{\bar{x}} )^* (\omega_i^{\text{min}})_{\bar{x}}^{\land} \overset{(\ref{ljlkl''''})}{\cong} h^*((\omega_i)_{\bar{x}}^{\land})
\cong
(\omega_i)_{\bar{y}}^{\land}
\]
proving what we want. 

To be more precise, there are canonical morphisms
\[
\begin{tikzcd}
\omega_i \arrow[rr, hook] 
& &
J_*J^*\omega_i 
\\
\oint^*\omega^{\text{min}}_i 
\arrow[r,equal,"\text{def}"]
&
\oint^*j_*J^*\omega_i
\arrow[r,equal]
&
\oint^*\oint_* J_* J^*\omega_i
\arrow[u]
\end{tikzcd}
\]
where $j$ and $J$ are open embeddings defined by the diagram
\[
\begin{tikzcd}
M_{n, \alpha\delta}^1 \arrow[r, hook, "J"] \arrow[dr, hook, "j"]  &
M_{n, \Sigma_{\alpha\delta}}^{\text{tor}} 
\arrow[d,"\oint"]
\\
& M_{n, \alpha\delta}^{\text{min}} 
\end{tikzcd}
\]
and the two arrows are the adjunction morphism. 
We showed that 
$\oint^*\omega^{\text{min}}_i$
and
$\omega_i$
are naturally identified over 
$(M_{n, \Sigma_{\alpha\delta}}^{\text{tor}})_{\bar{y}}^{\land}$
for every geometric point $\bar{y}$ of 
$M_{n, \Sigma_{\alpha\delta}}^{\text{tor}}$,
and the naturality tells us that after localization and completion, the images of $\omega_i$  and 
$\oint^*\omega^{\text{min}}_i$
in 
$J_*J^*\omega_i$
are identified. Now we can apply the fpqc descent to those two image sheaves and conclude that 
$\oint^*\omega^{\text{min}}_i 
\cong \omega_i$.

\end{proof}

\bibliographystyle{unsrt} 
\bibliography{ref}

\end{document}